\documentclass[12pt]{amsart}

\usepackage{amsmath,amsthm,amscd,amsfonts,wasysym,amssymb,epic,eepic,bbm,tikz-cd,mathtools,mathdots, comment,enumitem,fancyhdr,mathrsfs}
\usepackage[pagebackref,colorlinks=true,linkcolor=blue,citecolor=blue]{hyperref}
\usepackage[noabbrev]{cleveref}
\usepackage{MnSymbol}
\usepackage[all,cmtip]{xy}
\usepackage[normalem]{ulem} 
\pagestyle{fancy}
\fancyhf{}

\fancyfoot[C]{\thepage}
\usepackage[new]{old-arrows}

\allowdisplaybreaks
\setlength{\topmargin}{0truein} \setlength{\headheight}{.25truein}
\setlength{\headsep}{.25truein} \setlength{\textheight}{9truein}
\setlength{\footskip}{.25truein} \setlength{\oddsidemargin}{0truein}
\setlength{\evensidemargin}{0truein}
\setlength{\textwidth}{6.5truein} \setlength{\voffset}{-0.5truein}
\setlength{\hoffset}{0truein}

\vfuzz2pt 
\hfuzz2pt 
\newtheorem{thm}{Theorem}[section]
\newtheorem{cor}[thm]{Corollary}
\newtheorem{conj}[thm]{Conjecture}
\newtheorem{lem}[thm]{Lemma}

\newtheorem{prop}[thm]{Proposition}

\newtheorem{question}[thm]{Question}
\theoremstyle{definition}
\newtheorem{rem}[thm]{Remark}

\newtheorem{ans}[thm]{Ansatz}

\newcounter{remarkscounter}



\numberwithin{equation}{section}
\newcommand{\A}{\mathbb{A}}
\newcommand{\GL}{\mathrm{GL}}

\newcommand{\SL}{\mathrm{SL}}
\newcommand{\Sp}{\mathrm{Sp}}
\newcommand{\Mp}{\mathrm{Mp}}
\newcommand{\ZZ}{\mathbb{Z}}

\newcommand{\Gal}{\mathrm{Gal}}

\newcommand{\QQ}{\mathbb{Q}}

\newcommand{\lto}{\longrightarrow}

\newcommand{\OO}{\mathcal{O}}
\newcommand{\CC}{\mathbb{C}}
\newcommand{\RR}{\mathbb{R}}
\newcommand{\GG}{\mathbb{G}}

\newcommand{\GO}{\mathrm{GO}}

\newcommand{\Hom}{\mathrm{Hom}}

\newcommand{\sm}{\mathrm{sm}}

\newcommand{\quash}[1]{}

\theoremstyle{definition}
\newtheorem{defn}[thm]{Definition}

\newenvironment{psmatrix}
  {\left(\begin{smallmatrix}}
  {\end{smallmatrix}\right)}

\numberwithin{equation}{subsection}

\renewcommand{\hat}{\widehat}

\newcommand{\Spec}{\mathop{\mathrm{Spec}}}


\linespread{1.2}

\allowdisplaybreaks

\begin{document}

\title{Modulation groups}

\begin{abstract}
Conjectures of Braverman and Kazhdan, Ng\^o and Sakellaridis have motivated the introduction of Schwartz spaces for certain spherical varieties.  We prove that under suitable assumptions these Schwartz spaces are naturally a representation of a group that we christen the modulation group.  This provides a broad generalization of the defining representation of the metaplectic group.  The example of a vector space and the zero locus of a quadric cone in an even number of variables are discussed in detail.  In both of these cases the modulation group is closely related to algebraic groups, and we propose a conjectural method of linking modulation groups to ind-algebraic groups in general.  At the end of the paper we discuss adelization and the relationship between representations of modulation groups and the Poisson summation conjecture.
\end{abstract}

\subjclass[2020]{Primary 11F70;  Secondary 	11F27}

\thanks{Getz is thankful for partial support provided by NSF grant DMS-2400550, and Slipper is supported by NSF grant DMS-2231514. Any opinions, findings, and conclusions or recommendations expressed in this material are those of the authors and do not necessarily reflect the views of the National Science Foundation. Gutti\'erez Terradillos is supported by a research grant (VIL54509) from VILLUM FONDEN}

\author{Jayce R. Getz}
\address{Department of Mathematics\\
Duke University\\
Durham, NC 27708}
\email{jayce.getz@duke.edu}

\author{Armando Gutiérrez Terradillos}
\address{Department of Mathematics\\
Aarhus University\\
Aarhus, Denmark}
\email{armangute@math.au.dk}

\author{Farid Hosseinijafari}
\address{Department of Mathematics\\
Duke University\\
Durham, NC 27708}
\email{farid.hj@math.duke.edu}

\author{Bryan Hu}
\address{Department of Mathematics\\ University of California San Diego\\ San Diego, CA 92093}
\email{brhu@ucsd.edu}

\author{Seewoo Lee}
\address{Department of Mathematics \\ University of California Berkeley \\ Berkeley, CA 94720}
\email{seewoo5@berkeley.edu}

\author{Aaron Slipper}
\address{Department of Mathematics\\
Duke University\\
Durham, NC 27708}
\email{aaron.slipper@duke.edu}

\author{Marie-H\'el\`ene Tom\'e}
\address{Department of Mathematics\\ Duke University\\ Durham, NC 27708}
\email{mariehelene.tome@duke.edu}

\author{HaoYun Yao}
\address{Department of Mathematics\\
Duke University\\
Durham, NC 27708}
\email{haoyun.yao@duke.edu}

\author{Alan Zhao}
\address{Department of Mathematics \\ Columbia University \\
New York, NY, 10027}
\email{asz2115@columbia.edu}
\maketitle

\setcounter{tocdepth}{1}
\tableofcontents

\section{Introduction}
 
Let $F$ be a characteristic zero local field, let $H$ be an affine algebraic group over $F,$ and let $X$ be an affine $H$-scheme such that $X^{\mathrm{sm}}(F) \neq \emptyset.$  Here $X^{\mathrm{sm}} \subseteq X$ is the smooth locus.  We then have a unitary representation
$$
L^2(X^{\mathrm{sm}}(F),\mathcal{L}^{1/2})
$$
of $H(F),$ where $\mathcal{L}^{1/2}$ is the sheaf of half-densities.  If $X^{\mathrm{sm}}(F)$ admits an $H(F)$-eigenmeasure then one can alternately form the Hilbert space using the eigenmeasure (see Remark \ref{rem:half}).

In various settings there are additional automorphisms of $L^2(X^{\mathrm{sm}}(F),\mathcal{L}^{1/2})$ that allow us to extend the action of $H(F)$ to an action of a larger abstract group.  These larger groups have been studied in special cases, but never systematically.  Our aim in this paper is to unify these special cases into a broader framework, and conjecturally relate them to the $F$-points of ind-algebraic groups. 

Our ultimate motive, however, is to use this study to add structure to certain Schwartz spaces that appear in the Poisson summation conjecture.  We give more details in \S \ref{ssec:intro:PS} below.  The constructions in this paper upgrade any suitable Poisson summation formula to a representation generalizing the global Weil representation.

\subsection{Small modulation groups} Let $\psi:F \to \CC^\times$ be a non-trivial character.
Suppose that we are given a (right) representation $V$ of $H$ and an $H$-equivariant map
$$
\omega: X \lto V.
$$
We define a left action of $V^{\vee}(F) \rtimes H(F)$ on $L^2(X^{\mathrm{sm}}(F),\mathcal{L}^{1/2})$ by
\begin{align*}
\mathcal{R}_{\omega}: V^\vee(F) \rtimes H(F) \times L^2(X^{\mathrm{sm}}(F),\mathcal{L}^{1/2}) &\lto L^2(X^{\mathrm{sm}}(F),\mathcal{L}^{1/2})\\
    ((\lambda,  h),f) &\longmapsto \psi(\lambda \circ \omega(x))f(xh).
\end{align*}

\begin{defn} The \textbf{(spectral) small modulation group} $\Psi^{\mathrm{s}}_{\omega}\{F\}$ is the image of $V^\vee(F) \rtimes H(F)$ in  $\mathrm{U}(L^2(X^{\mathrm{sm}}(F),\mathcal{L}^{1/2})).$
\end{defn}
\noindent Here the superscript ${}^{\mathrm{s}}$ is short for small, and for Hilbert spaces $W$ we write $\mathrm{U}(W)$ for the group of unitary operators on $W.$

The definition furnishes a surjection 
\begin{align*}
    \mathcal{R}_\omega:V^\vee(F)\rtimes H(F)\lto \Psi_\omega^{\mathrm{s}}\{F\}
\end{align*}
of abstract groups.  Under natural assumptions on $H$ and $X,$ we check that in fact $\Psi_{\omega}^{\mathrm{s}}\{F\}$ is the $F$-points of an affine algebraic group $\Psi_{\omega}^{\mathrm{s}}$ over $F$ in Proposition \ref{modgp:alg=s}.  We refer to $\Psi_{\omega}^{\mathrm{s}}$ as the \textbf{algebraic small modulation group}.

\subsection{The modulation group}

In many cases there exists an isometry
$$
\mathcal{F}_{X}:L^2(X^{\mathrm{sm}}(F),\mathcal{L}^{1/2}) \lto L^2(X^{\mathrm{sm}}(F),\mathcal{L}^{1/2})
$$
and isomorphisms $\iota:H \to H$ such that 
$$
\mathcal{F}_{X} \circ \mathcal{R}_{\omega}(h)=\mathcal{R}_{\omega}(\iota(h)) \circ \mathcal{F}_{X}
$$
for $h \in H(F).$  Given a character $\chi:\Psi_{\omega}^{\mathrm{s}}(F) \to \CC^\times$ we define the \textbf{modulation group}
\begin{align} \label{mod:grp}
    \Psi_{\omega}\{F\}:=\Psi_{\omega,\chi,\mathcal{F}_{X}}\{F\} =\langle \mathcal{F}_{X},(\mathcal{R}_{\omega} \otimes \chi)(\Psi_{\omega}^{\mathrm{s}}\{F\})\rangle<\mathrm{Aut}(L^2(X^{\mathrm{sm}}(F),\mathcal{L}^{1/2}).
\end{align}
The odd notation is a hint that $\Psi_{\omega}\{F\}$ is not the $F$-points of a group scheme in general.  
  
We compute the modulation group in a family of test cases in \S \ref{sec:Mod:VS} and \S \ref{sec:Mod:cones}. The reader can refer to these sections for more details on the theorems we are about to state.  
In \S \ref{sec:Mod:VS}, we consider the classic setting in which $X=V \cong \GG_a^n$ equipped with the canonical action of $\mathrm{GL}_V.$  We consider two equivariant maps 
$$
\mathrm{id}:X \lto V \quad \textrm{and} \quad \mathrm{Sym}^2:X \lto \mathrm{Sym}^2 V^\vee.
$$
For the symplectic space
$$
W:=V \oplus V^\vee,
$$
 let $\mathrm{H}_W(F)$ be the Heisenberg group, let $\mathrm{Mp}_W(F)$ be the metaplectic group  and let 
$\widetilde{\mathrm{J}}_W(F)=\mathrm{H}_W(F) \rtimes \mathrm{Mp}_W(F)$
be the corresponding Jacobi group.  We point out that we take $\mathrm{Mp}_W(F)$ to be the $8$-fold cover of $\mathrm{Sp}_{W}(F)$ constructed in \cite{GKTtheta}, as opposed to the usual double cover.

\begin{thm} \label{thm:intro:HW}
The modulation group $\Psi_{\mathrm{id}}\{F\}$ is the image of 
$$
\mathrm{H}_W(F) \rtimes \langle w,\GL_V(F)\rangle \leq \widetilde{\mathrm{J}}_W(F)
$$
under the Heisenberg-Weil representation.  Here $w \in \mathrm{Mp}_W(F)$ is defined as in \eqref{w:def}.  The modulation group $\Psi_{\mathrm{Sym}^2}\{F\}$ is the image of a certain subgroup of $\mathrm{Mp}_W(F)$ under the Heisenberg-Weil representation.
\end{thm}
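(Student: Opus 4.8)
The strategy is to realize everything explicitly in the Schr\"odinger model of the Heisenberg--Weil representation $\rho$ of $\widetilde{\mathrm{J}}_W(F)=\mathrm{H}_W(F)\rtimes\Mp_W(F)$ on $L^2(X^{\sm}(F),\mathcal{L}^{1/2})$. Fixing the coordinates $V\cong\GG_a^n$ identifies $W=V\oplus V^\vee$ with $\GG_a^{2n}$ equipped with its standard symplectic form and $L^2(X^{\sm}(F),\mathcal{L}^{1/2})$ with $L^2(F^n)$; in this model $\mathrm{H}_W(F)$ acts through translations (the Lagrangian $V(F)$), modulations $f(x)\mapsto\psi(\langle\lambda,x\rangle)f(x)$ (the Lagrangian $V^\vee(F)$), and central scalars $\{\psi(t):t\in F\}$, the Siegel Levi $\GL_V\hookrightarrow\Sp_W$ acts by $f(x)\mapsto|\det h|^{1/2}f(xh)$, and the $\psi$-Fourier transform $\mathcal{F}$ equals $\rho(w)$ for the Weyl-element lift $w$ of \eqref{w:def}, the $\mu_8$-normalization absorbing the Weil index $\gamma_\psi$ (this is precisely why one passes to an $8$-fold cover). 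The plan is then to: (i) locate $\Psi^{\mathrm{s}}_\omega\{F\}$ inside $\rho(\widetilde{\mathrm{J}}_W(F))$ for $\omega\in\{\mathrm{id},\mathrm{Sym}^2\}$; (ii) take $\mathcal{F}_{X,\psi}=\mathcal{F}$, $\iota(h)={}^th^{-1}$, and $\chi=\one$; and (iii) compute the subgroup of $\mathrm{Aut}(L^2(X^{\sm}(F),\mathcal{L}^{1/2}))$ generated by $\mathcal{F}_{X,\psi}$ together with $(\mathcal{R}_\omega\otimes\chi)(\Psi^{\mathrm{s}}_\omega\{F\})$.

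For $\omega=\mathrm{id}$ one has $\mathcal{R}_{\mathrm{id}}(\lambda,h)f(x)=|\det h|^{1/2}\psi(\langle\lambda,x\rangle)f(xh)$, the composite of the modulation attached to $\lambda\in V^\vee(F)\subseteq\mathrm{H}_W(F)$ with the Siegel-Levi operator attached to $h$; this is visibly injective, so $\Psi^{\mathrm{s}}_{\mathrm{id}}\{F\}\cong V^\vee(F)\rtimes\GL_V(F)=(V^\vee\rtimes\GL_V)(F)$, in accord with Proposition~\ref{modgp:alg=s}. A direct computation gives $\mathcal{F}\circ\mathcal{R}_{\mathrm{id}}(0,h)\circ\mathcal{F}^{-1}=\mathcal{R}_{\mathrm{id}}(0,{}^th^{-1})$, so that $\iota(h)={}^th^{-1}$ is the automorphism induced on the Siegel Levi by conjugation by $w$, and $\Psi_{\mathrm{id}}\{F\}$ is generated by $\mathcal{F}$, the modulations, and the operators $f\mapsto|\det h|^{1/2}f(\cdot h)$. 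The inclusion $\Psi_{\mathrm{id}}\{F\}\subseteq\rho(\mathrm{H}_W(F)\rtimes\langle w,\GL_V(F)\rangle)$ is then immediate. Conversely, conjugating a modulation by $\mathcal{F}$ produces a translation, i.e.\ $\rho$ of an element of the Lagrangian $V(F)$; the commutator of the modulation by $\lambda$ with the translation by $v$ is the central scalar $\psi(-\langle\lambda,v\rangle)$, and these exhaust $\rho(Z(\mathrm{H}_W(F)))$ as $(\lambda,v)$ ranges over $V^\vee(F)\times V(F)$. Since $\mathrm{H}_W(F)$ is generated by its two Lagrangians and its center, $\Psi_{\mathrm{id}}\{F\}\supseteq\rho(\mathrm{H}_W(F))$, and it also contains $\mathcal{F}=\rho(w)$ and $\rho(\GL_V(F))$; as $\langle w,\GL_V(F)\rangle=\GL_V(F)\cdot\langle w\rangle$ normalizes $\mathrm{H}_W(F)$, this yields $\Psi_{\mathrm{id}}\{F\}=\rho(\mathrm{H}_W(F)\rtimes\langle w,\GL_V(F)\rangle)$.

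For $\omega=\mathrm{Sym}^2$ the relevant dual is $\mathrm{Sym}^2V$, and $\mathcal{R}_{\mathrm{Sym}^2}(S,h)f(x)=|\det h|^{1/2}\psi(Q_S(x))f(xh)$ where $Q_S$ is the quadratic form attached to $S$; this is again a faithful homomorphism, and the Gaussian modulations $f\mapsto\psi(Q_S)f$ are exactly $\rho$ of the unipotent radical $N$ of the Siegel parabolic $P\subseteq\Sp_W$, the $\GL_V$-part being the Siegel Levi as before, so $\Psi^{\mathrm{s}}_{\mathrm{Sym}^2}\{F\}\cong(\mathrm{Sym}^2V\rtimes\GL_V)(F)\cong P(F)$. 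With the same $\mathcal{F}_{X,\psi}=\mathcal{F}$, $\iota(h)={}^th^{-1}$, and $\chi=\one$, $\Psi_{\mathrm{Sym}^2}\{F\}$ is generated by $\rho(P(F))$ and $\mathcal{F}=\rho(w)$. Since conjugation by $\mathcal{F}$ carries $N$ to the opposite unipotent radical $\overline N$, and $P$ together with $\overline N$ generate $\Sp_W$ (the Siegel radical contains both short and long root subgroups, and commutators among these produce the remaining root subgroups, which generate $\Sp_W(F)$), one gets $\langle w,P(F)\rangle=\Sp_W(F)$; hence $\Psi_{\mathrm{Sym}^2}\{F\}$ is the Heisenberg--Weil image of the subgroup of $\Mp_W(F)$ generated by $w$ and the canonical lift of $P(F)$. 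Since $\Sp_W(F)$ is perfect and the cover $\Mp_W(F)\to\Sp_W(F)$ is non-split, this subgroup surjects onto $\Sp_W(F)$, and it is the "certain subgroup of $\Mp_W(F)$" of the statement; its precise intersection with the kernel $\mu_8$ is a cocycle computation.

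The conceptual steps above are short; the main obstacle is the metaplectic bookkeeping, i.e.\ exhibiting the lifts used above as genuine subgroups of the $8$-fold cover $\widetilde{\mathrm{J}}_W(F)$. Concretely, the operator $f\mapsto|\det h|^{1/2}f(\cdot h)$ differs from the genuine Weil action of the Siegel Levi by the non-multiplicative factor $\gamma_\psi(\det h)/\gamma_\psi(1)\in\mu_8$, and one must check that absorbing this factor (using the central $\mu_8$) produces an honest splitting $\GL_V(F)\hookrightarrow\widetilde{\mathrm{J}}_W(F)$ — the obstruction being exactly the Hilbert-symbol cocycle $(\det h,\det h')$, which cancels — and, likewise, that $w$ of \eqref{w:def} is the lift of the Weyl element with $\rho(w)=\mathcal{F}$ on the nose. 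One also verifies that $\chi=\one$ is compatible with the intertwining identity for $\mathcal{F}_{X,\psi}$ (automatic, since $\iota$ fixes the trivial character) and, in the $\mathrm{Sym}^2$ case, determines which scalars in $\mu_8$ lie in the group generated by $w$ and the lift of $P(F)$.
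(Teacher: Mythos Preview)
Your proposal is correct and follows essentially the same approach as the paper: identify $\Psi^{\mathrm{s}}_{\mathrm{id}}(F)$ with $V^\vee(F)\rtimes\GL_V(F)$ (resp.\ $\Psi^{\mathrm{s}}_{\mathrm{Sym}^2}(F)$ with the Siegel parabolic $P(F)$), realize $\mathcal{F}$ as $\omega^{\mathrm{J}}_\psi(w)$, and then obtain the full Heisenberg group by Fourier-conjugating modulations into translations and taking commutators for the center (resp.\ identify the modulation group with the image of $\langle w,P(F)\rangle\leq\Mp_W(F)$). The paper streamlines your last paragraph of metaplectic bookkeeping by working from the outset in the $8$-fold cover of \cite{GKTtheta}, where the splittings over $P(F)$ and over $\langle w\rangle$ are built into the model (see \eqref{Weil}), so the Weil-index factors you worry about never appear; and for the surjection $\langle w,P(F)\rangle\twoheadrightarrow\Sp_W(F)$ it invokes the Borel--Tits decomposition (Proposition~\ref{prop:gen1}) rather than your root-subgroup sketch.
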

\noindent This theorem shows that $\Psi_{\omega}\{F\}$ need not be the $F$-points of an algebraic group, although in both cases it is closely related to algebraic groups.  

In \S \ref{sec:Mod:cones}, we consider the case where
$X$ is the zero locus of a split non-degenerate quadratic form on a vector space $V \cong \GG_a^{2n}$ equipped with the natural action of the similitude group $\mathrm{GO}_{2n}.$  For technical reasons we extend this to an action of $\GG_m \times \mathrm{GO}_{2n}$ with $\GG_m$ acting by scaling.
In this setting we consider the inclusion $\omega:X \to V.$  
\begin{thm} \label{thm:ortho:case:intro}In the setting above,
$\Psi_{\omega}\{F\}$ is the image of $\mathrm{GO}_{2n+2}(F)$ under the minimal representation. 
\end{thm}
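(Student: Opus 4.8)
The plan is to realize both sides of the asserted equality inside $\mathrm{Aut}(L^2(X^{\sm}(F),\mathcal{L}^{1/2}))$ and to match them on generators. By construction $\Psi_{\omega}\{F\}$ is generated by $\mathcal{F}_{X,\psi}$ together with $(\mathcal{R}_{\omega}\otimes\chi)(\Psi_{\omega}^{\mathrm{s}}\{F\})$, and the latter is the image of $V^{\vee}(F)\rtimes(\GG_m\times\GO_{2n})(F)$. On the other side I would invoke the Schr\"odinger (Kobayashi--{\O}rsted) model of the minimal representation $\Pi$ of the split group $\GO_{2n+2}$, which realizes $\Pi$ precisely on $L^2(X^{\sm}(F),\mathcal{L}^{1/2})$, with $X$ our split quadric cone in $2n$ variables. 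The geometric input linking the two descriptions is that $V^{\vee}\rtimes(\GG_m\times\GO_{2n})$ sits inside $\GO_{2n+2}$ as the maximal parabolic $P$ stabilizing an isotropic line: the unipotent radical $N$ of $P$ is \emph{abelian} (a feature special to even orthogonal similitude groups) and is $\GO_{2n+2}$-equivariantly identified with $V^{\vee}$, while the Levi of $P$ is $\GG_m\times\GO_{2n}$, whose two-dimensional centre accounts for the scaling $\GG_m$ adjoined ``for technical reasons.''

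First I would check that in the Schr\"odinger model the restriction $\Pi|_{P(F)}$ equals $\mathcal{R}_{\omega}\otimes\chi$ for a suitable character $\chi$ of $\Psi_{\omega}^{\mathrm{s}}\{F\}$. In this model $N(F)\cong V^{\vee}(F)$ acts by the characters $x\mapsto\psi(\langle\lambda,x\rangle)$, which is exactly the $V^{\vee}(F)$-part of $\mathcal{R}_{\omega}$ since $\omega$ is the inclusion $X\hookrightarrow V$; the $\GO_{2n}(F)$-action and the scaling $\GG_m(F)$-action on the cone are the geometric ones twisted by the modulus of $P$, which is absorbed into the normalization of $\mathcal{L}^{1/2}$ and pins down $\chi$. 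Hence $(\mathcal{R}_{\omega}\otimes\chi)(\Psi_{\omega}^{\mathrm{s}}\{F\})=\Pi(P(F))$.

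The crux is the second identification: $\mathcal{F}_{X,\psi}=\Pi(w')$, where $w'\in\GO_{2n+2}(F)$ swaps the two isotropic ``ends'' (equivalently realizes the opposition $P\mapsto\bar P$, so that $\mathrm{Ad}(w')$ is the intertwining isomorphism $\iota$ of the general setup, restricted to the common Levi of $P$ and $\bar P$). In the Schr\"odinger model $\Pi(w')$ is the ``Fourier transform on the isotropic cone,'' a unitary operator given by a Hankel-type integral kernel built out of Bessel functions, and \S\ref{sec:Mod:cones} computes $\mathcal{F}_{X,\psi}$ as precisely this operator. Granting both identifications, the theorem is immediate from the Bruhat decomposition: $P(F)$ and $w'$ generate $\GO_{2n+2}(F)$, since together they contain $P(F)$ and the opposite parabolic $\bar P(F)=w'P(F)w'^{-1}$, hence all of $\GSO_{2n+2}(F)$, while $P(F)$ already surjects onto $\GO_{2n+2}(F)/\GSO_{2n+2}(F)$ through its factor $\GO_{2n}$. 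Applying $\Pi$, the image $\Pi(\GO_{2n+2}(F))$ is generated by $\Pi(P(F))=(\mathcal{R}_{\omega}\otimes\chi)(\Psi_{\omega}^{\mathrm{s}}\{F\})$ and $\Pi(w')=\mathcal{F}_{X,\psi}$, which is $\Psi_{\omega}\{F\}$.

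I expect the main obstacle to be exactly this second identification: verifying that $\mathcal{F}_{X,\psi}$, defined intrinsically on the cone, agrees on the nose — scalar normalization included — with the Weyl-element operator $\Pi(w')$. One route is to compute the Bessel--Hankel kernel explicitly and check that the resulting operator satisfies, together with $\Pi|_{P(F)}$, the braid and quadratic relations presenting $\GO_{2n+2}$. A cleaner route is to characterize $\Pi(w')$ abstractly as the essentially unique unitary operator satisfying, with the correct normalization, the relation $\mathcal{F}_{X,\psi}\circ\mathcal{R}_{\omega}(h)=\mathcal{R}_{\omega}(\iota(h))\circ\mathcal{F}_{X,\psi}$ for $h\in(\GG_m\times\GO_{2n})(F)$, and then to verify that $\mathcal{F}_{X,\psi}$ enjoys the same property, reducing the matching to a Schur-type uniqueness statement. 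A subsidiary point, handled along the way, is that enlarging $\GO_{2n}$ by the scaling $\GG_m$ does not push $\Psi_{\omega}\{F\}$ beyond the image of $\GO_{2n+2}(F)$: this $\GG_m$ lies in the centre of the Levi of $P$ and is already accounted for in $\Pi(P(F))$.
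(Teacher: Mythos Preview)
Your approach is essentially the one the paper takes: identify $\Psi_{\omega}^{\mathrm{s}}(F)$ with the $F$-points of the parabolic $\widetilde{Q}_n$ of $\mathrm{GO}_{V_{n+1}}$ so that $\mathcal{R}_{\omega}\otimes\chi$ matches the restriction of the (extended) minimal representation $\widetilde{\tau}$, identify $\mathcal{F}_{X,\psi}$ with $\widetilde{\tau}(w_0)$, and then invoke the fact that $\widetilde{Q}_n(F)$ and $w_0$ generate $\mathrm{GO}_{V_{n+1}}(F)$ (the paper deduces this from the decomposition $G(k)=N_P(k)N_{P^{\mathrm{op}}}(k)P(k)$, Proposition \ref{prop:gen1}).

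The one place where you create unnecessary work for yourself is what you call ``the crux'': matching $\mathcal{F}_{X,\psi}$ with $\Pi(w')$. In the paper this is a non-issue because the Fourier transform on the cone is \emph{defined} to be $\mathcal{F}_C:=\widetilde{\tau}(w_0)$ (see \eqref{QuadricFouriertrans}). There is no independent intrinsic definition to reconcile with the Weyl-element operator, so neither the Bessel--Hankel computation nor the Schur-type uniqueness argument is needed. The paper does remark that this definition is reasonable, citing the integral-kernel formulae in \cite{GurK:Cone} and \cite{Getz:Hsu:Leslie}, but nothing in the proof hinges on that. Once you adopt the definition, the proof reduces to (i) writing down the isomorphism $\Psi_{\omega}^{\mathrm{s}}(F)\to \widetilde{Q}_n(F)/H_X(F)$ and checking it intertwines $\mathcal{R}_{\omega,\chi}$ with $\widetilde{\tau}$ via \eqref{eqn:GK-klingen-action} and \eqref{2:pair}, and (ii) the generation statement.
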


Theorems \ref{thm:intro:HW} and \ref{thm:ortho:case:intro} indicate that the representations
$$
\Psi_{\omega}\{F\} \lto \mathrm{U}(L^2(X^{\mathrm{sm}}(F),\mathcal{L}^{1/2}))
$$
provide an interesting generalization of minimal representations.  One can think of these representations as an enrichment of the Fourier transform on $L^2(X^{\mathrm{sm}}(F),\mathcal{L}^{1/2})$ in the same manner as the Weil representation is an enrichment of the Fourier transform on a vector space.

\subsection{Schwartz spaces} In all of the examples above there is a Schwartz space 
$$
\mathcal{S}(X(F),\mathcal{L}^{1/2})<L^2(X^{\mathrm{sm}}(F),\mathcal{L}^{1/2})
$$
that is preserved by $\Psi_{\omega}\{F\}.$  Based on conjectures of Braverman-Kazhdan \cite{BK-lifting}, Ng\^o \cite{NgoSums}, and Sakellaridis
\cite{SakellaridisSph} we expect this is true in some degree of generality. 
  
Using the examples discussed previously, we highlight properties of Schwartz spaces that to our knowledge, have not been discussed in the literature. 
In most cases we do not have conceptual explanations for these properties.  In fact, we do not even know in what generality they will hold.  

Therefore, to explain our work, we will proceed as follows.  We will label several statements as ``Ansatz.''  What we mean by this is statements that should be true in some unknown level of generality.  We will then isolate two broad families of examples and make the formal conjecture that the ansatz are true in these two families.  Briefly, these are certain reductive monoids and certain horospherical varieties.  Finally, we will prove the conjectures in the simplest working examples in each family.
Organizing the paper in this manner highlights the structural phenomenon that we believe will provide an important guide to future research.  

\subsection{Boundary terms}

Assume that $H$ acts with dense open orbit $X^{\circ}$ on $X.$
 The \textbf{boundary} of the Schwartz space is the quotient
$$
\mathcal{S}(X(F),\mathcal{L}^{1/2})/\mathcal{S}(X^\circ(F),\mathcal{L}^{1/2}).
$$
We can regard $\mathcal{S}(X^\circ(F),\mathcal{L}^{1/2})$ as understood, so the boundary isolates exactly the exotic structure of the full Schwartz space.  

Motivated by the orbit method, we introduce an action of $\Psi_{\omega}^{\mathrm{s}}$ on the cotangent bundle $T^*X^{\mathrm{sm}}$ of $X^{\mathrm{sm}}$ in \S \ref{sec:mod:alg}.   In the setting of Theorem \ref{thm:intro:HW} and Theorem \ref{thm:ortho:case:intro}
we exhibit parallel $\Psi_{\omega}^{\mathrm{s}}\{F\}$-invariant filtrations of  $\mathcal{S}(X(F),\mathcal{L}^{1/2})$ and $\Psi_{\omega}^{\mathrm{s}}$-invariant filtrations of the affine closure  $\overline{T^*X^{\mathrm{sm}}}$ of $T^*X^{\mathrm{sm}}.$  We refer the reader to \S \ref{sec:Boundary} for precise statements. 

\begin{rem}
The paper \cite{Hsu:Asymp} gives a systematic description of the boundary of the Schwartz space for certain horospherical varieties (the boundary is referred to as the asymptotics in loc.~cit.).  This work will likely be crucial for a conceptual understanding of the phenomena we observe qualitatively in \S \ref{sec:Boundary}.
\end{rem}
 
\subsection{Algebraicity}
In the settings of both Theorem \ref{thm:intro:HW} and Theorem \ref{thm:ortho:case:intro} the modulation group is closely related to an algebraic group.  Moreover, as just explained, there is an empirical link between  action of the spectral small modulation group on $\mathcal{S}(X(F),\mathcal{L}^{1/2})$ and the algebraic small modulation group on $\overline{T^*X^{\mathrm{sm}}}.$

Thus, though the modulation group is not necessarily the $F$-points of an algebraic group, we expect that it is closely related to an ind-algebraic group.  To investigate this, we restrict to the Archimedean setting.  Without loss of generality we may assume $F=\RR.$  In \S \ref{sec:DQ} we isolate assumptions under which the action of $\Psi_{\omega}(\RR)$ on $L^2(X^{\mathrm{sm}}(\RR),\mathcal{L}^{1/2})$ induces an action of $\Psi_{\omega}\{\RR\}$ on $\mathcal{D}_X,$ the algebra of algebraic differential operators on $X.$  

In Ansatz \ref{Ansatz} we give a precise mathematical formulation of the following slogan:  The action of $\Psi_{\omega}\{\RR\}$ admits a semiclassical limit, which is an action of an ind-algebraic group 
$\Psi_{\omega}$ on $\overline{T^*X^{\mathrm{sm}}}.$  
We prove this conjecture in the settings of Theorem \ref{thm:intro:HW} and \ref{thm:ortho:case:intro} in \S \ref{ssec:exam}.   

\begin{rem} \label{rem:BZSV}
Ben-Zvi, Sakellaridis, and Venkatesh have suggested that Fourier transforms on $L^2(X^{\mathrm{sm}}(F),\mathcal{L}^{1/2})$ should correspond to choices of Lagrangian subvarieties of $\overline{T^*X^{\mathrm{sm}}}.$ In cases that can be reduced to the vector spaces setting this is discussed in \cite[\S 10.9]{BZSV}.  One consequence of Ansatz \ref{Ansatz} is that, roughly, Fourier transforms pass to automorphisms of the cotangent bundle.  
\end{rem}

\subsection{Global analogues of the metaplectic representation} \label{ssec:intro:PS}

One expects that in many cases there exists Poisson summation formulae for the Schwartz spaces considered above \cite{BK-lifting,NgoSums,Ngo:Hankel,Sakellaridis:Sph:func:sph}.  In the literature these conjectural Poisson summation formulae are usually stated for a limited class of test functions as in Ansatz \ref{ans:PSC}.  To treat general functions one needs to introduce ``boundary terms'' that remain mysterious in general.  We make this more precise in \S \ref{sec:Boundary} and \S \ref{sec:PSC}.  We came to the definition of modulation groups in our study of boundary terms.  We believe that modulation groups will be helpful in illuminating the structure of these terms.

In \S \ref{sec:AR}, we define the adelic modulation group $\Psi_{\omega}\{\A_F\}.$
We moreover show that the Poisson summation formula in a strong form (i.e.~including boundary terms) 
allows one to construct functions 
$$
\Theta_f:\Psi_{\omega}\{F\} \backslash \Psi_{\omega}\{\A_F\} \lto \CC
$$
such that the space
\begin{align} \label{theta:intro}
\left\{\Theta_f: f \in \mathcal{S}\left(X(\A_F),\mathcal{L}^{1/2}\right)\right\}
\end{align}
admits a nontrivial action of $\Psi_{\omega}\{\A_F\}.$
The resulting representation is a generalization of the metaplectic representation.

We mentioned above that in the local setting modulation groups provide an enrichment of the Fourier transform.  The theory discussed above implies that adelic modulation groups provide an enrichment of Poisson summation formulae.  This generalizes the manner in which the adelic metaplectic representation provides an enrichment of the Poisson summation formula for a vector space.

\section*{Acknowledgments}
D.~Kazhdan,  Y.~Sakellaridis and J.~Wang independently suggested to Getz that the boundary terms in the summation formula for a quadric ought to be related to the action of the similitude group on the affine closure of the cotangent space of the cone.
Examining this example convinced us that introducing the small and large modulation groups was natural.  Getz thanks H.~Hahn for her constant encouragement and help with editing.
Yao thanks S-Y. Lee for help with the proof of Lemma \ref{modgp:alg=s:lem1}. This paper was written under the auspices of the Duke Research Scholars program organized by Getz and funded by NSF RTG grant DMS-2231514.

\section{Function spaces}
\quash{
Let $X$ be a scheme smooth and of finite type over the local field $F$ with $X(F) \neq \emptyset.$  If $F$ is non-Archimedean we set $\mathcal{S}(X(F)):=C_c^\infty(X(F)).$  If $F$ is Archimedean we view
$X(F)=\mathrm{Res}_{F/\mathbb{R}}X(\mathbb{R})$ as a real algebraic variety and let $\mathcal{S}(X(F))$  be the Schwartz space defined in  \cite{AG:Nash,Elazar:Shaviv}; this is a nuclear Fr\'echet space. 
Thus in either the non-Archimedean or Archimedean case
\begin{align*}
    C_c^\infty(X(F))\leq\mathcal{S}(X(F)) \leq C^\infty(X(F)).
\end{align*}}

We recall that when $F$ is Archimedean the space of \textbf{tempered functions} on $F^n$ is the space $\mathcal{T}(F^n)$ of all $f \in C^\infty(F^n)$ that are of polynomial growth in the following sense:
for all $\alpha,\alpha'\in \ZZ_{\geq 0}^n$ there exists $p_{\alpha,\alpha'} \in F[x_1,\dots,x_n]$ such that 
$\left|\frac{\partial^{|\alpha|}}{\partial^{\alpha} x}\frac{\partial^{|\alpha'|}}{\partial^{\alpha'} \overline{x}}f(x)\right| \leq \left| p_{\alpha,\alpha'}(x)\right|$ for all $x \in F^n.$  Here we omit $\alpha'$ and the corresponding terms if $F$ is real.
If $F$ is non-Archimedean,  $\mathcal{T}(F^n):=C^\infty(F^n).$  

Let $X$ be an affine scheme of finite type over $F$ with $X^{\mathrm{sm}}(F) \neq \emptyset.$  
\begin{defn} \label{defn:local}
A subspace $\mathcal{S} \leq C^\infty(X^{\mathrm{sm}}(F))$ is \textbf{local} with respect to $X$ if for any  closed immersion $X \to \GG_a^n,$ $\mathcal{S}$ is closed under multiplication by restriction of functions in $\mathcal{T}(F^n).$
\end{defn}
\quash{
\begin{defn} \label{defn:rap}
Let $X' \subseteq X^{\mathrm{sm}}$ be an open subscheme such that $X'(F) \neq \emptyset.$  An $f \in C^\infty(X'(F))$ is \textbf{rapidly decreasing with respect to $X$} if there is a relatively compact neighborhood $\Omega$ of $X(F)-X'(F)$ and a function $\widetilde{f} \in \mathcal{S}_{ES}(X(F))$ such that 
$$
|f(x)| \leq \widetilde{f}(x)
$$
for all $x \in X(F)-\Omega.$
\end{defn}

This definition is constructed to capture the notion of a function with possible singularities along $X-X'$ that is rapidly decreasing away from $X-X'.$  When $F$ is non-Archimedean a function $f \in C^\infty(X'(F))$ is rapidly decreasing with respect to $X$ if and only if its support is contained in a compact subset of $X(F).$
}

\section{Modulation groups}\label{sec:mod:groups}

\subsection{Modulations} \label{ssec:mods}
Let $F$ be 
 a local field of characteristic zero and let $L^2(F^n)$ be defined with respect to a choice of additive Haar measure.
Let $\langle\,,\,\rangle:F^n \times F^n \to F$ be a perfect pairing and let $\psi:F \to \CC^\times$ be a nontrivial additive character.  This gives rise to a Fourier transform $\mathcal{F}_{\psi}:L^2(F^n) \to L^2(F^n).$
The Fourier transform $\mathcal{F}_{\psi}$  intertwines translation $f \mapsto f(\cdot+v)$ with a modulation, namely multiplication by $\overline{\psi}(\langle v,\cdot \rangle).$
If we replace $F^n$ by a nonlinear space $X$ then translations do not exist. However, we can define an analogue of modulation in great generality. 

To explain this, suppose we are given the following data:
\begin{enumerate}[label=(A\text{\arabic*}), ref=A\text{\arabic*}]
     \item \label{action:H} an affine algebraic group $H$ over $F,$
    \item \label{action:HX} an action
    $X \times H \to X$
    of $H$ on an affine scheme $X$ of finite type over $F,$
    \item \label{action:rep} a (right) linear representation
    $
    V \times H \to V$
    with $V \cong \GG_a^n$ for some $n,$ and
    \item \label{action:equi} an $H$-equivariant morphism $\omega:X\to V.$
\end{enumerate}

In the following we denote by $R$ an $F$-algebra, used to define the points of schemes.
Let $V^\vee$ be the dual of $V,$ so that $V^\vee(R) = \Hom_{\textbf{Mod}_R}(V(R),R)$. We form the semi-direct product $V^\vee \rtimes H.$  The group law is  given at the level of points by
\begin{align} \label{strange:semi} \begin{split}
(\lambda, h)(\lambda',h')=(\lambda +\lambda' \circ h,hh').
\end{split}
\end{align}
Here $(\lambda' \circ h)(v):=\lambda'(vh)$ for $(\lambda',h,v) \in V^\vee(R) \times H(F) \times V(R).$

\quash{
\textcolor{red}{HaoYun:}
\begin{align*}
    (\lambda,h)\left((\lambda',h')(\lambda'',h'')\right) &= (\lambda,h)(\lambda'+\lambda''\circ h',h'h'')\\
    &=(\lambda+(\lambda'+\lambda''\circ h')\circ h,hh'h'')\\
    &=(\lambda+\lambda'\circ h+\lambda''\circ h'\circ h,hh'h'')
\end{align*}
\begin{align*}
    \left((\lambda,h)(\lambda',h')\right)(\lambda'',h'')&=(\lambda+\lambda'\circ h,hh')(\lambda'',h'')\\
    &=(\lambda+\lambda'\circ h+\lambda''\circ hh',hh'h'')
\end{align*}}

To proceed we will use the bundle of $1/2$ densities $\mathcal{L}^{1/2}$ on $X^{\mathrm{sm}}(F)$ and the corresponding canonical Hilbert space $L^2(X^{\mathrm{sm}}(F),\mathcal{L}^{1/2}).$ Let
\begin{align*}
\mathcal{R}:H(F) \times L^2(X^{\mathrm{sm}}(F),\mathcal{L}^{1/2}) &\lto L^2(X^{\mathrm{sm}}(F),\mathcal{L}^{1/2})\\
    (h,f) &\longmapsto (x\mapsto f(xh))
\end{align*}
denote the induced unitary representation of $H(F)$. A convenient reference is \cite[\S 3.1]{WWLi:Zeta}.

\quash{The advantage of half-densities is that there is no need to choose a measure on $X^{\mathrm{sm}}(F),$ and the action of $G(F)$ on $L^2(X^{\mathrm{sm}}(F),\mathcal{L}^{1/2})$ is unitary.}

\begin{rem} \label{rem:half}
For readers unfamiliar with this language, we point out that in many cases one can avoid considering half-densities as follows.  Assume that $X^{\mathrm{sm}}$ admits an open dense $H$-orbit $O$ such that $O(F)$ is nonempty. Then $O(F)$ is open  in $X^{\mathrm{sm}}(F)$ \cite[Proposition 3.1]{Conrad_adelic_points}, and its complement is cut off by polynomials so that it has (Borel) measure zero.  Assume moreover that $O(F)$ admits an $H(F)$-eigenmeasure $dx$; thus $d(xh)=\chi(h)dx$ for some quasi-character $\chi:H(F) \to \RR_{>0}$. One has an isometry 
\begin{align*}
\phi_{dx}:  L^2(O(F),dx) &\,\,\tilde{\lto}\,\,  L^2(X^{\mathrm{sm}}(F),\mathcal{L}^{1/2})\\
  f &\longmapsto fdx^{1/2}
\end{align*}
and $\phi_{dx}(\mathcal{R}(h)f)=\chi^{-{1/2}}(h)\mathcal{R}(h)\phi_{dx}(f).$
\end{rem}

The representation $\mathcal{R}$ extends to a unitary representation
\begin{align} \label{Romega} \begin{split}
\mathcal{R}_{\omega}: V^\vee(F) \rtimes H(F) \times L^2(X^{\mathrm{sm}}(F),\mathcal{L}^{1/2}) &\lto L^2(X^{\mathrm{sm}}(F),\mathcal{L}^{1/2})\\
    ((\lambda , h),f) &\longmapsto \psi(\lambda \circ \omega(x))f(xh) \end{split}
\end{align}
of the semi-direct product $V^\vee(F)\rtimes H(F)$. By abuse of notation we let
\begin{align*}
    \mathcal{R}_\omega: V^\vee(F) \rtimes H(F) \lto \mathrm{U}(L^2(X^{\mathrm{sm}}(F),\mathcal{L}^{1/2}))
\end{align*}
denote the corresponding homomorphism. 
The \textbf{(spectral) small modulation group} $\Psi^{\mathrm{s}}_\omega\{F\}$ is defined to be the image of $V^\vee(F)\rtimes H(F)$ in $\mathrm{U}(L^2(X^{\mathrm{sm}}(F),\mathcal{L}^{1/2}))$. In particular we have a surjection
\begin{align}\label{modgp:alg=s:themap}
    \mathcal{R}_\omega: V^\vee(F) \rtimes H(F) \lto \Psi^{\mathrm{s}}_\omega\{F\}.
\end{align}

Under fairly weak assumptions the action of $\Psi^{\mathrm{s}}_{\omega}\{F\}$ on $L^2(X^{\mathrm{sm}}(F),\mathcal{L}^{1/2})$ is irreducible.  To prove this we first prove the following:

\begin{lem} \label{lem:loc:closed}
    Let $Z$ be a separated scheme of finite type over  $F$ equipped with an action of an algebraic group $H.$  Then every $H(F)$-orbit in $Z(F)$ is locally closed.
\end{lem}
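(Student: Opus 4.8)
The plan is to reduce to a standard fact from the theory of algebraic group actions: that over a field, every orbit of an algebraic group acting on a scheme of finite type is locally closed in the Zariski topology, and then to transfer this to the analytic topology on $F$-points.

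First I would recall the scheme-theoretic statement. Since $H$ is a smooth algebraic group acting on the separated finite-type $F$-scheme $Z$, for a point $z \in Z(F)$ the orbit map $H \to Z$, $h \mapsto zh$, factors through $Z$ and its scheme-theoretic image $\overline{Hz}$ (the reduced closure of the orbit). A classical result — see e.g. the standard references on algebraic groups (Borel, Springer, or the Stacks project on group actions) — asserts that the orbit $Hz$, with its reduced induced structure, is open in its closure $\overline{Hz}$; this uses Chevalley's theorem on constructible images together with the homogeneity of the orbit under the $H$-action, which allows one to spread out an open dense subset of the constructible set $Hz$ to all of it. Hence $Hz$ is a locally closed subscheme of $Z$, and in particular $(Hz)(F) \subseteq Z(F)$ is the set of $F$-points of a locally closed subscheme.

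Next I would identify $(Hz)(F)$ with the $H(F)$-orbit of $z$. One inclusion is clear: $z \cdot H(F) \subseteq (Hz)(F)$. The reverse inclusion is exactly the statement that the orbit scheme $Hz$ has an $F$-point in every ``geometric orbit component,'' which can fail in general — so here is where one must be careful. The clean way around this is to observe that we do not actually need $(Hz)(F) = z \cdot H(F)$ as sets; we only need that $z \cdot H(F)$ is locally closed in $Z(F)$ for the analytic (i.e. $F$-analytic, or in the non-Archimedean case the totally disconnected) topology. For this, let $U \subseteq \overline{Hz}$ be the open subscheme equal to the orbit $Hz$. Then $U(F)$ is open in $\overline{Hz}(F)$, which in turn is closed in $Z(F)$ for the analytic topology (closed immersions of $F$-schemes induce closed embeddings on $F$-points); so $U(F)$ is locally closed in $Z(F)$. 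It remains to see that the $H(F)$-orbit $z \cdot H(F)$ is open in $U(F)$. This follows from \cite[Proposition 3.1]{Conrad_adelic_points} (already cited in Remark \ref{rem:half} above): since $H$ is smooth and acts transitively on the smooth scheme $U$ with $z \in U(F)$, the orbit map $H \to U$ is a smooth surjection, hence the induced map $H(F) \to U(F)$ is open, so its image $z \cdot H(F)$ is open in $U(F)$, and therefore locally closed in $Z(F)$.

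The main obstacle I anticipate is precisely the gap between the scheme-theoretic orbit and the $H(F)$-orbit: $U(F)$ may be strictly larger than $z \cdot H(F)$ because of Galois-cohomological obstructions (the fibers of $U(F) \to (U/H)(F)$ being a union of several $H(F)$-orbits). The resolution above sidesteps this by using openness of the orbit map on $F$-points rather than transitivity of the point-set action; the one technical input needed is the smoothness of the orbit map $H \to U = Hz$, which holds because $H$ is smooth and $U$ is a homogeneous space — this is standard but worth stating carefully, and is the reason the hypothesis that $H$ is smooth (rather than merely affine algebraic) is invoked in the statement.
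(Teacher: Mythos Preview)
Your argument is correct and follows essentially the same route as the paper: reduce to the scheme-theoretic orbit $U=Hz$, which is locally closed in $Z$, and then argue inside $U(F)$. The only difference is in the final step: the paper identifies $U$ with $H_z\backslash H$ and invokes \cite[Lemma 17.4.3]{Getz:Hahn} to conclude that every $H(F)$-orbit in $U(F)$ is \emph{closed}, whereas you show the single orbit $z\cdot H(F)$ is \emph{open} in $U(F)$ via the openness of smooth maps on $F$-points (Conrad). Both conclusions yield ``locally closed in $Z(F)$'' and are really two faces of the same fact about $F$-points of homogeneous spaces; your version has the virtue of making the role of smoothness (of the orbit map, hence of the stabilizer in characteristic zero) explicit.
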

\begin{proof}
Every $H$-orbit in $Z$ is locally closed in the Zariski topology by \cite[Proposition 7.20]{Milne:AGbook}.  Thus we may assume that $Z$ is a single $H$-orbit, and that $Z(F)$ is nonempty.  Choosing $z \in Z(F),$ we have an isomorphism of $H$-schemes $H_z \backslash H \cong Z$ by loc.~cit.  Thus by \cite[Lemma 17.4.3]{Getz:Hahn}, every $H(F)$-orbit in $Z(F)$ is closed in $Z(F).$ 
\end{proof}

Choose $x_0 \in X^{\mathrm{sm}}(F).$  Then  $\omega(x_0) \in V(F)=(V^\vee)^\vee(F).$

\begin{thm}\label{IrreducibilityRV}  Assume the orbit $O(x_0)$ of $x_0$ under $H$ is open and dense in $X$ and that $H(F)$ acts transitively on $O(x_0)(F).$  Assume moreover that   $H_{x_0}=H_{\omega(x_0)}.$  Then the representation of $\Psi_{\omega}^s\{F\}$ on $L^2(X^{\mathrm{sm}}(F),\mathcal{L}^{1/2})$ is irreducible. 
\end{thm}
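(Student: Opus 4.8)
The plan is to apply Schur's lemma. Since $\Psi^{\mathrm{s}}_\omega\{F\}$ is by definition the image of $V^\vee(F)\rtimes H(F)$ under $\mathcal{R}_\omega$, it suffices to show that any bounded operator $T$ on $L^2(X^{\mathrm{sm}}(F),\mathcal{L}^{1/2})$ commuting with every $\mathcal{R}_\omega((\lambda,h))$ is a scalar. First I would reduce to the open orbit: because $X^{\mathrm{sm}}$ is open and $H$-stable and $x_0\in X^{\mathrm{sm}}(F)$, we have $O(x_0)\subseteq X^{\mathrm{sm}}$, and since $O(x_0)$ is open and dense in $X$ its complement in $X^{\mathrm{sm}}$ is a proper closed subscheme. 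Hence $X^{\mathrm{sm}}(F)\setminus O(x_0)(F)$ is cut out by polynomials and has measure zero, so $L^2(X^{\mathrm{sm}}(F),\mathcal{L}^{1/2})=L^2(O(x_0)(F),\mathcal{L}^{1/2})$, with $H(F)$ acting via $\mathcal{R}$ as the (unitary) quasi-regular representation on the homogeneous space $O(x_0)(F)\cong H_{x_0}(F)\backslash H(F)$, and each $\lambda\in V^\vee(F)$ acting by the multiplication operator $M_\lambda$ given by $\psi(\lambda\circ\omega(\cdot))$.

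The hypothesis $H_{x_0}=H_{\omega(x_0)}$ enters through a single observation: $\omega$ is injective on $O(x_0)(F)$. Indeed, using transitivity of $H(F)$ on $O(x_0)(F)$, two points are of the form $x_0h$ and $x_0h'$; if $\omega(x_0h)=\omega(x_0h')$ then $h(h')^{-1}\in H_{\omega(x_0)}(F)=H_{x_0}(F)$, so $x_0h=x_0h'$. Consequently the family $\{\psi(\lambda\circ\omega(\cdot)):\lambda\in V^\vee(F)\}$ separates the points of the second countable, locally compact space $O(x_0)(F)$; moreover a countable subfamily already separates points, since for $y\neq y'$ the evaluation map $\lambda\mapsto\lambda(\omega(y)-\omega(y'))$ is a surjective linear functional $V^\vee(F)\to F$ while $F\setminus\ker\psi$ is open and nonempty.

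Now I would run two complementary arguments on $T$. Since $T$ commutes with every $M_\lambda$, it lies in the commutant of the von Neumann algebra they generate; but the von Neumann algebra generated by a family of multiplication operators on $L^2(O(x_0)(F),\mathcal{L}^{1/2})$ is $L^\infty$ of the $\mu$-completed $\sigma$-algebra generated by the corresponding functions, and since a countable subfamily separates points of the standard Borel space $O(x_0)(F)$, that $\sigma$-algebra is all of the Borel $\sigma$-algebra. Hence $T$ lies in the commutant of $L^\infty(O(x_0)(F))$ acting by multiplication, which is a maximal abelian subalgebra, so $T=M_g$ for some $g\in L^\infty(O(x_0)(F))$. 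Since $T$ also commutes with $\mathcal{R}(h)$ for all $h\in H(F)$, and $\mathcal{R}(h)M_g\mathcal{R}(h)^{-1}=M_{g(\cdot\, h)}$, the function $g$ satisfies $g(xh)=g(x)$ for a.e.\ $x$, for every $h$; applying Fubini to $\{(x,h):g(xh)\neq g(x)\}$ in $O(x_0)(F)\times H(F)$ and using that $h\mapsto x_1h$ pushes Haar measure onto the measure class of $O(x_0)(F)$ for a suitable $x_1$, we conclude $g$ is constant almost everywhere. Therefore $T$ is a scalar, and irreducibility follows.

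The main obstacle is the first of these two moves — showing the modulation operators $M_\lambda$ already generate all of $L^\infty(O(x_0)(F))$ — since this is precisely where $H_{x_0}=H_{\omega(x_0)}$ is used (it gives the separation of points) and it rests on the measure-theoretic fact that a countable point-separating family of bounded Borel functions on a standard Borel space generates the Borel $\sigma$-algebra up to completion. An alternative organization, which is where Lemma~\ref{lem:loc:closed} becomes relevant, is to argue via Mackey's little-group method for $V^\vee(F)\rtimes H(F)$: by the injectivity of $\omega$ the spectral measure of the normal abelian subgroup $V^\vee(F)$ on $L^2(O(x_0)(F),\mathcal{L}^{1/2})$ is carried with multiplicity one by the single $H(F)$-orbit of $\omega(x_0)$ in the Pontryagin dual $\widehat{V^\vee(F)}\cong V(F)$, which is locally closed by Lemma~\ref{lem:loc:closed}; the Mackey machine then exhibits $\mathcal{R}_\omega$ as induced from a character of $V^\vee(F)\rtimes H_{x_0}(F)$, hence irreducible. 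In that formulation the work shifts to verifying the smoothness hypotheses of the Mackey machine rather than the von Neumann generation step.
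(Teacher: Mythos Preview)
Your proof is correct and takes a genuinely different route from the paper. The paper invokes the Mackey machine as a black box: after checking that $V^\vee(F)$ is regularly embedded (using Lemma~\ref{lem:loc:closed} to see that $H(F)$-orbits in the dual $V(F)$ are locally closed), it notes that the stabilizer of the character $\psi_{x_0}(\lambda)=\psi(\lambda(\omega(x_0)))$ is $H_{\omega(x_0)}(F)$ and cites \cite[Theorem~4.29]{Kaniuth:Taylor} to conclude that the induced representation $\ind_{V^\vee(F)\rtimes H_{\omega(x_0)}(F)}^{V^\vee(F)\rtimes H(F)}(\psi_{x_0})$ is irreducible; the hypothesis $H_{x_0}=H_{\omega(x_0)}$ is used only at the very end to identify this induction with $L^2(X^{\mathrm{sm}}(F),\mathcal{L}^{1/2})$. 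Your main argument is instead a direct Schur's-lemma computation: you use $H_{x_0}=H_{\omega(x_0)}$ at the outset to get injectivity of $\omega$ on the orbit, then argue that the modulation operators generate the maximal abelian algebra $L^\infty(O(x_0)(F))$, forcing any intertwiner to be a multiplication operator, and finally use transitivity of $H(F)$ to force the multiplier to be constant. This is more self-contained and makes no appeal to the Mackey classification, at the cost of the measure-theoretic input about point-separating Borel functions on a standard Borel space (your countable-subfamily step is easiest to justify by taking a countable dense subset of $V^\vee(F)$ and using continuity of $\lambda\mapsto\psi(\lambda(\omega(y)-\omega(y')))$). The alternative organization you sketch in your final paragraph is precisely the paper's proof.
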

\begin{proof} 
We employ the Mackey machine.  The continuous linear dual of $V^\vee(F)$ is $V(F).$  We point out that $V(F)$ is second countable and $V^\vee(F) \rtimes H(F)/V^\vee(F)= H(F)$ is $\sigma$-compact.  Moreover $H(F)$-orbits in $V(F)$ are locally closed by Lemma \ref{lem:loc:closed}.  Thus $V^\vee(F)$ is Mackey compatible in $V^\vee(F) \rtimes H(F)$ in the terminology of \cite[Remark 4.26(2)]{Kaniuth:Taylor}.  

We have a character
\begin{align*}
\psi_{x_0}:V^\vee(F) &\lto F\\
\lambda &\longmapsto 
\psi(\lambda(\omega(x_0))).
\end{align*}
The stabilizer in $H(F)$ of this character is $H_{\omega(x_0)}(F),$ and the character extends to  $V^\vee(F) \rtimes H_{\omega(x_0)}(F).$ 
By \cite[Theorem 4.29]{Kaniuth:Taylor} we deduce that the unnormalized induction 
\begin{align} \label{induced}
\mathrm{ind}_{V^\vee(F) \rtimes H_{\omega(x_0)}(F) }^{\Psi_{\omega}^s(F)}\left((\lambda, h) \mapsto (\delta_{H}^{-1/2}\delta_{H_{\omega(x_0)}}^{1/2})(h)\psi_{x_0}(\lambda)\right)
\end{align}
is irreducible.  Since we assumed $H_{\omega(x_0)} = H_{x_0}$ the representation \eqref{induced} is 
isomorphic to $L^2(X^{\mathrm{sm}}(F),\mathcal{L}^{1/2}).$  We point out that the modular quasi-characters are introduced due to the fact that $L^2(X^{\mathrm{sm}}(F),\mathcal{L}^{1/2})$ is a space of half-densities, see \cite[(3.2)]{WWLi:Zeta}.
\end{proof}

For use in \S \ref{sec:DQ} we prove the following lemma:

\begin{lem} \label{lem:dense} Suppose that $x_0 \in X(F)$ and the $H$-orbit $O(x_0)$ of $x_0$ is Zariski dense in $X.$  If $H$ is connected then $O(x_0)(F)$ is Zariski dense in $X.$
\end{lem}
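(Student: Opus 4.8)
The plan is to reduce the statement to the density of $H(F)$ in $H$ for the Zariski topology, and then push this density forward along the orbit morphism. The one genuine input is that a connected linear algebraic group over a field of characteristic zero is unirational over that field (Borel, \emph{Linear Algebraic Groups}, Thm.~18.2); since a local field of characteristic zero is infinite, a unirational $F$-variety has Zariski dense $F$-points, so $H(F)$ is Zariski dense in $H$. This is the only nontrivial ingredient; everything else is a formal manipulation of the Zariski topology.

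With that in hand, I would argue as follows. Let $\pi\colon H \to X$, $h \mapsto x_0 h$, be the orbit morphism; by definition $\pi(H) = O(x_0)$, and $\pi$ carries $H(F)$ into $x_0 H(F) \subseteq O(x_0)(F) \subseteq X(F)$. A morphism of $F$-schemes is continuous for the Zariski topology, hence satisfies $\pi(\overline{A}) \subseteq \overline{\pi(A)}$ for every subset $A \subseteq H$, where the bars denote Zariski closure. Applying this with $A = H(F)$ and using that $H(F)$ is dense in $H$ gives
$$
O(x_0) \;=\; \pi(H) \;=\; \pi\bigl(\overline{H(F)}\bigr) \;\subseteq\; \overline{\pi(H(F))} \;\subseteq\; \overline{O(x_0)(F)}.
$$
Taking closures and invoking the hypothesis that $O(x_0)$ is dense in $X$ yields $X = \overline{O(x_0)} \subseteq \overline{O(x_0)(F)}$, so $O(x_0)(F)$ is Zariski dense in $X$, as claimed.

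There is no real obstacle here beyond correctly citing the unirationality of connected linear algebraic groups in characteristic zero; the connectedness hypothesis on $H$ enters precisely (and only) at that point — for disconnected $H$ the identity component $H^\circ(F)$ is still dense in $H^\circ$, but $H(F)$ may miss entire components, which is exactly the phenomenon the hypothesis rules out. If one preferred to avoid quoting unirationality, one could instead use the structure theory of $H$ — it is generated by a maximal torus and root/unipotent subgroups, each of which manifestly has Zariski dense $F$-points over an infinite field — but the unirationality statement packages this cleanly and is the natural reference.
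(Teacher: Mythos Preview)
Your proof is correct and follows essentially the same approach as the paper: both arguments use the orbit map $H \to X$, invoke Borel's result that $H(F)$ is Zariski dense in $H$ for connected $H$ over a characteristic-zero field, and push this density forward by continuity. The only cosmetic difference is that the paper phrases the continuity step via preimages ($p^{-1}(\overline{O(x_0)(F)})$ is closed and contains $H(F)$, hence equals $|H|$) rather than your forward-image inclusion $\pi(\overline{A}) \subseteq \overline{\pi(A)}$, and cites the density statement directly (Borel, Prop.~18.3) rather than deriving it from unirationality.
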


\begin{proof}
For clarity we write $|Y|$ for the underlying topological space of a scheme $Y.$
It suffices to show that $\overline{O(x_0)(F)},$ the closure of $O(x_0)(F)$ in $|O(x_0)|,$ is equal to $|O(x_0)|.$  Let $p:|H| \to |O(x_0)|$ be the action map induced by the choice of basepoint $x_0.$  It is surjective.  
The set $p^{-1}(\overline{O(x_0)(F)}) \subseteq |H|$  is closed and contains $H(F).$  On the other hand since $H$ is connected and $F$ is of characteristic zero, \cite[Chapter V, Corollary 18.3]{Borel:LAG} asserts that  $H(F)$ is dense in $|H|.$  Hence $p^{-1}(\overline{O(x_0)(F)}) =|H|$ and $\overline{O(x_0)(F)}=|O(x_0)|.$    
\end{proof}

\subsection{The algebraic small modulation group} \label{ssec:alg:small}
In this subsection we show that under mild assumptions $\Psi_{\omega}^{\mathrm{s}}\{F\}$ is the $F$-points of an algebraic group.  
Let $H_X$ denote the kernel of the $H$-action on $X^{\mathrm{sm}}.$ By \cite[\S II.1.3.6(c)]{DG}, it is a closed normal subgroup scheme of $H$.
Moreover, let $H_V$ be the kernel of action $H$ on $V.$  Let $Z_H$ be the center of $H$ and let $A_H$ be the maximal split subtorus of $Z_H.$
We impose the following assumptions:
\begin{enumerate}[label=(M\text{\arabic*}), ref=M\text{\arabic*}]
    \item\label{omega:span} The image of $X^{\mathrm{sm}}(F)$ under $\omega$ spans $V(F)$.
    \item\label{omega:scale} 
    In the $A_H$-isotypic decomposition
    $$
    V=\bigoplus_{\chi \in X^*(A_H)}V(\chi)
    $$
    one has $V(\chi)=0$ if $\chi$ is trivial.\quash{
    There exists a cocharacter $c:\mathbb{G}_m\to Z_H$ and an integer $k\in\mathbb{Z}_{>0}$ such that 
    \begin{align*}
        \omega(xc(t)) = t^k\omega(x)
    \end{align*}
    for all $x\in X(R)$ and $t\in R^\times$.}
    \item\label{HX:H1=0}$H^1(F,H_X)=0$.
    \item\label{HX:Fpoints} $H_X(F)=\{h\in H(F): xh=x\text{ for all }x\in X^{\mathrm{sm}}(F)\}$.
\end{enumerate}
Since $V(F)$ spans $V(R)$ as an $R$-module, it follows that \eqref{omega:span} is equivalent to 
\begin{enumerate}[label=(M\text{\arabic*$'$}), ref=M\text{\arabic*$'$}]
    \item \label{omega:span:1} For any $F$-algebra $R$,  $\omega(X^{\mathrm{sm}}(R))$ spans $V(R)$ as an $R$-module.
\end{enumerate}
In particular, under \eqref{omega:span} we have 
\begin{align} \label{HXHV}
H_X \leq H_V.
\end{align}

\begin{lem} \label{lem:M:equiv} Assume that  $\omega:X \to V$ is an immersion.
Then \eqref{omega:span} implies $H_X=H_V$ and \eqref{HX:Fpoints}.  
\end{lem}
\begin{proof} By \eqref{HXHV}, the assumption \eqref{omega:span} implies $H_X \leq H_V.$ Since immersions are injective on points we deduce $H_X=H_V.$   Moreover, 
\begin{align} \label{HVid}
H_V(F) = \{h\in H(F): vh=v\text{ for all }v\in V(F)\}
\end{align}
because $V(F)$ spans $V(R)$ as an $R$-module. 
Applying \eqref{omega:span} to \eqref{HVid} we see 
$$
H_V(F) = \{h\in H(F): xh=x\text{ for all }x\in X^{\mathrm{sm}}(F)\}.
$$
Since $H_X(F)=H_V(F)$ we deduce \eqref{HX:Fpoints}.
\end{proof}

\begin{lem}\label{modgp:alg=s:lem1} The kernel of the homomorphism $\mathcal{R}_\omega$ of \eqref{modgp:alg=s:themap} is 
\begin{align*}
     \left\{(0,h)\in V^\vee(F)\rtimes H(F): xh = x\text{ for all }x\in X^{\mathrm{sm}}(F)\right\},
\end{align*}
which is canonically isomorphic to $H_X(F).$
\end{lem}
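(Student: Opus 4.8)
The plan is to compute the kernel of $\mathcal{R}_\omega$ directly from the formula \eqref{Romega} and then identify it with $H_X(F)$ using the assumptions. First I would observe that $(\lambda, h)$ lies in the kernel if and only if $\psi(\lambda \circ \omega(x))f(xh) = f(x)$ for all $f \in L^2(X^{\mathrm{sm}}(F),\mathcal{L}^{1/2})$ and (almost) all $x$. Specializing to positive functions $f$ supported near a point, the absolute value forces $f(xh) = f(x)$ for all such $f$, hence $xh = x$ for all $x \in X^{\mathrm{sm}}(F)$ (this step just uses that $X^{\mathrm{sm}}(F)$ carries enough bump functions to separate points and that the action of $H(F)$ on $X^{\mathrm{sm}}(F)$ moves points around measurably; concretely, if $xh \neq x$ for some $x$, one picks disjoint neighbourhoods and a bump function supported in one of them to get a contradiction). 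Once $h$ fixes $X^{\mathrm{sm}}(F)$ pointwise, the relation reduces to $\psi(\lambda \circ \omega(x)) = 1$ for all $x$, i.e.\ $\lambda \circ \omega(x) \in \ker \psi$ for all $x \in X^{\mathrm{sm}}(F)$; since $\ker \psi$ contains no nonzero $F$-subspace and $\omega(X^{\mathrm{sm}}(F))$ spans $V(F)$ by \eqref{omega:span}, the linear functional $\lambda \circ \omega$ vanishes identically on a spanning set, so $\lambda = 0$.

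This identifies the kernel as the set displayed in the statement, $\{(0,h) : xh = x \text{ for all } x \in X^{\mathrm{sm}}(F)\}$. To finish, I would invoke assumption \eqref{HX:Fpoints}, which says precisely that this set of $h \in H(F)$ equals $H_X(F)$; the map $h \mapsto (0,h)$ is then a group isomorphism onto the kernel (it is clearly an injective homomorphism, and surjective by the computation). This gives the canonical isomorphism with $H_X(F)$. It is worth noting that $H_X$ was defined as the scheme-theoretic kernel of the action on $X^{\mathrm{sm}}$, so \eqref{HX:Fpoints} is exactly the (nontrivial) input that the $F$-points of that kernel coincide with the naive stabilizer; without it one only gets the set-theoretic description.

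The main obstacle I anticipate is the measure-theoretic bookkeeping in the first step: elements of $L^2(X^{\mathrm{sm}}(F),\mathcal{L}^{1/2})$ are only defined almost everywhere, so "$f(xh) = f(x)$ for all $f$" must be upgraded to a genuine pointwise statement $xh = x$. The clean way is to restrict attention to continuous (indeed compactly supported smooth) half-densities, which are dense, and to use that the action map $x \mapsto xh$ is a homeomorphism of $X^{\mathrm{sm}}(F)$; if $xh \ne x$ at some point then by continuity there is an open set $U$ with $Uh \cap U = \emptyset$, and a nonzero $f \in C_c^\infty$ supported in $U$ has $\mathcal{R}_\omega((\lambda,h))f$ supported in $Uh$, hence cannot equal $f$. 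The rest of the argument is formal linear algebra plus the cited assumption, so no serious difficulty is expected there; one should just be careful that the scaling cocharacter assumption \eqref{omega:scale} and \eqref{omega:span} are the only hypotheses actually needed for the kernel computation (the $H^1$ vanishing \eqref{HX:H1=0} enters only later, in Proposition~\ref{modgp:alg=s}, when one descends to an algebraic group).
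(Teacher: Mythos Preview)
Your first step—showing that $h$ must fix $X^{\mathrm{sm}}(F)$ pointwise via a support/bump-function argument and then invoking \eqref{HX:Fpoints}—is correct and matches the paper. The gap is in the second step, where you conclude $\lambda = 0$ from $\psi(\lambda(\omega(x))) = 1$ for all $x$.

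You argue that ``$\ker\psi$ contains no nonzero $F$-subspace'' forces $\lambda(\omega(x)) = 0$. But the set $\{\lambda(\omega(x)) : x \in X^{\mathrm{sm}}(F)\} \subset F$ is not known to be an $F$-subspace: $X$ is merely an affine $H$-scheme and $\omega$ an $H$-equivariant morphism, with no linear structure on $X$. Knowing that $\omega(X^{\mathrm{sm}}(F))$ spans $V(F)$ does not help directly, since $\ker\psi$ is not closed under $F$-linear combinations (for non-Archimedean $F$ it is an open compact subgroup of $F$, so individual values $\lambda(\omega(x_i))$ can lie in $\ker\psi$ while their $F$-span is all of $F$). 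Thus from $\lambda(\omega(x)) \in \ker\psi$ for each $x$ you cannot conclude that $\lambda$ vanishes on a spanning set.

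This is precisely where \eqref{omega:scale} is needed, and you never actually use it despite flagging it at the end. The paper applies the cocharacter $c$ to obtain $\psi(t^k \lambda(\omega(x))) = 1$ for all $t \in F^\times$, and then proves (nontrivially in the non-Archimedean case, via a continuity argument showing $1 + \varpi^N\mathcal{O}_F \leq (1+\varpi\mathcal{O}_F)^k$ for suitable $N$) that any additive character of $F$ trivial on $(F^\times)^k$ is already trivial. This forces $\lambda(\omega(x)) = 0$ for each individual $x$, and only then does \eqref{omega:span} yield $\lambda = 0$.
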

\begin{proof} Let $(\lambda,h)\in \ker\mathcal{R}_\omega$\quash{be in the kernel}. Suppose $x\neq xh$ for some $x\in X^{\mathrm{sm}}(F)$. Then there exists $f \in L^2(X^{\mathrm{sm}}(F),\mathcal{L}^{1/2})$ such that $x \in \mathrm{supp}(f)$ while $xh \notin \mathrm{supp}(f)$. 
This implies $\mathcal{R}_\omega(\lambda,h)f\neq f$, which is a contradiction. Hence $h \in H_X(F)$ by \eqref{HX:Fpoints}.

Consequently, for any  $(x,(\lambda,h)) \in X^{\mathrm{sm}}(F) \times \ker \mathcal{R}_{\omega}$
\begin{align*}
     \psi(\lambda \circ \omega (x)) = 1.
\end{align*}
Let $V^\vee=\bigoplus_{\chi \in X^*(A_H)} V^\vee(\chi)$ be the $A_H$-isotypic decomposition of $V^\vee.$  Write $\lambda=\sum_{\chi}\lambda_{\chi}$ with respect to this decomposition.
For $a \in A_H(F)$ and $x\in X^{\mathrm{sm}}(F)$, we have
\begin{align} \label{is1}
    1 = \psi(\lambda \circ \omega (xa)) = \psi\left(\sum_{\chi}\lambda_{\chi}(\omega(xa))\right) = 
\psi\left(\sum_{\chi}\chi(a)\lambda_{\chi}(\omega(x))\right).
\end{align}
 We claim that $\lambda_{\chi}(\omega(x))=0$ for all $\chi.$
Assuming the claim,  
 we can apply \eqref{omega:span} to see that $\lambda_{\chi}=0$ for all $\chi$ and complete the proof.  

Assume to the contrary that $\lambda_{\chi_0}(\omega(x)) \neq 0$ for some $\chi_0.$  By
\eqref{omega:scale} this implies that $\chi_0$ is nontrivial.

Choose a cocharacter $\nu:\GG_m \to A_H$ such that $\chi_0 \circ \nu$ is nontrivial.  Then \eqref{is1} implies
\begin{align} \label{is12}
    1  = 
\psi\left(\sum_{\chi}t^{k_{\chi}}\lambda_{\chi}(\omega(x))\right)
\end{align}
for some integers $k_{\chi} \in \ZZ$ with $k_{\chi_0} \neq 0.$  Upon choosing a different $\nu$ and $\chi_0$ if necessary, we can assume that $k_{\chi_0}>0$ and $k_{\chi_0}>k_{\chi}$ if $\chi \neq \chi_0.$

Let $F_0=\RR$ if $F$ is Archimedean, and let $F_0=\QQ_p$ if $F$ is a non-Archimedean local field of characteristic zero and residual characteristic $p.$  
The map $(\cdot)^{k_{\chi_0}}:F^\times \to F^\times$ is open, as is the map $\mathrm{tr}_{F/F_0}:F \to F_0$.  It follows that there is a $u \in F^\times$ such that if $N>0$ one has
$$
\left|\mathrm{tr}_{F/F_0}\left(\sum_{\chi}(ut)^{k_{\chi}}\lambda_{\chi}(\omega(x))\right)\right|>N
$$
for $t \in F_0$ with $|t| \gg_N 1.$  This contradicts \eqref{is12}.  
\end{proof}

\quash{The claim is easy to check in  the Archimedean case. If $F$ is non-Archimedean, let $\varpi$ be a uniformizer of $F.$  The map  $(\cdot)^k:F^\times \to F^\times$ is open.  Thus we can find $N\in\mathbb{Z}_{\geq 1}$ such that $1+\varpi^N\OO_F\leq (1+\varpi\OO_F)^k$. Then for $n\in\mathbb{Z}$, we have $\varpi^{-nk}(1+\varpi^N\OO_F)\subseteq (\varpi^{-n}(1+\varpi\OO_F))^k\subseteq \ker\psi'.$ Since $\varpi^{-nk}\in\ker\psi'$, we see $\varpi^{-nk+N}\OO_F\leq \ker\psi'$ for all $n\in\mathbb{Z}$. The claim then follows from the observation that $F = \bigcup\limits_{n\in\mathbb{Z}}\varpi^{-nk+N}\OO_F.$}

\begin{defn} \label{def:alg-small-mod} The \textbf{algebraic small modulation group} $\Psi^{\mathrm{s},\mathrm{alg}}_{\omega}$ of $\omega:X \to V$ is 
\begin{align*}
    \Psi^{\mathrm{s},\mathrm{alg}}_{\omega} := V^\vee \rtimes (H/H_X).
\end{align*}
\end{defn}

\begin{prop}\label{modgp:alg=s} The map $\mathcal{R}_\omega$ of \eqref{modgp:alg=s:themap} induces an isomorphism $\Psi^{\mathrm{s},\mathrm{alg}}_{\omega}(F)\cong \Psi_{\omega}^{\mathrm{s}}\{F\}.$
\end{prop}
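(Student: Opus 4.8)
The plan is to deduce the statement by assembling Lemma~\ref{modgp:alg=s:lem1}, which already computes $\ker\mathcal{R}_\omega$, together with the vanishing hypothesis \eqref{HX:H1=0}; essentially all the real work was done in that lemma, and what remains is bookkeeping about quotients of group schemes and their $F$-points.

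First I would record the relevant exact sequence. Since $F$ has characteristic zero, $H_X$ is a smooth closed normal subgroup scheme of $H$, so the quotient $H/H_X$ exists as an affine algebraic group and $H^1(F,H_X)$ is ordinary Galois cohomology. By \eqref{HXHV} (a consequence of \eqref{omega:span}) we have $H_X\leq H_V$, so $H_X$ acts trivially on $V$ and hence on $V^\vee$; consequently the $H$-action on $V^\vee$ descends to $H/H_X$, the subgroup $\{0\}\rtimes H_X$ is a smooth closed normal subgroup scheme of $V^\vee\rtimes H$, and there is a short exact sequence of smooth affine $F$-group schemes
$$1\lto H_X\lto V^\vee\rtimes H\xrightarrow{\ q\ }\Psi^{\mathrm{s},\mathrm{alg}}_{\omega}=V^\vee\rtimes(H/H_X)\lto 1,\qquad q(\lambda,h)=(\lambda,\bar h).$$
Taking $F$-points, and using that the underlying scheme of $V^\vee\rtimes H$ is $V^\vee\times H$ so that $(V^\vee\rtimes H)(F)=V^\vee(F)\rtimes H(F)$, the long exact cohomology sequence reads
$$1\lto H_X(F)\lto V^\vee(F)\rtimes H(F)\xrightarrow{\ q\ }\Psi^{\mathrm{s},\mathrm{alg}}_{\omega}(F)\lto H^1(F,H_X).$$
By \eqref{HX:H1=0} the last term vanishes, so $q$ induces an isomorphism $\bigl(V^\vee(F)\rtimes H(F)\bigr)/H_X(F)\xrightarrow{\ \sim\ }\Psi^{\mathrm{s},\mathrm{alg}}_{\omega}(F)$.

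Next I would compare the two quotients. By Lemma~\ref{modgp:alg=s:lem1} (which uses \eqref{omega:scale}, \eqref{omega:span} and \eqref{HX:Fpoints}), the surjection $\mathcal{R}_\omega$ of \eqref{modgp:alg=s:themap} has kernel exactly $H_X(F)\subseteq V^\vee(F)\rtimes H(F)$, so $\mathcal{R}_\omega$ factors through a group isomorphism $\bigl(V^\vee(F)\rtimes H(F)\bigr)/H_X(F)\xrightarrow{\ \sim\ }\Psi^{\mathrm{s}}_{\omega}\{F\}$. Composing with $q^{-1}$ yields the desired isomorphism $\Psi^{\mathrm{s},\mathrm{alg}}_{\omega}(F)\cong\Psi^{\mathrm{s}}_{\omega}\{F\}$, and it is the one induced by $\mathcal{R}_\omega$ in the precise sense that the composite $V^\vee(F)\rtimes H(F)\xrightarrow{\ q\ }\Psi^{\mathrm{s},\mathrm{alg}}_{\omega}(F)\to\Psi^{\mathrm{s}}_{\omega}\{F\}$ equals $\mathcal{R}_\omega$.

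The one step that is more than pure formality is the surjectivity of $H(F)\to(H/H_X)(F)$: this is exactly what \eqref{HX:H1=0} buys, and it is the point I would take care to spell out. Existence and affineness of the quotient group scheme $H/H_X$, the identification of the $F$-points of the semidirect product, and the factorization of $\mathcal{R}_\omega$ through the quotient are all routine.
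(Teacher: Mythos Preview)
Your proposal is correct and follows essentially the same approach as the paper: invoke Lemma~\ref{modgp:alg=s:lem1} to identify $\ker\mathcal{R}_\omega$ with $H_X(F)$, then use \eqref{HX:H1=0} to identify $\bigl(V^\vee(F)\rtimes H(F)\bigr)/H_X(F)$ with $\Psi^{\mathrm{s},\mathrm{alg}}_{\omega}(F)$. The paper's version is terser---it applies \eqref{HX:H1=0} directly to the sequence $1\to H_X\to H\to H/H_X\to 1$ rather than to your semidirect-product sequence---but the content is the same.
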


\begin{proof} By Lemma \ref{modgp:alg=s:lem1} the map $\mathcal{R}_\omega$ induces an isomorphism
\begin{align*}
    V^\vee(F)\rtimes (H(F)/H_X(F))\cong \Psi_\omega^{\mathrm{s}}\{F\}.
\end{align*}
On other other hand, by \eqref{HX:H1=0} the left hand side is $\Psi^{\mathrm{s},\mathrm{alg}}_{\omega}(F)$. 
\end{proof}

Due to the proposition we omit the superscript $\mathrm{alg}$ from $\Psi^{\mathrm{s},\mathrm{alg}}_{\omega}.$  The algebraic small modulation group is the algebraic group $\Psi^{\mathrm{s}}_{\omega}$ and the spectral small modulation group is its $F$-points $\Psi^{\mathrm{s}}_{\omega}(F).$  Thus in this setting there is no real need to use the adjectives ``algebraic'' and ``spectral.''

\subsection{The action on the cotangent bundle}  \label{sec:mod:alg}
We pause to explain how $V^\vee \rtimes H$ acts on the cotangent bundle $T^*X^{\mathrm{sm}}$ of $X^{\mathrm{sm}}.$  We will not use the results of this subsection until \S \ref{sec:DQ} and \ref{sec:Boundary}.  For this subsection $F$ is an arbitrary field.

For lack of a reference, we give an explicit description of the functor of points of the cotangent space.  This is necessary so the reader can check that the actions we construct below are well-defined.
Let $X_0$ be a smooth scheme over a field $F.$  The cotangent bundle of $X_0$ is defined as 
\begin{align*}
    T^*X_0 := \underline{\mathrm{Spec}}_{X_0}\mathrm{Sym}_{\OO_{X_0}}(\Omega_{X_0/F}^\vee).
\end{align*}
where $\Omega_{X_0/F}:=\Omega_{X_0/\Spec F}$ is the sheaf of K\"ahler differentials of $X_0$. Since $X_0$ is smooth, $\Omega_{X_0/F}$ is a finite locally free $\OO_{X_0}$-module. Using the description of the functor of points of a relative spectrum in \cite[\href{https://stacks.math.columbia.edu/tag/01LQ}{Tag 01LQ}]{stacks-project}, for $F$-algebras $R$ we have
\begin{align*}
    T^*X_0(R)
    &=\left\{(x,\phi): x\in X_0(R),\,\phi\in \Hom_{\textbf{Alg}_{\OO_{X_0}}}(\mathrm{Sym}_{\OO_{X_0}}\Omega_{X_0/F}^\vee,x_*\OO_{\mathop{\mathrm{Spec}} R})\right\}\\
    &=\left\{(x,\phi): x\in X_0(R),\,\phi\in \Hom_{\textbf{Mod}_{\OO_{X_0}}}(\Omega_{X_0/F}^\vee,x_*\OO_{\mathop{\mathrm{Spec}} R})\right\}\\
    &=\left\{(x,\phi): x\in X_0(R),\,\phi\in \Hom_{\textbf{Mod}_{\OO_{\Spec R}}}(x^*(\Omega_{X_0/F}^\vee),\OO_{\mathop{\mathrm{Spec}} R})\right\}.
\end{align*}
In the first and last equation we have used the standard identities in \cite[\href{https://stacks.math.columbia.edu/tag/0094}{Tag 0094}]{stacks-project}.

The field-valued points of the functor admits a simpler description. Let $x\in X_0(F)$. We point out that duals commute with pullbacks for finite locally free modules, so 
\[
x^*(\Omega_{X_0/F}^\vee)\cong (x^*\Omega_{X_0/F})^\vee.
\]
Hence we obtain that
\[
\Hom_{\textbf{Mod}_{\OO_{\Spec F}}}(x^*(\Omega_{X_0/F}^\vee),\OO_{\mathop{\mathrm{Spec}} F}) \cong (x^*\Omega_{X_0/F})(\Spec F)\cong T_{X_0,x}^{*},
\]
where $T_{X_0,x}^*$ is the Zariski cotangent space of $X_0$ at $x$. Here we are identifying $x \in X_0(F)$ with a closed point of the underlying topological space of $X_0.$ In other words,
\begin{align} \label{tangent:field}
    T^*X_0(F) = \left\{(x,v): x\in X_0(F),\,v\in T_{X_0,x}^*\right\}.
\end{align}
For this reason, we define
\begin{align*}
    T^*_{X_0,x} := \Hom_{\textbf{Mod}_{\OO_{\Spec R}}}(x^*(\Omega_{X_0/F}^\vee),\OO_{\mathop{\mathrm{Spec}} R})
\end{align*}
for any $x\in X_0(R)$. This is naturally an $R$-module.

\begin{lem}\label{action:H:general} Assume $X_0$ admits a right $H$-action.  Then $T^*X_0$ admits a canonical $H$-action.  It is given on points in an $F$-algebra $R$ by
\begin{align*}
T^*X_0(R) \times H(R) &\lto T^*X_0(R)\\
((x,\phi),h) &\longmapsto (xh,h^*\phi)
\end{align*}
where $h^*:T^*_{X_0,x}\to T^*_{X_0,xh}$ is given by precomposing with the isomorphism 
$$
(xh)^*(\Omega_{(X_0)_R/R}^\vee)\lto x^*(\Omega_{(X_0)_R/R}^\vee).
$$\hfill\qedsymbol
\end{lem}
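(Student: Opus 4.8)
The plan is to verify that the formula in the statement actually defines a morphism of schemes $T^*X_0 \times H \to T^*X_0$, and then that it satisfies the axioms of a group action. Everything is an exercise in unwinding the functor-of-points description established just above, so the guiding principle is to reduce each claim to a statement about the $H$-action on the sheaf of K\"ahler differentials, where functoriality does all the work.

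First I would base change to an arbitrary $F$-algebra $R$ and set $(X_0)_R$, $H_R$. Given a point $h \in H(R)$, the right translation map $\rho_h : (X_0)_R \to (X_0)_R$ is an automorphism of $(X_0)_R$ over $\Spec R$, and hence induces an isomorphism of locally free sheaves $\rho_h^* \Omega_{(X_0)_R/R} \xrightarrow{\sim} \Omega_{(X_0)_R/R}$, equivalently, for each $x \in X_0(R)$ a natural isomorphism $(xh)^*\Omega_{(X_0)_R/R}^\vee \to x^*\Omega_{(X_0)_R/R}^\vee$ obtained by pulling this back along $x : \Spec R \to (X_0)_R$ and identifying $x^*\rho_h^* = (\rho_h \circ x)^* = (xh)^*$. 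Precomposition of an element of $T^*_{X_0,x} = \Hom_{\textbf{Mod}_{\OO_{\Spec R}}}(x^*(\Omega^\vee), \OO_{\Spec R})$ with this isomorphism lands in $T^*_{X_0,xh}$, which is exactly the map $h^*\phi$ of the statement; so the assignment $((x,\phi),h) \mapsto (xh, h^*\phi)$ has the correct target. Naturality in $R$ (i.e.\ compatibility with $F$-algebra maps $R \to R'$) is automatic because all of the above — pullback, duals of finite locally free modules, and the $\Hom$ into the structure sheaf — commute with arbitrary base change, so this defines a morphism of schemes by Yoneda.

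Next I would check the two group-action axioms on $R$-points. The identity $e \in H(R)$ acts by $\rho_e = \mathrm{id}$, so $e^* = \mathrm{id}$ and $((x,\phi),e) \mapsto (x,\phi)$. For compatibility with composition, one needs $(h_1 h_2)^* \phi = h_2^*(h_1^* \phi)$ for $h_1, h_2 \in H(R)$; this follows from the contravariant functoriality of pullback, namely $\rho_{h_1 h_2} = \rho_{h_2} \circ \rho_{h_1}$ (since the $H$-action on $X_0$ is a \emph{right} action) gives $\rho_{h_1 h_2}^* = \rho_{h_1}^* \circ \rho_{h_2}^*$ on differentials, and tracing this through the identification above flips the order of composition appropriately, so that the $H$-action on $T^*X_0$ is again a right action. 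I would spell out the bookkeeping of which composite lands where — this is the one place where a sign/direction error is easy to make — but it is purely formal.

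The main (and only real) obstacle is notational: one must be careful that the isomorphism $(xh)^*(\Omega_{(X_0)_R/R}^\vee) \to x^*(\Omega_{(X_0)_R/R}^\vee)$ is the \emph{correct} one — i.e.\ the one induced by $\rho_h$ and not its inverse — and that ``precompose with'' rather than ``postcompose with'' is what keeps the target in $T^*_{X_0,xh}$; getting these straight is what makes the composition axiom come out as a right action rather than a left action. Once the conventions are pinned down, there is no geometric content beyond the functoriality of $\Omega_{-/F}$ and the fact (already noted in the excerpt) that duals commute with pullbacks for finite locally free modules, which is what guarantees the construction is representable by a scheme morphism. I would therefore present the proof as: (1) define the map on $R$-points via $\rho_h^*$; (2) observe naturality in $R$ from base-change compatibility, hence a scheme morphism; (3) check $e$ acts trivially and $(h_1h_2)^* = h_2^* h_1^*$, hence a right action.
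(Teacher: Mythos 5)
Your proposal is correct and matches the paper's (omitted, left-to-the-reader) argument in substance: both rest on the functoriality of $\Omega_{(X_0)_R/R}$ under the translation automorphisms $\rho_h$, the compatibility of duals of finite locally free modules with pullback, and the identification $x^*\rho_h^* = (xh)^*$, with the composition axiom reducing to $\rho_{h_1h_2} = \rho_{h_2}\circ\rho_{h_1}$. The only cosmetic difference is that the paper builds the morphism $T^*X_0 \times H \to T^*X_0$ globally by dualizing the sheaf isomorphism and taking relative $\underline{\mathrm{Spec}}$, whereas you define it on $R$-points and invoke base-change compatibility plus Yoneda; these are interchangeable.
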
\quash{
\begin{proof} Let $\alpha:H(R)\to\mathrm{Aut}_{\textbf{Sch}_R}((X_0)_R)$ denote the group anti-homomorphism induced by the right action of $H$ on $X_0$. Let $\varphi\in \mathrm{Aut}_{\textbf{Sch}_R}((X_0)_R)$. By \cite[\href{https://stacks.math.columbia.edu/tag/06B6}{Tag 06B6}]{stacks-project} we have an isomorphism $(\varphi^{-1})^*\Omega_{(X_0)_R/R}\stackrel{\sim}{\to}\Omega_{(X_0)_R/R}$. Dualizing and taking relative spec, we obtain an isomorphism $(X_0)_R\times_{(X_0)_R}T^*(X_0)_R\stackrel{\sim}{\to} T^*(X_0)_R$ of schemes over $(X_0)_R$. Pulling back along $\varphi\times\mathrm{id}_{T^*(X_0)_R}$ yields an isomorphism $T^*\varphi:T^*(X_0)_R\stackrel{\sim}{\to} T^*(X_0)_R$ of $R$-schemes fitting into a commuting square
\begin{center}
    \begin{tikzcd}
        T^*(X_0)_R\arrow[d]\arrow[r,"{T^*\varphi}"]&T^*(X_0)_R\arrow[d]\\
        (X_0)_R\arrow[r,"\varphi"]&(X_0)_R.
    \end{tikzcd}
\end{center}
One checks that $\varphi\mapsto T^*\varphi$ defines a homomorphism $\mathrm{Aut}_{\textbf{Sch}_R}((X_0)_R)\to \mathrm{Aut}_{\textbf{Sch}_R}(T^*(X_0)_R)$, so that we obtain a group anti-homomorphism $H(R)\to \mathrm{Aut}_{\textbf{Sch}_R}(T^*(X_0)_R)$. Varying $R$, this defines a right $H$-action on $T^*X_0$. 

The description on points follows from the functor of points definition of a relative spec.

Note the isomorphism $(xh)^*(\Omega_{(X_0)_R/R}^\vee)\to x^*(\Omega_{(X_0)_R/R}^\vee)$ is obtained by applying $x^*$ to the isomorphism $\alpha(h)^*(\Omega_{(X_0)_R/R}^\vee)\to \Omega_{(X_0)_R/R}^\vee$, and note that $x^*\alpha(h)^* = (xh)^*$.
\end{proof}}

\begin{lem}\label{action:Vv:general}  Let $V \cong \GG_a^n$ over $F$ and let $\omega:X_0\to V$ be a morphism of $F$-schemes. There is an action of $V^\vee$ on $T^*X_0$ given on points in an $F$-algebra $R$ by
\begin{align*}
T^*X_0(R) \times V^\vee(R) &\lto T^*X_0(R)\\
((x,\phi),\lambda) &\longmapsto (x,\phi+\omega_x^*\lambda).
\end{align*}
Here $\omega_x^*:V^\vee(R) = T^*_{V,\omega(x)} \to T^*_{X_0,x}$ is given by precomposing with $$
\Omega_{(X_0)_R/R}^\vee\lto (\omega^*\Omega_{V_R/R})^\vee \cong\omega^*\Omega_{V_R/R}^\vee.
$$\qed
\end{lem}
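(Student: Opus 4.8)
The plan is to work entirely on the functor of points: I will make the map $\omega_x^*$ precise, check that the assignment $((x,\phi),\lambda)\mapsto(x,\phi+\omega_x^*\lambda)$ is natural in the test algebra $R$ — so that, by Yoneda's lemma, it is induced by a morphism of $F$-schemes $T^*X_0\times_F V^\vee\to T^*X_0$ — and finally verify the action axioms, which will fall out of $R$-linearity.

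First, the construction of $\omega_x^*$. The morphism $\omega$ gives the canonical pullback map on K\"ahler differentials $\omega^\sharp\colon\omega^*\Omega_{V/F}\to\Omega_{X_0/F}$. Since $V\cong\GG_a^n$, the sheaf $\Omega_{V/F}$ is free on $dt_1,\dots,dt_n$; being finite locally free, duality commutes with the pullback $\omega^*$, so dualizing $\omega^\sharp$ yields $(\omega^\sharp)^\vee\colon\Omega_{X_0/F}^\vee\to(\omega^*\Omega_{V/F})^\vee\cong\omega^*\Omega_{V/F}^\vee$. Base changing to $R$ and pulling back along a point $x\in X_0(R)$, using $x^*\omega^*=(\omega\circ x)^*=\omega(x)^*$, we obtain an $R$-linear map $x^*\Omega_{X_0/F}^\vee\to\omega(x)^*\Omega_{V/F}^\vee$; precomposition with it carries $T^*_{V,\omega(x)}=\Hom_R(\omega(x)^*\Omega_{V/F}^\vee,R)$ into $T^*_{X_0,x}=\Hom_R(x^*\Omega_{X_0/F}^\vee,R)$, and this is $\omega_x^*$. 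To identify the source with $V^\vee(R)$ one uses again that $V=\GG_a^n$: the module $\omega(x)^*\Omega_{V/F}^\vee$ is free on the basis dual to $dt_1,\dots,dt_n$, so its $R$-dual $T^*_{V,\omega(x)}$ is free on $dt_1,\dots,dt_n$ and is canonically identified with $V^\vee(R)=\Hom_R(V(R),R)$ by sending $dt_i$ to the $i$th coordinate functional; this is just the translation-invariant trivialization of $T^*V$ for the vector group $V$. With these identifications the sum $\phi+\omega_x^*\lambda$ makes sense in the $R$-module $T^*_{X_0,x}$.

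Second, naturality in $R$. Formation of $\Omega_{-/F}$ commutes with base change, $\omega^\sharp$ is functorial, and the operations of pulling back along a point, dualizing a finite locally free module, and applying $\Hom_R(-,R)$ all commute with base change along $R\to R'$; fiberwise addition on $T^*X_0$ is likewise compatible with base change by the functor-of-points description of $T^*X_0$ recalled above. Hence $((x,\phi),\lambda)\mapsto(x,\phi+\omega_x^*\lambda)$ defines a natural transformation $T^*X_0(-)\times V^\vee(-)\to T^*X_0(-)$, i.e.\ a morphism of $F$-schemes $T^*X_0\times_F V^\vee\to T^*X_0$.

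Third, the action axioms, which are immediate from the $R$-linearity of $\omega_x^*$: one has $\omega_x^*(0)=0$, so the identity section of $V^\vee$ acts trivially, and since the group law on the vector group $V^\vee$ is addition, $((x,\phi)\cdot\lambda)\cdot\lambda'=(x,\phi+\omega_x^*\lambda+\omega_x^*\lambda')=(x,\phi+\omega_x^*(\lambda+\lambda'))=(x,\phi)\cdot(\lambda+\lambda')$; as $V^\vee$ is commutative the left/right distinction is vacuous. I expect the only point requiring genuine care is the bookkeeping of the canonical isomorphisms in the first step — verifying that, under $T^*_{V,\omega(x)}=V^\vee(R)$, the map $\omega_x^*$ really is ``precompose with $(\omega^\sharp)^\vee$'' as stated, and that every identification in sight is base-change compatible. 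Conceptually there is no difficulty: $\omega$ induces a morphism of vector bundles $\omega^*T^*V=X_0\times_F V^\vee\to T^*X_0$ over $X_0$ whose fiber over $x$ is $\omega_x^*$, and $V^\vee$ acts on the vector bundle $T^*X_0$ by translating a fiber element over $x$ by the linear image of $\lambda$.
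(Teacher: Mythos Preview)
Your proof is correct. The paper itself omits the argument (it places a \qedsymbol\ immediately after the statement), so there is nothing substantive to compare against in the published text. The authors' suppressed proof constructs the morphism $T^*X_0\times_F V^\vee\to T^*X_0$ directly at the level of schemes: they dualize $\omega^*\Omega_{V/F}\to\Omega_{X_0/F}$, take relative $\Spec$ to get $X_0\times_V T^*V\to T^*X_0$, use $T^*V\cong V\times_F V^\vee$ to rewrite the source as $X_0\times_F V^\vee$, and then compose with the fiberwise addition map $T^*X_0\times_{X_0}T^*X_0\to T^*X_0$. Your route via the functor of points and Yoneda is equivalent and arguably cleaner for verifying the action axioms; the scheme-theoretic construction has the mild advantage that one never needs to invoke naturality explicitly, since the morphism is built from canonical scheme morphisms from the start.
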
\quash{
\begin{proof} By \cite[\href{https://stacks.math.columbia.edu/tag/06B6}{Tag 06B6}]{stacks-project} we have a morphism $\omega^*\Omega_{V/F}\to \Omega_{X_0/F}$. Dualizing and taking relative spec, we obtain a morphism $X_0\times_VT^*V\to T^*X_0$. Now
\begin{align*}
    X_0\times_VT^*V \cong X_0\times_V(V\times_FV^\vee) \cong  X_0\times_F V^\vee
\end{align*}
yields a morphism $X_0\times_F V^\vee\to T^*X_0$. Applying the functor $T^*X_0\times_{X_0}(-)$, we get a morphism
\begin{align*}
    T^*X_0\times_FV^\vee\cong T^*X_0\times_{X_0}(X_0\times_F V^\vee)\to T^*X_0\times_{X_0}T^*X_0.
\end{align*}
There is a morphism $T^*X_0\times_{X_0}T^*X_0\to T^*X_0$ which on $R$-points is given by
\begin{align*}
    ((x,\phi),(x,\psi)) \mapsto (x,\phi+\psi).
\end{align*}
Here the addition takes place in $T^*_{(X_0)_R,x}$. Post-composing with this yields a morphism
\begin{align*}
    T^*X_0\times_FV^\vee\to T^*X_0.
\end{align*}
The description on points is clear. Note that the last morphism of sheaves is obtained by dualizing the morphism $\omega^*\Omega_{V_R/R}\to \Omega_{(X_0)_R/R}$ \cite[\href{https://stacks.math.columbia.edu/tag/06B6}{Tag 06B6}]{stacks-project} ($\omega$ here really means its base change $\omega:(X_0)_R\to V_R$).
\end{proof}}
\quash{
Over field-valued points we can make $\omega_x^*$ more explicit. Indeed, for $x\in X_0(F)$ the map
\begin{align*}
    \omega_x^*: V^\vee \to T^*_{X_0,x}
\end{align*}
is dual to the canonical map
\begin{align}\label{iotax:field}
    \omega_x: T_{X_0,x}\to T_{V,\omega(x)}\cong V
\end{align}
where $T_{X_0,x}$ and $T_{V,\omega(x)}$ denote the Zariski tangent spaces. Again the same description holds over $E$-valued points for any field extension of $E/F$. We leave this verification to the reader.}


We return to the setting of \eqref{action:H}, \eqref{action:HX}, \eqref{action:rep} and \eqref{action:equi}; the field $F$ can be arbitrary. 
\begin{lem} \label{lem:cot:action} We have an action of $V^\vee\rtimes H$ on $T^*X^{\mathrm{sm}}$ given on the level of points by
\begin{align} \label{action:on:cotan}\begin{split}
  T^*X^{\mathrm{sm}}(R) \times V^\vee(R) \rtimes H(R) &\lto T^*X^{\mathrm{sm}}(R)\\
    ((x,\phi),(\lambda,h)) &\longmapsto \left(xh,h^*\phi+\omega_{xh}^*(\lambda\circ h^{-1})\right).
\end{split}
\end{align}
Here we are using notation as in Lemma \ref{action:H:general} and Lemma \ref{action:Vv:general}. \qed
\end{lem}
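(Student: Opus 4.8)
The plan is to verify directly that the formula in \eqref{action:on:cotan} defines a right action of the group $V^\vee \rtimes H$ on the scheme $T^*X^{\mathrm{sm}}$, by checking functoriality in $R$ and the two action axioms (identity and associativity) at the level of $R$-points. Functoriality is automatic: each ingredient --- the right $H$-action $x \mapsto xh$, the pullback maps $h^*$ on cotangent spaces from Lemma \ref{action:H:general}, and the maps $\omega_y^*$ from Lemma \ref{action:Vv:general} --- is already known to be functorial, and postcomposition/addition in the $R$-modules $T^*_{X^{\mathrm{sm}},y}$ is functorial, so the composite is a morphism of schemes. The identity axiom is immediate since $h = 1$ gives $x\cdot 1 = x$, $1^* = \mathrm{id}$, and $\lambda = 0$ contributes nothing.

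The substance is the associativity check. First I would record the two building-block compatibilities separately: the $H$-action alone, $(x,\phi) \cdot (0,h) = (xh, h^*\phi)$, is a right action because $(hh')^* = h'^* \circ h^*$ as maps on the relevant cotangent spaces (this is exactly the statement that $x^* \alpha(h)^*$ composes correctly, as in the proof of Lemma \ref{action:H:general}); and the $V^\vee$-action alone, $(x,\phi)\cdot(\lambda,0) = (x, \phi + \omega_x^*\lambda)$, is a right action because $\omega_x^*$ is $R$-linear, so $\omega_x^*(\lambda + \lambda') = \omega_x^*\lambda + \omega_x^*\lambda'$. Then I would compute $\big((x,\phi)\cdot(\lambda,h)\big)\cdot(\lambda',h')$ and compare it with $(x,\phi)\cdot\big((\lambda,h)(\lambda',h')\big)$, where the product on the right is $(\lambda + \lambda'\circ h, hh')$ by \eqref{strange:semi}. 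The first-component agreement is clear: both give $xhh'$. For the cotangent components, the left side expands to
\begin{align*}
h'^*\!\left(h^*\phi + \omega_{xh}^*(\lambda \circ h^{-1})\right) + \omega_{xhh'}^*\!\left(\lambda' \circ h'^{-1}\right)
= (hh')^*\phi + h'^*\omega_{xh}^*(\lambda\circ h^{-1}) + \omega_{xhh'}^*(\lambda'\circ h'^{-1}),
\end{align*}
while the right side expands to
\begin{align*}
(hh')^*\phi + \omega_{xhh'}^*\!\left((\lambda + \lambda'\circ h)\circ (hh')^{-1}\right)
= (hh')^*\phi + \omega_{xhh'}^*(\lambda \circ (hh')^{-1}) + \omega_{xhh'}^*(\lambda'\circ h^{-1} h'^{-1}).
\end{align*}
The terms $(hh')^*\phi$ match by the $H$-action compatibility above, and the terms $\omega_{xhh'}^*(\lambda' \circ h'^{-1})$ match by $R$-linearity of $\omega_{xhh'}^*$ (noting $\lambda'\circ(hh')^{-1} = (\lambda'\circ h^{-1})\circ h'^{-1}$, and we should keep the bookkeeping of whether $h$ acts on $V$ on the left or right consistent with convention \eqref{action:rep}). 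So the whole check reduces to the single identity
\begin{align*}
h'^* \circ \omega_{xh}^* = \omega_{xhh'}^* \circ (\text{precompose with } h'^{-1})\colon V^\vee(R) \lto T^*_{X^{\mathrm{sm}},xhh'}.
\end{align*}

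The main obstacle --- and the only genuinely non-formal point --- is precisely this last compatibility between the $H$-pullback on cotangent vectors and the $V^\vee$-translation, which is the $H$-equivariance of $\omega$ expressed on cotangent spaces. I would prove it by unwinding the definitions from Lemmas \ref{action:H:general} and \ref{action:Vv:general}: $\omega_y^*$ is defined as dualizing-then-precomposing with the canonical map $\omega^*\Omega_{V_R/R} \to \Omega_{(X^{\mathrm{sm}})_R/R}$ evaluated at $y$, and $h'^*$ comes from the canonical isomorphism $\alpha(h')^*\Omega \xrightarrow{\sim} \Omega$; the equivariance $\omega \circ \alpha_X(h') = \alpha_V(h') \circ \omega$ of the morphism $\omega$ gives a commuting square of sheaves relating these, and applying $y^*$ (with $y = xh$, so $yh' = xhh'$) and dualizing yields exactly the claimed identity, with the $h'^{-1}$ on $V^\vee$ arising from how the right action on $V$ dualizes to an action on $V^\vee$ as in the semidirect product \eqref{strange:semi}. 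Since the analogous unwindings were already deemed routine enough to be relegated to the (quashed) proofs of the two preceding lemmas, I expect this verification to be short once the diagram is set up, and I would present it at that level of detail rather than re-deriving the Stacks Project citations.
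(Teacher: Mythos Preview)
Your proposal is correct and follows essentially the same approach as the paper's own (suppressed) proof: expand both sides of the associativity condition and reduce to the single equivariance identity $h^*\omega_x^*(\lambda) = \omega_{xh}^*(\lambda\circ h^{-1})$, which the paper simply leaves to the reader. You actually go further by sketching how this identity follows from the $H$-equivariance of $\omega$ at the level of sheaves of differentials; the minor bookkeeping about left/right conventions you flag is real but harmless once fixed.
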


\quash{
\begin{proof} We must show that
\begin{align*}
    ((x,\phi)(\lambda,h))(\lambda',h') = (x,\phi)(\lambda + \lambda'\circ h,hh')
\end{align*}
Unwinding the definition, up to a change in notation, it suffices to show for $(x,h,\lambda)\in X^{\mathrm{sm}}(R)\times H(R)\times V^\vee(R)$ that
\begin{align}\label{lem:cot:action:true:action}
    h^*\iota_x^*(\lambda) = \iota^*_{xh}(\lambda\circ h^{-1}).
\end{align}
\quash{This amounts to showing the diagram
\begin{center}
    \begin{tikzcd}
        V^\vee(R)\arrow[d,"\circ h^{-1}"]\arrow[r,"{\iota_x^*}"]&T^*_{X_0,x}\arrow[d,"{h^*}"]\\
        V^\vee(R)\arrow[r,"{\iota_{xh}^*}"]&T^*_{X_0,xh}
    \end{tikzcd}
\end{center}}
We leave this to readers.
\end{proof}}

\quash{
\begin{lem} \label{lem:ker} Assume that $F$ has characteristic zero and that \eqref{omega:span} holds.
The kernel of the action of $V^\vee \rtimes H$ on $T^*X^{\mathrm{sm}}$ is $0 \rtimes H_X$.
\end{lem}
\begin{proof} Denote by $H'$ the kernel of the action of $V^\vee \rtimes H$ on $T^*X^{\mathrm{sm}}$. Clearly $0 \rtimes H_X\leq H'$. Since $F$ is of characteristic zero, to prove they coincide it suffices  to show that $(0 \rtimes H_X)(\overline{F}) = H'(\overline{F})$, where $\overline{F}$ is an algebraic closure of $F$ \cite[Proposition 1.2.9]{Getz:Hahn}.

Let $(\lambda , h)\in H'(\overline{F}).$  Then $h \in H_X(\overline{F}) \leq H_V(\overline{F}).$  For $(x,\phi) \in  T^*X^{\mathrm{sm}}(\overline{F})$ we deduce that 
\begin{align*}
    \phi=h^*\phi+\omega_{xh}^*(\lambda\circ h^{-1})=\phi + \omega_x^*(\lambda)
\end{align*}
which is to say $\omega_x^*(\lambda)=0.$  

Let $\overline{F}[\epsilon]$ denote the dual numbers over $\overline{F}.$  Consider the restriction to the fiber $X^{\mathrm{sm}}(\overline{F}[\epsilon])\to X^{\mathrm{sm}}(\overline{F})$ over $x$ of the map
\begin{align*}
    X^{\mathrm{sm}}(\overline{F}[\epsilon])\lto V(\overline{F}[\epsilon])\lto V(\overline{F}).
\end{align*}
By the description of the tangent space given in \cite[\href{https://stacks.math.columbia.edu/tag/0B28}{Tag 0B28}]{stacks-project}, this map is dual to $\omega_x^*.$
Hence, in view of \eqref{omega:span:1}, the condition $\omega_x^*(\lambda)=0$ implies that $\lambda(y)=0$ for all $y\in V(\overline{F})$. This proves $\lambda\equiv 0$ as an element in $V^\vee(\overline{F})$. 
\end{proof}

By Lemma \ref{lem:ker} we could alternately define $\Psi^{\mathrm{s}}_{\omega}$ to be the quotient of $V^\vee \rtimes H$ by the kernel of the action of $V^\vee \rtimes H$ on $T^*X^{\mathrm{sm}}$. In particular, we have defined a right action
\begin{align}\label{Mod:CoTan:action}
    T^*X^{\mathrm{sm}}\times \Psi^{\mathrm{s}}_{\omega}\to T^*X^{\mathrm{sm}}.
\end{align}}

\subsection{Fourier transforms}
\label{ssec:FT}
To proceed we require Fourier transforms.  We begin with the abstract setup.  Let $F$ be a local field.  
Let 
$$
 \nu:\GG_m \lto Z_H
$$
be a cocharacter.  We assume there is an automorphism $\iota:H \to H$ such that $\iota \circ \nu(x)=\nu(x)^{-1}.$
 We then assume the existence of an isometry
\begin{align} \label{Fi}
\mathcal{F}_{X}:L^2(X^{\mathrm{sm}}(F),\mathcal{L}^{1/2}) \tilde{\lto} L^2(X^{\mathrm{sm}}(F),\mathcal{L}^{1/2})
\end{align}
such that 
\begin{align} \label{Fi:equiv}
    \mathcal{F}
    _{X} \circ \mathcal{R}(h)=\mathcal{R}(\iota(h)) \circ \mathcal{F}_{X}.
\end{align}
Finally, let 
\begin{align}
    \chi:\Psi_{\omega}^{\mathrm{s}}(F) \lto \CC^\times
\end{align}
be a character.

\begin{defn}  \label{defn:mod:group}  The \textbf{modulation group} $\Psi_{\omega}\{F\}$ attached to $\omega,$ $\mathcal{F}_X,$ and $\chi$ is the subgroup
of $\mathrm{U}(L^2(X^{\mathrm{sm}}(F),\mathcal{L}^{1/2}))$ generated by $\mathcal{F}_{X}$  and $(\mathcal{R}_{\omega} \otimes \chi)(\Psi^{\mathrm{s}}_{\omega}(F)):$
\begin{align*}
\Psi_{\omega}\{F\}:=\Psi_{\omega,\mathcal{F}_{X},\chi}\{F\}    :=\langle \mathcal{F}_X,(\mathcal{R}_{\omega} \otimes \chi)(\Psi_{\omega}^{\mathrm{s}}(F))\rangle.
\end{align*}
\end{defn}

\begin{rem} \label{rem:split:assum}
In \eqref{modif:mod:group} we will give an example in which it is convenient to twist the action of the small modulation group by a character as in Definition \ref{defn:mod:group}.  
 It would be desirable to have a conceptual explanation for this phenomenon.  In any case one should view Definition \ref{defn:mod:group} as a working definition.   
 \end{rem}
\noindent In some situations it is more natural to work with a family of Fourier transforms (see \cite{BK:basic:affine}). The considerations of this paper adapt in the natural manner to this setting.

\subsection{Schwartz spaces}
\label{ssec:Schwartz}
For many purposes, the Hilbert space $L^2(X^{\mathrm{sm}}(F),\mathcal{L}^{1/2})$ is unwieldy.  Instead, one would like to work with an $H(F)$-invariant Schwartz space
$$
\mathcal{S}(X(F),\mathcal{L}^{1/2}) \leq L^2(X^{\mathrm{sm}}(F),\mathcal{L}^{1/2}) \cap C^\infty(X^{\mathrm{sm}}(F),\mathcal{L}^{1/2})
$$
satisfying the following axioms:
\begin{enumerate}[label=($\mathcal{S}$\text{\arabic*}), ref=$\mathcal{S}$\text{\arabic*}]
    \item\label{assum:preserve} $\mathcal{S}(X(F),\mathcal{L}^{1/2})$ is preserved by $\mathcal{F}_X.$
     \item \label{assum:local} $\mathcal{S}(X(F),\mathcal{L}^{1/2})$ is local with respect to $X^{\mathrm{sm}} \subseteq X$ in the sense of Definition \ref{defn:local}.
      \item \label{assum:dense} $\mathcal{S}(X(F),\mathcal{L}^{1/2})$ is dense in $L^2(X^{\mathrm{sm}}(F),\mathcal{L}^{1/2}).$
      \item \label{assum:sm} $\mathcal{S}(X^{\mathrm{sm}}(F),\mathcal{L}^{1/2}) \leq \mathcal{S}(X(F),\mathcal{L}^{1/2}).$ 
   \quash{ \item \label{assum:rd} $\mathcal{S}(X(F),\mathcal{L}^{1/2})$ is rapidly decreasing with respect to $X^{\mathrm{sm}} \subseteq X$ in the sense of Definition \ref{defn:rap}.}
\end{enumerate}
Here $\mathcal{S}(X^{\mathrm{sm}}(F),\mathcal{L}^{1/2})=C_c^\infty(X^{\mathrm{sm}}(F),\mathcal{L}^{1/2})$ when $F$ is non-Archimedean and it is defined as in \cite{AG:Nash} when $F$ is Archimedean.

Clearly \eqref{assum:sm} implies \eqref{assum:dense}.  We separate the two because \eqref{assum:dense} is easier to prove in practice.
There are additional desiderata one could impose, but we prefer to keep things as simple as possible.

\begin{lem}
    Under \eqref{assum:preserve}, \eqref{assum:local}, and \eqref{assum:dense} the Schwartz space $\mathcal{S}(X(F),\mathcal{L}^{1/2})$ is preserved by $\Psi_{\omega}\{F\}$ and the induced map
   $\Psi_{\omega}\{F\} \to \mathrm{Aut}_\CC(\mathcal{S}(X(F),\mathcal{L}^{1/2}))$
    is injective. \qed
\end{lem}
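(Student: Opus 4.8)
The plan is to verify the two assertions separately: first that $\mathcal{S}(X(F),\mathcal{L}^{1/2})$ is stable under the generators of $\Psi_\omega\{F\}$, and second that restriction to $\mathcal{S}(X(F),\mathcal{L}^{1/2})$ loses no information. For stability, recall that $\Psi_\omega\{F\}$ is generated by $\mathcal{F}_X$ and by $(\mathcal{R}_\omega\otimes\chi)(\Psi^{\mathrm s}_\omega(F))$. Stability under $\mathcal{F}_X$ is exactly axiom \eqref{assum:preserve}. For the remaining generators, note that $\mathcal{R}_\omega(\lambda,h)$ acts by $f\mapsto \psi(\lambda\circ\omega(x))f(xh)$: the substitution $f\mapsto f(\cdot\,h)$ preserves $\mathcal{S}(X(F),\mathcal{L}^{1/2})$ because this space is $H(F)$-invariant by hypothesis, and multiplication by the function $x\mapsto\psi(\lambda(\omega(x)))$ preserves it by \eqref{assum:local}, since $\psi\circ\lambda$ is a (smooth, bounded) character of $V(F)$ and hence, after choosing a closed immersion $X\hookrightarrow\GG_a^n$ through which $\omega$ factors on the relevant component, its pullback lies in $\mathcal{T}(F^n)$; finally twisting by the scalar $\chi$ changes nothing since $\chi$ takes values in $\CC^\times$. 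As a subgroup of $\mathrm{U}(L^2)$ generated by elements each preserving the subspace $\mathcal{S}(X(F),\mathcal{L}^{1/2})$, all of $\Psi_\omega\{F\}$ preserves it. One subtlety to address: the operators must preserve $\mathcal{S}$ and not merely map it into its closure, so one should also check that the inverse of each generator preserves $\mathcal{S}$ — but $\mathcal{F}_X$ is an isometry whose inverse is again of the same type (or $\mathcal{F}_X^{-1}=\overline{\mathcal{F}_X}$ up to a sign conventions matter handled in \S\ref{ssec:FT}), and $\mathcal{R}_\omega(\lambda,h)^{-1}=\mathcal{R}_\omega((\lambda,h)^{-1})$ is of the same form, so no issue arises.

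For injectivity of $\Psi_\omega\{F\}\to\mathrm{U}(\mathcal{S}(X(F),\mathcal{L}^{1/2}))$, suppose $T\in\Psi_\omega\{F\}\subseteq\mathrm{U}(L^2(X^{\mathrm{sm}}(F),\mathcal{L}^{1/2}))$ restricts to the identity on $\mathcal{S}(X(F),\mathcal{L}^{1/2})$. By \eqref{assum:dense}, $\mathcal{S}(X(F),\mathcal{L}^{1/2})$ is dense in $L^2(X^{\mathrm{sm}}(F),\mathcal{L}^{1/2})$, and $T$ is a bounded (indeed unitary) operator, hence continuous; a continuous operator that agrees with the identity on a dense subspace is the identity. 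Therefore the map $\Psi_\omega\{F\}\to\mathrm{U}(\mathcal{S}(X(F),\mathcal{L}^{1/2}))$ is injective, and in fact it is nothing but the tautological restriction of the faithful embedding $\Psi_\omega\{F\}\hookrightarrow\mathrm{U}(L^2(X^{\mathrm{sm}}(F),\mathcal{L}^{1/2}))$.

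The only genuinely non-formal point is the claim that multiplication by $x\mapsto\psi(\lambda(\omega(x)))$ preserves $\mathcal{S}(X(F),\mathcal{L}^{1/2})$, i.e. that this function is the restriction of a tempered function under a closed immersion into affine space. In the Archimedean case this follows because $\psi$ is a fixed additive character, $\lambda\circ\omega$ is a polynomial map $X\to\A^1$, and $\psi$ composed with a linear form has all derivatives of polynomial growth, so $\psi\circ\lambda\circ\omega\in\mathcal{T}(F^n)$ after pulling back along a closed immersion extending $\omega$ coordinatewise; in the non-Archimedean case $\mathcal{T}(F^n)=C^\infty(F^n)$ and $\psi\circ\lambda\circ\omega$ is locally constant, so the assertion is immediate. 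I expect this to be the main (and only mild) obstacle, and it is precisely why axiom \eqref{assum:local} was formulated as it was; everything else is soft functional analysis. Hence the lemma follows.
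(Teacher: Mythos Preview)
Your proof is correct and follows the same approach the paper has in mind: the lemma carries a bare \qed\ with no written argument, so the paper treats it as an immediate consequence of axioms \eqref{assum:preserve}, \eqref{assum:local}, \eqref{assum:dense} together with the standing $H(F)$-invariance of the Schwartz space, which is exactly what you unpack. One small clean-up: rather than trying to factor $\omega$ through a closed immersion, it is simpler to note that $\lambda\circ\omega\in\mathcal{O}_X(X)$ is the restriction of a polynomial $p$ under \emph{any} closed immersion $X\hookrightarrow\GG_a^N$, and $\psi\circ p$ is tempered since all its derivatives are polynomials times a function of absolute value $1$; this avoids the slightly awkward phrasing about ``extending $\omega$ coordinatewise.''
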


In favorable circumstances the following holds:
\begin{ans} \label{Ans:Schwartz} There exists $\mathcal{F}_X$ satisfying \eqref{Fi:equiv} and a Schwartz space $\mathcal{S}(X(F),\mathcal{L}^{1/2})$ satisfying \eqref{assum:preserve}, \eqref{assum:local}, \eqref{assum:dense}, and \eqref{assum:sm}.
\end{ans}

\begin{question} When Ansatz \ref{Ans:Schwartz} holds, is there a notion of smooth representation for $\Psi_{\omega}\{F\}$ such that $\mathcal{S}(X(F),\mathcal{L}^{1/2})$ is the space of smooth vectors in $L^2(X^{\mathrm{sm}}(F),\mathcal{L}^{1/2})$?
\end{question}

When $\Psi_{\omega}\{F\}$ is a Lie group or a locally compact totally disconnected group it is reasonable to use the usual notion of smoothness.  However, we do not know if $\Psi_{\omega}\{F\}$ is a Lie group or a locally compact totally disconnected group in general. 

In the setting of Theorems \ref{thm:intro:HW}  and \ref{thm:ortho:case:intro} the answer to this question is affirmative if we use the usual notion of smoothness (see Proposition \ref{prop:smooth1} and \eqref{cone:Sch} below).

\section{Reductive monoids}
\label{sec:RM}

\subsection{Reductive monoids}
We briefly review Ng\^o's construction of $L$-monoids from \cite[\S5]{Ngo:Hankel}.
Let $F$ be a local or global field of characteristic $0$, and let $G$ be a connected reductive group over $F$.  Let $\rho:{}^LG\to \GL_{V}(\mathbb{C})$ be a finite dimensional representation of ${}^LG.$  Here we use the Galois form of the $L$-group.

 One has $\rho|_{Z_{\widehat{G}}^\circ}=\chi_1 \oplus \cdots \oplus \chi_{t}$
for some characters $\chi_i \in X^*(Z_{\widehat{G}}^\circ),$ possibly occurring with multiplicity.  We assume that 
\begin{align} \label{chi:assum}
    \chi_i \neq 1 \textrm{ for all }i \textrm{ and the }\chi_i \textrm{ generate a strictly convex cone }C_{Z_{\widehat{G}}^\circ}(\rho) \subset X^*(Z_{\widehat{G}}^\circ)_{\RR}.
\end{align}
Here a strictly convex cone is a convex cone containing no line.

Fix an algebraic closure $\overline{F}$ of $F$, and let $G_{\overline{F}}$ be the base change of $G$ to $\overline{F}$. Let $T\leq G$ be a maximal torus and $B$ a Borel subgroup containing $T_{\overline{F}}$. 
Let $\Omega(\rho)\subseteq X^*(\widehat{T}) = X_*(T_{\overline{F}})$ be the convex span of the weights of $\rho$ and let $C(\rho)\subseteq X_*(T_{\overline{F}})_{\RR}$ denote the cone generated by $\Omega(\rho).$
As explained in the proof of \cite[Proposition 5.1]{Ngo:Hankel}, assumption \eqref{chi:assum} implies that $C(\rho)$ is strictly convex.
Let $\alpha_1,\ldots,\alpha_r\in X^*(T_{\overline{F}})$ be a set of nonzero elements lying in the positive Weyl chamber with respect to $B$ such that their $W(G_{\overline{F}},T_{\overline{F}})(\overline{F})$-orbits generate $\Omega(\rho)^\vee:=\{\alpha\in X^*(T_{\overline{F}}): \langle\alpha,\Omega(\rho)\rangle\geq 0\}$. Let $\omega_{\alpha_i}:G_{\overline{F}}\to \GL_{V_{\alpha_i}}$ denote a representation of highest weight $\alpha_i$, and let
\begin{align} \label{omega}
\omega_{\rho}:G_{\overline{F}}\lto \bigoplus_{i=1}^r \mathrm{End}_{V_{\alpha_i}}
\end{align} 
be the canonical map. We point out that $\omega_{\rho}$ depends on the choice of $\alpha_i,$ but we will not encode this into the notation.  

By our assumption that the characteristic of $F$ is zero, claim ($\text{a}'$) in the proof of \cite[Proposition 2.3.2]{Wang:monoid} implies the following:
\begin{lem} \label{lem:is:normal}
    The closure of the image of $\omega_{\rho}$ in $\bigoplus_{i=1}^r \mathrm{End}_{V_{\alpha_i}}$ is normal. \qed
\end{lem}

In \cite[\S 5]{Ngo:Hankel},  the monoid $M_{\rho,\overline{F}}$ is defined to be the normalization of the closure of the image of $\omega_{\rho}.$  By Lemma \ref{lem:is:normal}, in our setting no normalization is necessary.
In \cite[Proposition 5.1]{Ngo:Hankel} it is proved that $M_{\rho, \overline{F}}$ admits an $F$-model independent of the choice of $\alpha_1,\ldots,\alpha_r;$ hence $M_{\rho,\overline{F}}$ is independent of the choice of $\alpha_1,\dots,\alpha_r.$ 
We will proceed slightly differently to define our model so that the whole representation $\omega_{\rho}$ descends.  In any case by loc.~cit.~the unit group of $M_{\rho,\overline{F}}$ is $G_{\overline{F}}$ and the
 map $G_{\overline{F}} \to M_{\rho,\overline{F}}$ is an open immersion.

\begin{lem} \label{lem:omegarho}
 For an appropriate choice of $\alpha_1,\dots,\alpha_r$ (possibly with multiplicity) the morphism $\omega_{\rho}$ descends to a morphism
$\omega_{\rho}:G \to E_{\omega_{\rho}}:=\bigoplus_{\mathfrak{O}}E_{\mathfrak{O}}
$
where each $E_{\mathfrak{O}}$ is an irreducible representation of $G \times G$, with $E_{\mathfrak{O}} \cong E_{\mathfrak{O}'}$ if and only if $\mathfrak{O}=\mathfrak{O}'.$
\end{lem}

\begin{proof} 
There is a natural action of $\Gal(\overline{F}/F)$ on $X^*(T_{\overline{F}}).$  We consider the twisted action 
\begin{align*} 
\Gal(\overline{F}/F) \times X^*(T_{\overline{F}}) \lto X^*(T_{\overline{F}})\\
(\sigma,\nu ) \longmapsto w(\sigma \circ \nu)
\end{align*}
where $w \in W(G_{\overline{F}},T_{\overline{F}})(\overline{F})$ is the unique element such that the automorphism $\nu \mapsto w(\sigma \circ \nu)$ preserves the positive Weyl chamber with respect to $B.$  This is the same action as that given in \cite[\S 3.1]{Tits:representations}.  

As mentioned above, without changing $M_{\rho,\overline{F}}$
we may enlarge
$\{\alpha_1,\dots,\alpha_r\}.$  From this fact and the definition of $M_{\rho,\overline{F}}$ as a closure we can even enlarge $\{\alpha_1,\dots,\alpha_r\}$ to be a multiset, or in other words allow multiplicities greater than $1$ in the representation $\omega_{\rho}.$  We first enlarge $\{\alpha_1,\dots,\alpha_r\}$  so that it is a union of $\Gal(\overline{F}/F)$-orbits.  For each orbit $\mathfrak{O}$ choose $\alpha_{i} \in \mathfrak{O}$ and let $F_{i}$ be the fixed field of the stabilizer $\Gal(\overline{F}/F)_{\alpha_{i}}.$ 

By \cite[Th\'eor\`eme 7.2]{Tits:representations}, to $\alpha_i$ one can attach a division algebra $D_{i}$ over $F_i$ and a finite rank $D_i$-module $W_{\alpha_i}$ such that 
\begin{align} \label{descent}
V_{\alpha_i}':=\mathrm{Res}_{F_i/F}\mathrm{Res}_{D_i/F_i}W_{\alpha_i}
\end{align}
is an irreducible representation of $G$ and 
$(V_{\alpha_i}')_{\overline{F}} \cong V_{\alpha_i}^{[F_i:F]\dim_{F_i}D_i}
$
as representations of $G_{\overline{F}}.$
By loc.~cit., up to isomorphism over $F,$ \eqref{descent} does not depend on the choice of $\alpha_i$ in the $\Gal(\overline{F}/F)$-orbit,
 so we denote it by $V_{\mathfrak{O}}.$  Let $E_{\mathfrak{O}}:=V_{\mathfrak{O}}^{\vee} \otimes_F V_{\mathfrak{O}};$ it is an irreducible representation of $G \times G.$

Thus we choose the multiset $\{\alpha_1,\dots,\alpha_r\}$ so that it is a union of $\Gal(\overline{F}/F)$-orbits, where the multiplicity of an orbit $\mathfrak{O} \ni \alpha_i$ is $([F_i:F]\dim_{F_i}D_i)^2.$  Then $(V_{\alpha_i}^{\vee})^{[F_i:F]\dim_{F_i}D_i} \otimes V_{\alpha_i}^{[F_i:F]\dim_{F_i}D_i}$ descends to $E_{\mathfrak{O}}.$  
\end{proof}

We let $M_{\rho}$ be the closure of $\omega_{\rho}(G)$ in $E_{\omega_{\rho}}.$  We refer to it as the \textbf{$L$-monoid} attached to $\rho.$  The notation is consistent, as the base change of $M_{\rho}$ to $\overline{F}$ is $M_{\rho,\overline{F}}.$  This model may be identified with the model constructed by Ng\^o by the uniqueness statement in \cite[Proposition 5.1]{Ngo:Hankel}.

\begin{lem} One has $\mathrm{codim}(M_\rho - M_\rho^{\mathrm{sm}},M_\rho)\geq 2$. 
\end{lem}

\begin{proof} By Lemma \ref{lem:is:normal} $M_\rho$ is normal, so it is regular in codimension $1$ \cite[\href{https://stacks.math.columbia.edu/tag/0345}{Tag 0345}]{stacks-project}. Over a field of characteristic zero regular is equivalent to smooth  \cite[\href{https://stacks.math.columbia.edu/tag/0B8X}{Tag 0B8X}]{stacks-project}.
\end{proof}

\subsection{Modulation groups for reductive monoids} \label{sec:monoid-mod} 
  For simplicity we assume that we are given an isomorphism 
$$
d:G/G^{\mathrm{der}} \tilde{\lto} \GG_m
$$
and that $\rho \circ \hat{d}:\GG_m \lto \GL_V$ is the inclusion of $\GG_m$ into the center of $\GL_V.$  
Let $T \leq G$ be a maximal torus, and let
\begin{align}
\Omega(\rho)^{\perp}:=\{\alpha \in X^*(T_{\overline{F}}): \langle \alpha,\chi\rangle=0 \textrm{ for all }\chi \in \Omega(\rho)\}.
\end{align}
We assume that 
\begin{align} \label{perp0}
\Omega(\rho)^{\perp}=0.
\end{align}

We now explain how to define modulation groups in this setting.  We take
\begin{itemize}
    \item $H=G\times G,$
    \item $X = M_\rho$ with right $G\times G$-action,
    \item $V=E_{\omega_\rho}$
    with right $G\times G$-action,
    \item $\omega:=\omega_\rho:M_\rho\to E_{\omega_\rho}$ as in Lemma \ref{lem:omegarho}.
\end{itemize}  We henceforth assume that 
\begin{align} \label{H1:assump}
H^1(F,Z_G)=0.
\end{align}

\begin{lem} \label{lem:monoid:case} In the setting above, under assumptions \eqref{perp0} and \eqref{H1:assump} the assumptions \eqref{omega:span}, \eqref{omega:scale},  \eqref{HX:H1=0}, and \eqref{HX:Fpoints} hold.
\end{lem}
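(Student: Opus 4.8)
The plan is to verify the four axioms \eqref{omega:span}, \eqref{omega:scale}, \eqref{HX:H1=0}, and \eqref{HX:Fpoints} in turn for the data $H = G\times G$, $X = M_\rho$, $V = E_{\omega_\rho}$, and $\omega = \omega_\rho$, using the structure of reductive monoids recalled in \S\ref{sec:RM}. The overall strategy is that most of these reduce to properties of the $G\times G$-equivariant closed immersion $\omega_\rho$ and the fact that $M_\rho$ contains $G$ as a dense open subset.

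First I would dispose of \eqref{omega:span} and \eqref{HX:Fpoints} together by invoking Lemma \ref{lem:M:equiv}: since $\omega_\rho$ is a closed immersion it satisfies hypothesis $\mathrm{(M0)}$, so it suffices to check \eqref{omega:span}, and then \eqref{HX:Fpoints} (as well as $H_X = H_V$) follows for free. For \eqref{omega:span} itself, I would argue that the $F$-span of $\omega_\rho(X^{\mathrm{sm}}(F))$ is a $G\times G$-subrepresentation of $\oplus_{\mathfrak{O}} E_{\mathfrak{O}}$: it is nonzero (as $M_\rho$ is a nonempty affine variety and $\omega_\rho$ an immersion), and because $G\subseteq M_\rho$ is dense and open with $G(F)\subseteq X^{\mathrm{sm}}(F)$ Zariski-dense (by Lemma \ref{lem:dense}, using connectedness of $G\times G$ and density of $G$ in $M_\rho$), the span already contains the span of $\omega_\rho(G(F))$, which generates $\oplus_{\mathfrak{O}} E_{\mathfrak{O}}$ as a $G\times G$-representation since each $E_{\mathfrak{O}}$ is irreducible and appears in the image. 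So the span is everything.

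Next, \eqref{omega:scale} should come from the central cocharacter $\hat d:\GG_m\to G$ dual to $d:G/G^{\mathrm{der}}\xrightarrow{\sim}\GG_m$. Embed $\GG_m$ into $Z_{G\times G}$ via, say, $t\mapsto (\hat d(t), 1)$ (or the diagonal, whichever matches the normalization $\rho\circ\hat d = $ scaling). The hypothesis that $\rho\circ\hat d$ is the inclusion of $\GG_m$ into the center of $\GL_V$ means this cocharacter acts on $V = E_{\omega_\rho}$, and hence on the image of $\omega_\rho$, by a fixed power of $t$; equivariance of $\omega_\rho$ then gives $\omega_\rho(xc(t)) = t^k\omega_\rho(x)$ with $k=1$ (or $k$ the relevant weight). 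This is essentially a bookkeeping check on which power of $\GG_m$ acts on each $E_{\mathfrak{O}}$, the point being that it is the \emph{same} positive power on all of them because of the normalization.

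Finally, \eqref{HX:H1=0} asks $H^1(F, H_X) = 0$. By the previous steps $H_X = H_V = \ker(G\times G\to \GL_V)$. Here I expect the main obstacle: one must identify this kernel concretely and show its $H^1$ vanishes. The natural claim is that $H_V$ sits inside (a twisted form of) the center $Z_{G\times G} = Z_G\times Z_G$ — indeed an element $(g_1,g_2)$ acting trivially on the monoid representation must act trivially on $G$ by left/right translation composed with $\omega_\rho$, forcing $g_1 = g_2 = z$ central, and then the scaling normalization pins down $z$ further, so that $H_V$ is a subgroup scheme of the diagonal copy of $Z_G$. Then $H^1(F,H_V)$ injects into or is controlled by $H^1(F,Z_G)$, which vanishes by the standing assumption \eqref{H1:assump}; one may need the long exact sequence associated to $H_V\hookrightarrow Z_G$ (or $Z_G\twoheadrightarrow Z_G/H_V$) together with $H^1(F,Z_G)=0$, and possibly Hilbert 90 for the relevant quotient torus. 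Making the identification of $H_V$ precise — in particular checking it is exactly a subgroup of $Z_G$ and not larger, and that the quotient is a torus so its $H^1$ behaves — is the step I would write out most carefully.
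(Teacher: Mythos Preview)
Your approach matches the paper's: verify \eqref{omega:span} via irreducibility of the $E_{\mathfrak{O}}$, get \eqref{HX:Fpoints} from Lemma~\ref{lem:M:equiv} since $\omega_\rho$ is an immersion, and read off \eqref{omega:scale} from the normalization on $d$. The one place you make your life harder than necessary is \eqref{HX:H1=0}. You correctly argue that any $(g_1,g_2)$ acting trivially on $G\subset M_\rho$ must have $g_1=g_2=z\in Z_G$, giving $H_X\subseteq \Delta(Z_G)$; but you then hedge (``pins down $z$ further'', ``may need the long exact sequence'', ``possibly Hilbert 90''). There is no further constraint: conversely $\Delta(Z_G)$ visibly acts trivially on $G$ and hence on its closure $M_\rho$, so $H_X=\Delta(Z_G)$ \emph{exactly}. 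Then $H^1(F,H_X)=H^1(F,Z_G)=0$ is immediate from \eqref{H1:assump}, with no exact sequence or quotient torus needed. The paper's proof makes precisely this identification and stops there.

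A minor remark on your \eqref{omega:span} argument: the appeal to Lemma~\ref{lem:dense} for density of $G(F)$ is harmless but unnecessary. It is enough that the $F$-span of $\omega_\rho(X^{\mathrm{sm}}(F))$ is a $G(F)\times G(F)$-stable subspace meeting each $E_{\mathfrak{O}}$ nontrivially (e.g.\ via the image of the identity); since the $E_{\mathfrak{O}}$ are irreducible and pairwise nonisomorphic, this forces the span to be all of $V$.
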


\begin{proof} 
The span of $\omega_{\rho}(G)$ is a subrepresentation of $E_{\omega_\rho} =\bigoplus_{\mathfrak{O}}E_{\mathfrak{O}}.$  By Lemma \ref{lem:omegarho} $E_{\mathfrak{O}} \cong E_{\mathfrak{O}'}$ if and only if $\mathfrak{O}=\mathfrak{O}',$ and it follows from the proof of loc.~cit. that the projection of $\omega_{\rho}(G)$ to each $E_{\mathfrak{O}}$ is nonzero.  This implies \eqref{omega:span}.

Let $A_G=Z_G^{\circ}$ be the greatest split torus in the center of $G.$   Consider the action of $A_G \times A_G$ on $E_{\omega_{\rho}}.$  On each $E_{\mathfrak{O}}$ it acts via a character $\chi_{\mathfrak{O}}.$  To prove \eqref{omega:scale} we must show that $\chi_{\mathfrak{O}}$ is nontrivial for all $\mathfrak{O}.$  To prove this it suffices to show that $A_{G\overline{F}}$ acts nontrivially on $V_{\alpha_i},$ where we are using notation introduced above \eqref{omega}.  Equivalently, we must show that $\alpha_i|_{A_{G\overline{F}}}$ is nontrivial.  

We have
$$
\sum_{w \in W(G_{\overline{F}},T_{\overline{F}})(\overline{F})}w.\alpha_i=rd
$$
for some $r \in \ZZ$ because the element on the left is Weyl invariant.  If $r \neq 0$ then we are done.  If $r=0,$ then $-\alpha_i=\sum_{w \neq I}w.\alpha_i.$  This implies that $ \langle-\alpha_i,\omega'\rangle=\langle\sum_{w \neq I}w.\alpha_i,\omega'\rangle \geq 0$ for all $\omega' \in \Omega(\rho).$  Since the $W(G_{\overline{F}},T_{\overline{F}})(\overline{F})$-orbit of $\alpha_i$ lies in $\Omega(\rho)^\vee,$  we deduce that $\langle \alpha_i,\omega' \rangle=0$ for all $\omega \in \Omega(\rho),$ contradicting \eqref{perp0}.  
   
To prove \eqref{HX:H1=0} we point out that in the case at hand $H_X=(G \times G)_{M_{\rho}}=\Delta(Z_G),$ the image of the center $Z_G$ under the diagonal embedding $\Delta:G \to G \times G.$   Indeed, we certainly have $\Delta(Z_G)\leq (G \times G)_{M_{\rho}},$ and it is easy to check that $(G \times G)_G\leq \Delta(Z_G).$  We now apply \eqref{H1:assump}.  Finally,  \eqref{HX:Fpoints} follows from Lemma \ref{lem:M:equiv}.
\end{proof}

We set
\begin{align}
\Psi^{\mathrm{s}}_{\rho}:=\Psi^{\mathrm{s}}_{\omega_{\rho}} \quash{= \textcolor{red}{V_{\rho}^{\vee} \otimes V_{\rho} \rtimes (G\times G)/\Delta(Z(G))}}
\end{align}
and call it the \textbf{small modulation group of $\rho.$}

We originally defined the small modulation group using the unitary representation of $\Psi_{\rho}^{\mathrm{s}}(F)$ on $L^2(M_{\rho}^{\mathrm{sm}}(F),\mathcal{L}^{1/2})).$ This representation is equivalent to the representation
\begin{align*}
\Psi_{\rho}^{\mathrm{s}}(F) \times L^2(G(F)) &\lto L^2(G(F))\\
((\lambda, (g_1,g_2)),f) &\longmapsto \left(x \mapsto \psi(\lambda\circ\omega_\rho(x))f(g_1^{-1}xg_2)\right)
\end{align*}
where we use the Haar measure to define $L^2(G(F)).$  Indeed, $M^{\mathrm{sm}}_{\rho}(F)-G(F)$ is of measure zero, and the passage between the space of half densities and Haar measures is a special case of Remark \ref{rem:half}. We use this observation in the following lemma: 

\begin{lem} The representation of $\Psi^{\mathrm{s}}_{\rho}(F)$ on $L^2(G(F))$ is irreducible. 
\end{lem}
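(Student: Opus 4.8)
The plan is to deduce this from the irreducibility criterion established in Theorem \ref{IrreducibilityRV}, transported through the equivalence between $L^2(M_\rho^{\mathrm{sm}}(F),\mathcal{L}^{1/2})$ and $L^2(G(F))$ recorded just above. Concretely, I would apply Theorem \ref{IrreducibilityRV} with $H=G\times G$, $X=M_\rho$, $V=E_{\omega_\rho}$ and $\omega=\omega_\rho$, taking the basepoint $x_0$ to be the identity element $e\in G(F)\subseteq M_\rho^{\mathrm{sm}}(F)$. I therefore need to verify the three hypotheses of that theorem in this setting: (i) the $H$-orbit $O(x_0)$ is open and dense in $X$; (ii) $H(F)$ acts transitively on $O(x_0)(F)$; and (iii) $H_{x_0}=H_{\omega(x_0)}$.

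For (i), the orbit of $e$ under the $G\times G$-action $x\mapsto g_1^{-1}xg_2$ is exactly $G\subseteq M_\rho$, which is Zariski dense and open in $M_\rho$ by Ng\^o's construction (the map $G\to M_\rho$ of \S\ref{sec:RM} is an open immersion onto a dense open subscheme). For (ii), transitivity of $(G\times G)(F)$ on $G(F)$ is immediate: $g\in G(F)$ is $(1,g)\cdot e$. For (iii), the stabilizer $H_{x_0}=(G\times G)_e$ is the diagonal $\Delta(G)$, while $H_{\omega(x_0)}$ is the stabilizer in $G\times G$ of $\omega_\rho(e)\in E_{\omega_\rho}$; one checks these coincide. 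One direction, $\Delta(G)\subseteq (G\times G)_{\omega_\rho(e)}$, holds because $\omega_\rho$ is $G\times G$-equivariant and $e$ is fixed by $\Delta(G)$. For the reverse inclusion one can argue as in the preceding lemma: if $(g_1,g_2)$ fixes $\omega_\rho(e)$ then, since $\omega_\rho$ is a closed immersion (hence injective on points), $(g_1,g_2)$ fixes $e\in G$, forcing $g_1^{-1}g_2=e$, i.e. $(g_1,g_2)\in\Delta(G)$. Thus $H_{x_0}=H_{\omega(x_0)}$.

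Having checked the hypotheses, Theorem \ref{IrreducibilityRV} yields that the representation of $\Psi^{\mathrm{s}}_\rho\{F\}=\Psi^{\mathrm{s}}_{\omega_\rho}\{F\}$ on $L^2(M_\rho^{\mathrm{sm}}(F),\mathcal{L}^{1/2})$ is irreducible. Finally, by the discussion immediately preceding the lemma, this unitary representation is equivalent to the representation of $\Psi^{\mathrm{s}}_\rho(F)$ on $L^2(G(F))$: indeed $M_\rho^{\mathrm{sm}}(F)-G(F)$ has measure zero, and passing from half-densities to the Haar measure on $G(F)$ is the isometry of Remark \ref{rem:half}. Irreducibility is preserved under unitary equivalence, so the representation on $L^2(G(F))$ is irreducible as well.

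I expect the only genuine point requiring care is the verification that $H_{x_0}=H_{\omega(x_0)}$, i.e. that passing to $E_{\omega_\rho}$ does not enlarge the stabilizer; but this follows cleanly from the fact that $\omega_\rho$ is a closed immersion. Everything else is bookkeeping, relying on facts already assembled in \S\ref{sec:RM} and the previous lemma.
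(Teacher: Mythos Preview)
Your proof is correct and follows essentially the same route as the paper: apply Theorem \ref{IrreducibilityRV} with basepoint the identity $e\in G(F)$, check that the orbit is $G$ (open and dense in $M_\rho$), that $G(F)\times G(F)$ acts transitively on $G(F)$, and that $(G\times G)_e=(G\times G)_{\omega_\rho(e)}$. The paper simply asserts the stabilizer equality without justification, whereas you supply the clean argument via injectivity of the closed immersion $\omega_\rho$; this is a welcome detail, not a departure in approach.
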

\begin{proof}
The $G \times G$-orbit of the identity $I$ in $G \subset M_{\rho}$ is just $G,$ and it is open and dense in $M_{\rho}.$ Moreover $G(F) \times G(F)$ certainly acts transitively on $G(F),$ and $(G \times G)_I=(G \times G)_{\omega(I)}.$  Thus we can apply Theorem \ref{IrreducibilityRV}.
\end{proof}

\begin{defn} \label{defn:mod:group:monoid} Assume $G$ is split.  The \textbf{modulation group} $\Psi_{\rho}\{F\}$ of $\rho$ over $F$ is the modulation group attached to $\mathcal{F}_{\rho},$ $\omega_{\rho},$ and the trivial character $\chi:G(F) \to \CC^\times:$
\begin{align}
\Psi_{\rho}\{F\}    :=\langle \mathcal{F}_{\rho},\Psi_{\omega_{\rho}}^{\mathrm{s}}(F)\rangle.
\end{align}
\end{defn}
\noindent Here we are using Definition \ref{defn:mod:group}.  We have assumed $G$ is split because it is unclear which character $\chi$ to choose in general.

The following is a summary of some of the results of \cite{DRS:Schwartz}:

\begin{thm} If $X$ is a reductive monoid as above, then there is a Fourier transform $\mathcal{F}_{\rho}$ satisfying \eqref{Fi:equiv} and a Schwartz space satisfying \eqref{assum:preserve}, \eqref{assum:dense}, \eqref{assum:sm}.  \qed
\end{thm}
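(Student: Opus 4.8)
The plan is to deduce the statement directly from the constructions of \cite{DRS:Schwartz}; the work lies in translating their conventions into the framework of \S\ref{ssec:FT} and \S\ref{ssec:Schwartz}. First I would recall from loc.~cit.\ the Schwartz space $\mathcal{S}(M_\rho(F))$, a space of functions on $G(F)$, together with the $\rho$-Fourier transform $\mathcal{F}_\rho$ acting on it. The unit group $G \subseteq M_\rho^{\mathrm{sm}}$ is open with complement of measure zero, and bi-invariant Haar measure on $G(F)$ is a $(G\times G)(F)$-eigenmeasure; Remark \ref{rem:half} therefore provides an isometry $L^2(G(F)) \xrightarrow{\sim} L^2(M_\rho^{\mathrm{sm}}(F),\mathcal{L}^{1/2})$. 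I would transport $\mathcal{S}(M_\rho(F))$ and $\mathcal{F}_\rho$ across this isometry to obtain a subspace $\mathcal{S}(M_\rho(F),\mathcal{L}^{1/2}) \leq L^2(M_\rho^{\mathrm{sm}}(F),\mathcal{L}^{1/2})\cap C^\infty(M_\rho^{\mathrm{sm}}(F),\mathcal{L}^{1/2})$ and an isometry $\mathcal{F}_X$ of half-densities; here the scalar $\chi^{-1/2}$ of Remark \ref{rem:half} (which is in fact trivial, Haar measure being bi-invariant) is exactly the bookkeeping needed to match the translation action on functions used in \cite{DRS:Schwartz} with the unitary action $\mathcal{R}$ on half-densities.

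Next I would verify the equivariance \eqref{Fi:equiv}. Set $\nu:\GG_m \to Z_{G\times G}$ equal to $t \mapsto (\hat{d}(t),\hat{d}(t)^{-1})$, where $\hat{d}$ is the central cocharacter of $G$ associated with the isomorphism $d:G/G^{\mathrm{der}} \xrightarrow{\sim} \GG_m$, and let $\iota:G\times G \to G\times G$ be the automorphism exchanging the two factors; then $\iota\circ\nu(x) = \nu(x)^{-1}$, as the setup of \S\ref{ssec:FT} requires. The equivariance of the $\rho$-Fourier transform established in \cite{DRS:Schwartz} asserts that $\mathcal{F}_\rho$ intertwines the left-and-right translation action of $G\times G$ with its $\iota$-twist; the prototype is $\rho$ the standard representation, $M_\rho = \mathrm{Mat}_n$, and $\mathcal{F}_\rho$ the ordinary Fourier transform on matrices, for which the identity $\mathcal{F}_X\circ\mathcal{R}(h) = \mathcal{R}(\iota(h))\circ\mathcal{F}_X$ is a direct change of variables once one passes to half-densities (the Jacobian factors cancel precisely against those in the half-density action of $\mathcal{R}$). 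For general $\rho$ one may need to compose $\iota$ with a Chevalley involution to match the normalization of $\mathcal{F}_\rho$ in loc.~cit., but this does not affect the relation $\iota\circ\nu = \nu^{-1}$. This establishes \eqref{Fi:equiv}.

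It remains to check the three axioms. Axiom \eqref{assum:preserve} is built into the construction: $\mathcal{F}_\rho$ preserves $\mathcal{S}(M_\rho(F))$ in \cite{DRS:Schwartz}, so $\mathcal{F}_X$ preserves $\mathcal{S}(M_\rho(F),\mathcal{L}^{1/2})$. For axiom \eqref{assum:sm}, I would use that $\mathcal{S}(M_\rho^{\mathrm{sm}}(F),\mathcal{L}^{1/2})$ consists of half-densities that decay rapidly toward the boundary of any smooth compactification of $M_\rho^{\mathrm{sm}}$, hence in particular toward $M_\rho \setminus M_\rho^{\mathrm{sm}}$; the explicit description of $\mathcal{S}(M_\rho(F))$ in loc.~cit., which contains $\mathcal{S}(M_\rho^{\mathrm{sm}}(F))$ (and $C_c^\infty(M_\rho^{\mathrm{sm}}(F))$ when $F$ is non-Archimedean), then places such functions inside $\mathcal{S}(M_\rho(F),\mathcal{L}^{1/2})$. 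Axiom \eqref{assum:dense} is then immediate, since $\mathcal{S}(M_\rho^{\mathrm{sm}}(F),\mathcal{L}^{1/2})$ is dense in $L^2(M_\rho^{\mathrm{sm}}(F),\mathcal{L}^{1/2})$. The only non-formal points, and hence the main obstacles, are (i) pinning down $\iota$ and $\nu$ so that the equivariance statement of \cite{DRS:Schwartz} becomes \eqref{Fi:equiv} on the nose after the half-density renormalization of the first step, and (ii) confirming \eqref{assum:sm}, i.e.\ that the Schwartz space of the smooth variety $M_\rho^{\mathrm{sm}}$ is contained in the monoid Schwartz space; both are settled by unwinding the definitions in loc.~cit.
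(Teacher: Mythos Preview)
Your proposal is correct and follows the same approach as the paper: the paper gives no proof whatsoever, merely prefacing the theorem with ``The following is a summary of some of the results of \cite{DRS:Schwartz}.'' Your proposal is simply a more detailed unpacking of that citation, spelling out the half-density translation via Remark~\ref{rem:half} and the choice of $\iota$ and $\nu$, which the paper leaves entirely to the reader.
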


\section{Horospherical varieties}
\label{sec:HS}

Let $P \leq G$ be a parabolic subgroup of a simple and simply connected group $G.$  
Let 
\begin{align}
    X_P^\circ:=P^{\mathrm{der}} \backslash G, \quad 
    X_P:=\overline{P^{\mathrm{der}} \backslash G}.
\end{align}
These spaces are both examples of horospherical varieties.  
Let $M$ be a Levi subgroup of $P$ and let $M^{\mathrm{ab}}=M/M^{\mathrm{der}}.$  The space $X_P$ admits a canonical right action $X_P \times M^{\mathrm{ab}} \times G \to X_P.$
   
   Fourier transforms and Schwartz spaces for this general class of schemes were first considered in \cite{BK:normalized}.  To simplify our discussion, we will focus on the case where $P$ is a self-associate maximal parabolic subgroup.
\quash{When $P=B$ is a Borel subgroup then we refer to $X_B$ as basic affine space (although this monicker is often reserved for $X_B^\circ$).  It comes equipped with the usual Pl\"ucker embedding
$$
\mathrm{Pl}_{X_B}:X_B \lto \prod_{\omega}V_{\omega}
$$
where the sum is over the fundamental representations of $G$ (viewed as right representations) and $V_{\omega}$ is the space of the fundamental representation. The natural action of $G$ on $\prod_{\omega} V_{\omega}$ extends to an action of $M^{\mathrm{ab}}$ \textcolor{red}{How?}}
We have a closed immersion
\begin{align}
\mathrm{Pl}_{P}:X_P \lto V_P
\end{align}
that is a lift of the usual Pl\"ucker embedding.  We refer to \cite[Lemma 3.4]{Getz:Hsu:Leslie} for details.  Here $V_P$ is a  right representation of $G.$  It is a fundamental representation of highest weight $-\omega_{P}$ with respect to a choice of maximal torus $T \leq M.$   The character $\omega_P$ extends to $M$ and factors through $M^{\mathrm{ab}}.$  The map $\mathrm{Pl}_{P}$ intertwines the action of $M^{\mathrm{ab}}$ on $X_P$ with the action of $M^{\mathrm{ab}}$ on $V_P$ via scaling by the character $\omega_P.$

\begin{lem}
The actions of $M^{\mathrm{ab}} \times G$ on $X_P$ and $V_P$ factor through 
$$
H:=(M^{\mathrm{ab}} \times G)/(M^{\mathrm{ab}} \times G)_{X_P}.
$$
With this choice of $H,$ $X=X_P$ and $\omega=\mathrm{Pl}_P$  the assumptions \eqref{omega:span}, \eqref{omega:scale}, \eqref{HX:H1=0}, and \eqref{HX:Fpoints} are valid.  Moreover $X_P$ is normal and $\mathrm{codim}(X-X^{\mathrm{sm}},X) \geq 2.$
\end{lem}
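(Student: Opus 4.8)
The plan is to check \eqref{omega:span}, \eqref{omega:scale}, \eqref{HX:H1=0} and \eqref{HX:Fpoints} one at a time, using the properties of the lift $\mathrm{Pl}_P$ of the Pl\"ucker embedding recalled above, and then to read off normality and the codimension bound from the fact that $X_P$ is the affine cone over $G/P$. That the action on $X_P$ factors through $H$ is the definition of $H$. For \eqref{omega:span}, fix the base point $x_0 \in X_P^\circ(F)$; then $\mathrm{Pl}_P(X_P^\circ) = \mathrm{Pl}_P(x_0)\cdot G$ because $X_P^\circ = P^{\mathrm{der}}\backslash G$ is a single $G$-orbit and $\mathrm{Pl}_P$ is $G$-equivariant. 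The $F$-span $W$ of $\{\mathrm{Pl}_P(x_0)g : g \in G(F)\}$ is a nonzero $G(F)$-stable $F$-subspace of $V_P(F)$; since $G$ is connected and $\mathrm{char}\,F=0$, the group $G(F)$ is Zariski dense in $G$ (cf.\ the proof of Lemma \ref{lem:dense}), and as the stabilizer of a subspace is Zariski closed, $W\otimes_F\bar F$ is a nonzero $G$-subrepresentation of the absolutely irreducible representation $V_P\otimes_F\bar F$, hence equals it, and $W = V_P(F)$. This gives \eqref{omega:span}, equivalently \eqref{omega:span:1}. It also follows that $(M^{\mathrm{ab}}\times G)_{X_P} = (M^{\mathrm{ab}}\times G)_{V_P}$: any element fixing $X_P$ pointwise fixes the spanning set $\mathrm{Pl}_P(X_P(R))$, hence all of $V_P(R)$, while conversely $\mathrm{Pl}_P$ is an $(M^{\mathrm{ab}}\times G)$-equivariant monomorphism. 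In particular the action on $V_P$ also factors through $H$, and $H$ acts faithfully on $V_P$.

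For \eqref{omega:scale}: by construction $\mathrm{Pl}_P$ intertwines the $M^{\mathrm{ab}}$-action on $X_P$ with scaling by the nontrivial character $\omega_P\colon M^{\mathrm{ab}}\to\GG_m$ on $V_P$. Choose a cocharacter $c'\colon\GG_m\to M^{\mathrm{ab}}$ with $\omega_P\circ c'$ equal to $t\mapsto t^k$ for some $k\in\ZZ_{>0}$; this is possible since $\omega_P$ is a nontrivial character of a torus. Its image $c$ in $H$ is central, because $c'$ factors through the central subgroup $M^{\mathrm{ab}}\times 1$ of $M^{\mathrm{ab}}\times G$, and $\mathrm{Pl}_P(x\,c(t)) = \omega_P(c'(t))\mathrm{Pl}_P(x) = t^k\mathrm{Pl}_P(x)$, which is \eqref{omega:scale}. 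For \eqref{HX:H1=0} and \eqref{HX:Fpoints}: since $\mathrm{Pl}_P$ is a closed immersion, Lemma \ref{lem:M:equiv} applies and, together with \eqref{omega:span}, yields \eqref{HX:Fpoints} and an equality $H_X = H_V$. But $H_V$ is the kernel of the $H$-action on $V_P$, which is trivial because $H$ acts faithfully on $V_P$; hence $H_X = 1$, so $H^1(F,H_X)=0$ and \eqref{HX:H1=0} holds as well.

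It remains to treat normality and the codimension bound. The lift of the Pl\"ucker embedding exhibits $X_P$ as the closure of $\mathrm{Pl}_P(X_P^\circ)$, and $\mathrm{Pl}_P(X_P^\circ)$ is the set of nonzero vectors of $V_P$ lying over the closed orbit $P\backslash G\subseteq\mathbb{P}(V_P)$; thus $X_P$ is the affine cone over $G/P$ and $X_P\setminus X_P^\circ = \{0\}$. Normality of $X_P$ follows from projective normality of $G/P$ in this embedding — classical for flag varieties — or directly from the observation that $\OO(X_P) = \OO(G)^{P^{\mathrm{der}}}$ is integrally closed, being the intersection of the normal ring $\OO(G)$ with a subfield of its fraction field (its finite generation being the only imported ingredient). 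Since $X_P$ is normal, $X_P^{\mathrm{sm}}\supseteq X_P^\circ$ and $X_P\setminus X_P^{\mathrm{sm}}\subseteq\{0\}$, of codimension $\dim X_P = \dim(G/P)+1\geq 2$ as $P\subsetneq G$.

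The only genuinely non-formal step is this last paragraph: the identification of $X_P$ with the affine cone over $G/P$ and the normality of $X_P$ rest on standard facts about flag varieties — the structure of the orbit of a highest (or extremal) weight vector, and projective normality — which are imported rather than proved here. Everything else is bookkeeping with the properties of $\mathrm{Pl}_P$ recorded before the lemma, with the (small) care point being that one should pass through $V_P$ — rather than directly through $X_P^{\mathrm{sm}}$ — when comparing the various stabilizer subgroup schemes, which is exactly what Lemma \ref{lem:M:equiv} and the equality $(M^{\mathrm{ab}}\times G)_{X_P}=(M^{\mathrm{ab}}\times G)_{V_P}$ accomplish.
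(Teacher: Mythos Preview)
Your argument is correct and, for \eqref{omega:span}--\eqref{HX:Fpoints}, follows essentially the same route as the paper: irreducibility of $V_P$ gives \eqref{omega:span}; Lemma \ref{lem:M:equiv} (applied either to $M^{\mathrm{ab}}\times G$ as in the paper or to the quotient $H$ as you do) gives $(M^{\mathrm{ab}}\times G)_{X_P}=(M^{\mathrm{ab}}\times G)_{V_P}$ and \eqref{HX:Fpoints}; and both arguments arrive at $H_X=1$, whence \eqref{HX:H1=0}. Your treatment is more explicit where the paper is terse, but the ideas coincide.

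The genuine difference is in the last paragraph. The paper handles normality and the codimension bound by citation: $X_P^\circ$ is strongly quasi-affine, so its affine closure is normal, and a result of Popov--Vinberg gives $\dim(X_P\setminus X_P^\circ)=0$. You instead identify $X_P$ concretely as the affine cone over $G/P$ inside $V_P$ and read off $X_P\setminus X_P^\circ=\{0\}$ directly, deducing normality either from projective normality of $G/P$ or from the observation that $\OO(X_P)=\OO(G)^{P^{\mathrm{der}}}=\OO(G)\cap \mathrm{Frac}(\OO(G))^{P^{\mathrm{der}}}$ is the intersection of a normal domain with a subfield of its fraction field. Your approach is more self-contained and exploits the specific situation (maximal parabolic in a simply connected simple group), while the paper's cited results apply in greater generality; both are valid here.
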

\begin{proof}
Assumption \eqref{omega:span} follows from the fact that $V_P$ is an irreducible representation of $G.$  Since $\omega_P$ is an immersion, Lemma \ref{lem:M:equiv} (with $H=(M^{\mathrm{ab}} \times G)$) implies $(M^{\mathrm{ab}} \times G)_{X_P}=(M^{\mathrm{ab}} \times G)_{V_P}.$  The assertion on the actions follows.  
Our comments above on the action of $M^{\mathrm{ab}}$ imply \eqref{omega:scale}.
The group $H_X$ is trivial, so it is clear that \eqref{HX:H1=0} and \eqref{HX:Fpoints} hold.

Since $X_P^\circ$ is smooth, connected, and strongly quasi-affine \cite[Theorem 1.1.2]{Braverman:Gaitsgory} $X_P$ is normal \cite[Lemma 3.1]{Getz:Gu:Hsu:Leslie}.  Since $X_P-X_P^\circ$ has dimension $0$ \cite[Theorem 1 and 2]{Popov:Vinberg} the codimension statement is easy to check.
\end{proof}

\begin{thm} If $X$ is a horospherical variety $X_P$ as above and $G$ is a classical group or $G_2$ then Ansatz \ref{Ans:Schwartz} is valid.
\end{thm}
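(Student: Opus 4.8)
The statement asserts the validity of Ansatz~\ref{Ans:Schwartz} for the horospherical varieties $X_P$ attached to a self-associate maximal parabolic $P$ in a classical group or $G_2$. Unwinding the ansatz, we must exhibit a Fourier transform $\mathcal{F}_{X_P}$ satisfying the equivariance \eqref{Fi:equiv} and a Schwartz space $\mathcal{S}(X_P(F),\mathcal{L}^{1/2})$ satisfying the four axioms \eqref{assum:preserve}, \eqref{assum:local}, \eqref{assum:dense}, and \eqref{assum:sm}. The plan is to import the Fourier transform and Schwartz space already constructed in the literature for these varieties---by Braverman--Kazhdan \cite{BK:normalized} in the function-field setting and, in the generality we need, by the work on Plücker-embedded horospherical spaces (the companion papers \cite{Getz:Hsu:Leslie, Getz:Gu:Hsu:Leslie} and the asymptotics analysis \cite{Hsu:Asymp})---and then verify that those objects satisfy our axioms. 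Concretely, one takes $\mathcal{S}(X_P(F),\mathcal{L}^{1/2})$ to be the push-forward to half-densities (via an $H(F)$-eigenmeasure as in Remark~\ref{rem:half}) of the Braverman--Kazhdan Schwartz space on $X_P^\circ(F)$, and $\mathcal{F}_{X_P}$ to be the normalized Fourier transform defined there. The restriction to classical groups or $G_2$ enters precisely because the construction of $\mathcal{F}_{X_P}$ and the proof that it preserves the Schwartz space currently rely on case-by-case analysis of the relevant $\gamma$-factors / Braverman--Kazhdan kernels, which is complete only for these groups.

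The verification proceeds axiom by axiom. For \eqref{Fi:equiv}: the Fourier transform on $X_P$ is built to intertwine the $G$-action with itself and the $M^{\mathrm{ab}}$-action via the inversion $m \mapsto m^{-1}$ (reflecting $\omega_P \mapsto -\omega_P = \omega_{P^{\mathrm{op}}}$ under the self-associativity hypothesis); this is exactly the statement that there is an automorphism $\iota$ of $H = (M^{\mathrm{ab}}\times G)/(M^{\mathrm{ab}}\times G)_{X_P}$ acting by inversion on the central cocharacter $\nu$ coming from $M^{\mathrm{ab}}$, with $\mathcal{F}_{X_P}\circ \mathcal{R}(h) = \mathcal{R}(\iota(h))\circ \mathcal{F}_{X_P}$. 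For \eqref{assum:preserve}: this is the defining property that $\mathcal{F}_{X_P}$ maps the Schwartz space to itself, which is part of the cited construction. For \eqref{assum:local}: one uses the closed immersion $\mathrm{Pl}_P : X_P \to V_P$ and checks that the Braverman--Kazhdan Schwartz space is stable under multiplication by restrictions of tempered functions on $V_P(F)$; this follows from the description of the Schwartz space near the boundary $X_P - X_P^\circ$ (of codimension $\geq 2$, as noted in the preceding lemma) together with the fact that sections extend across the singular locus, reducing the matter to the smooth locus where locality is a standard property of the Schwartz space of a smooth variety. For \eqref{assum:dense} and \eqref{assum:sm}: since $X_P$ is normal with $\mathrm{codim}(X_P - X_P^{\mathrm{sm}}, X_P)\geq 2$, half-densities on $X_P^{\mathrm{sm}}(F)$ that are Schwartz extend, and $\mathcal{S}(X_P^{\mathrm{sm}}(F),\mathcal{L}^{1/2})\leq \mathcal{S}(X_P(F),\mathcal{L}^{1/2})$ holds because the Braverman--Kazhdan space contains $C_c^\infty$ of the open orbit and, more strongly, the full Schwartz space of the smooth locus; density in $L^2$ is then immediate from \eqref{assum:sm} (as remarked after the axioms).

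The main obstacle is \eqref{assum:preserve}: proving that the correct Fourier transform exists and genuinely preserves a Schwartz space with the other three properties. This is where the restriction on $G$ is unavoidable with current technology---the Fourier kernel for $X_P$ is, up to normalization, an orbital-integral transform on $V_P(F)$ whose analytic continuation and boundedness properties are known only through the explicit computations available for classical groups and $G_2$ (via the Braverman--Kazhdan / Ngô program and its verification in those cases). One must also be somewhat careful reconciling conventions: the cited works typically phrase things in terms of functions against an eigenmeasure on $X_P^\circ(F)$ rather than half-densities on $X_P^{\mathrm{sm}}(F)$, so a preliminary step is to invoke the isometry $\phi_{dx}$ of Remark~\ref{rem:half} to transport everything into the half-density picture, checking that the equivariance \eqref{Fi:equiv} survives the twist by $\chi^{-1/2}$. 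Once the Fourier transform is in hand with \eqref{assum:preserve}, the remaining axioms are soft consequences of normality and the codimension bound, as sketched above.
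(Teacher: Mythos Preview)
Your overall strategy---import the Schwartz space and Fourier transform from \cite{BK:normalized,Getz:Hsu:Leslie} via the eigenmeasure trivialization of Remark~\ref{rem:half}, then check the axioms---is the same as the paper's. But you have inverted where the classical/$G_2$ hypothesis is actually needed, and this causes a real gap.

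You claim that \eqref{assum:preserve} (and the very existence of $\mathcal{F}_{X_P}$) is the hard step requiring the case-by-case restriction, while \eqref{assum:local} and \eqref{assum:sm} are ``soft consequences of normality and the codimension bound.'' The paper's proof says the opposite: the construction of the Fourier transform and the verification of \eqref{assum:preserve} are cited from \cite[5.24]{Getz:Hsu:Leslie} with the explicit remark that none of this depends on $G$ being classical or $G_2$. What \emph{does} require that restriction is precisely \eqref{assum:local} and \eqref{assum:sm}, which the paper defers to \cite{Hsu:Asymp}. Your normality/codimension sketch for those two axioms is not a proof: locality in the sense of Definition~\ref{defn:local} concerns stability under multiplication by tempered functions on the ambient $V_P(F)$, and the inclusion $\mathcal{S}(X_P^{\mathrm{sm}}(F))\leq \mathcal{S}(X_P(F))$ is a nontrivial statement about the behavior of the Braverman--Kazhdan space near the boundary---neither follows formally from $\mathrm{codim}\geq 2$. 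These are exactly the points where the asymptotic analysis of \cite{Hsu:Asymp} (currently available only for classical groups and $G_2$) does the work.

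So the fix is simple but important: move the classical/$G_2$ hypothesis from your discussion of \eqref{assum:preserve} to \eqref{assum:local} and \eqref{assum:sm}, and replace the codimension hand-wave there with a citation to \cite{Hsu:Asymp}.
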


\begin{proof}
For the construction of the Schwartz space and the Fourier transform, see \cite{Getz:Hsu:Leslie} which refines earlier work of \cite{BK:normalized}.  These references use an eigenmeasure to trivialize the space of half-densities as in Remark \ref{rem:half}, so we will omit $\mathcal{L}^{1/2}$ from notation.
It is obvious from the definition that functions in the Schwartz space $\mathcal{S}(X_P(F))$ are smooth. The fact that the Schwartz space is contained in $L^2(X_P^{\circ}(F))$ is \cite[Corollary 5.8]{Getz:Hsu:Leslie}.   See \cite[5.24]{Getz:Hsu:Leslie} for condition \eqref{assum:preserve} and the existence of the Fourier transform.

If $G$ is a classical group or $G_2$ the proof of \eqref{assum:local} and \eqref{assum:sm} (and hence \eqref{assum:dense}) is contained in \cite{Hsu:Asymp}.
\end{proof}

\section{Modulation groups of vector spaces and the Weil representation} \label{sec:Mod:VS} 
Throughout this section, we assume that $F$ is a local field of characteristic zero.  Recall $\psi:F \to \CC^\times$ is a non-trivial character.  Let
\begin{itemize}
    \item $X=V\cong \mathbb{G}_a^n,$
    \item $H=\GL_V$, acting on $V$ on the right.
\end{itemize}
We wish to compute modulation groups in this setting.  This requires a Fourier transform.  We choose a perfect pairing 
\begin{align} \label{perfect:pair}
\langle\,,\,\rangle:V(F) \times V(F) \lto F
\end{align}
and use it to define a Fourier transform
\begin{align}
\mathcal{F}_{V}:=\mathcal{F}_{\langle\,,\,\rangle,\psi}:\mathcal{S}(V(F),\mathcal{L}^{1/2}) \lto \mathcal{S}(V(F),\mathcal{L}^{1/2}).
\end{align}
The notation here is the same as that of \cite[\S 2.7]{Getz:Gu:Hsu:Leslie}, which can be consulted for more details.  The Fourier transform extends an isometry of $L^2(V(F),\mathcal{L}^{1/2}).$

The modulation groups  $\Psi_{\omega}$ computed in this section are both defined using the Fourier transform $\mathcal{F}_{V},$ but with two different choices of equivariant maps $\omega.$

\subsection{The identity map} \label{ssec:ID}
For this subsection we take $\omega=\mathrm{id}:X\to V,$ the identity map.
Since $\GL_V$ acts faithfully on $V$, one sees $H_X=H_V$ is the trivial group. It is straightforward to check the conditions \eqref{omega:span}--\eqref{HX:Fpoints}.  By Definition \ref{def:alg-small-mod}
\begin{align} \label{Psiid}
    \Psi^{s}_{\operatorname{id}} = V^{\vee}\rtimes \GL_V.
\end{align}

To compute the modulation group $\Psi_{\operatorname{id}}\{F\}$, we embed $\Psi^{s}_{\operatorname{id}}$ into a Jacobi group and employ the Heisenberg-Weil representation to describe $\Psi_{\operatorname{id}}\{F\}$ explicitly.

Let $W:=V \oplus V.$
Our choice of perfect pairing \eqref{perfect:pair} induces a symplectic pairing 
\begin{align} \label{symp} \begin{split}
\langle\,,\,\rangle_\wedge:W(F) \times W(F) &\lto F\\
( (v_1,\lambda_1) , (v_2, \lambda_2) ) &\longmapsto \langle v_2,\lambda_1\rangle-\langle v_1,\lambda_2\rangle. \end{split}
\end{align}
Let $\mathrm{Sp}_W$ denote the corresponding symplectic group.  
We will also make use of the Heisenberg group $\operatorname{H}_W = W \ltimes \GG_a.$  We recall that the group law is given on points in an $F$-algebra $R$ by 
\begin{align} \label{H:gp}
    (w_1,t_1).(w_2,t_2) = (w_1 + w_2,\; t_1 + t_2+\tfrac{1}{2}\langle w_1,w_2 \rangle_\wedge), \quad w_1,w_2\in W(R),\; t_1,t_2 \in R.
\end{align}
There is a representation 
$$
\rho_{\psi,W}:\mathrm{H}_W(F) \times L^2(V(F),\mathcal{L}^{1/2}) \lto L^2(V(F),\mathcal{L}^{1/2})
$$
given as follows:
for $f \in L^2(V(F),\mathcal{L}^{1/2})$  and $x \in V(F)$, 
\begin{align} \label{action:HW}
    \rho_{\psi,W}((v,\lambda),t)f(x) &= \psi(\langle x,\lambda \rangle+\tfrac{1}{2}\langle v,\lambda \rangle+t)f(x+v).
\end{align}

Let $\GL_V$ act on $W=V \oplus V$ via its natural action on the first factor and the dual action with respect to $\langle\,,\,\rangle$ on the second factor. This action preserves the symplectic form and induces a closed immersion of groups
\begin{align} \label{m0}
    m: \GL_V \to \Sp_W
\end{align}
with image a Levi subgroup.
For $F$-algebras $R$ 
let
\begin{align} \label{sympair}
\mathrm{Sym}_{\langle\,,\,\rangle}(R):=\left\{b \in \mathrm{End}_{R}(V(R)): \langle b v,w\rangle=\langle v,bw\rangle \textrm{ for all }v,w \in V(R)\right\}.
\end{align}
We then have a closed immersion of groups
\begin{align} \label{n} \begin{split}
n:\mathrm{Sym}_{\langle\,,\,\rangle}(R) &\lto \Sp_W(R)\\
b &\longmapsto \left((v,\lambda) \mapsto (v, \lambda + bv)\right).\end{split}
\end{align}
Lastly, there exists a unique $w \in \Sp_W(F)$ that acts on $W$ by 
\begin{equation}\label{w:def}
    (v,\lambda)w = (-\lambda, v).
\end{equation}
Since $w$ represents the longest Weyl element, the collection of elements $m(g)$, $n(b)$, and $w$ generate $\Sp_W(F)$ by the Bruhat decomposition with respect to Siegel parabolic 
\begin{align} \label{PW}
P_W = m(\GL_V)n( \operatorname{Sym}_{\langle\,,\,\rangle}(V)).
\end{align}
We let $\widetilde{\GL}_V$ be the algebraic subgroup of $\mathrm{Sp}_{V}$ such that 
\begin{align} \label{widetilde:GLv}
\widetilde{\GL}_V(k)=\langle w,\GL_V(k)\rangle
\end{align}
for all fields $k/F.$

The action of $\GL_V$ on $W$ extends to an action on $\mathrm{H}_W.$ Consequently, we have the following closed immersion of group schemes:
\begin{align}\label{Vec-Weil: embedding}
  \mathrm{H}_W\rtimes\GL_V \lto \mathrm{H}_W \rtimes \Sp_W =: \mathrm{J}_W.
\end{align}
The group $\mathrm{J}_W$ is known as Jacobi group associated with the symplectic space $W.$ The representation $\rho_{\psi,W}$ does not in general extend to $\mathrm{J}_W(F),$ but it does lift to a metaplectic cover as we now recall.  It is convenient to use \cite{GKTtheta} as a reference, but we warn the reader that 
in loc.~cit.~only the non-Archimedean case is treated.  The statements we make will be valid for general local fields.    

Following \cite[\S 9.2.2]{GKTtheta}, let $\Mp_W(F) := \Mp^{(8)}_{W,\psi,V^{\vee}}(F)$ be the metaplectic 8-fold cover of $\Sp_W(F)$ attached to $W$ and the additive character $\psi$. In other words we take $Z=\mu_8$ in the notation of loc.~cit.  Let $(\omega_{\psi,W}, L^2(V(F),\mathcal{L}^{1/2}))$ be the Schr{\"{o}}dinger model of the Weil representation of $\Mp_{W}(F)$. By \cite[Theorem 9.9]{GKTtheta}, we can explicitly describe the action on a set of generators as follows:
\begin{align} \label{Weil}\begin{split}
    \omega_{\psi,W}(m(g),1)f(x) &=  f(xg),\\
    \omega_{\psi,W}(n(b),1)f(x) &= \overline{\psi}(\tfrac{1}{2}\langle bx,x\rangle)f(x),\\
    \omega_{\psi, W}(w,1)f(x) &=  \mathcal{F}_{V,\psi}(f)(x),\\
    \omega_{\psi,W}(I,z)f(x)&=zf(x). \end{split}
\end{align}
We refer to loc.~cit.~for the (standard) unexplained notation. We point out that the usual factor of $|\det g|^{1/2}$ does not appear because we are working with half densities. \quash{\textcolor{red}{Why this is the case? the sections $f(xg)$ and $|\det g|^{1/2}f(xg)$ are still different.}}  Moreover, we are using the fact that the cover $\Mp_W(F) \to \mathrm{Sp}_{W}(F)$ splits over $P(F),$ and over the subgroup generated by $w$ \cite[Corollary 2.21, Corollary 2.27]{GKTtheta}.

Let $\widetilde{\mathrm{J}}_W(F) := \mathrm{H}_W(F) \rtimes \Mp_W(F)$.  Then we have the Heisenberg-Weil representation 
\begin{align}\label{Def: Heisenberg-Weil}
    \omega^{\mathrm{J}}_{\psi} := \rho_{\psi,W} \otimes \omega_{\psi,W} : \widetilde{\mathrm{J}}_W(F) \lto \operatorname{Aut}(L^2(V(F),\mathcal{L}^{1/2})).
\end{align}
We point out that in general the map $\omega^{\mathrm{J}}_{\psi}$ is not injective.  Indeed, when $F \neq \RR$ even its restriction to the center of $\mathrm{H}_W(F)$ is not injective because $\psi$ has non-trivial kernel.

\begin{thm} \label{thm:vector-Mod-group}
The modulation group $\Psi_{\operatorname{id}}\{ F \}$ coincides with the image of 
$$ \mathrm{H}_W(F) \rtimes \langle w,\GL_V(F)\rangle <  \widetilde{\mathrm{J}}_W(F)$$
under $\omega_{\psi}^J.$
\end{thm}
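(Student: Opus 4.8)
The plan is to unwind the definition of $\Psi_{\mathrm{id}}\{F\}$ and identify each generator with the image under $\omega_\psi^{\mathrm J}$ of an explicit element of $\mathrm{H}_W(F)\rtimes\langle w,\GL_V(F)\rangle$. Recall from \eqref{Psiid} that $\Psi_{\mathrm{id}}^{\mathrm s}=V^\vee\rtimes\GL_V$, and that by Definition \ref{defn:mod:group} (with trivial character $\chi$) the group $\Psi_{\mathrm{id}}\{F\}$ is generated inside $\mathrm U(L^2(V(F),\mathcal L^{1/2}))$ by the Fourier transform $\mathcal F_{V,\psi}$ together with $\mathcal R_{\mathrm{id}}(\lambda,g)$ for $(\lambda,g)\in V^\vee(F)\rtimes\GL_V(F)$. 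Comparing \eqref{Romega} with the explicit formulas \eqref{action:HW} and \eqref{Weil}, I would first check the three matching identities: (i) $\mathcal R_{\mathrm{id}}(0,g)f(x)=f(xg)=\omega_{\psi,W}(m(g),1)f(x)$, so the $\GL_V(F)$-part of the small modulation group is exactly the image of $m(\GL_V(F))\subseteq\Mp_W(F)$; (ii) the Fourier transform $\mathcal F_{V,\psi}=\omega_{\psi,W}(w,1)$ by the third line of \eqref{Weil}; and (iii) the modulation operator $\mathcal R_{\mathrm{id}}(\lambda,1)f(x)=\psi(\lambda(x))f(x)$ equals $\rho_{\psi,W}((0,\lambda),0)f(x)=\psi(\langle x,\lambda\rangle)f(x)$ after identifying $V^\vee(F)$ with the $\{0\}\oplus V$ summand of $W(F)$ via the pairing \eqref{perfect:pair}. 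These three computations are routine manipulations of the formulas already on the page.

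Granting (i)--(iii), the generators of $\Psi_{\mathrm{id}}\{F\}$ all lie in the image of $\mathrm H_W(F)\rtimes\langle w,\GL_V(F)\rangle$ under $\omega_\psi^{\mathrm J}$, giving the inclusion ``$\subseteq$''. For the reverse inclusion I would observe that $\langle w,\GL_V(F)\rangle=\widetilde{\GL}_V(F)$ by \eqref{widetilde:GLv}, that $\GL_V(F)$ and $w$ are already accounted for by (i) and (ii), and that the full Heisenberg group $\mathrm H_W(F)$ is generated by the central $\GG_a(F)$ together with the two Lagrangians $V\oplus\{0\}$ and $\{0\}\oplus V$ in $W$. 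The operators attached to the central $\GG_a(F)$ are scalars $\psi(t)\in\CC^\times$ (fourth line of \eqref{Weil} combined with the $\rho_{\psi,W}$-formula), which lie in $\Psi^{\mathrm s}_{\mathrm{id}}$-image trivially (or are absorbed since modulation by $\lambda=0$ with the central twist recovers them); the operators for $\{0\}\oplus V$ are the modulations from (iii); and the operators for $V\oplus\{0\}$ are the translations $f\mapsto f(x+v)$ (up to a fixed scalar), which one obtains as a conjugate $\mathcal F_{V,\psi}^{-1}\circ(\text{modulation})\circ\mathcal F_{V,\psi}$ using the standard intertwining property of the Fourier transform recalled in \S\ref{ssec:mods} — equivalently, since $w$ conjugates the $\{0\}\oplus V$ Lagrangian to $V\oplus\{0\}$ inside $\Sp_W$. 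Hence every image of an element of $\mathrm H_W(F)\rtimes\widetilde{\GL}_V(F)$ is a product of generators of $\Psi_{\mathrm{id}}\{F\}$, proving ``$\supseteq$''.

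The one genuinely delicate point is bookkeeping of cocycles and scalars: $\omega_\psi^{\mathrm J}$ is only a representation of the $8$-fold cover $\widetilde{\mathrm J}_W(F)$, not of $\mathrm J_W(F)$, so a priori the image of $\mathrm H_W(F)\rtimes\langle w,\GL_V(F)\rangle$ depends on choices of lifts, and one must make sure no stray roots of unity are needed to close up the group. I would handle this by invoking the splitting of $\Mp_W(F)\to\Sp_W(F)$ over the Siegel parabolic $P(F)$ and over $\langle w\rangle$ recorded after \eqref{Weil} (from \cite[Cor.~2.21, Cor.~2.27]{GKTtheta}): this lets me work with honest set-theoretic lifts of $m(g)$, $n(b)$, and $w$, so that $\langle w,\GL_V(F)\rangle$ lifts to an honest subgroup of $\Mp_W(F)$ and the only ``metaplectic'' discrepancies are the central scalars in $\mu_8$, which are already in the image of $\omega_\psi^{\mathrm J}$ (indeed in $\CC^\times\cdot\mathrm{id}$) and hence cause no trouble. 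With this in hand the two inclusions combine to the asserted equality.
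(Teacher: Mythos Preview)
Your approach is essentially the paper's: match the generators $\mathcal F_{V,\psi}$, $\mathcal R_{\mathrm{id}}(0,g)$, $\mathcal R_{\mathrm{id}}(\lambda,1)$ with $\omega_\psi^{\mathrm J}$ applied to $w$, $m(g)$, and $((0,\lambda),0)$ respectively, then argue that these together generate the image of $\mathrm H_W(F)\rtimes\langle w,\GL_V(F)\rangle$.

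There is one genuine gap. Your justification that the central $\GG_a(F)$ maps into $\Psi_{\mathrm{id}}\{F\}$ is wrong: you claim the scalars $\psi(t)\cdot\mathrm{id}$ ``lie in $\Psi_{\mathrm{id}}^{\mathrm s}$-image trivially (or are absorbed since modulation by $\lambda=0$ with the central twist recovers them)'', but $\mathcal R_{\mathrm{id}}(\lambda,g)f(x)=\psi(\lambda(x))f(xg)$ is a scalar only when $(\lambda,g)=(0,1)$, in which case it is the identity. So $\mathcal R_{\mathrm{id}}(\Psi_{\mathrm{id}}^{\mathrm s}(F))$ contains no nontrivial scalars. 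The fix is the observation you almost make but do not use: the two Lagrangians already generate all of $\mathrm H_W(F)$, because commutators of an element of $V\oplus\{0\}$ with an element of $\{0\}\oplus V$ hit the center. Concretely, once you know (via your Fourier-conjugation argument) that both translations $\rho_{\psi,W}((v,0),0)$ and modulations $\rho_{\psi,W}((0,\lambda),0)$ lie in $\Psi_{\mathrm{id}}\{F\}$, a product such as
\[
((v,0),0)\cdot((0,\lambda),0)\cdot((-v,0),0)\cdot((0,-\lambda),0)=((0,0),\langle v,\lambda\rangle)
\]
(computed via the Heisenberg law \eqref{H:gp}) produces every central element. This is exactly the commutator step the paper carries out. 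With that correction your proof goes through; the extended cocycle discussion in your final paragraph is not needed, since the paper (and your argument once patched) works directly with explicit lifts and their images.
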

\begin{proof}
We have an injective group homomorphism 
\begin{align*}
\Psi_{\mathrm{id}}^{\mathrm{s}}(F) &\lto \mathrm{H}_W(F) \rtimes \GL_V(F)\\
(\lambda, h) &\longmapsto (((0,\lambda),0) , m(h))
\end{align*}
that intertwines the actions of $\Psi_{\mathrm{id}}^{\mathrm{s}}(F)$ and $\mathrm{H}_W(F)$ on $L^2(V(F),\mathcal{L}^{1/2}).$

The formulae \eqref{Weil} imply that
\begin{align} \label{ineq} \begin{split}
        \Psi_{\operatorname{id}}\{F\} := \langle \mathcal{F}_V,V^{\vee}(F)\rtimes\GL_V(F)\rangle &= \omega^{\mathrm{J}}_{\psi}\big(\langle w , \left(\{0\} \times V (F)\right)\rtimes\GL_V(F)\rangle\big)\\
        &\leq \omega^{\mathrm{J}}_{\psi}\big(\mathrm{H}_W(F) \rtimes \langle w,\GL_V(F)\rangle\big). \end{split}
\end{align}
On the other hand, $W(F) <\langle w , \left(\{0\} \times V(F)\right)\rtimes\GL_V(F)\rangle,$ and
for any $\lambda,v \in V(F)$ 
$$
   \big( ((v,0),0).((0,\lambda),0)\big).\big(((-v,0),0).((0,-\lambda),0)\big) = ((0,0),-\langle v, \lambda\rangle).
$$
This implies that $\langle w , \left(\{0\} \times V (F)\right)\rtimes\GL_V(F)\rangle=\mathrm{H}_W(F)\rtimes \langle w,\GL_V(F)\rangle $ and hence \eqref{ineq} is an equality.
\end{proof}

\subsection{Symmetric squares of vector spaces}
Let $V_0 \cong \GG_a^n$ and let $\langle\,,\,\rangle:V_0(F) \times V_0(F) \to F$ be a perfect pairing.
We take 
\begin{align}
    X: = V_0 \quad\textrm{and}\quad H = \GL_{V_0}
\end{align}
acting on $X$ on the right.  We then take $V=\mathrm{Sym}_{\langle\,,\,\rangle}^{\vee}$ and let $\omega$ be the map given on points in an $F$-algebra $R$ by 
 \begin{align} \label{omega:2} \begin{split}
 \omega :X(R) &\lto \mathrm{Sym}_{\langle\,,\,\rangle}^\vee(R)\\
 v &\longmapsto \left( b\mapsto \tfrac{1}{2}\langle -b(v),v \rangle\right).\end{split}
 \end{align}
The group $H_X$ is trivial and verifying the conditions \eqref{omega:span}--\eqref{HX:Fpoints} is straightforward. In this setting
\begin{align}\label{sym2mod}
     \Psi^{\mathrm{s}}_{\omega}= \operatorname{Sym}_{\langle\,,\,\rangle} \rtimes \GL_{V_0}.
\end{align}

\begin{prop} \label{prop:gen1}
Let $G$ be a reductive group over a field $k,$ let $P \leq G$ be a minimal parabolic subgroup, let $M \leq P$ be a Levi subgroup, let $N_P \leq P$ be the unipotent radical, and let $N_{P^{\mathrm{op}}}$ be the unipotent radical of the parabolic subgroup $P^{\mathrm{op}}$ opposite $P$ with respect to $M.$  Then 
$$
G(k)=N_P(k)N_{P^{\mathrm{op}}}(k)P(k).
$$
\end{prop}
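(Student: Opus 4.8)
The plan is to reduce the statement to the Bruhat decomposition together with the standard ``Zariski density of the big cell'' trick. First I would recall that by the Bruhat decomposition relative to $P$ (with unipotent radical $N_P$ and opposite $N_{P^{\mathrm{op}}}$), the product map $N_{P^{\mathrm{op}}} \times P \to G$ is an open immersion onto the big cell $\Omega$, which is dense in $G$; thus $\Omega(k) = N_{P^{\mathrm{op}}}(k)P(k)$. The content of the proposition is that translating $\Omega$ by elements of $N_P(k)$ suffices to cover all of $G(k)$, even when $k$ is not algebraically closed (so one cannot simply invoke that $\Omega(k)$ is Zariski dense in $G(k)$).

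The key step is the following: for any $g \in G(k)$, I want to produce $n \in N_P(k)$ such that $ng \in \Omega(k)$, i.e.\ $ng \in N_{P^{\mathrm{op}}}(k)P(k)$. To see this, consider the map $N_P \to N_P \backslash G$, $n \mapsto N_P ng$. The image $N_P g$ in $N_P\backslash G$; here I use that the big cell $\Omega = N_{P^{\mathrm{op}}}P$ maps onto a dense open subset $U$ of $N_P\backslash G$ (since $\Omega$ is $N_P$-stable on the left... actually one should be careful: $\Omega = N_{P^{\mathrm{op}}}P$ is not left $N_P$-stable, so instead consider $N_P\Omega = N_P N_{P^{\mathrm{op}}} P$, which contains $\Omega$ hence is dense, and is left $N_P$-stable). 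So $N_P \backslash (N_P \Omega)$ is a dense open subscheme of $N_P\backslash G$, and I claim it equals an affine space on which $N_P(k)$-translation acts transitively on $k$-points in a suitable sense. Concretely: $N_P\backslash G \supseteq N_P\backslash(N_P\Omega) \cong N_{P^{\mathrm{op}}} \times (M \backslash P)$-type coordinates; one checks that the $N_P(k)$-orbit of the base point (the image of $\Omega$) already covers all $k$-points of $N_P\backslash G$ lying in this cell, and then an affine-space / translation argument shows every $k$-point of $N_P\backslash G$ is in the $N_P(k)$-translate of the big cell. Equivalently, and more cleanly: write $g = n_1 m n_2 w$ via the full Bruhat decomposition only when $w = 1$; for general $g$ one uses that $N_P(k)$ acting on the flag variety $G/P$ has the property that $N_P(k) \cdot eP$ is dense, and that the complement of the big cell in $G/P$ is a union of Schubert cells of positive codimension which can be avoided after a suitable $N_P(k)$-translation — this is exactly \cite[Chapter V, Proposition 18.3]{Borel:LAG}-style reasoning applied to the unipotent group $N_P$, which is $k$-split, so $N_P(k)$ is Zariski dense in $N_P$ over any infinite $k$ and the argument goes through; for finite $k$ one argues directly with the Bruhat cells.

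I expect the main obstacle to be the case distinction and the bookkeeping when $P$ is only assumed minimal (so $M$ need not be a torus) and $k$ need not be infinite or perfect: one must ensure the ``density of $N_P(k)$-translates'' argument does not secretly use algebraic closedness. The cleanest route is probably: (i) establish $G = N_P \cdot \Omega$ as schemes, i.e.\ $N_P N_{P^{\mathrm{op}}} P = G$ — this can be checked on $\bar k$-points using the Bruhat decomposition and the fact that every $\bar k$-point of $G/P$ is an $N_P(\bar k)$-translate of the big cell, combined with an openness/dominance argument to conclude surjectivity of $N_P \times N_{P^{\mathrm{op}}} \times P \to G$ as a morphism of schemes; and then (ii) invoke that this morphism, being surjective with irreducible source mapping onto $G$, is in fact faithfully flat / smooth after suitable restriction, hence surjective on $k$-points — or more elementarily, note that $N_P \times \Omega \to G$ is a morphism from a scheme covered by the $N_P(k)$-translates of $\Omega(k)$, and since $\Omega(k) = N_{P^{\mathrm{op}}}(k)P(k)$ is already known, every $k$-point of $G$ in the image of $N_P(k) \times \Omega(k)$ is of the desired form; the surjectivity of $N_P \times \Omega \to G$ on $k$-points then follows because the fibers are $N_P \cap \Omega$-torsors (trivial, since $N_P \cap N_{P^{\mathrm{op}}}P = \{1\}$) and the map is smooth, so $H^1$ obstructions vanish. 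I would present step (i) via the $G/P$ flag variety argument and step (ii) via the triviality of the torsor, keeping the write-up to a few lines.
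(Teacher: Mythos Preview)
The paper does not give an argument; it simply cites the remark following \cite[Corollaire 6.26]{Borel:Tits}. So your proposal is not to be compared against an in-paper proof but against the standard Borel--Tits argument.

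Your outline contains the right ingredients (big cell, Bruhat decomposition, translating by $N_P$), but step (ii) as written does not work. You assert that the fibers of $N_P \times \Omega \to G$ are torsors under $N_P \cap N_{P^{\mathrm{op}}}P$ and that this group is trivial. Both claims are false: since $N_P \subset P \subset N_{P^{\mathrm{op}}}P = \Omega$, one has $N_P \cap \Omega = N_P$, not $\{1\}$; and the fiber over $g$ is $\{n \in N_P : n^{-1}g \in \Omega\}$, an \emph{open subscheme} of $N_P$, not a torsor under any group. So the $H^1$-vanishing argument is not applicable here, and you have not actually produced a $k$-point in the fiber.

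Your step (i) is also circular as stated: saying that ``every $\bar k$-point of $G/P$ is an $N_P(\bar k)$-translate of the big cell'' is precisely the statement $G(\bar k) = N_P(\bar k)\Omega(\bar k)$ that you are trying to prove (specialized to $\bar k$). A correct approach over $\bar k$ uses the Bruhat decomposition $G = \bigcup_w N_P \dot w P$ and then, for each $w$, an explicit root-group computation (ultimately an $\mathrm{SL}_2$ identity of the form $\dot s_\alpha \in U_\alpha U_{-\alpha} U_\alpha T$) to show $\dot w \in N_P N_{P^{\mathrm{op}}} P$; this is what Borel--Tits do, and it works uniformly over any field including finite ones. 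Alternatively, over an infinite field, once you know $N_P g \cap \Omega$ is a \emph{nonempty} open in $N_P \cong \mathbb{A}^N$ you get a $k$-point for free, but the nonemptiness still requires the Bruhat/root-group input, and the finite-field case needs the explicit identities rather than a density argument. I would recommend either citing Borel--Tits as the paper does, or writing out the root-group reduction carefully.
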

\begin{proof} See the comment after \cite[Corollaire 6.26]{Borel:Tits}.
\end{proof}

Let $W_0 = V_0\oplus V_0$ equipped with the symplectic form $\langle\,,\,\rangle_{\wedge}$ attached to $\langle\,,\,\rangle$ as in \eqref{symp}.  We then have a symplectic group $\mathrm{Sp}_{W_0}$ etc. as in \S \ref{ssec:ID}.
Let
\begin{align}
\mathrm{Mp}_{W_0}(F)':=\langle w,P_{W_0}(F)\rangle
\end{align}
be the subgroup generated by $w$ and $P_{W_0}(F).$  It follows from Proposition \ref{prop:gen1} that the natural map $\mathrm{Mp}_{W_0}(F)' \to \mathrm{Sp}_{W_0}(F)$ is surjective.
 
\begin{thm} \label{thm:mod:group:Weil}
One has that $\Psi_{\operatorname{Sym^2}}\{F\} = \omega_{\psi, W_0}(\Mp_{W_0}(F)').$
\end{thm}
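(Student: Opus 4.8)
The plan is to identify the modulation group $\Psi_{\operatorname{Sym}^2}\{F\}$ inside the image of the Weil representation $\omega_{\psi,W_0}$ by matching generators, exactly as in the proof of Theorem \ref{thm:vector-Mod-group} but now without the Heisenberg group. First I would recall that $\Psi_{\operatorname{Sym}^2}\{F\}$ is by Definition \ref{defn:mod:group:monoid} (applied via Definition \ref{defn:mod:group}) the subgroup of $\mathrm{U}(L^2(V_0(F),\mathcal{L}^{1/2}))$ generated by the Fourier transform $\mathcal{F}_{V_0,\psi}$ together with the image of $\Psi^{\mathrm{s}}_\omega(F) = \operatorname{Sym}_{\langle\,,\,\rangle}(F) \rtimes \GL_{V_0}(F)$ under $\mathcal{R}_\omega$. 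So the first step is to write down $\mathcal{R}_\omega$ explicitly on these generators: for $g \in \GL_{V_0}(F)$ one has $\mathcal{R}_\omega(0,g)f(x) = f(xg)$, and for $b \in \operatorname{Sym}_{\langle\,,\,\rangle}(F)$, viewed as giving $\lambda_b \in V^\vee(F) = \operatorname{Sym}_{\langle\,,\,\rangle}(F)$, one has $\mathcal{R}_\omega(\lambda_b, 1)f(x) = \psi\!\left(\lambda_b \circ \omega(x)\right) f(x) = \overline{\psi}\!\left(\tfrac12\langle b x, x\rangle\right) f(x)$, using \eqref{omega:2}.

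The second step is to compare these formulas with \eqref{Weil}. The point is that $\mathcal{R}_\omega(0,g) = \omega_{\psi,W_0}(m(g),1)$, that $\mathcal{R}_\omega(\lambda_b,1) = \omega_{\psi,W_0}(n(b),1)$, and that $\mathcal{F}_{V_0,\psi} = \omega_{\psi,W_0}(w,1)$ — all on the nose because we work with half-densities, so no $|\det g|^{1/2}$ factors intervene. (One should be slightly careful about the scalar $\mu_8$ ambiguity in choosing lifts, but since $\mathcal{F}_{V_0,\psi}$, $\omega_{\psi,W_0}(m(g),1)$, and $\omega_{\psi,W_0}(n(b),1)$ are honest operators on the Hilbert space, the images coincide regardless of the choice of cover element, and this is exactly why $\Mp_{W_0}(F)'$ rather than $\Mp_{W_0}(F)$ appears — the central $\mu_8$ does not enlarge the image on half-densities.) Hence $\Psi_{\operatorname{Sym}^2}\{F\}$ is the subgroup of $\mathrm{U}(L^2(V_0(F),\mathcal{L}^{1/2}))$ generated by $\omega_{\psi,W_0}(m(\GL_{V_0}(F)),1)$, $\omega_{\psi,W_0}(n(\operatorname{Sym}_{\langle\,,\,\rangle}(F)),1)$, and $\omega_{\psi,W_0}(w,1)$.

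The third step is to observe that $m(\GL_{V_0}(F))$ and $n(\operatorname{Sym}_{\langle\,,\,\rangle}(F))$ together generate the Siegel parabolic $P_{W_0}(F)$ — indeed $P_{W_0} = m(\GL_{V_0})\,n(\operatorname{Sym}_{\langle\,,\,\rangle})$ by the definition recalled in \S\ref{ssec:ID} — and then $\langle w, P_{W_0}(F)\rangle = \Mp_{W_0}(F)'$ by definition of the latter. Applying $\omega_{\psi,W_0}$ and using that it is a homomorphism, we get that $\omega_{\psi,W_0}(\Mp_{W_0}(F)')$ is generated by precisely the operators above, hence equals $\Psi_{\operatorname{Sym}^2}\{F\}$. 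Since $\mathrm{Mp}_{W_0}(F)' \to \Sp_{W_0}(F)$ is surjective (Proposition \ref{prop:gen1}, as already noted right before the statement), this also pins down the image as all of $\omega_{\psi,W_0}(\Mp_{W_0}(F))$ modulo the central scalars that act trivially. The main obstacle I anticipate is purely bookkeeping: making sure the two group laws match — that the semidirect product structure of $\operatorname{Sym}_{\langle\,,\,\rangle} \rtimes \GL_{V_0}$ corresponds correctly to conjugation of $n(b)$ by $m(g)$ inside $P_{W_0}$ (i.e. $m(g)^{-1} n(b) m(g) = n(g^{*} b g)$ for the appropriate twisted action), and that the sign/character conventions in $\omega$ from \eqref{omega:2} are consistent with those in the second line of \eqref{Weil}. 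These are routine but must be checked to conclude that the generator-matching is an isomorphism of the abstract groups acting, not merely a coincidence of generating sets.
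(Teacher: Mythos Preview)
Your proposal is correct and follows essentially the same route as the paper: identify $\Psi^{\mathrm{s}}_{\omega}(F)$ with the Siegel parabolic $P_{W_0}(F)$ via $(b,g)\mapsto n(b)m(g)$, check that this intertwines $\mathcal{R}_\omega$ with $\omega_{\psi,W_0}$ using the formulas \eqref{Weil}, and then conclude by the definition of $\Mp_{W_0}(F)'$. Your parenthetical remarks about the central $\mu_8$ are slightly off (the center does act by scalars on $L^2$, per the last line of \eqref{Weil}, so $\omega_{\psi,W_0}(\Mp_{W_0}(F)')$ need not equal $\omega_{\psi,W_0}(\Mp_{W_0}(F))$), but this is extraneous to the theorem as stated and does not affect your main argument.
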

\begin{proof}
Using notation from \eqref{m0}, \eqref{n} and \eqref{PW} we have an isomorphism 
\begin{align*}
\Psi_{\omega}^{\mathrm{s}}(R) &\tilde{\lto} P_{W_0}(R)\\
(b,  g) &\longmapsto n(b)m(g).
\end{align*}
The isomorphism intertwines the actions of $\Psi_{\omega}^{\mathrm{s}}(F)$ and $P_{W_0}(F)$ on $L^2(V_0(F),\mathcal{L}^{1/2}).$  Using 
\eqref{Weil} we see that 
    $$
        \Psi_{\operatorname{Sym^2}}\{F\} = \langle \mathcal{F}_V, \omega_{\psi, W_0}(P_{W_0}(F))\rangle = \omega_{\psi, W_0}(\langle w,P_{W_0}(F) \rangle).
    $$  
\end{proof}

\subsection{The standard representation} \label{sec:standard}
In this section we let $X=V=M_n$ and let
$$
\omega:X \lto V
$$
be the identity map.  Here we equip $X$ and $V$ with the usual action of $H=\GL_n \times \GL_n.$
Then 
$H_X \cong \GG_m$, 
\begin{align} \label{H:quot}
H/H_X = \mathrm{P}(\GL_{n}^2):= (\GL_{n} \times \GL_{n})/\operatorname{diag}(\GG_m),
\end{align}
and conditions \eqref{omega:span}--\eqref{HX:Fpoints} are satisfied.  By \eqref{H:quot} we have
\begin{align}
    \Psi_{\operatorname{id}}^{\mathrm{s}} = V^{\vee} \rtimes \mathrm{P}(\GL_{n}^2).
\end{align}

Let $W = V\oplus V^{\vee}$ and define $\mathrm{H}_W,$ etc. as in \S \ref{ssec:ID}.  The action of $\GL_{n} \times \GL_{n}$ on $X=V$ induces a closed immersion 
$$
\mathrm{P}(\GL_{n}^2) \lto \GL_{V} \stackrel{m}{\lto} P_{W}
$$
and we identify $\mathrm{P}(\GL_{n}^2)$ with its image.
Arguing as in the the proof of Theorem \ref{thm:vector-Mod-group}, we obtain the following theorem:
\begin{thm} \label{thm:Stand} One has that $\Psi_{\operatorname{id}}\{F\} =\omega^{\mathrm{J}}_{\psi} \big(\mathrm{H}_W(F) \rtimes \langle w, \mathrm{P}(\GL_{n}^2)(F) \rangle \big).$ \qed
\end{thm}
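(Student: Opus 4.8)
The plan is to transcribe the proof of Theorem~\ref{thm:vector-Mod-group}, now with $V=M_n$, $W=V\oplus V^\vee$, and $\mathrm{P}(\GL_{n}^2)$ in the role played there by $\GL_V$. Since $H_X\cong\GG_m$ acts trivially on $X=M_n$, the residual action of $\mathrm{P}(\GL_{n}^2)$ on $M_n$ is faithful, so we have the closed immersion $\mathrm{P}(\GL_{n}^2)\lto\GL_V\stackrel{m}{\lto}P_W\leq\Sp_W$ recorded after \eqref{H:quot}; because $\Mp_W(F)\to\Sp_W(F)$ splits over $P_W(F)$ and over $\langle w\rangle$ \cite[Cor.~2.21, Cor.~2.27]{GKTtheta}, we may regard $w$ and $\mathrm{P}(\GL_{n}^2)(F)$ as elements of $\Mp_W(F)$. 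Embedding $V^\vee(F)$ into the Heisenberg group as $(\{0\}\times V^\vee(F))\times\{0\}\leq\mathrm{H}_W(F)$, I would first check that
\[
\Psi_{\operatorname{id}}^{\mathrm{s}}(F)=V^\vee(F)\rtimes\mathrm{P}(\GL_{n}^2)(F)\lto\mathrm{H}_W(F)\rtimes\mathrm{P}(\GL_{n}^2)(F),\qquad (\lambda,h)\longmapsto(((0,\lambda),0),\,h)
\]
is an injective homomorphism intertwining $\mathcal{R}_{\omega}$ with the restriction of $\omega^{\mathrm{J}}_{\psi}$: with $\omega=\mathrm{id}$ one has $\mathcal{R}_{\omega}((\lambda,h))f(A)=\psi(\lambda(A))f(Ah)$, while \eqref{action:HW} and \eqref{Weil} give $\rho_{\psi,W}((0,\lambda),0)f(A)=\psi(\lambda(A))f(A)$ and $\omega_{\psi,W}(m(h),1)f(A)=f(Ah)$.

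Granting this, since $\mathcal{F}_{V,\psi}=\omega_{\psi,W}(w,1)$ and the character $\chi$ in Definition~\ref{defn:mod:group:monoid} is trivial, we obtain
\begin{align*}
\Psi_{\operatorname{id}}\{F\}=\langle\mathcal{F}_{V,\psi},\,\mathcal{R}_{\omega}(\Psi_{\operatorname{id}}^{\mathrm{s}}(F))\rangle
&=\omega^{\mathrm{J}}_{\psi}\big(\langle w,\,(\{0\}\times V^\vee(F))\rtimes\mathrm{P}(\GL_{n}^2)(F)\rangle\big)\\
&\leq\omega^{\mathrm{J}}_{\psi}\big(\mathrm{H}_W(F)\rtimes\langle w,\mathrm{P}(\GL_{n}^2)(F)\rangle\big).
\end{align*}
For the reverse inclusion I would argue, exactly as in Theorem~\ref{thm:vector-Mod-group}, that $\Gamma:=\langle w,\,(\{0\}\times V^\vee(F))\rtimes\mathrm{P}(\GL_{n}^2)(F)\rangle$ already contains all of $\mathrm{H}_W(F)$: conjugation by $w$ carries the Lagrangian $\{0\}\times V^\vee(F)$ onto $V(F)\times\{0\}$ (up to sign), so $W(F)=V(F)\oplus V^\vee(F)\subseteq\Gamma$; and by \eqref{H:gp}, for $v\in V(F)$ and $\lambda\in V^\vee(F)$,
\[
\big(((v,0),0)\cdot((0,\lambda),0)\big)\cdot\big(((v,0),0)\cdot((0,-\lambda),0)\big)=((0,0),\,\langle(v,0),(0,\lambda)\rangle_\wedge),
\]
and as $v,\lambda$ vary the right-hand side runs over the central $\GG_a(F)\subseteq\mathrm{H}_W(F)$. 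Since $\Gamma$ also contains $w$ and $\mathrm{P}(\GL_{n}^2)(F)$, it equals $\mathrm{H}_W(F)\rtimes\langle w,\mathrm{P}(\GL_{n}^2)(F)\rangle$, so the displayed inclusion is an equality, which is the assertion.

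The argument is essentially routine given Theorem~\ref{thm:vector-Mod-group}; the only points that merit a line of justification rather than a pointer to the vector space case are (i) the faithfulness of the $\mathrm{P}(\GL_{n}^2)$-action on $M_n$ and the fact that its image lies in a Levi subgroup of $\Sp_W$, so the metaplectic splittings apply; and (ii) that $\mathrm{P}(\GL_{n}^2)$ normalizes the Lagrangian $\{0\}\times V^\vee$ inside $\mathrm{H}_W$ (it does, since $\mathrm{P}(\GL_{n}^2)$ acts on $W=V\oplus V^\vee$ through the standard representation on $V$ and its dual on $V^\vee$), which is what makes $(\{0\}\times V^\vee(F))\rtimes\mathrm{P}(\GL_{n}^2)(F)$ a genuine subgroup. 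The substantive content, the spanning-and-commutator computation showing $\mathrm{H}_W(F)\leq\Gamma$, is identical to the one already carried out for Theorem~\ref{thm:vector-Mod-group}, so I do not expect any real obstacle.
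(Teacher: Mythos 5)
Your proposal is correct and is exactly what the paper intends: the paper gives no separate proof of Theorem \ref{thm:Stand}, stating only that it follows ``by the same argument as for Theorem \ref{thm:vector-Mod-group},'' and your transcription of that argument (the intertwining embedding of $\Psi_{\operatorname{id}}^{\mathrm{s}}(F)$ into $\mathrm{H}_W(F)\rtimes \mathrm{P}(\GL_n^2)(F)$, the identification $\mathcal{F}_{V,\psi}=\omega_{\psi,W}(w,1)$, and the $w$-conjugation plus commutator computation recovering all of $\mathrm{H}_W(F)$) is the intended proof. The two extra justifications you flag, faithfulness of the $\mathrm{P}(\GL_n^2)$-action and its normalizing the Lagrangian $\{0\}\times V^\vee$, are exactly the points where the new setting differs from the vector-space case, and you handle them correctly.
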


\subsection{Smooth vectors}

\begin{prop} \label{prop:smooth1} In the settings of Theorems \ref{thm:vector-Mod-group} and  \ref{thm:Stand},  the Schwartz space is the space of smooth vectors under the action of the modulation group.  
\end{prop}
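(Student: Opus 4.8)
The plan is to leverage the fact that, in both settings, the modulation group is the image of (a subgroup of) the Jacobi group $\mathrm{J}_W$ — or more precisely its metaplectic cover $\widetilde{\mathrm{J}}_W(F)$ — under the Heisenberg-Weil representation $\omega_\psi^{\mathrm{J}}$, together with the standard fact that the Schwartz space $\mathcal{S}(V(F))$ (with half-densities, $\mathcal{S}(V(F),\mathcal{L}^{1/2})$) is precisely the space of smooth vectors for the Heisenberg-Weil representation of $\widetilde{\mathrm{J}}_W(F)$ acting on $L^2(V(F),\mathcal{L}^{1/2})$. The first step is to recall or cite this classical statement: for the Schrödinger model, $\mathcal{S}(V(F),\mathcal{L}^{1/2})$ is the space of smooth vectors for the restriction of $\omega_{\psi,W}$ to the Heisenberg group $\mathrm{H}_W(F)$, and a fortiori it is stable under and consists of smooth vectors for all of $\widetilde{\mathrm{J}}_W(F)$; this is in \cite{GKTtheta} and goes back to Weil. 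In the non-Archimedean case ``smooth'' means locally constant with the usual sense, and $\mathcal{S}(V(F)) = C_c^\infty(V(F))$ is exactly the smooth vectors; in the Archimedean case it is the classical Schwartz space and smoothness is in the sense of Fréchet-space representations of Lie groups.

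Next, I would transfer this statement along the inclusions of groups. By Theorem \ref{thm:vector-Mod-group} (resp.\ Theorem \ref{thm:Stand}), $\Psi_{\mathrm{id}}\{F\}$ is the image under $\omega_\psi^{\mathrm{J}}$ of the subgroup $\mathrm{H}_W(F)\rtimes\langle w,\GL_V(F)\rangle$ (resp.\ $\mathrm{H}_W(F)\rtimes\langle w,\mathrm{P}(\GL_n^2)(F)\rangle$) of $\widetilde{\mathrm{J}}_W(F)$. Since this subgroup contains the Heisenberg group $\mathrm{H}_W(F)$, the space of smooth vectors for the $\Psi_{\mathrm{id}}\{F\}$-action is contained in the space of smooth vectors for the $\mathrm{H}_W(F)$-action, which is $\mathcal{S}(V(F),\mathcal{L}^{1/2})$. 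Conversely, $\mathcal{S}(V(F),\mathcal{L}^{1/2})$ is preserved by the full modulation group (by the lemma on stability of the Schwartz space under $\Psi_\omega\{F\}$, using \eqref{assum:preserve}, \eqref{assum:local}, \eqref{assum:dense}, which hold here — or directly from the explicit formulae \eqref{Weil} and \eqref{action:HW}), and every vector in $\mathcal{S}(V(F),\mathcal{L}^{1/2})$ is smooth for the action of the relevant subgroup of $\widetilde{\mathrm{J}}_W(F)$, hence smooth for $\Psi_{\mathrm{id}}\{F\}$. This gives the equality of the two spaces.

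The point requiring care — and the main obstacle — is making sense of ``smooth vectors for $\Psi_{\mathrm{id}}\{F\}$'' at all, since $\Psi_{\mathrm{id}}\{F\}$ is not a priori a Lie group or a totally disconnected locally compact group, but an abstract group sitting inside $\mathrm{U}(L^2(V(F),\mathcal{L}^{1/2}))$. What makes this work is that, in these two cases, $\Psi_{\mathrm{id}}\{F\}$ \emph{is} in fact (the image of) such a group: it is a quotient of the locally compact group $\mathrm{H}_W(F)\rtimes\langle w,\GL_V(F)\rangle$, and the kernel of $\omega_\psi^{\mathrm{J}}$ on this group is central (contained in the center of $\mathrm{H}_W(F)$, acting by the character $\psi$ on $\GG_a$, together with a finite part), so $\Psi_{\mathrm{id}}\{F\}$ inherits the structure of a Lie group (resp.\ locally compact totally disconnected group). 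I would make this explicit: identify the kernel, observe $\Psi_{\mathrm{id}}\{F\}$ is thereby a Lie/$\ell$-group, and note the notion of smooth vector used is the usual one, as already flagged in the discussion after the Question in \S\ref{ssec:Schwartz}. With that structure in hand, the argument of the previous paragraph applies verbatim, using that the inclusion $\mathrm{H}_W(F)\hookrightarrow \mathrm{H}_W(F)\rtimes\langle w,\GL_V(F)\rangle$ is a closed embedding of Lie/$\ell$-groups and that smooth vectors for a closed subgroup containing a fixed closed subgroup $K$ are contained in the smooth vectors for $K$; the reverse containment is the standard fact that Schwartz functions are smooth vectors for the whole group, which for $\GL_V(F)$ and $w$ acting by \eqref{Weil} is immediate. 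The case of Theorem \ref{thm:Stand} is identical with $\mathrm{P}(\GL_n^2)(F)$ in place of $\GL_V(F)$.
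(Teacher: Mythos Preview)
Your proposal is correct and follows essentially the same approach as the paper: both arguments reduce to the classical fact that $\mathcal{S}(V(F))$ is exactly the space of smooth vectors for the Heisenberg group $\mathrm{H}_W(F)$, and then use that the modulation group contains (the image of) $\mathrm{H}_W(F)$ to sandwich the smooth vectors between $\mathcal{S}(V(F))$ and itself. The paper's proof is terser, simply citing Howe for $F=\RR$ and \cite{GKTtheta} for the non-Archimedean case for the key Heisenberg fact, whereas you additionally spell out the topological issue of endowing $\Psi_{\mathrm{id}}\{F\}$ with a Lie/$\ell$-group structure so that ``smooth vector'' makes sense --- a point the paper leaves implicit.
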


\begin{proof}
Assume first that $V$ is as in Theorem \ref{thm:vector-Mod-group}.  We have that
\begin{align} \label{first:ineq}
\mathcal{S}(V(F)) \leq L^2(V(F))^{\mathrm{sm}}.
\end{align}
To prove equality, we recall that the Schwartz space
$\mathcal{S}(V(F))$ is the space of smooth vectors in $L^2(V(F))$ under the action of $H_W(F).$ Indeed, when $F=\RR$ this is \cite[p. 827]{Howe:Heis}; the argument generalizes in a standard manner to the case $F=\CC.$
  When $F$ is non-Archimedean it is immediate.  This completes the proof in the setting of Theorem \ref{thm:vector-Mod-group}.

The proof in the setting of Theorem \ref{thm:Stand} is essentially the same.
\end{proof}

\section{Modulation groups of quadric cones}
\label{sec:Mod:cones}
For the moment we take $F$ to be an arbitrary characteristic zero field so that we can develop notation that will also appear in other sections.  Let $V_{0}=\GG_a^{2j}$ for some $j \in \ZZ_{\geq 0}$ and let $\mathcal{Q}_0:V_0 \to \GG_a$ be an anisotropic form.  Let $V_k=\GG_a^{2k+2j}$ be a space with a nondegenerate quadratic form $\mathcal{Q}_k$ in the same Witt class as $\mathcal{Q}_0.$  We choose bases so that the matrix of the quadratic form $\mathcal{Q}_{k+1}$ is given inductively by
\begin{align}
   \begin{psmatrix} & & 1\\ & J_k & \\ 1 & & \end{psmatrix} 
\end{align}
where $J_k$ is the matrix of $\mathcal{Q}_k.$  Let $\langle\,,\,\rangle_k$ be the pairing defined by $\mathcal{Q}_k.$
  We let $\mathrm{O}_{V_k}$ and $\mathrm{GO}_{V_k}$ be the orthogonal group and orthogonal similitude group of $V_k$
  and let $\mathrm{GSO}_{V_k}$ be the neutral component of $\mathrm{GO}_{V_k}.$  We denote by 
  $$
  \nu:\mathrm{GO}_{V_k} \lto \GG_m
  $$
  the similitude norm.
Let 
$$
C_{k} \subset V_k
$$
be the vanishing locus of $\mathcal{Q}_k$ and let $C_k^\circ:=C_k-\{0\}.$ 
For $F$-algebras $R$ we have a group homomorphism
\begin{align} \label{m} \begin{split}
m:R^\times \times \mathrm{GO}_{V_k}(R)&\lto \mathrm{GO}_{V_{k+1}}(R)\\
(a,g) &\longmapsto \begin{psmatrix} a\nu(g) & & \\ & g & \\ & & a^{-1}\end{psmatrix}. \end{split}
\end{align}
Let 
\begin{align} \label{opn} \begin{split}
\overline{n}:V_k(R) &\lto \mathrm{O}_{V_{k+1}}(R)\\
v &\longmapsto \begin{psmatrix} 1 & & \\ -J_kv^t & I_k & \\ -\mathcal{Q}_k(v) & v & 1 \end{psmatrix},
\\
n:V_k(R) &\lto \mathrm{O}_{V_{k+1}}(R)\\
v &\longmapsto \begin{psmatrix} 1 & v & -\mathcal{Q}_k(v) \\  & I_k & -J_kv^t \\  &  & 1 \end{psmatrix}.
\end{split}
\end{align}
We let 
\begin{align} \begin{split}
Q_k(R):&=\left\{m(a,g)n(v):(a,g,v) \in R^\times \times \mathrm{O}_{V_k}(R) \times V_k(R)\right\},\\
Q_k^{\mathrm{op}}(R):&=\{m(a,g)\overline{n}(v):(a,g,v) \in R^\times \times \mathrm{O}_{V_k}(R) \times V_k(R)\}. \end{split}
\end{align}
These are parabolic subgroups of 
$\mathrm{O}_{V_{k+1}}$ opposite each other with respect to $m(\GG_m \times \mathrm{O}_{V_k}).$ We let $N_k$ (resp.~$N_k^{\mathrm{op}}$) be the unipotent radical of $Q_k$ (resp.~$Q_k^{\mathrm{op}}$).  Thus $N_k$ is the image of $n$ and $N_k^{\mathrm{op}}$ is the image of $\overline{n}.$ Similarly, we let
\begin{align}
    \widetilde{Q}_k(R):=\{m(a,g)n(v):(a,g,v) \in R^\times \times \mathrm{GO}_{V_k}(R) \times V_k(R)\}
\end{align}
which is a parabolic subgroup of $\mathrm{GO}_{V_{k+1}},$
and let $\widetilde{Q}^{\mathrm{op}}_k$ be its opposite parabolic.  Usually in the theory described below one restricts attention to orthogonal groups, but in order to construct a group action satisfying the assumptions in \S \ref{ssec:alg:small} and to make contact with reductive monoids in \S \ref{ssec:RSM} we extend the theory to similitudes.

Let
\begin{align}
\mathbb{O}_{k+1} \subset \mathfrak{o}_{V_{k+1}}
\end{align}
be the minimal nilpotent orbit.
By restricting the adjoint action, we obtain an action of $Q_{k+1}^{\mathrm{op}}$ on $\mathbb{O}_{k+1}.$

The following is proved in \cite{Tome} using the description of the minimal nilpotent orbit given in \cite{jia2024affineclosuretslnu}.

\begin{lem} \label{lem:orbit:dim} For $\dim V_{k} > 2$ there is a unique open $Q_{k+1}^{\mathrm{op}}$-orbit in $\mathbb{O}_{k+1}.$ Its complement has codimension at least $2.$ \qed
\end{lem}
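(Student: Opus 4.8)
The plan is to make the statement concrete by working with the explicit coordinates for $\mathbb{O}_{n+2}$ provided by \cite{jia2024affineclosuretslnu} and the explicit unipotent and Levi matrices in \eqref{m}, \eqref{opn}. First I would recall that for an even orthogonal Lie algebra $\mathfrak{o}_{V_{n+2}}$ the minimal nilpotent orbit consists of rank-one elements of the form $v \wedge w$ for isotropic $v$ with $v,w$ orthogonal, so as a variety $\mathbb{O}_{n+2} \cong \{(v,w) : \mathcal{Q}_{n+2}(v)=0,\ \langle v,w\rangle_{n+2}=0\}/\sim$ modulo the obvious scaling, and its dimension is $2(n+2)-3 = 2n+1$. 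I would then read off how $Q_n^{\mathrm{op}} = m(\GG_m \times \mathrm{O}_{V_n})\overline{n}(V_n)$ acts in these coordinates: the Levi factor $m(a,g)$ rescales the weight spaces and acts through $g$ on the middle block, while $\overline{n}(v)$ acts unipotently. The point of passing to $Q_n^{\mathrm{op}}$ (rather than $Q_n$) is that its unipotent radical $N_n^{\mathrm{op}}$ together with the Levi should sweep out a dense subset by a dimension count.

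The key steps, in order: (1) identify $\mathbb{O}_{n+2}$ with an explicit affine cone / quotient as above and fix coordinates compatible with the $\ZZ$-grading coming from the cocharacter defining $Q_n \subset \mathrm{GSO}_{V_{n+2}}$; (2) compute the orbit of a well-chosen highest-weight-type vector $X_0 \in \mathbb{O}_{n+2}$ — concretely a root vector for the highest root of $\mathfrak{o}_{V_{n+2}}$ lying in $\mathfrak{n}_n^{\mathrm{op}}$ — under $Q_n^{\mathrm{op}}$, and check by differentiating the action map at $X_0$ that the tangent space to the orbit is all of $T_{X_0}\mathbb{O}_{n+2}$, which (since $\mathbb{O}_{n+2}$ is irreducible) forces the orbit to be open and dense; (3) analyze the complement: the complement of the open orbit is a closed $Q_n^{\mathrm{op}}$-stable subvariety, and I would bound its dimension by stratifying according to where the ``leading'' coordinate (the weight-$\pm$ component picked out by the grading) vanishes, showing each stratum has dimension $\le 2n-1$; the hypothesis $\dim V_n > 2$ is exactly what makes the middle quadric $C_{n-1}$-type locus have positive-enough codimension and rules out small coincidences. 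Uniqueness of the open orbit is automatic once we know one orbit is dense, since two dense orbits in an irreducible variety must coincide.

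I would organize the codimension bound via the Bruhat-type decomposition: restricting to $Q_n^{\mathrm{op}}$-orbits, every orbit meets the Levi $\GG_m \times \mathrm{O}_{V_n}$-orbits on the ``boundary divisor'' $\{$leading coordinate $=0\} \cap \mathbb{O}_{n+2}$, which is itself (a cone over) a minimal-orbit-like variety for $\mathrm{O}_{V_n}$ of dimension $2n-1$, plus the contribution of the unipotent radical $N_n^{\mathrm{op}} \cong V_n$ of dimension $2n+2j$ — but these do not add independently because the stabilizer grows, and a careful bookkeeping (or a direct appeal to the orbit description in \cite{jia2024affineclosuretslnu}, as \cite{Tome} does) gives total dimension $\le 2n-1 = (2n+1)-2$, i.e. codimension $\ge 2$ in $\mathbb{O}_{n+2}$, noting $\dim \mathbb{O}_{n+2} = \dim \mathbb{O}_{n}$ in the notation of the lemma statement where $\mathbb{O}_n$ abbreviates $\mathbb{O}_{n+2}$ as in the surrounding text.

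The main obstacle I anticipate is step (3), the codimension estimate: it is easy to exhibit \emph{some} dense orbit, but controlling \emph{all} the lower-dimensional orbits requires either a clean combinatorial handle on the $Q_n^{\mathrm{op}}$-orbit stratification of $\mathbb{O}_{n+2}$ or a careful induction on $n$ using the structure of $\mathbb{O}_{n}$ inside $\mathbb{O}_{n+2}$. This is presumably why the lemma is attributed to \cite{Tome} and leans on the explicit affine-closure computation of \cite{jia2024affineclosuretslnu}; in a self-contained write-up I would isolate the boundary divisor $\mathbb{O}_{n+2} \cap \{$leading coordinate$=0\}$, identify it with the affine cone over $\mathbb{P}(\mathbb{O}_n)$ fibered over the quadric $C_{n-1}$, and verify the codimension bound there, the $\dim V_n > 2$ hypothesis ensuring this divisor is irreducible of the expected dimension.
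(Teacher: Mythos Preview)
The paper does not prove this lemma; it is marked \qed\ and attributed to \cite{Tome}, which is said to proceed via the explicit description of the minimal nilpotent orbit in \cite{jia2024affineclosuretslnu}. At that level your plan matches what the paper invokes: work in explicit coordinates for the orbit, exhibit a point with dense $Q^{\mathrm{op}}$-orbit via a tangent-space computation, then stratify the complement.

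However, your sketch contains errors that would derail the dimension count on which the codimension-$2$ claim rests. The minimal nilpotent orbit in $\mathfrak{so}_m$ has dimension $2m-6$ (for $m\geq 5$), not $2m-3$; your formula $2(n+2)-3=2n+1$ is wrong both in the value of $m=\dim V$ (recall $\dim V_i = 2i+2j$) and in the orbit-dimension formula. Your description of the orbit also omits a condition: elements are $v\wedge w$ with $\mathrm{span}(v,w)$ a \emph{totally isotropic} $2$-plane, so one needs $\mathcal{Q}(v)=\mathcal{Q}(w)=\langle v,w\rangle=0$, not merely $v$ isotropic. With the corrected dimensions your complement bound ``$\leq 2n-1$'' must be redone from scratch. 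You rightly flag step (3) as the hard part, but the sketch there (stratify by vanishing of the leading coordinate, then ``bookkeep'' the stabilizer growth against the unipotent radical) is not yet an argument and cannot be assessed until the arithmetic is fixed. Finally, you are correct that the indexing around the lemma is inconsistent: the group $Q_n^{\mathrm{op}}\leq\mathrm{O}_{V_{n+1}}$ acts on $\mathbb{O}_{n+1}\subset\mathfrak{o}_{V_{n+1}}$, as the proof of Proposition~\ref{minNilpOrbitIso} makes clear; this is a typo in the paper, not a gap in your understanding.
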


One has a canonical action of $\mathrm{O}_{V_k}$ on $T^*C_k^\circ$ as in Lemma \ref{lem:cot:action}; it extends to the affine closure $\overline{T^*C_k^\circ}^{\mathrm{aff}}.$
In Proposition \ref{prop:cone:Ansatz} we require the following proposition:

\begin{prop}\label{minNilpOrbitIso} For $\dim V_k > 2$ the affine closure $\overline{T^*C_k^{\circ}}^{\mathrm{aff}}$
is isomorphic as a $\mathrm{O}_{V_k}$-scheme to the closure of  $\mathbb{O}_{k+1}$ in $\mathfrak{o}_{V_{k+1}}$.  
\end{prop}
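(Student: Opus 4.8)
The plan is to produce an explicit isomorphism between the two affine schemes by passing through intermediate descriptions of both sides. First I would recall the description of the cotangent bundle $T^*C_n^\circ$ of the smooth locus of the punctured cone. Since $C_n^\circ = C_n - \{0\}$ is a smooth quasi-affine variety, its cotangent bundle is cut out inside $C_n^\circ \times V_n^\vee$ by the conormal conditions; concretely a point is a pair $(v,\xi)$ with $\mathcal{Q}_n(v)=0$, $v\neq 0$, and $\xi$ taken modulo the line spanned by $J_n v^t$ (the differential of $\mathcal{Q}_n$ at $v$). Identifying $V_n^\vee$ with $V_n$ via $\langle\,,\,\rangle_n$, this realizes $T^*C_n^\circ$ as (an open subset of) the variety of pairs $(v,w)$ with $\langle v,v\rangle_n=0$, $v\neq 0$, with $w$ well-defined up to adding a multiple of $v$. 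Taking the affine closure $\overline{T^*C_n^\circ}^{\mathrm{aff}}$ amounts to taking $\mathrm{Spec}$ of the ring of global functions, which will fill in the locus $v=0$ and collapse the indeterminacy in $w$ into genuine relations.

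The key step is to identify this ring of functions with the coordinate ring of $\overline{\mathbb{O}_{n+1}}\subseteq \mathfrak{o}_{V_{n+1}}$. Here I would use the standard model of the minimal nilpotent orbit of an even orthogonal Lie algebra: after a choice of isotropic flag, an element of $\overline{\mathbb{O}_{n+1}}$ is (up to the usual normalization) a rank $\leq 2$ skew-self-adjoint endomorphism of $V_{n+1}$ whose image is totally isotropic, which one can write in block form, with respect to the decomposition $V_{n+1} = L \oplus V_n \oplus L'$ used in \eqref{m}, \eqref{opn}, as built out of a vector $v\in V_n$ with $\mathcal{Q}_n(v)=0$ and a dual datum. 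This is precisely the content imported from \cite{jia2024affineclosuretslnu} and \cite{Tome}. Matching the $\mathrm{O}_{V_n}$-action on both sides is then forced by equivariance: $\mathrm{O}_{V_n}$ sits inside $\mathrm{O}_{V_{n+1}}$ via $m(1,-)$, it acts on $\overline{T^*C_n^\circ}^{\mathrm{aff}}$ through its action on the base as in Lemma \ref{lem:cot:action}, and it acts on $\overline{\mathbb{O}_{n+1}}$ by restriction of the adjoint action; both actions are the natural one on the vector $v$ and its dual partner.

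Concretely I would carry out the argument as follows: (1) write down the explicit $\mathrm{O}_{V_n}$-equivariant morphism $\mu: T^*C_n^\circ \to \overline{\mathbb{O}_{n+1}}$ sending $(v,w)$ to the corresponding rank $\leq 2$ nilpotent element of $\mathfrak{o}_{V_{n+1}}$, and check it is well-defined (independent of the representative $w$) and lands in the orbit closure; (2) observe that $\mu$ is an open immersion onto the open $Q_n^{\mathrm{op}}$-orbit provided by Lemma \ref{lem:orbit:dim} (applied with index shifted by one), using the transitivity of $Q_n^{\mathrm{op}}$ on the nonzero locus and the fact that $C_n^\circ$ is a single $\mathrm{O}_{V_n}$- or similitude orbit; (3) invoke Lemma \ref{lem:orbit:dim} to conclude that the complement of this open orbit in $\overline{\mathbb{O}_{n+1}}$ has codimension $\geq 2$, and that $\overline{\mathbb{O}_{n+1}}$ is normal (minimal nilpotent orbit closures are normal), so that by Hartogs-type extension $\mu$ induces an isomorphism on global functions, hence an isomorphism $\overline{T^*C_n^\circ}^{\mathrm{aff}} \xrightarrow{\sim} \overline{\mathbb{O}_{n+1}}$; (4) upgrade to an isomorphism of $\mathrm{O}_{V_n}$-schemes by noting all maps constructed are equivariant. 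The main obstacle I anticipate is step (3): controlling the passage to the affine closure requires knowing that $\overline{\mathbb{O}_{n+1}}$ is normal with the stated codimension bound, and that taking global functions is insensitive to the removed codimension $\geq 2$ locus on the cotangent side as well — this is exactly why Lemma \ref{lem:orbit:dim} (and the normality results underlying it from \cite{jia2024affineclosuretslnu}) is cited, and the low-dimensional edge cases $\dim V_n = 2$ may need separate inspection.
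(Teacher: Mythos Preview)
Your proposal is correct and follows essentially the same route as the paper: both describe $T^*C_n^\circ$ as pairs $(c,v)\in C_n^\circ\times V_n$ modulo the $\GG_a$-action $(c,v)\mapsto (c,v+tc)$, construct an explicit $\mathrm{O}_{V_n}$-equivariant map into $\mathbb{O}_{n+1}$ (the paper does it by conjugating a highest-root vector by $\overline{n}(v)$, yielding the block formula \eqref{invariantscone}), identify the image with the open $Q_n^{\mathrm{op}}$-orbit via Lemma~\ref{lem:orbit:dim} and a dimension count, and then use normality of $\overline{\mathbb{O}}_{n+1}$ together with the codimension~$\geq 2$ complement to pass to affine closures. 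The only substantive difference is that the paper checks injectivity on $\overline{F}$-points directly and then invokes \cite[Proposition~1.2.9]{Getz:Hahn} to upgrade the bijection to an isomorphism of schemes, whereas you assert ``open immersion'' without that intermediate step---worth making explicit, but the content is the same.
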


\noindent This is part of the folklore.  We give a proof because the morphism will be helpful to us later in \S \ref{sssec:quadric}.

\begin{proof}
For $F$-algebras $R$ let $R[\epsilon]$ denote the ring of dual numbers. We have that
\begin{align} \label{id} \begin{split}
    TC_k^\circ(R)&=C_k^\circ(R[\epsilon])\\
    &=\{(c,v) \in C_k^\circ(R) \times V_k(R):\langle c,v\rangle_k=0\}. \end{split}
\end{align}
Use $\langle\,,\,\rangle_k$ to identify $V_k$ with $V_k^\vee.$  Then \eqref{id} implies $T^*C_k^\circ$ is the quotient of $C_k^\circ \times V_k$ by the action of $\GG_a$ given by
\begin{align} \label{Ga:act} \begin{split}
    C_k^\circ(R) \times V_k(R) \times R &\lto C_k^\circ(R) \times V_k(R) \\
    ((c,v),a) &\longmapsto (c,v+ac). \end{split}
\end{align}

Let $\mathbb{O}_{k+1} \subset \mathfrak{o}_{V_{k+1}}$ be the minimal nilpotent orbit.  
We have a map
\begin{align} \label{action:map} \begin{split}
   a: C^\circ_k(R) \times V_k(R) &\lto \mathbb{O}_{k+1}(R)\\
    (c,v) &\longmapsto  \overline{n}(v)\begin{psmatrix}
    0 & c & 0\\
          0 & 0   &  -J_kc^t\\
          0 & 0   & 0
\end{psmatrix}\overline{n}(v)^{-1}. \end{split}
\end{align}
Here we are using notation as in \eqref{opn}. Thus 
\begin{align}\label{invariantscone}
a(c,v)=\begin{psmatrix}
    \langle c,v \rangle_k & c & 0 \\J_kc^t\mathcal{Q}_k(v)-\langle c,v \rangle_k J_kv^t & \left((-J_kv^t)_ic_j+(J_kc^t)_iv_j  \right)_{ij} & -J_kc^t\\
    0 & -\mathcal{Q}_k(v)c+\langle v,c\rangle_k v & -\langle v,c \rangle_k
\end{psmatrix}.
\end{align}

One has $a(c,v+tc)=a(c,v)$ for all $t \in R.$  Hence $a$ descends to a morphism
$\overline{a}:T^*C_k^{\circ} \to \mathbb{O}_{k+1}.$  
We claim that this map is injective on $\overline{F}$-points.  Indeed, assume $a(c,v)=a(c',v')$ for $(c,v),(c',v') \in T^*C_k^\circ(\overline{F}).$ Then it is clear that $c=c',$ and then $a(c,v-v')=a(c,0).$  One checks directly that this implies $v-v'=tc$ for some $t \in \overline{F}.$

One has that $\dim \mathbb{O}_{k+1}=2\dim V_k-2.$  Thus by comparing dimensions the image of $a$ is the open $Q_{k+1}^{\mathrm{op}}$-orbit in $\mathbb{O}_{k+1}$ mentioned in Lemma \ref{lem:orbit:dim}.
It is the underlying topological space of a reduced open subscheme $\mathbb{O}_{k+1}' \subset \mathbb{O}_{k+1}.$
Since $T^*C_k^\circ$ and $\mathbb{O}_{k+1}$ are both smooth they are in particular integral and normal.  
The map
$\overline{a}:T^*C_k^\circ(\overline{F}) \to \mathbb{O}_{k+1}'(\overline{F})$ is bijective, and hence the map $\overline{a}: T^*C_k^\circ \to \mathbb{O}_{k+1}$ is an isomorphism onto its image by a standard argument recorded in \cite[Proposition 1.2.9]{Getz:Hahn}; the proof therein does not use the assumption that the schemes are affine.

Now the complement of $\mathbb{O}_{k+1}'$ in $\mathbb{O}_{k+1}$ is of codimension at least $2.$  Moreover $\overline{\mathbb{O}}_{k+1}$ is known to be normal and equal to $\mathbb{O}_{k+1} \sqcup \{0\}$ \cite[\S 8.6]{Jantzen:Nilpotent}.
  It follows that the closure $\overline{\mathbb{O}'}_{k+1}=\overline{\mathbb{O}}_{k+1}$ of $\mathbb{O}'_{k+1}$ in $\mathfrak{o}_{V_{k+1}}$ may be identified with the affine closure of $\mathbb{O}'_{k+1}$ \cite[Theorem 6.45]{Gortz_Wedhorn} and hence the isomorphism
$\overline{a}:T^*C_k^\circ \tilde{\to} \mathbb{O}'_{k+1}$ extends to an isomorphism $\overline{a}:\overline{T^*C_k^\circ}^{\mathrm{aff}} \tilde{\to} \overline{\mathbb{O}}_{k+1}.$
\end{proof}

\subsection{The small modulation group for quadric cones} 
We now assume $F$ is a local field of characteristic $0$.  We take $X=C_{k}$, $V=V_{k},$ and let
$$
\omega:C_{k} \lto V_{k}
$$
be the inclusion.  We take  $H=\GG_m \times \mathrm{GO}_{V_k}$
where the action is given by 
\begin{align*}
V_k(R) \times H(R) &\lto V_k(R)\\
(v,(a,h)) &\longmapsto avh.
\end{align*}
Then one has that
$$
H_{X}(R):=\{(\lambda^{-1},\lambda I_{V_k}):\lambda \in R^\times\} \cong R^\times.
$$
The hypothesis \eqref{omega:span}--\eqref{HX:Fpoints} are valid.  
\begin{rem}
The space $C_k$ is a horospherical variety in the sense of \S \ref{sec:HS}, but for convenience we have not taken the group $H$ in the current section to be the same as the group $H$ in \S \ref{sec:HS}.  The two actions encode essentially the same information.
\end{rem}

Using the pairing $\langle\,,\,\rangle_k$ we identify $V$ with its dual.  Then according to Definition \ref{def:alg-small-mod}, we have that
\[
\Psi^{\mathrm{s}}_{\omega} = V \rtimes (\GG_m \times \GO_{V_k})/H_X.
\]
Let $\omega_{C_k}$ be the top-degree differential form on $C_k^\circ$ such that 
\begin{equation}\label{DefinitionDifferentialForm}
d v_1 \wedge\cdots \wedge d v_{\mathrm{dim}(V_k)} = d(\mathcal{Q}_k(v)) \wedge \omega_{C_k}(v).
\end{equation}
The corresponding density $|\omega_{C_k}|$ on $C_k^\circ(F)$ defines an eigenmeasure under the action of $H(F).$  Explicitly, one has that
\begin{align} \label{omega:translation}
|\omega_{C_k}|(avh)=|a|^{\dim V_k-2}|\nu(h)|^{(\dim V_k-2)/2}|\omega_{C_k}|(v).
\end{align}
As in Remark \ref{rem:half} we therefore have an isomorphism
\begin{align} \begin{split}
    L^2(C_k^\circ(F)) &\tilde{\lto}L^2(C_k^\circ(F),\mathcal{L}^{1/2})\\
    f &\longmapsto f|\omega_{C_k}|^{1/2} \end{split}
\end{align}
where on the left hand side the Hilbert space is defined using the measure $|\omega_{C_k}|.$  The action of $\Psi_{\omega}^{s}(F)$ on the right hand side is intertwined with the action on the left hand side given by 
\begin{align}\label{def:Quadric:Rep}
    \mathcal{R}_{\omega,\psi}(v \rtimes m(a,h))f(c) = \psi(\langle v, c \rangle)|a|^{(\dim V_k-2)/2}\left|\nu(h)\right|^{(\dim V_k-2)/4} f(ach) &\textrm{ for }(a,h)\in F^\times \times\mathrm{GO}_{V_k}(F).
\end{align} 
To ease comparison with \cite{GurK:Cone} we work with $L^2(C_k^{\circ}(F))$ instead of $L^2(C_k^\circ(F),\mathcal{L}^{1/2}).$

\subsection{Modulation groups for quadric cones}\label{Quadric:Cones:Section}

For the remainder of this section we assume the following:
\begin{enumerate}
\item When $F$ is non-Archimedean we assume $\dim V_k \geq 6$ and that the Witt index of $V_{k+1}$ is at least $\dim V_k/2.$

\item When $F$ is Archimedean we assume it is real. 
\end{enumerate}
The first assumption is made so that we can apply the results of \cite{GurK:Cone}.  The second is due to the fact that we were unable to locate references for the minimal representation of the orthogonal group over the complex numbers.

When $F$ is non-Archimedean we let $\chi$ be the character associated by class field theory to $F(\sqrt{\det J_k})/F.$  When $F$ is real and $\mathcal{Q}_{k}$ has signature $(p,q)$ we let $\chi:F^\times \to \CC^\times$  be the character that is trivial on $\RR_{>0}$ and assigns $(-1)^{(p-q)/2}$ to $\RR_{<0}.$

\subsubsection{The minimal representation of $\mathrm{O}_{V_{k+1}}(F)$} \label{ssec:min}

We use the isomorphism $\overline{n}:V_{k} \to N_k^{\mathrm{op}}$ of \eqref{opn} to identify $V_k$ with $N_k^{\mathrm{op}}.$
For $(v,a,h) \in V_k(F) \times F^\times \times \mathrm{GO}_{V_k}(F)$ one has that 
\begin{align} \label{conj}
m(a,h)^{-1}\overline{n}(v)m(a,h) = \overline{n}(avh).
\end{align}
\quash{Use the exponential map to identify $N_k(F) \times N_k^{\mathrm{op}}(F)$ with its Lie algebra; it is a subalgebra of $\mathfrak{o}_{V_{k+1}}.$  Restricting the Killing form $\langle X,Y\rangle:=(\dim V_{k+1}-2)\mathrm{tr}\,XY$ to $N_k(F)\times N_k^{\mathrm{op}}(F)$ we see that 
\begin{align} \label{2:pair}
    \langle n(v),\overline{n}(v')\rangle=-2(\dim V_{k+1}-2)\langle v,v'\rangle_k.
\end{align} }
Let $\widetilde{\tau}$ be the representation of $\widetilde{Q}_k(F)$ on the space of smooth functions $C^{\infty}(C_k^\circ(F))$ defined by
\begin{equation}
    \label{eqn:GK-klingen-action}
     \widetilde{\tau}( n(v)m(a,g))f(c) = \psi(\langle v,c\rangle_k)\chi(a)|a|^{(\dim V_k-2)/2}|\nu(g)|^{(\dim V_k-2)/4} f(acg )
\end{equation}
for $(v,a,g,c) \in V_k(F) \times F^\times \times \mathrm{GO}_{V_k}(F) \times C_k^\circ(F).$

Set $\tau=\widetilde{\tau}|_{Q_k(F)}.$  Then $\tau$ extends to a unitary representation
$$
\tau:\mathrm{O}_{V_{k+1}}(F) \times L^2(C_k^\circ(F)) \lto L^2(C_k^{\circ}(F)),
$$
namely, the minimal representation (see \cite{Kobayashi:Mano,GurK:Cone}).  We point out that in \cite{GurK:Cone} the action of $Q_{k}(F)$ is given in terms of the Killing form instead of $\langle\,,\,\rangle_k,$ but the two forms agree up to a nonzero constant.  The constant is unimportant for the purposes of this paper.

Identify $\mathrm{O}_{V_{k+1}}(F)$ with a subgroup of $\mathrm{GO}_{V_{k+1}}(F)$ in the evident manner.  
Let
\begin{align} \label{w0}
w_0= \left(\begin{smallmatrix}
        & & 1 \\ & I_{V_{k}} & \\ 1 & & 
    \end{smallmatrix}\right)
\in \mathrm{O}_{V_{k+1}}(F).
\end{align}

\begin{lem}
The representation $\widetilde{\tau}$ extends to a unitary representation of $\mathrm{GO}_{V_{k+1}}(F)$ on $L^2(C_k^{\circ}(F)),$ still denoted $\tau,$ such that $\widetilde{\tau}|_{\mathrm{O}_{V_{k+1}}(F)}=\tau.$  
\end{lem}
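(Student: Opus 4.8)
The idea is that $\GO_{V_{n+1}}(F)$ is generated by its two subgroups $\mathrm{O}_{V_{n+1}}(F)$ and $\widetilde{Q}_n(F)$, on which we already have the minimal representation $\tau$ and the representation $\widetilde{\tau}$, and that these can be glued. Under the running hypotheses $V_n$ --- hence also $V_{n+1}$ --- is isotropic, so scaling a hyperbolic plane shows $\nu\colon\GO_{V_n}(F)\to F^\times$ and $\nu\colon\GO_{V_{n+1}}(F)\to F^\times$ are surjective. Since $\mathrm{O}_{V_{n+1}}=\ker(\nu\colon\GO_{V_{n+1}}\to\GG_m)$, the subgroup $\mathrm{O}_{V_{n+1}}(F)$ is normal in $\GO_{V_{n+1}}(F)$ with quotient $F^\times$, the subgroup $\widetilde{Q}_n(F)$ surjects onto this quotient (its $\nu$-image is $\nu(\GO_{V_n}(F))=F^\times$), we have $\widetilde{Q}_n(F)\cap\mathrm{O}_{V_{n+1}}(F)=Q_n(F)$, and therefore $\GO_{V_{n+1}}(F)=\mathrm{O}_{V_{n+1}}(F)\cdot\widetilde{Q}_n(F)$. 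For $g=oq$ with $o\in\mathrm{O}_{V_{n+1}}(F)$ and $q\in\widetilde{Q}_n(F)$ we set $\tau(g):=\tau(o)\widetilde{\tau}(q)$; if $oq=o'q'$ then $o'^{-1}o=q'q^{-1}\in Q_n(F)$ where $\tau$ and $\widetilde{\tau}$ agree, so this is well defined, it restricts to $\tau$ on $\mathrm{O}_{V_{n+1}}(F)$ and to $\widetilde{\tau}$ on $\widetilde{Q}_n(F)$, each $\tau(g)$ is unitary, and strong continuity follows using a local section of $\mathrm{O}_{V_{n+1}}(F)\times\widetilde{Q}_n(F)\to\GO_{V_{n+1}}(F)$.

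It remains to check that $\tau$ is a homomorphism. Expanding $\tau(g_1)\tau(g_2)$ and $\tau(g_1g_2)$ and using normality of $\mathrm{O}_{V_{n+1}}$ shows that this is equivalent to the identity $\widetilde{\tau}(q)\tau(o)\widetilde{\tau}(q)^{-1}=\tau(qoq^{-1})$ for all $o\in\mathrm{O}_{V_{n+1}}(F)$, $q\in\widetilde{Q}_n(F)$ (note $qoq^{-1}\in\mathrm{O}_{V_{n+1}}(F)$); call this $(\star)$. For fixed $q$ the $o$ satisfying $(\star)$ form a subgroup of $\mathrm{O}_{V_{n+1}}(F)$, and the $q$ satisfying $(\star)$ for all $o$ form a subgroup of $\widetilde{Q}_n(F)$, so it suffices to verify $(\star)$ with $q$ ranging over a generating set of $\widetilde{Q}_n(F)$ and $o$ over a generating set of $\mathrm{O}_{V_{n+1}}(F)$. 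Take $\widetilde{Q}_n(F)=\langle Q_n(F),\{m(1,h):h\in\GO_{V_n}(F)\}\rangle$ and $\mathrm{O}_{V_{n+1}}(F)=\langle Q_n(F),w_0\rangle$ (the standard Bruhat generation used in \cite{GurK:Cone,Kobayashi:Mano}). If $q\in Q_n(F)$ then $(\star)$ is clear since $\tau=\widetilde{\tau}$ there and $\widetilde{\tau}$ is a homomorphism. If $q=m(1,h)$ and $o\in Q_n(F)$ it is again clear, because $m(1,h)$ normalizes $Q_n$ (by the analogue of \eqref{conj} for $n(v)$ and by $m(1,h)m(a,g)m(1,h)^{-1}=m(a,hgh^{-1})$) and $\widetilde{\tau}$ is a homomorphism on $\widetilde{Q}_n(F)$. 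Finally, for $q=m(1,h)$ and $o=w_0$, the matrix computation $m(1,h)w_0m(1,h)^{-1}=w_0\,m(\nu(h)^{-1},1)$ with $m(\nu(h)^{-1},1)\in Q_n(F)$ reduces $(\star)$ to
\begin{equation*}
\tau(w_0)^{-1}\,\widetilde{\tau}\!\bigl(m(1,h)\bigr)\,\tau(w_0)=\widetilde{\tau}\!\bigl(m(\nu(h)^{-1},h)\bigr),\qquad h\in\GO_{V_n}(F).\tag{$\star\star$}
\end{equation*}

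To prove $(\star\star)$, note that both sides are strongly continuous unitary representations of $\GO_{V_n}(F)$. They agree on $\mathrm{O}_{V_n}(F)$ because $w_0$ commutes with $m(1,g)$ for $g\in\mathrm{O}_{V_n}(F)$ and $\tau$ is a representation of $\mathrm{O}_{V_{n+1}}(F)$; and they agree on the scalars $F^\times\cdot I_{V_n}$ by a direct computation from \eqref{eqn:GK-klingen-action} using $w_0m(\lambda,1)w_0=m(\lambda^{-1},1)$ and $\chi^2=1$. Hence they agree on the subgroup $\{h:\nu(h)\in(F^\times)^2\}=\langle\mathrm{O}_{V_n}(F),F^\times I_{V_n}\rangle$. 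Since both sides of $(\star\star)$ are homomorphisms, $(\star\star)$ then holds on a coset $h_0\langle\mathrm{O}_{V_n}(F),F^\times I_{V_n}\rangle$ as soon as it holds at $h_0$; as $\GO_{V_n}(F)/\langle\mathrm{O}_{V_n}(F),F^\times I_{V_n}\rangle$ embeds in the finite group $F^\times/(F^\times)^2$, only finitely many instances remain. These are settled by inserting the explicit (Bessel-kernel) description of the Fourier transform $\tau(w_0)$ on the cone from \cite{GurK:Cone} (respectively \cite{Kobayashi:Mano} in the real case) and carrying out the change of variables $c\mapsto ch$: the resulting Jacobian and quadratic character are exactly those predicted by the transformation law \eqref{omega:translation} of the eigenmeasure $|\omega_{C_n}|$.

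Everything above is formal except the last step. \emph{The main obstacle is precisely the verification of $(\star\star)$ for similitudes of non-square factor}: there one cannot argue by generators and relations but must compute with the explicit kernel of the Fourier transform on the cone. (If a reference extending the minimal representation of an even orthogonal group to its similitude group --- as arises in the similitude theta correspondence --- is at hand, it may be quoted in place of this computation.)
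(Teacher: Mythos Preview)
Your approach is essentially the same as the paper's: both use the factorization $\GO_{V_{n+1}}(F)=\mathrm{O}_{V_{n+1}}(F)\cdot(\text{similitudes from the Levi})$, define the extension on products, and reduce the homomorphism property to the single compatibility relation at $h=w_0$, which is then verified via the explicit Radon--transform description of $\tau(w_0)$.

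Two remarks on your organization. First, your detour through the subgroup $\langle\mathrm{O}_{V_n}(F),F^\times I_{V_n}\rangle$ and the reduction to finitely many cosets in $F^\times/(F^\times)^2$ is unnecessary: the paper simply carries out the change of variables $x\mapsto axg$ in the Radon integral \eqref{tau} uniformly for all $(a,g)\in F^\times\times\GO_{V_n}(F)$, and nothing in that computation distinguishes square from non-square similitude factor. So the ``main obstacle'' you identify is not really localized at non-square similitudes; it is just the one change-of-variables computation, following \cite[Proposition~3.16]{GurK:Cone}. Second, your phrase ``the resulting Jacobian and quadratic character are exactly those predicted by the transformation law \eqref{omega:translation}'' is slightly off: the eigenmeasure transformation \eqref{omega:translation} contributes only the absolute-value Jacobian, while the factor $\chi(a)$ in $\widetilde{\tau}(m(a,g))$ simply passes through the Radon integral as a constant; no quadratic character arises from the change of variables itself.
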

\begin{proof}
Any element of $\mathrm{GO}_{V_{k+1}}(F)$ is of the form $m(a,g)h$ where $(a,g,h) \in F^\times \times \mathrm{GO}_{V_{k}}(F) \times \mathrm{O}_{V_{k+1}}(F).$  We claim that the desired extension is given by 
$$
\widetilde{\tau}(m(a,g)h):=\widetilde{\tau}(m(a,g))\tau(h).
$$
To prove that this is an extension of our original representation it suffices to show that 
\begin{align} \label{compat2}
 \tau(h)\widetilde{\tau}(m(a,g))=\widetilde{\tau}(m(a,g))\tau(m(a,g)^{-1}hm(a,g)).
\end{align}
It suffices to check \eqref{compat2} for a set of $h$ generating $\mathrm{O}_{V_{k+1}}(F)$
 as a group.  
 Thus by Proposition \ref{prop:gen1} and the Bruhat decomposition it suffices to check \eqref{compat2} if $h \in Q_{k+1}(F)$ or if $h=w_0.$  If $h \in Q_{k+1}(F)$ then \eqref{compat2} is valid because $\widetilde{\tau}|_{Q_{k+1}(F)}=\tau|_{Q_{k+1}(F)}.$  
 
 To check \eqref{compat2} when $h=w_0$ we use the formula
 \begin{align} \label{tau}
     \tau(w_0)(f)(c)=\int_{F}\Psi(t)\mathcal{R}_t(f)(c)dt.
 \end{align}
 Here $\Psi(t)$ is a certain distribution, 
 \begin{align}
 \mathcal{R}_{t}(f)(c):=\int_{x \in C_{k}^\circ(F):\langle c,x\rangle_k=t}f(x)|\omega_{c,t}|(x)
 \end{align}
 is the Radon transform \cite[(1.5)]{GurK:Cone} \cite[(5.2.2)]{Kobayashi:Mano}, and $|\omega_{c,t}|$ is a suitable family of measures
  (see \cite[\S 3.2]{GurK:Cone}).  We point out that in \cite{Kobayashi:Mano}
a different choice of $w_0$ and pairing is used, but \eqref{tau} is still valid.

Following the proof of \cite[Proposition 3.16]{GurK:Cone}, we then have that
\begin{align*}
&\tau(w_0)\widetilde{\tau}(m(a,g))f(c)\\&=
    \chi(a)|a|^{(\dim V_k-2)/2}|\nu(g)|^{(\dim V_k-2)/4}\int_{F}\Psi(t)\int_{x \in C_{k}^\circ(F):\langle c,x\rangle_k=t}f(axg)|\omega_{c,t}|(x) dt
    \\&=\chi(a)|a|^{(2-\dim V_k)/2}|\nu(g)|^{(\dim V_k-2)/4}\int_{F}\Psi(t)\int_{x \in C_{k}^\circ(F):\langle c,x\rangle_k=t}f(axg)|\omega_{a^{-1}\nu(g)^{-1}cg,t}|(axg) dt\\
    &=\chi(a)|a|^{(2-\dim V_k)/2}|\nu(g)|^{(2-\dim V_k)/4}\int_F \Psi(t)\int_{x \in C_{k}^\circ(F):\langle c,axg^{-1}\rangle_k=t}f(x)|\omega_{a^{-1}\nu(g)^{-1}cg,t}|(x)dt\\
    &=\chi(a)|a|^{(2-\dim V_k)/2}|\nu(g)|^{(2-\dim V_k)/4}\int_{F}\Psi(t)\int_{x \in C_{k}^\circ(F):\langle a^{-1}\nu(g)^{-1}cg,x\rangle_k=t}f(x)|\omega_{a^{-1}\nu(g)^{-1}cg,t}|(x)dt.
\end{align*}
Since $m(a,g)^{-1}w_0m(a,g)= \begin{psmatrix} a^{-2}\nu(g)^{-1} & & \\ & I_{2k} & \\ & & a^2\nu(g) \end{psmatrix}w_0$
this is 
$\widetilde{\tau}(m(a,g))\tau(m(a,g)^{-1}w_0m(a,g))f(c).$ This completes the proof of \eqref{compat2} when $h=w_0$ and hence the proof of the lemma.
\end{proof}
\quash{
\textcolor{red}{Assume $F$ is non-Archimedean. Let's try to remove this}  According to \cite[\S 2]{GurK:Cone}, \cite[Lemma 3.7.2]{KO:I}, \cite[\S 5.5]{KO:I} and \cite[(2.11)]{Howe:Tan} the minimal representation $\Pi$ of $\mathrm{O}_{V_{n+2}}(F)$ can be realized as the unique irreducible sub-representation of the normalized induction 
 \begin{align}
I(\chi|\cdot|):=I_{Q^{\mathrm{op}}_{2n}}^{\mathrm{O}_{2n+2}}(m(a,g) \mapsto \chi(a)|a|).
 \end{align}
Consider
\begin{align}
\widetilde{I}(\chi|\cdot|):=I_{\widetilde{Q}^{\mathrm{op}}_{2n}}^{\mathrm{GO}_{2n+2}}(m(a,g) \mapsto \chi(a)|a||\nu(h)|^{-1/2})
\end{align}
Restriction of functions yields a canonical identification  $\widetilde{I}(\chi|\cdot|) \tilde{\to} I(\chi|\cdot|).$ In particular the representation $I(\chi|\cdot|)$ of $\mathrm{O}_{V_k}$ admits an extension to a representation $\widetilde{I}(\chi|\cdot|)$ of $\mathrm{GO}_{V_{n+2}}.$
\quash{
\begin{lem} \label{lem:extend}
    The representation $\Pi$ of $\mathrm{O}_{V_{n+2}}(F)$ extends to a representation $\widetilde{\Pi}$ of $\mathrm{GO}_{V_{n+2}}(F),$ with the same underlying space.
\end{lem}
\begin{proof}
For every $g \in \mathrm{GO}_{V_{n+2}}(F)$ the space $I(\chi|\cdot|)(g)\Pi \leq I(\chi|\cdot|)$ is a $\mathrm{O}_{V_{n+2}}(F)$-subrepresentation of $I(\chi|\cdot|),$ hence equal to $\Pi.$
\end{proof}}

To be consistent with notion from earlier in the paper, we view these representation in the category of Hilbert representations.  If $\pi$ is a Hilbert representation we write $\pi^{\mathrm{sm}}$ for the space of smooth vectors.

 In this section, we follow \cite{GurK:Cone} and \cite{Kobayashi:Mano} to explain how the representation $\Pi^{\mathrm{sm}}$ is realized in $C^{\infty}(C^\circ_k(F))$.   
 We require some detail in order to check that $\Pi^{\mathrm{sm}}$ extends  to a representation $\tilde{\Pi}^{\mathrm{sm}}$ of $\mathrm{GO}_{2n+2}(F)$ which again can be realized in $C^\infty(C_k^\circ(F)).$

We use the isomorphism $\overline{n}:V_{k} \to N_k^{\mathrm{op}}$ of \eqref{opn} to identify $V_k$ with $N_k^{\mathrm{op}}.$
For $(v,a,h) \in V_k(F) \times F^\times \times \mathrm{GO}_{V_k}(F)$ one has 
\begin{align} \label{conj}
m(a,h)^{-1}\overline{n}(v)m(a,h) = \overline{n}(\nu(h)^{-1}avh).
\end{align}
Restricting the Killing form $\langle\cdot,\cdot\rangle$ to $N_k(F)\times N_k^{\mathrm{op}}(F)$ we see that 
\begin{align} \label{2:pair}
    -\langle n(v),\overline{n}(v')\rangle=\langle v,v'\rangle_k
\end{align} 
Let $\widetilde{\tau}$ be the representation of $\widetilde{Q}_k(F)$ on the space of smooth functions $C^{\infty}(C_k^\circ(F))$ defined by
\begin{equation}
    \label{eqn:GK-klingen-action}
     \tau( n(v)m(a,g))f(c) = \psi(-\langle n(v),\overline{n}(c)\rangle)\chi(a)|a|^{k-1}|\nu(h)|^{(k-1)/2} f(ach )
\end{equation}
for $(v,a,g,c) \in V_k(F) \times F^\times \times \mathrm{GO}_{V_k}(F) \times C_k^\circ(F).$

\begin{prop}\label{GKembeddingPropa}
There is a $\widetilde{Q}_k(F)$-equivariant injection 
    \begin{align*}
   \iota: \widetilde{\Pi}^{\mathrm{sm}} \lto C^\infty(C_k^\circ(F)),
   \end{align*}   
   where $\widetilde{Q}(F)$ acts on $C^\infty(C_k^\circ(F))$ by $\widetilde{\tau}$. One has
  $$
  \widetilde{\Pi}^{\mathrm{sm}}(aI_{2n+2}) = \widetilde{\tau}(aI_{2n+2}) = \chi(a).
  $$
\end{prop}

\noindent We write
\begin{align}
\label{schwartz:space:Cn}    \mathcal{S}(C_k(F)) := \iota(\widetilde{\Pi}^{\mathrm{sm}})=\iota(\Pi^{\mathrm{sm}}).
\end{align} 
By \cite[Theorem 1.2 (2)]{GurK:Cone}, we have
   \begin{align}
   C^\infty_c(C_k^\circ(F))< \mathcal{S}(C_k(F)) < C^\infty(C_k^\circ(F)).
   \end{align} \textcolor{red}{UTH}

\begin{proof}
The existence of a $Q_k(F)$-equivariant map as in the theorem is \cite[Theorem 1.2(i)]{GurK:Cone}. Since  the explicit map plays a central role in the proof, we include a brief sketch of their argument.

    The set $Q_{k}^{\mathrm{op}}(F)N_k(F)$ is open and dense in $\mathrm{O}_{V_{k+1}}(F)$. Thus the restriction map 
    \begin{align*}
   I(\chi|\cdot|)^{\mathrm{sm}}&\to C^\infty(N_k(F)),\\
   f&\mapsto f|_{N_k(F)},\nonumber
   \end{align*}
   is injective. Let $\mathcal{F}_{\psi,N}:\mathcal{S}(N_k(F))\to \mathcal{S}(N_k^{\mathrm{op}}(F))$ be the Fourier transform defined with respect to the Killing form. Let 
   \begin{align*}
       \mathcal{S}'' &:= \{f\in \mathcal{S}(N_k(F)),:\mathcal{F}_{\psi,N}(f)|_{C_k^\circ(F)} = 0\},\\
       \mathcal{S}' &:= \{f\in \mathcal{S}(N_k(F)):\mathcal{F}_{\psi,N_k}(f)(0) = 0\}.
   \end{align*}
   The Fourier transform $\mathcal{F}_{\psi,N_k}$ provides an isomorphism $\mathcal{F}_{\psi,N_k}:\mathcal{S}'/\mathcal{S}''\tilde{\to} \mathcal{S}(C_k^\circ(F))$. According to \cite[Lemma 2.11]{GurK:Cone}, the pairing
   \begin{align}\label{Pairing:GurKan}
    \Pi^{\mathrm{sm}}\times \mathcal{S}(N(F))&\to \mathbb{C}.\\
    \left(f,g\right)&\mapsto \int_{N(F)}f(n)\overline{g(n)}d n,\nonumber
   \end{align}
   descends to a left non-degenerate pairing of $\Pi\times \mathcal{S}'/\mathcal{S}''\cong \Pi\times \mathcal{S}(C_k^\circ(F))$. Consequently, the representation $\Pi$ is embedded into the $Q(F)$-smooth dual of $\mathcal{S}(C_k^\circ(F))$.  This is the embedding $\iota$ of the proposition.

      An explicit formula for $\iota$ is obtained from Parseval's identity: for $h\in \mathcal{S}(C_k^\circ(F))$, we have
   \[\int_{N(F)}f(n)\overline{\mathcal{F}^{-1}_{\psi,N}(h)(n)}dn = \int_{\overline{N}(F)}\mathcal{F}_{\psi,N}(f)(n)\overline{h(n)}dn.\]
   This shows that $\iota(f)(c) = \mathcal{F}_{\psi,N}(f)(c)$ for $c\in C_k^\circ(F)$, completing the sketch of the proof of \cite[Theorem 1.2 (i)]{GurK:Cone}.

Now by \eqref{conj} we have
\begin{align*}
\mathcal{F}_{\psi,N_k}(\widetilde{I}(\chi|\cdot|)(m(1,g))f)(c)&=|\nu(h)|^{-(n+1)/2}\int_{N(F)}f(\overline{n}(\nu(g)^{-1}vg)\psi(\langle \overline{n}(v),n(c) \rangle)dv\\
&=|\nu(h)|^{(k-1)/2}\int_{N(F)}f(\overline{n}(v)\psi(\langle \overline{n}(v),n(cg) \rangle)dv\\
&=\tau(m(1,g))\mathcal{F}_{\psi,N}(f)(c).
\end{align*}
It follows that the map $\iota$ in fact intertwines the actions of $\widetilde{\Pi}$ and $\widetilde{\tau}.$
\quash{
***

   We recall that by hypothesis, the Witt index is at least $n-1$. Then, if $V_{2n}$ is not totally split, we have the factorization $V_{2n} = E\oplus \mathbb{H}^{n-1}$, where $E/F$ is a quadratic extension of $F$. We have that $\mathrm{GO}_{2m}\cong \mathrm{O}_{2m}\rtimes A_{2m}$, where
   \[A_{2m} :=
   \begin{cases}
    \{a_{2m}(\lambda) :=\mathrm{diag}(\lambda,\stackrel{m}{\cdots},\lambda,1,\stackrel{m}{\cdots},1),\;\lambda\in F^{\times}\}&if\; V_{2m} = \mathbb{H}^{m},\\
   \{a_{2m}(\lambda) :=\mathrm{diag}(\lambda,\stackrel{m-1}{\cdots},\lambda,x,1,\stackrel{m-1}{\cdots},1),\;x\in \mathrm{GO}(E),\;and\;\nu(x) = \lambda\neq 1\}&if\;V_{2m} = E\oplus \mathbb{H}^{m-1},
   \end{cases}
   \]
    On the one hand, for any $a\in A_{2n+2}$, the intertwining map
   \begin{align*}
    I_{a} :\;C^\infty(\mathrm{O}_{2n+2})&\to C^\infty(\mathrm{O}_{2n+2}),\\
    f&\mapsto (h \mapsto |\nu(a)|^{\frac{-n-1}{2}} f(a^{-1} h a)),
    \end{align*}
    satisfies that $\Pi(a^{-1}ga) = I_{a}^{-1}\Pi(g)I_{a}$. \textcolor{red}{The problem with this is:  How do we know that the operators $I_a$ preserve the space of $\Pi?$ It only seems obvious to me that they preserve $I(\chi|\cdot|).$}Therefore
    \[
    \begin{cases}
        \tilde{\Pi}(g) = \Pi(g) &g \in \mathrm{O}_{2n+2} \\
        \tilde{\Pi}(a) = I_{a}  & a \in A_{2n+2}
    \end{cases}
    \]
    is a representation of $\mathrm{GO}_{2n+2}$ so that $\tilde{\Pi}|_{\mathrm{O}_{2n+2}} \cong \Pi$.\\
    On the other hand, given $a_{2n+2}(\lambda)\in A_{2n+2}$, we consider the intertwining map
    \begin{align}\label{Proof:Extension:Repre:R}
    R_{a_{2n+2}(\lambda)}: C^\infty(C_k^\circ(F))&\to C^\infty(C_k^\circ(F)),\\
    f&\mapsto (c \mapsto |\lambda|^{\frac{n-1}{2}} f(ca_{2n}(\lambda))),
    \end{align}
    satisfying that $\rho(a_{2n+2}(\lambda)^{-1}qa_{2n+2}(\lambda)) = R_{a_{2n+2}(\lambda)}^{-1}\rho(q)R_{a_{2n+2}(\lambda)}$. Since $\tilde{Q}(F) = Q(F)\rtimes A_{2n+2}$, we have that 
    \begin{equation}\label{DefTilderho}
    \begin{cases}
        \tilde{\rho}(q) = \rho(q) &q \in Q(F) \\
        \tilde{\rho}(a) = R_{a}  & a\in A_{2n+2}
    \end{cases}
    \end{equation}
    is a representation of $\tilde{Q}(F)$ satisfying $\tilde{\rho}|_{Q(F)} \cong \rho$.
    
    By \cite[Theorem 1.2(i)]{GurK:Cone}, we know that the map $\iota$ is $Q(F)$-intertwining. Then, it suffices to check that it intertwines the action of $A_{2n+2}$. Let $f\in \Pi$. Recall that $\iota$ is obtained by first restricting $f\mapsto f|_{N(F)}$ and then identifying $f|_{N(F)}$ with an element of $C^\infty_c(C_k^\circ(F))^{\vee}$ using the left non-degenerate pairing \eqref{Pairing:GurKan} and the Fourier transform $\mathcal{F}_{\psi,N}$. A direct computation shows
    \begin{align*}
        \mathcal{F}_{\psi,N}I_{a_{2n+2}(\lambda)}f(c) &= |\lambda|^{\frac{-n-1}{2}}\int_{N(F)}f(a_{2n+2}(\lambda)^{-1}na_{2n+2}(\lambda))\psi(\langle n,c\rangle)dn\\&= |\lambda|^{\frac{-n-1}{2}+n}\int_{N(F)}f(n)\psi(\langle n,ca_{2n}(\lambda)\rangle)dn,
    \end{align*}
    where we did a change of variables and used the adjoint invariance of the Killing form. We conclude that $\mathcal{F}_{\psi,N}I_{a_{2n+2}(\lambda)}f = R_{a_{2n+2}(\lambda)}\mathcal{F}_{\psi,N}f$, establishing the intertwining property and completing the proof of the Proposition.}
\end{proof}

Using the injection $\iota$ we obtain a representation of $\mathrm{GO}_{V_{n+2}}(F)$ on $\mathcal{S}(C_k(F))$ by transport of structure.  Let us denote this representation by $\widetilde{\tau},$ since it extends $\tau.$

The restriction of $\widetilde{\tau}$ to $\mathrm{O}_{V_{n+2}}(F)$ is equivalent to $\Pi^{\mathrm{sm}}$ and extends to a unitary representation on $L^2(C_k^\circ(F),\eta_C).$  Using \eqref{omega:translation} we deduce that the action of $\mathrm{GO}_{V_{n+2}}(F)$ is again unitary. 
}

\subsubsection{The modulation group}
\label{ssec:mod:grp}
We define
\begin{align} \label{cone:Sch}
    \mathcal{S}(C_k(F))=L^2(C_k^\circ(F))^{\mathrm{sm}}
\end{align}
where we take smooth vectors with respect to the action of $\mathrm{GO}_{V_{k+1}}(F)$ via $\widetilde{\tau},$ or equivalently with respect to $\mathrm{O}_{V_{k+1}}(F)$ via $\tau.$

We define
\begin{align}\label{QuadricFouriertrans}
\mathcal{F}_{C}:=\widetilde{\tau}(w_0)
\end{align}
with $w_0$ as in \eqref{w0}.
Thus we have an isomorphism 
$$
\mathcal{F}_{C}:\mathcal{S}(C_k(F)) \lto \mathcal{S}(C_k(F)).
$$
It is reasonable to view this as a Fourier transform.  Indeed, there is a formula for it entirely analogous to the usual Fourier transform; see \cite[1.5]{GurK:Cone} or \cite[Corollary 6.9]{Getz:Hsu:Leslie}.

Consider the action  $\mathcal{R}_{\omega,\chi}:\Psi^{\mathrm{s}}_{\omega}(F) \times L^2(C_k^\circ(F)) \to L^2(C_k^\circ(F))$ given by \begin{align} \label{twisted:action}
    \mathcal{R}_{\omega,\chi}((v, m(a,h)))f(c)=\psi(\langle v, c \rangle)\chi(a)|a|^{(\dim V_k-2)/2}\left|\nu(h)\right|^{(\dim V_k-2)/4} f(ach) 
\end{align}
for $(a,h)\in F^\times \times\mathrm{GO}_{V_k}(F).$ This is a twist of the action \eqref{def:Quadric:Rep}.
The \textbf{modulation group} of the inclusion $\omega:C_k \to V_k$ is the group 
\begin{align} \label{modif:mod:group}
\Psi_{\omega}\{F\}:=\langle\mathcal{F}_{C}, \mathcal{R}_{\omega,\chi}(\Psi_{\omega}^{\mathrm{s}}(F))\rangle.
\end{align}  

\begin{thm} \label{thm:mod:cone}
One has that $
\Psi_{\omega}\{F\}=\widetilde{\tau}(\mathrm{GO}_{V_{k+1}}(F)).
$  
\end{thm}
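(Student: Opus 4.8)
\subsection*{Proof proposal}

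The plan is to identify both generating sets of $\Psi_\omega\{F\}$ with pieces of the extended minimal representation $\widetilde{\tau}$, thereby reducing the statement to a purely group-theoretic generation fact inside $\GO_{V_{n+1}}(F)$. First I would compare the twisted action \eqref{twisted:action} with the formula \eqref{eqn:GK-klingen-action} defining $\widetilde{\tau}$ on the parabolic $\widetilde{Q}_n(F)$. Using the identity \eqref{2:pair}, namely $-\langle n(v),\overline{n}(c)\rangle=\langle v,c\rangle_n$, together with the identification of $V_n$ with $V_n^\vee$ via $\langle\,,\,\rangle_n$ that is already in force in this section, one sees that the operator $\mathcal{R}_{\omega,\chi}((v,m(a,h)))$ on $L^2(C_n^\circ(F))$ is exactly $\widetilde{\tau}(n(v)m(a,h))$. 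Since $\widetilde{Q}_n(F)=\{m(a,h)n(v):(a,h,v)\in F^\times\times\GO_{V_n}(F)\times V_n(F)\}=\{n(v)m(a,h):\dots\}$ as a set, this shows $\mathcal{R}_{\omega,\chi}(\Psi_\omega^{\mathrm{s}}(F))=\widetilde{\tau}(\widetilde{Q}_n(F))$ (the $H_X$--quotient and the $\chi$--twist only affect how the map factors, not the image, since the operators in question are literally the values of the homomorphism $\widetilde{\tau}$ on $\widetilde{Q}_n(F)$). By \eqref{QuadricFouriertrans} we have $\mathcal{F}_C=\widetilde{\tau}(w_0)$, so because $\widetilde{\tau}$ is a group homomorphism,
\begin{align*}
\Psi_\omega\{F\}=\langle\mathcal{F}_C,\mathcal{R}_{\omega,\chi}(\Psi_\omega^{\mathrm{s}}(F))\rangle=\widetilde{\tau}\big(\langle w_0,\widetilde{Q}_n(F)\rangle\big).
\end{align*}
Thus it suffices to prove $\langle w_0,\widetilde{Q}_n(F)\rangle=\GO_{V_{n+1}}(F)$.

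For this I would use exactly the structural facts already assembled in the proof of the lemma extending $\widetilde{\tau}$ to $\GO_{V_{n+1}}(F)$: every element of $\GO_{V_{n+1}}(F)$ can be written as $m(a,g)h$ with $(a,g,h)\in F^\times\times\GO_{V_n}(F)\times\mathrm{O}_{V_{n+1}}(F)$, and $\mathrm{O}_{V_{n+1}}(F)$ is generated by the parabolic $Q_n(F)$ together with $w_0$ (Bruhat decomposition with respect to $Q_n$; this is the same input used via Proposition \ref{prop:gen1} in that lemma). Since $m(a,g)\in\widetilde{Q}_n(F)$ and $Q_n(F)\le\widetilde{Q}_n(F)$, the subgroup $\langle w_0,\widetilde{Q}_n(F)\rangle$ contains $m(F^\times\times\GO_{V_n}(F))$ and $\langle Q_n(F),w_0\rangle=\mathrm{O}_{V_{n+1}}(F)$, hence contains every $m(a,g)h$, i.e.\ all of $\GO_{V_{n+1}}(F)$. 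The reverse inclusion is immediate since $w_0\in\mathrm{O}_{V_{n+1}}(F)$ and $\widetilde{Q}_n(F)\le\GSO_{V_{n+1}}(F)$. (Equivalently, one can argue directly that $w_0$ conjugates $\widetilde{Q}_n$ to the opposite parabolic $\widetilde{Q}_n^{\mathrm{op}}$, that a maximal parabolic and its opposite generate the connected group $\GSO_{V_{n+1}}(F)$, and that $w_0$ together with the $\GO_{V_n}$--part of the Levi reaches the nonidentity component of $\GO_{V_{n+1}}$ and all similitude factors; but the first argument is shorter given the lemma.)

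The only genuine points needing care are in the first paragraph: one must check that the identification $\mathcal{R}_{\omega,\chi}(\Psi_\omega^{\mathrm{s}}(F))=\widetilde{\tau}(\widetilde{Q}_n(F))$ is exact on the nose — in particular that the $\chi(a)$, $|a|^{n-1}$, and $|\nu(h)|^{(n-1)/2}$ factors in \eqref{twisted:action} match those in the Klingen-type formula \eqref{eqn:GK-klingen-action} (this is why $\mathcal{R}_{\omega,\chi}$ was defined as a $\chi$--twist of \eqref{def:Quadric:Rep} rather than \eqref{def:Quadric:Rep} itself), and that the bijection $(v,(a,h))\mapsto n(v)m(a,h)$ between the underlying set of $V_n\rtimes(\GG_m\times\GO_{V_n})$ and $\widetilde{Q}_n$ intertwines the two group laws. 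Once that bookkeeping is done, the remainder is the clean generation statement above, which I expect to be routine given the preceding lemma; so the main (though modest) obstacle is pinning down the formula comparison, not the group theory.
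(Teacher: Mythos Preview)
Your proposal is correct and follows essentially the same route as the paper's proof: identify $\mathcal{R}_{\omega,\chi}(\Psi_\omega^{\mathrm{s}}(F))$ with $\widetilde{\tau}(\widetilde{Q}_n(F))$ via \eqref{eqn:GK-klingen-action} and \eqref{2:pair}, note $\mathcal{F}_C=\widetilde{\tau}(w_0)$, and reduce to the generation statement $\langle w_0,\widetilde{Q}_n(F)\rangle=\GO_{V_{n+1}}(F)$, which the paper also dispatches via Proposition~\ref{prop:gen1}. Your version is simply more explicit about the bookkeeping (matching the $\chi$, $|a|^{n-1}$, and $|\nu(h)|^{(n-1)/2}$ factors, and the decomposition $\GO_{V_{n+1}}(F)=m(F^\times\times\GO_{V_n}(F))\cdot\mathrm{O}_{V_{n+1}}(F)$), whereas the paper states the intertwining as an isomorphism and invokes Proposition~\ref{prop:gen1} directly.
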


\begin{proof}
We have an isomorphism
\begin{align*}
    \Psi_{\omega}^{\mathrm{s}}(F) &\tilde{\lto} (F^\times \times \widetilde{Q}_{k}(F))/H_X(F)\\
    (v, (a,g))& \longmapsto n(v)m(a,g)
\end{align*}
that intertwines the action \eqref{twisted:action} and $\widetilde{\tau}$ by \eqref{eqn:GK-klingen-action}.  Thus it suffices to check that $\mathrm{GO}_{V_{k+1}}(F)$ is generated by $\widetilde{Q}_k(F)$ and $w_0.$  This is a consequence of Proposition \ref{prop:gen1}.
\end{proof}

\subsection{The modulation group for the Rankin-Selberg monoid} \label{ssec:RSM}
Let $M_{\otimes}$ be the reductive monoid whose points in an $F$-algebra $R$ are given by
\begin{equation}\label{monoidtensorproddef}
    M_\otimes(R) = \{(X_1, X_2) \in M_{2}(R) \times M_{2}(R) : \det(X_1) = \det(X_2)\}.
\end{equation}
Let $G:=M_{\otimes}^\times$ be its group of units.
We refer to $M_{\otimes}$ as the Rankin-Selberg monoid. It is the reductive monoid attached to the tensor product representation
$$
\otimes:{}^LG \lto \GL_4(\CC).
$$
There is a canonical closed immersion
\begin{align}
\omega_{\otimes}:M_{\otimes}\lto M_{2} \times M_{2}.
\end{align}
We can choose data so that this is the morphism of Lemma \ref{lem:omegarho}.

We view $M_2$ and $M_2 \times M_2$ as quadratic spaces equipped with quadratic forms $X \mapsto \det X$ and  $(X,Y) \mapsto \det X-\det Y.$  Let 
$$
W:=\GG_a \oplus M_2 \oplus M_2 \oplus \GG_a
$$
equipped with the quadratic form $(a,X,Y,b) \mapsto ab+\det X-\det Y.$

We have a Fourier transform 
$$
\mathcal{F}_{M_{\otimes}}:\mathcal{S}(M_{\otimes}(F)) \lto \mathcal{S}(M_{\otimes}(F)).
$$
When we refer to modulation groups in this section, we will always mean modulation groups with respect to this transform.  

In the present case, the small modulation group is equal to the group
\[
\Psi^{\mathrm{s}}_{\otimes} = M_2 \times M_2 \rtimes \left(G \times G\right)/\Delta(Z_G).
\]
Here we have used the quadratic form to identify $M_2 \times M_2$ with its dual.
Let
\begin{align}
    (\mathrm{GSO}_{M_2} \times \mathrm{GSO}_{M_2})^\circ(R):=\{(g_1,g_2)\in \mathrm{GSO}_{M_2}(R) \times \mathrm{GSO}_{M_2}(R):\nu(g_1)=\nu(g_2)\}.
\end{align}

\begin{prop}\label{TensorProductSmallProp}
    We have that $\Psi^s_{\otimes}= M_2 \times M_2 \rtimes(\mathrm{GSO}_{M_2}\times \mathrm{GSO}_{M_2})^\circ.$    
\end{prop}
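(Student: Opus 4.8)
The plan is to reduce the proposition to an isomorphism of algebraic $F$-groups $(G\times G)/\Delta(Z_G)\cong(\GSO_{M_2}\times\GSO_{M_2})^{\circ}$ compatible with the action on $M_2\times M_2$, and to prove the latter by realizing $G\times G$ explicitly inside $\GL(M_2)\times\GL(M_2)$. By Definition~\ref{def:alg-small-mod} we have $\Psi^{\mathrm{s}}_{\otimes}=V^{\vee}\rtimes(H/H_X)$ with $H=G\times G$ and $V=M_2\times M_2$; the computation of \S\ref{sec:monoid-mod} gives $H_X=H_V=\Delta(Z_G)$, and the quadratic form $(X,Y)\mapsto\det X-\det Y$ identifies $V^{\vee}$ with $V=M_2\times M_2$. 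So it suffices to show that the homomorphism $\Phi\colon H\to\GL(M_2)\times\GL(M_2)$ induced by the action on $V$ has kernel $\Delta(Z_G)$ and image exactly $(\GSO_{M_2}\times\GSO_{M_2})^{\circ}$ (acting on the two summands of $M_2\oplus M_2$ separately).

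First I would write down $\Phi$. Recall $G=M_{\otimes}^{\times}=\{(a,b)\in\GL_2\times\GL_2:\det a=\det b\}$, and that under the right action of $H=G\times G$ on $M_{\otimes}\subseteq M_2\times M_2$ the element $\bigl((a_1,a_2),(b_1,b_2)\bigr)$ sends $(X_1,X_2)$ to $(a_1^{-1}X_1b_1,\,a_2^{-1}X_2b_2)$. The key input is the classical fact that for $(a,b)\in\GL_2\times\GL_2$ the map $X\mapsto a^{-1}Xb$ is an orthogonal similitude of $(M_2,\det)$ of similitude norm $\det(b)/\det(a)$, and that the resulting homomorphism $\GL_2\times\GL_2\to\GSO_{M_2}$ is surjective (source and target are connected of dimension $7$) with kernel the diagonal scalars. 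Applying this factor by factor, $\Phi$ lands in $\GSO_{M_2}\times\GSO_{M_2}$; and since $\det a_1=\det a_2$ and $\det b_1=\det b_2$, the two similitude norms $\det(b_i)/\det(a_i)$ coincide, so $\Phi(H)\subseteq(\GSO_{M_2}\times\GSO_{M_2})^{\circ}$.

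Next I would pin down $\Phi$ on the nose. A direct computation shows $\Phi\bigl((a_1,a_2),(b_1,b_2)\bigr)=\mathrm{id}$ forces $b_i=a_i$ with each $a_i$ scalar, so $\ker\Phi=\Delta(Z_G)$, consistent with the general monoid fact $H_X=\Delta(Z_G)$. For surjectivity onto $(\GSO_{M_2}\times\GSO_{M_2})^{\circ}$ — which is connected, being an extension of $\GSO_{M_2}$ by $\SO_{M_2}$ — note that $\Phi(H)$ is a closed connected subgroup of the same dimension:
\[
\dim (G\times G)/\Delta(Z_G)=2\cdot 7-1=13=\dim(\GSO_{M_2}\times\GSO_{M_2})^{\circ}.
\]
Hence $\Phi(H)$ is the whole target. (Alternatively one checks surjectivity on $\overline{F}$-points: given $(\phi_1,\phi_2)$ with common norm $\nu$, write $\phi_i=(X\mapsto a_i^{-1}Xb_i)$ with $(a_i,b_i)\in\GL_2(\overline{F})^2$ and rescale $(a_2,b_2)$ by a scalar $s$ with $s^2=\det(a_1)/\det(a_2)$ so that $\det a_1=\det(sa_2)$; the condition $\nu(\phi_1)=\nu(\phi_2)$ then forces $\det b_1=\det(sb_2)$, producing a preimage in $(G\times G)(\overline{F})$.) Since all the groups in sight are smooth and $\mathrm{char}\,F=0$, the bijective homomorphism $(G\times G)/\Delta(Z_G)\to(\GSO_{M_2}\times\GSO_{M_2})^{\circ}$ is an isomorphism by \cite[Proposition~1.2.9]{Getz:Hahn}; it is compatible with the actions on $M_2\times M_2$ by construction. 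Feeding this into $\Psi^{\mathrm{s}}_{\otimes}=V^{\vee}\rtimes(H/H_X)$, together with the quadratic-form identification $V^{\vee}\cong V=M_2\times M_2$, yields the proposition.

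The one point to handle with care is that over $F$ the homomorphism $G\times G\to(\GSO_{M_2}\times\GSO_{M_2})^{\circ}$ is \emph{not} surjective on $F$-points — its failure to be surjective on points is governed by $F^{\times}/(F^{\times})^{2}$ — so the argument must be carried out at the level of algebraic groups (or over $\overline{F}$), and the characteristic-zero rigidity statement then upgrades the bijection of algebraic groups to an isomorphism. Everything else is routine: the surjectivity $\GL_2\times\GL_2\twoheadrightarrow\GSO_{M_2}$, a dimension count, and the computation of $\ker\Phi$.
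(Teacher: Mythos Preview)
Your proof is correct and follows the same approach as the paper. The paper's proof consists of a single sentence citing the isomorphism $\GL_2\times\GL_2/\Delta(Z_{\GL_2})\cong\GSO_{M_2}$ induced by the action map $M_2\times\GL_2\times\GL_2\to M_2$; you spell out the factor-by-factor application of this fact, the verification that the equal-determinant constraint in $G$ forces the two similitude norms to agree, the kernel computation, and the dimension count, all of which the paper leaves implicit.
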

\begin{proof}
The action
$
M_2 \times \GL_2 \times \GL_2 \to M_2$
induces an isomorphism
$\GL_2 \times \GL_2/\Delta(Z_{\GL_2}) \tilde{\to}\mathrm{GSO}_{M_2}.$  The proposition follows in a straightforward manner from this observation.
\end{proof} 

\begin{thm} One has that $\Psi_{\otimes}\{F\}=\Psi_{\omega}(\mathrm{GSO}_{W}(F)).$
\end{thm}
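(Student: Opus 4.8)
The plan is to follow the template of the proof of Theorem~\ref{thm:mod:cone}, of which this result is essentially a refinement: realize every operator in sight inside the minimal representation of an orthogonal similitude group, and then read off the answer from generation in a $D$-type root system. The first step is to make the reduction to a quadric cone explicit. By \eqref{monoidtensorproddef} and the choice of quadratic form $(X,Y)\mapsto\det X-\det Y$ on $M_2\times M_2$, the closed immersion $\omega_\otimes$ identifies $M_\otimes$ with the quadric cone $C_4\subset V_4:=M_2\oplus M_2$ of \S\ref{sec:Mod:cones} (with $j=0$, $n=4$; here $V_4$ is split of rank $8$), so that $W\cong V_5$. Hence, as in Remark~\ref{rem:half}, $L^2(M_\otimes^{\mathrm{sm}}(F),\mathcal L^{1/2})\cong L^2(C_4^\circ(F))$ and $\mathcal S(M_\otimes(F))=\mathcal S(C_4(F))$, the monoid Fourier transform $\mathcal F_{M_\otimes}$ of \S\ref{ssec:RSM} is a unitary operator on this space satisfying \eqref{Fi:equiv}, and by \S\ref{ssec:min}--\S\ref{ssec:mod:grp} the space carries the minimal representation $\widetilde\tau$ of $\mathrm{GO}_{V_5}(F)=\mathrm{GO}_W(F)$.

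Next I would embed the small modulation group into $\mathrm{GSO}_W(F)$. By Proposition~\ref{TensorProductSmallProp}, $\Psi^{\mathrm s}_\otimes=M_2\times M_2\rtimes(\mathrm{GSO}_{M_2}\times\mathrm{GSO}_{M_2})^\circ$. Using the isomorphism $(\GL_2\times\GL_2)/\Delta(\GG_m)\cong\mathrm{GSO}_{M_2}$ of its proof (two-sided multiplication) together with the block inclusion $\mathrm{GSO}_{M_2}\times\mathrm{GSO}_{M_2}\hookrightarrow\mathrm{GSO}_{V_4}$, and identifying $M_2\times M_2=V_4$ with the unipotent radical $N_4$ via $n$ of \eqref{opn}, one obtains a homomorphism $\Psi^{\mathrm s}_\otimes(F)\to\widetilde Q_4(F)\subset\mathrm{GSO}_{V_5}(F)$, absorbing $H_X=\Delta(Z_G)$ into the similitude parameter $a$ of $m(a,g)$. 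A direct computation with \eqref{eqn:GK-klingen-action} and \eqref{2:pair} shows this intertwines $\mathcal R_{\omega_\otimes}$ with $\widetilde\tau$; in particular $\mathcal R_{\omega_\otimes}(\Psi^{\mathrm s}_\otimes(F))\subseteq\widetilde\tau(\mathrm{GSO}_{V_5}(F))$, since $N_4\subset\mathrm{SO}_{V_5}$ and $m(a,g)\in\mathrm{GSO}_{V_5}$ exactly when $g\in\mathrm{GSO}_{V_4}$.

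The main obstacle is to identify $\mathcal F_{M_\otimes}=\widetilde\tau(w)$ for an explicit $w\in\mathrm{GSO}_{V_5}(F)$ (and not merely $\mathrm{GO}_{V_5}(F)$, which is what the analogous step for the bare cone yields). Since $\mathcal F_{M_\otimes}$ satisfies \eqref{Fi:equiv} with $\iota$ the automorphism of $H=G\times G$ interchanging the two factors, and since $\mathcal R_{\omega_\otimes}$ is irreducible (Theorem~\ref{IrreducibilityRV}), $\mathcal F_{M_\otimes}$ is pinned down up to a scalar and an element of the image of $\Psi^{\mathrm s}_\otimes$ by the requirement that conjugation by it carry the image of $N_4$ onto that of $N_4^{\mathrm{op}}$ and realize the swap of the two $\GL_2$-factors of $(\mathrm{GSO}_{M_2}\times\mathrm{GSO}_{M_2})^\circ$. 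Tracking this through the minimal-representation model---equivalently, comparing $\mathcal F_{M_\otimes}$ with the cone transform $\mathcal F_C=\widetilde\tau(w_0)$ of \eqref{QuadricFouriertrans} and with the transpose-type isometry of $V_4$ realizing $\iota$---one finds that the resulting Weyl element may be chosen to lie in $\mathrm{GSO}_{V_5}(F)$. This is the delicate point, and is exactly where the statement departs from Theorem~\ref{thm:mod:cone}; I expect it to require either an explicit Godement--Jacquet-type formula for $\mathcal F_{M_\otimes}$ or a careful direct comparison of the two Fourier transforms on $\mathcal S(C_4(F))$.

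Granting this, $\Psi_\otimes\{F\}=\big\langle\widetilde\tau(w),\ \widetilde\tau(N_4(F)),\ \widetilde\tau((\mathrm{GSO}_{M_2}\times\mathrm{GSO}_{M_2})^\circ(F))\big\rangle$. Because $w$ conjugates the image of $N_4$ onto a Levi-translate of that of $N_4^{\mathrm{op}}$, the generated subgroup of $\mathrm{GSO}_{V_5}$ contains both $N_4(F)$ and $N_4^{\mathrm{op}}(F)$. A short computation in the root system of type $D_5$ finishes the argument: every root is either of the form $\pm e_1\pm e_j$ with $2\le j\le5$---hence occurs in $N_4$ or $N_4^{\mathrm{op}}$---or of the form $\pm e_i\pm e_j$ with $2\le i,j\le5$, which arises as a commutator $[\mathfrak u_{e_1-e_i},\mathfrak u_{-e_1\pm e_j}]$, so the generated subgroup contains every root subgroup, hence the image of $\mathrm{Spin}_{V_5}(F)$ in $\mathrm{SO}_{V_5}(F)$; the remaining similitude and spinor-norm classes are supplied by $(\mathrm{GSO}_{M_2}\times\mathrm{GSO}_{M_2})^\circ(F)$, whose $\GL_2$-components realize arbitrary determinants and whose similitude character is surjective. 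Thus the generated subgroup is $\mathrm{GSO}_{V_5}(F)=\mathrm{GSO}_W(F)$, and applying $\widetilde\tau$ identifies $\Psi_\otimes\{F\}$ with the image of $\mathrm{GSO}_W(F)$ under the minimal representation, as asserted.
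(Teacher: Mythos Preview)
Your outline matches the paper's strategy, but the step you flag as ``the main obstacle''—locating $w\in\mathrm{GSO}_{V_5}(F)$ with $\mathcal F_{M_\otimes}=\widetilde\tau(w)$—is both left incomplete in your proposal and handled quite differently in the paper. The paper does not search for such a $w$ inside $\mathrm{GSO}$, nor invoke any Godement--Jacquet comparison, nor use your description of the equivariance automorphism $\iota$ as the swap of the two $G$-factors. It simply uses the identification $M_\otimes=C_4$ to take $\mathcal F_{M_\otimes}=\mathcal F_C=\widetilde\tau(w_0)$ with $w_0$ as in \eqref{w0}, citing ``the proof of Theorem~\ref{thm:mod:cone},'' and then works directly with the group $\langle w_0,\,N(F)\rtimes(\mathrm{GSO}_{M_2}\times\mathrm{GSO}_{M_2})^\circ(F)\rangle$. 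So your detour through irreducibility and transpose-type isometries is unnecessary, and the gap you leave (``granting this'') is filled in the paper by a one-line citation rather than the analysis you anticipate.

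For the generation step the paper also takes a different and cleaner route than your root-commutator argument. Rather than producing all root subgroups and then arguing separately about spinor-norm and similitude classes, the paper exhibits the full maximal torus inside the generated group via the identity
\[
\begin{psmatrix} a & & \\ & I_{M_2\times M_2} & \\ & & a^{-1}\end{psmatrix}
=\begin{psmatrix}\nu(h)&&\\&h&\\&&1\end{psmatrix}w_0\begin{psmatrix}\nu(h)^{-1}&&\\&h^{-1}&\\&&1\end{psmatrix}w_0
\]
for any $h\in(\mathrm{GSO}_{M_2}\times\mathrm{GSO}_{M_2})^\circ(F)$ with $\nu(h)=a$, and then concludes by invoking \cite[Propositions~6.2 and 6.11]{BT:abstraits}. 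This sidesteps the bookkeeping in your final paragraph, where the assertion that $(\mathrm{GSO}_{M_2}\times\mathrm{GSO}_{M_2})^\circ(F)$ ``supplies the remaining spinor-norm classes'' would itself need proof.
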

\begin{proof} 
The quadratic space $M_2 \times M_2$ is isomorphic to $V_4$ and the quadratic space $W$ is isomorphic to $V_{5}.$  
We have a closed immersion $(\mathrm{GSO}_{M_2} \times \mathrm{GSO}_{M_2})^\circ \to \mathrm{GSO}_{W}$ given on points by 
\begin{align}
    (g_1,g_2) \longmapsto \begin{psmatrix}\nu(g_1) & & & & \\ & g_1 & & & \\ & & g_2 & \\ & & & 1 \end{psmatrix}.
\end{align}
This extends to a homomorphism $M_2 \times M_2 \rtimes (\mathrm{GSO}_{M_2} \times \mathrm{GSO}_{M_2})^\circ \to \mathrm{GSO}_{W}$ sending $M_2 \times M_2$ to $N_4.$ 
By the proof of Theorem \ref{thm:mod:cone} and Proposition \ref{TensorProductSmallProp} 
we deduce that 
\begin{align}
\Psi_{\otimes}\{F\}=\widetilde{\tau}\left(
\langle w_0,N(F) \rtimes (\mathrm{GSO}_{M_2} \times \mathrm{GSO}_{M_2})^\circ(F) \rangle\right).
\end{align}

Consider the subgroup 
\begin{align} \label{show:big}
\langle w_0,N(F) \rtimes (\mathrm{GSO}_{M_2} \times \mathrm{GSO}_{M_2})^\circ(F) \rangle \leq \mathrm{GSO}_{W}(F).
\end{align}
We claim that this inequality is in fact an equality.  Proving the claim will complete the proof of the theorem.

For $a \in F^\times,$ choose $h\in (\mathrm{GSO}_{M_2} \times \mathrm{GSO}_{M_2})^\circ(F)$  so that $a = \nu(h)$. Then
\[
    \left(\begin{smallmatrix}
        a & & \\ & I_{M_2 \times M_2} & \\ & & a^{-1}
    \end{smallmatrix}\right) = \left(\begin{smallmatrix}
        \nu(h) & & \\ & h & \\ & & 1
    \end{smallmatrix}\right) \left(\begin{smallmatrix}
        & & 1 \\ & I_{M_2 \times M_2} & \\ 1 & & 
    \end{smallmatrix}\right) \left(\begin{smallmatrix}
        \nu(h)^{-1} & & \\ & h^{-1} & \\ & & 1
    \end{smallmatrix}\right) \left(\begin{smallmatrix}
        & & 1 \\ & I_{M_2 \times M_2} & \\ 1 & & 
    \end{smallmatrix}\right).
\]
Using this observation we deduce that 
the left hand side of \eqref{show:big} contains the $F$-points of a maximal split torus of $\mathrm{GSO}_{W}(F).$  One can now deduce the theorem from \cite[Proposition 6.2(v) and 6.11(i)]{BT:abstraits}.\quash{
One checks that it also contains the $F$-points of all root spaces associated to all simple roots with respect to either the upper or lower triangular matrices.  Hence it contains the unipotent radicals of these subgroups.  We deduce the claim from Proposition \ref{prop:gen1}.

 We now prove that it contains all the root subgroups. Let us denote by $\{e_i\}_{i = 0,\cdots,5}$ a basis of $X^{*}(T)$, where $T$ is a maximal split torus of $\mathrm{GSO}_{10}$. The group $(\mathrm{GSO}_{4}\times \mathrm{GSO}_{4})^\circ$ embedded into $\mathrm{GSO}_{10}$ as above has the following set of positive roots within the root system of $\mathrm{GSO}_{10}$: \[\Phi_1^{+} = \{e_2-e_3,e_2+e_3,e_4-e_5,e_4+e_5,-e_5+e_4,e_3+e_2\}.\] 
   Furthermore, the embedding of $V_8^{\vee}$ in $\mathrm{GSO}_{10}$, together with $w_0$, give rise to the following subsets of roots (along with their negatives):
    \[\Phi_2^+ = \{e_1-e_2,e_1-e_3,e_1-e_4,e_1-e_5,e_1+e_5,e_1+e_4,e_1+e_3,e_1+e_2\}.\]
    It is straightforward to verify that all the roots of $\mathrm{GSO}_{10}$ are generated by linear combinations of $\Phi_1^+$ and $\Phi_2^+$. In fact: $e_2-e_4 = (e_1-e_4)-(e_1-e_2)$, $e_2-e_5 = (e_1-e_5)-(e_1-e_2)$, $e_2+e_5 = (e_1+e_2)-(e_1-e_5)$, $e_2+e_4 = (e_1+e_2)-(e_1-e_4)$, $(e_3-e_4) = (e_1-e_5)-(e_1-e_3)$, $e_3+e_5 = (e_1+e_3)-(e_1-e_5)$, $e_3+e_4 = (e_1+e_3)-(e_1-e_4)$. }
\end{proof}

\section{Deformation quantization and the semi-classical limit}  \label{sec:DQ}
\subsection{Differential operators}\label{subsec:diffop} Let $F$ be a field. We will require both the algebraic and analytic theory of differential operators.  We were unable to find references that explained the relationship in the generality we require, so we provide some background information.

For an $F$-scheme $X,$ we have a sheaf $\underline{\mathrm{End}_F}(\OO_X)$ of $F$-linear endomorphisms of $\OO_X$ and a sheaf $\underline{\mathrm{End}_{\OO_X}}(\OO_X)$ of $\OO_X$-linear endomorphisms of $\OO_X.$
The sheaf $\underline{\mathrm{End}_F}(\OO_X)$ is a non-commutative $\OO_X$-algebra so the usual commutator bracket $[\cdot,\cdot]$ is defined, and $\underline{\mathrm{End}_{\OO_X}}(\OO_X)$ is canonically isomorphic to $\OO_X$.

Set $\underline{\mathcal{D}_X^{\leq 0}} = \underline{\mathrm{End}_{\OO_X}}(\OO_X)$, and for $n\geq 1$ set
\begin{align*}
    \underline{\mathcal{D}_X^{\leq n}} := \left\{T\in\underline{\mathrm{End}_F}(\OO_X)\mid [T,f]\in\underline{\mathcal{D}_X^{n-1}}\text{ for all }f\in \underline{\mathcal{D}_{X}^{\leq 0}}\right\}.
\end{align*}
The sheaf of differential operators on $X$ is given by the directed union
\begin{align*}
    \underline{\mathcal{D}_X} := \bigcup_{n\geq 0}\underline{\mathcal{D}_X^{\leq n}}.
\end{align*}
This is a (non-commutative) $\OO_X$-subalgebra of $\underline{\mathrm{End}_F}(\OO_X)$, filtered by $(\underline{\mathcal{D}_X^{\leq n}})_{n\geq 0}$.

Throughout the rest of this subsection, let $X$ be a scheme of finite type over $F.$ We assume
\begin{enumerate}[label=(M\text{\arabic*}), ref=M\text{\arabic*}]\setcounter{enumi}{4}
    \item\label{X:cod} $X$ is normal and $\mathrm{codim}(X-X^{\mathrm{sm}},X) \geq 2.$
\end{enumerate}
Let $j:X^{\mathrm{sm}}\to X$ denote the inclusion.
\begin{lem}\label{M5:OX->OXsm:isom} The canonical map $\OO_X\to j_*\OO_{X^{\mathrm{sm}}}$ is an isomorphism of $\OO_X$-modules.
\end{lem}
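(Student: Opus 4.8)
The plan is to reduce the statement, which is local on $X$, to the affine case $X=\Spec A$ with $A$ Noetherian. Since $X$ is normal, each local ring of $X$ is a normal domain, so $A$ is a finite product of Noetherian normal domains; passing to these factors (equivalently, to the connected components of $X$, which for a normal Noetherian scheme coincide with its irreducible components) I may further assume that $A$ is a Noetherian normal domain with fraction field $K$. Write $Z:=X-X^{\mathrm{sm}}$ and let $j:X^{\mathrm{sm}}\to X$ be the inclusion. As $X^{\mathrm{sm}}$ is a nonempty open subscheme of the integral scheme $X$, the ring $\Gamma(X^{\mathrm{sm}},\OO_{X^{\mathrm{sm}}})$ sits naturally inside $K$, compatibly with all stalks, and the canonical map $A=\Gamma(X,\OO_X)\to\Gamma(X^{\mathrm{sm}},\OO_{X^{\mathrm{sm}}})$ is the restriction of the inclusion $A\subseteq K$. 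So it suffices to prove the equality $\Gamma(X^{\mathrm{sm}},\OO_{X^{\mathrm{sm}}})=A$ of subrings of $K$.

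Injectivity uses only that $X$ is reduced and that $X^{\mathrm{sm}}$ is dense in $X$ (its complement $Z$ has codimension $\geq 2\geq 1$ by \eqref{X:cod}), so that a section vanishing on $X^{\mathrm{sm}}$ already vanishes at the generic point, hence in $A\subseteq K$. For the reverse inclusion $\Gamma(X^{\mathrm{sm}},\OO_{X^{\mathrm{sm}}})\subseteq A$, I would invoke the standard description of a Noetherian normal domain,
\begin{align*}
A=\bigcap_{\mathrm{ht}\,\pp=1}A_{\pp}\subseteq K,
\end{align*}
the intersection running over the height one primes of $A$ (a consequence of Serre's criterion for normality; see \cite[\href{https://stacks.math.columbia.edu/tag/031S}{Tag 031S}]{stacks-project} and the surrounding discussion). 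Here hypothesis \eqref{X:cod} enters decisively: it says $Z$ contains no point of codimension $\leq 1$, so every height one prime $\pp\subset A$ corresponds to a point of $X^{\mathrm{sm}}$, whence $\OO_{X^{\mathrm{sm}},\pp}=A_{\pp}$. Therefore a section $s\in\Gamma(X^{\mathrm{sm}},\OO_{X^{\mathrm{sm}}})\subseteq K$ lies in $A_{\pp}$ for every height one prime $\pp$, and so $s\in A$. Combined with injectivity this gives the isomorphism over a connected component, and reassembling the components yields the lemma.

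All of the scheme-theoretic reductions above are routine; the one genuine input — the "algebraic Hartogs" phenomenon — is the identity $A=\bigcap_{\mathrm{ht}\,\pp=1}A_{\pp}$ for Noetherian normal domains, so that is where I expect the substance (such as it is) to lie. An equivalent route, if one prefers to argue globally on $X$, is via Serre's condition $S_2$: normality forces $\mathrm{depth}\,\OO_{X,x}\geq 2$ at every $x\in Z$ (since there $\dim\OO_{X,x}\geq 2$ by \eqref{X:cod}), hence the local cohomology sheaves $\mathcal{H}^0_Z(\OO_X)$ and $\mathcal{H}^1_Z(\OO_X)$ vanish, and the exact sequence
\begin{align*}
0\lto\mathcal{H}^0_Z(\OO_X)\lto\OO_X\lto j_*\OO_{X^{\mathrm{sm}}}\lto\mathcal{H}^1_Z(\OO_X)
\end{align*}
then gives the isomorphism directly.
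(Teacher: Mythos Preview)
Your proof is correct and follows essentially the same approach as the paper's: both observe that every open $U\subseteq X$ again satisfies \eqref{X:cod}, reducing to the affine case, and then invoke algebraic Hartogs for Noetherian normal domains. The paper simply cites \cite[Theorem 6.45]{Gortz_Wedhorn} for this last step, whereas you spell out the argument via $A=\bigcap_{\mathrm{ht}\,\pp=1}A_{\pp}$ (and offer the $S_2$/local cohomology alternative); the content is the same.
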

\begin{proof}
If $U$ is any open subscheme of $X$ then it again satisfies \eqref{X:cod}.  Thus the lemma follows from \cite[Theorem 6.45]{Gortz_Wedhorn}.
\end{proof}

For any open subscheme $U$ of $X,$ note that
\begin{align*}
    j_*\underline{\mathrm{End}_F}(\OO_{X^{\mathrm{sm}}})(U) = \mathrm{End}_F(\OO_{U^{\mathrm{sm}}}) = \mathrm{End}_F((j_*\OO_{X^{\mathrm{sm}}})\vert_U).
\end{align*}
By Lemma \ref{M5:OX->OXsm:isom}, the canonical isomorphism $\OO_X\to j_*\OO_{X^{\mathrm{sm}}}$ then induces a canonical isomorphism $\mathrm{End}_F(\OO_X\vert_U)\cong \mathrm{End}_F((j_*\OO_{X^{\mathrm{sm}}})\vert_U)$. Varying $U$ yields the following corollaries:
\begin{cor}\label{M5:OX->OXsm:isom:End} The canonical map $\underline{\mathrm{End}_F}(\OO_X)\to j_*\underline{\mathrm{End}_F}(\OO_{X^{\mathrm{sm}}})$ is an isomorphism.
\hfill\qedsymbol
\end{cor}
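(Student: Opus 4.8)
The plan is to upgrade the two facts recorded just before the statement — the chain of identities for $j_*\underline{\mathrm{End}_F}(\OO_{X^{\mathrm{sm}}})(U)$, and the consequence of Lemma~\ref{M5:OX->OXsm:isom} for $\mathrm{End}_F(\OO_X\vert_U)$ — to an isomorphism of sheaves on $X$, and then to check that the resulting isomorphism is the canonical map. Throughout I will use that, since $j\colon X^{\mathrm{sm}}\to X$ is an open immersion, for every open $U\subseteq X$ the base change $j_U\colon U^{\mathrm{sm}}:=U\cap X^{\mathrm{sm}}\hookrightarrow U$ is again an open immersion, so that $j_U^{-1}(j_U)_*=\mathrm{id}$, $(j_*\OO_{X^{\mathrm{sm}}})\vert_U=(j_U)_*\OO_{U^{\mathrm{sm}}}$, and $\OO_X\vert_{X^{\mathrm{sm}}}=\OO_{X^{\mathrm{sm}}}$.

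First I would produce a natural isomorphism of sheaves on $X$
\[
j_*\underline{\mathrm{End}_F}(\OO_{X^{\mathrm{sm}}})\;\tilde{\lto}\;\underline{\mathrm{End}_F}(j_*\OO_{X^{\mathrm{sm}}}).
\]
On $U$-sections the source is $\mathrm{Hom}_F(\OO_{U^{\mathrm{sm}}},\OO_{U^{\mathrm{sm}}})$ and the target is $\mathrm{Hom}_F\big((j_U)_*\OO_{U^{\mathrm{sm}}},(j_U)_*\OO_{U^{\mathrm{sm}}}\big)$; the adjunction $j_U^{-1}\dashv (j_U)_*$ together with the counit isomorphism $j_U^{-1}(j_U)_*=\mathrm{id}$ matches the two, compatibly with shrinking $U$, so it sheafifies. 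This is exactly the content of the displayed equalities preceding the statement. Next I would apply the functor $\underline{\mathrm{End}_F}(-)$ to the isomorphism $\OO_X\tilde{\lto}j_*\OO_{X^{\mathrm{sm}}}$ furnished by Lemma~\ref{M5:OX->OXsm:isom}, which produces (by conjugation of operators over each open set) an isomorphism $\underline{\mathrm{End}_F}(\OO_X)\tilde{\lto}\underline{\mathrm{End}_F}(j_*\OO_{X^{\mathrm{sm}}})$. Composing the two yields an isomorphism $\underline{\mathrm{End}_F}(\OO_X)\tilde{\lto}j_*\underline{\mathrm{End}_F}(\OO_{X^{\mathrm{sm}}})$.

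It then remains to check that this composite is the canonical map of the statement, namely the morphism that on an open $U$ sends an $F$-linear endomorphism $T$ of $\OO_X\vert_U$ to its restriction along $j_U$, an $F$-linear endomorphism of $(\OO_X\vert_U)\vert_{U^{\mathrm{sm}}}=\OO_{U^{\mathrm{sm}}}$. This is the only step I would treat as a real (and still mild) obstacle: it is a direct unwinding of the unit and counit of the open-immersion adjunction, using that the map $\OO_X\to j_*\OO_{X^{\mathrm{sm}}}$ is itself the unit of adjunction, so that applying $j_U^{-1}$ to the conjugation isomorphism recovers plain restriction of $T$. One must do this carefully to be sure the adjunction identification of the target is literally ``restriction of operators'' rather than a twist of it. The essential geometric content is entirely in Lemma~\ref{M5:OX->OXsm:isom}, i.e.\ in hypothesis \eqref{X:cod} that $X$ is normal with $\mathrm{codim}(X-X^{\mathrm{sm}},X)\ge 2$; without it the map $\OO_X\to j_*\OO_{X^{\mathrm{sm}}}$ need not be an isomorphism and the two endomorphism sheaves genuinely differ.
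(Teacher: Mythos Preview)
Your argument is correct and is essentially the same as the paper's: both identify $j_*\underline{\mathrm{End}_F}(\OO_{X^{\mathrm{sm}}})(U)$ with $\mathrm{End}_F((j_*\OO_{X^{\mathrm{sm}}})\vert_U)$ and then invoke Lemma~\ref{M5:OX->OXsm:isom} sectionwise. Your extra care in verifying that the resulting isomorphism is the canonical restriction map (via the adjunction unit/counit) is a point the paper leaves implicit, but the underlying strategy is identical.
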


\begin{cor}\label{M5:DX->DXsm:isom} The canonical map $\underline{\mathcal{D}_X}\to j_*\underline{\mathcal{D}_{X^{\mathrm{sm}}}}$ is an isomorphism of filtered $\OO_X$-algebras. \qed
\end{cor}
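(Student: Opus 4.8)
The plan is to deduce this from Corollary \ref{M5:OX->OXsm:isom:End} by an induction on the order of differential operators, followed by a short quasi-compactness argument to pass to the directed union. Write $\Phi\colon\underline{\mathrm{End}_F}(\OO_X)\xrightarrow{\sim}j_*\underline{\mathrm{End}_F}(\OO_{X^{\mathrm{sm}}})$ for the isomorphism of Corollary \ref{M5:OX->OXsm:isom:End}. Unwinding its construction, $\Phi$ is conjugation by the isomorphism $\OO_X\xrightarrow{\sim}j_*\OO_{X^{\mathrm{sm}}}$ of Lemma \ref{M5:OX->OXsm:isom}; in particular $\Phi$ is an isomorphism of sheaves of (noncommutative) rings, it is $\OO_X$-linear once the target is given its $\OO_X$-algebra structure through Lemma \ref{M5:OX->OXsm:isom}, and it intertwines the commutator brackets. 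Since the canonical map $\underline{\mathcal{D}_X}\to j_*\underline{\mathcal{D}_{X^{\mathrm{sm}}}}$ is the restriction of $\Phi$ to the subsheaf $\underline{\mathcal{D}_X}\subseteq\underline{\mathrm{End}_F}(\OO_X)$, it suffices to prove that $\Phi$ carries $\underline{\mathcal{D}_X}$ onto $j_*\underline{\mathcal{D}_{X^{\mathrm{sm}}}}$, compatibly with the two filtrations; the claims about the $\OO_X$-algebra structure are then automatic from the properties of $\Phi$ just listed.

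The first step is to show, by induction on $n\ge 0$, that $\Phi$ restricts to an isomorphism $\underline{\mathcal{D}_X^{\le n}}\xrightarrow{\sim}j_*\underline{\mathcal{D}_{X^{\mathrm{sm}}}^{\le n}}$ of subsheaves of $\underline{\mathrm{End}_F}(\OO_X)$. The case $n=0$ is immediate from Lemma \ref{M5:OX->OXsm:isom}, since $\underline{\mathcal{D}_X^{\le 0}}=\underline{\mathrm{End}_{\OO_X}}(\OO_X)\cong\OO_X$, likewise $j_*\underline{\mathcal{D}_{X^{\mathrm{sm}}}^{\le 0}}\cong j_*\OO_{X^{\mathrm{sm}}}$, and under these identifications $\Phi$ becomes the map of that lemma. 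For the inductive step, fix an open $V\subseteq X$ and a section $T$ of $\underline{\mathrm{End}_F}(\OO_X)$ over $V$. By definition, $T\in\underline{\mathcal{D}_X^{\le n}}(V)$ if and only if for every open $V'\subseteq V$ and every $f\in\OO_X(V')$ one has $[T|_{V'},f]\in\underline{\mathcal{D}_X^{\le n-1}}(V')$. Applying $\Phi$ and using that it is bracket-preserving, that $\OO_X(V')\xrightarrow{\sim}\OO_{X^{\mathrm{sm}}}(V'\cap X^{\mathrm{sm}})$ by Lemma \ref{M5:OX->OXsm:isom} (so $\Phi(f)$ runs over all of $\OO_{X^{\mathrm{sm}}}(V'\cap X^{\mathrm{sm}})$), and the inductive hypothesis $\Phi(\underline{\mathcal{D}_X^{\le n-1}})=j_*\underline{\mathcal{D}_{X^{\mathrm{sm}}}^{\le n-1}}$, this condition becomes: for every open $V'\subseteq V$ and every $g\in\OO_{X^{\mathrm{sm}}}(V'\cap X^{\mathrm{sm}})$, $[\Phi(T)|_{V'\cap X^{\mathrm{sm}}},g]\in\underline{\mathcal{D}_{X^{\mathrm{sm}}}^{\le n-1}}(V'\cap X^{\mathrm{sm}})$. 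Because $X^{\mathrm{sm}}$ is open in $X$, the opens of the form $V'\cap X^{\mathrm{sm}}$ with $V'\subseteq V$ open in $X$ are exactly the opens of $X^{\mathrm{sm}}$ contained in $V\cap X^{\mathrm{sm}}$, so this last condition is precisely $\Phi(T)|_{V\cap X^{\mathrm{sm}}}\in\underline{\mathcal{D}_{X^{\mathrm{sm}}}^{\le n}}(V\cap X^{\mathrm{sm}})=(j_*\underline{\mathcal{D}_{X^{\mathrm{sm}}}^{\le n}})(V)$. This closes the induction.

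It remains to pass to the directed unions $\underline{\mathcal{D}_X}=\bigcup_n\underline{\mathcal{D}_X^{\le n}}$ and $\underline{\mathcal{D}_{X^{\mathrm{sm}}}}=\bigcup_n\underline{\mathcal{D}_{X^{\mathrm{sm}}}^{\le n}}$. Taking the union of the previous isomorphisms, $\Phi$ identifies $\underline{\mathcal{D}_X}$ with $\bigcup_n j_*\underline{\mathcal{D}_{X^{\mathrm{sm}}}^{\le n}}\subseteq j_*\underline{\mathcal{D}_{X^{\mathrm{sm}}}}$, so it suffices to prove $\bigcup_n j_*\underline{\mathcal{D}_{X^{\mathrm{sm}}}^{\le n}}=j_*\underline{\mathcal{D}_{X^{\mathrm{sm}}}}$; this can be checked on stalks. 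At a point of $X^{\mathrm{sm}}$ the map $j$ is a local isomorphism and there is nothing to prove. At $x\in X\setminus X^{\mathrm{sm}}$, a germ of $j_*\underline{\mathcal{D}_{X^{\mathrm{sm}}}}$ at $x$ is represented by some $S\in\underline{\mathcal{D}_{X^{\mathrm{sm}}}}(U\cap X^{\mathrm{sm}})$ with $U$ an open neighborhood of $x$ in $X$; since $X$ is Noetherian, $U\cap X^{\mathrm{sm}}$ is quasi-compact, and a section of a directed union of subsheaves over a quasi-compact open lies in one of the terms (cover it by finitely many opens on which $S$ is of bounded order, take the maximal such bound $N$, and use that $\underline{\mathcal{D}_{X^{\mathrm{sm}}}^{\le N}}$ is a sheaf). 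Thus $S\in\underline{\mathcal{D}_{X^{\mathrm{sm}}}^{\le N}}(U\cap X^{\mathrm{sm}})=(j_*\underline{\mathcal{D}_{X^{\mathrm{sm}}}^{\le N}})(U)$ for some $N$, so the germ of $S$ at $x$ lies in $\bigl(\bigcup_n j_*\underline{\mathcal{D}_{X^{\mathrm{sm}}}^{\le n}}\bigr)_x$, as required.

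I expect the genuine content to be the bookkeeping in the inductive step, namely that the universal quantifier over pairs $(V',f)$ with $V'\subseteq V$ open in $X$ and $f\in\OO_X(V')$ matches the universal quantifier over pairs $(W,g)$ with $W\subseteq V\cap X^{\mathrm{sm}}$ open and $g\in\OO_{X^{\mathrm{sm}}}(W)$; this works precisely because $j$ is an open immersion, so restriction of opens and of functions is transparent and Lemma \ref{M5:OX->OXsm:isom} makes $\OO_X(V')\to\OO_{X^{\mathrm{sm}}}(V'\cap X^{\mathrm{sm}})$ bijective. The passage to the directed union is routine, using only that $X$ is Noetherian (so the relevant opens are quasi-compact) and the sheaf axiom for each $\underline{\mathcal{D}_{X^{\mathrm{sm}}}^{\le N}}$. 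All of the geometric input — normality and the codimension hypothesis \eqref{X:cod} — has already been absorbed into Lemma \ref{M5:OX->OXsm:isom} and Corollary \ref{M5:OX->OXsm:isom:End}.
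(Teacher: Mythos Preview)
Your proof is correct and follows essentially the same approach as the paper: induction on the order $n$ with base case Lemma \ref{M5:OX->OXsm:isom}, using that the isomorphism of Corollary \ref{M5:OX->OXsm:isom:End} preserves brackets. You are in fact more careful than the paper's sketch in handling the passage to the directed union via quasi-compactness (Noetherianity of $X$), a point the paper leaves implicit.
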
\quash{
\begin{proof} The map of \eqref{M5:OX->OXsm:isom:End} restricts to injections $\underline{\mathcal{D}_X^{\leq n}}\to j_*\underline{\mathcal{D}_{X^{\mathrm{sm}}}^{\leq n}}$ for all $n\geq 0$. It remains to show they are surjective. This follows from an easy induction on $n$, the base case being Lemma \ref{M5:OX->OXsm:isom}. 
\end{proof}}

Let
\begin{align}\label{M5:DX->DXsm:isom:glob:sec}
    \mathcal{D}_X := \underline{\mathcal{D}_X}(X)
\end{align}
be the global sections of the sheaf $\underline{\mathcal{D}_{X}}.$
Similarly set $\mathcal{D}_X^{\leq n}:=\underline{\mathcal{D}_X^{\leq n}}(X)$. Then Corollary \ref{M5:DX->DXsm:isom} implies the restriction
\begin{align} \label{iso:smooth}
    \mathcal{D}_X&\lto \mathcal{D}_{X^{\mathrm{sm}}}
\end{align}
is an isomorphism of filtered $\OO_X(X)$-algebras.

For an $F$-algebra $R$, define the algebra of differential operators $\mathcal{D}_R := \bigcup\limits_{n\geq 0}\mathcal{D}_R^{\leq n}$ in the usual manner.  Then the canonical map $\mathrm{End}_F(\OO_X)\to \mathrm{End}_F(\OO_X(X))$ restricts to a map
\begin{align}\label{DX:glob:sec:1}
    \mathcal{D}_X \lto \mathcal{D}_{\OO_X(X)}
\end{align}
of filtered $F$-algebras.  The following is \cite[\href{https://stacks.math.columbia.edu/tag/0G44}{Tag 0G44}]{stacks-project}:
\begin{lem}\label{DX:glob:sec:1:lem} If $X$ is affine, then \eqref{DX:glob:sec:1} is an isomorphism. \qed
\end{lem}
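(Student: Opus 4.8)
Write $A:=\OO_X(X)$, so that $X\cong\Spec A$ with $A$ of finite type over $F$. The map \eqref{DX:glob:sec:1} is a morphism of filtered $F$-algebras by construction: it is the restriction to global sections of $\underline{\mathrm{End}_F}(\OO_X)\to\mathrm{End}_F(A)$, which respects composition, and it carries $\underline{\mathcal{D}_X^{\leq n}}$ into $\mathcal{D}_A^{\leq n}$ because the defining condition --- vanishing of sufficiently long iterated commutators with multiplication operators --- is visibly preserved. So it suffices to prove, for each $n\geq 0$, that restriction to global sections induces an isomorphism $\underline{\mathcal{D}_X^{\leq n}}(X)\cong\mathcal{D}_A^{\leq n}$; the lemma then follows on passing to the filtered union over $n$. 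The case $n=0$ is immediate: $\underline{\mathcal{D}_X^{\leq 0}}=\underline{\mathrm{End}_{\OO_X}}(\OO_X)\cong\OO_X$ and $\mathcal{D}_A^{\leq 0}\cong A$, and $\OO_X(X)=A$.

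The conceptually cleanest way to handle general $n$ is to reformulate $\underline{\mathcal{D}_X^{\leq n}}$ via principal parts. Let $\mathcal{P}^n_{X/F}$ be the sheaf of $n$-th order principal parts of $X$ over $F$; since $X$ is of finite type over the field $F$ the scheme $X\times_F X$ is Noetherian, so $\mathcal{P}^n_{X/F}$ is coherent, corresponding over $X=\Spec A$ to the finitely presented $A$-module $P^n_{A/F}=(A\otimes_F A)/I^{n+1}$ with $I=\ker(A\otimes_F A\to A)$. The classical comparison between the iterated-commutator description of differential operators used in this paper and the jet-bundle description (cf.~EGA~IV, \S16.8) provides a canonical isomorphism of $\OO_X$-modules $\underline{\mathcal{D}_X^{\leq n}}\cong\underline{\Hom}_{\OO_X}(\mathcal{P}^n_{X/F},\OO_X)$. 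As $\mathcal{P}^n_{X/F}$ is coherent, this $\underline{\Hom}$-sheaf is quasi-coherent with global sections $\Hom_A(P^n_{A/F},A)$, which is precisely $\mathcal{D}_A^{\leq n}$ by the same comparison applied to the ring $A$. This gives the desired isomorphism on every filtration step at once, and hence the lemma.

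A more hands-on alternative, closer to \cite[Tag~0G44]{stacks-project}, proceeds by induction on $n$, uniformly over all affine schemes of finite type over $F$, and rests on the fact that differential operators localize uniquely: for a multiplicative subset $S\subseteq A$ and $D\in\mathcal{D}_A^{\leq n}$ there is a unique $S^{-1}D\in\mathcal{D}_{S^{-1}A}^{\leq n}$ with $(S^{-1}D)(a/1)=D(a)/1$. This is itself proved by induction on $n$: for $n=0$, $D=\mu_{D(1)}$; for $n\geq 1$ and $s\in S$ the commutator $[D,\mu_s]$, with $\mu_s$ the operator of multiplication by $s$, lies in $\mathcal{D}_A^{\leq n-1}$ and so localizes by the inductive hypothesis, whereupon the identity $s\cdot(S^{-1}D)(a/s^{k})=(S^{-1}D)(a/s^{k-1})-(S^{-1}[D,\mu_s])(a/s^{k})$ forces a recursive definition of $S^{-1}D$ whose well-definedness, $F$-linearity and membership in $\mathcal{D}_{S^{-1}A}^{\leq n}$ are checked along the way. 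Granting this, injectivity of $\underline{\mathcal{D}_X^{\leq n}}(X)\to\mathcal{D}_A^{\leq n}$ holds because a global section $T$ induces on each basic open $\Spec A_f$ an operator $T_f\in\mathcal{D}_{A_f}^{\leq n}$ --- here one invokes the $(n-1)$-case of the lemma for $A_f$ to see $T_f$ really is a differential operator of order $\leq n$ --- which by uniqueness must be the localization of $T\vert_A$, so $T\vert_A=0$ forces every $T_f=0$ and hence $T=0$; surjectivity holds because, given $D\in\mathcal{D}_A^{\leq n}$, its localizations $D_f$ agree on the overlaps $\Spec A_{fg}$ by uniqueness and patch to a global section of $\underline{\mathcal{D}_X^{\leq n}}$ restricting to $D$. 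Either way the sole real obstacle is this classical input --- unique localization of differential operators, equivalently the quasi-coherence of $\underline{\mathcal{D}_X^{\leq n}}$ on affines --- and the remainder is formal bookkeeping.
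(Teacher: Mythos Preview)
Your proof is correct. The paper itself gives no argument: it simply cites \cite[Tag~0G44]{stacks-project} and places a \qedsymbol. Your second approach is precisely an unpacking of that reference (unique localization of differential operators plus gluing on basic opens), so in that sense you have reproduced the paper's intended proof with full detail. Your first approach via principal parts is a genuinely different and somewhat slicker route that the paper does not mention: identifying $\underline{\mathcal{D}_X^{\leq n}}$ with $\underline{\Hom}_{\OO_X}(\mathcal{P}^n_{X/F},\OO_X)$ and invoking coherence of $\mathcal{P}^n_{X/F}$ dispatches all $n$ at once without induction, at the cost of importing the EGA comparison between the commutator and jet definitions of differential operators. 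Both are fine; the principal-parts argument is more conceptual, while the localization argument stays closer to the paper's commutator definition and is self-contained modulo the elementary localization lemma.
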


Let $H$ be an affine $F$-group scheme and let $X$ be an affine $F$-scheme with a right action of $H.$ In particular we have a group anti-homomorphism $H(F)\to\mathrm{Aut}_{\textbf{Sch}_F}(X)$. Since $X$ is affine, the usual anti-equivalence gives a group anti-homomorphism $\mathrm{Aut}_{\textbf{Sch}_F}(X)\to \mathrm{Aut}_{\textbf{Alg}_F}(\OO_X(X))$. Finally, conjugation defines a group homomorphism $\mathrm{Aut}_{\textbf{Alg}_F}(\OO_X(X))\to \mathrm{Aut}_{\textbf{Mod}_F}(\mathrm{End}_F(\OO_X(X)))$. In sum, all of these compose to a group homomorphism 
\begin{align}\label{HactDX}
    H(F)\to \mathrm{Aut}_{\textbf{Mod}_F}(\mathrm{End}_F(\OO_X(X)))
\end{align}
defining a left $H(F)$-action on $\mathrm{End}_F(\OO_X(X))$. Note that $H_X(F)$ acts trivially on $\mathrm{End}_F(\OO_X(X))$ as it acts trivially on $X.$
\begin{lem}\label{HactDX:preserve:filtration} For each $n\geq 0$, $\mathcal{D}_{\OO_X(X)}^{\leq n}$ is an $H(F)$-invariant subspace of $\mathrm{End}_F(\OO_X(X))$.
\end{lem}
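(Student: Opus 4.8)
The plan is to reduce the assertion to a purely algebraic statement about conjugation by ring automorphisms and then argue by induction on $n$. Write $A := \OO_X(X)$, and for $f \in A$ let $m_f \in \mathrm{End}_F(A)$ denote multiplication by $f$; recall that $\mathcal{D}_A^{\leq 0} = \{m_f : f \in A\}$ and that, for $n \geq 1$, an operator $T \in \mathrm{End}_F(A)$ lies in $\mathcal{D}_A^{\leq n}$ if and only if $[T, m_f] \in \mathcal{D}_A^{\leq n-1}$ for all $f \in A$.

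First I would observe that the action \eqref{HactDX} is $F$-linear: for $h \in H(F)$, letting $\sigma_h \in \mathrm{Aut}_{\textbf{Alg}_F}(A)$ be the corresponding algebra automorphism, the action on $T \in \mathrm{End}_F(A)$ is $h \cdot T = \sigma_h \circ T \circ \sigma_h^{-1}$, which is $F$-linear in $T$. Since each $\mathcal{D}_A^{\leq n}$ is evidently an $F$-subspace of $\mathrm{End}_F(A)$, it therefore suffices to show that $\mathcal{D}_A^{\leq n}$ is stable under $T \mapsto \sigma T \sigma^{-1}$ for every $\sigma \in \mathrm{Aut}_{\textbf{Alg}_F}(A)$.

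The key point is the identity $\sigma \circ m_f \circ \sigma^{-1} = m_{\sigma(f)}$, valid for all $f \in A$, which follows from $(\sigma \circ m_f \circ \sigma^{-1})(a) = \sigma(f\,\sigma^{-1}(a)) = \sigma(f)\,a$. This already gives the base case $n=0$: conjugation by $\sigma$ permutes $\mathcal{D}_A^{\leq 0}$. For the inductive step, assume conjugation by $\sigma$ preserves $\mathcal{D}_A^{\leq n-1}$, and let $T \in \mathcal{D}_A^{\leq n}$. Writing $m_f = \sigma\, m_{\sigma^{-1}(f)}\, \sigma^{-1}$, one computes for each $f \in A$ that
\[
[\sigma T \sigma^{-1},\, m_f] \;=\; \sigma\,[T,\, m_{\sigma^{-1}(f)}]\,\sigma^{-1}.
\]
Since $T \in \mathcal{D}_A^{\leq n}$, the bracket $[T, m_{\sigma^{-1}(f)}]$ lies in $\mathcal{D}_A^{\leq n-1}$, and by the inductive hypothesis so does its $\sigma$-conjugate. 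As $f \in A$ was arbitrary, $\sigma T \sigma^{-1} \in \mathcal{D}_A^{\leq n}$, completing the induction and hence the proof.

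There is no real obstacle here; the only point requiring care is bookkeeping about which direction the automorphism acts — the group anti-homomorphism $H(F) \to \mathrm{Aut}_{\textbf{Sch}_F}(X)$ attached to a right action induces, after passing to $\OO_X(X)$ and then to conjugation, an honest left action on $\mathrm{End}_F(\OO_X(X))$ — but this is harmless, since conjugation by both $\sigma$ and $\sigma^{-1}$ preserves the filtration.
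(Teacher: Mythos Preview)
Your proof is correct and follows essentially the same approach as the paper: both argue by induction on $n$ using the identity $[h\cdot T, h\cdot S] = h\cdot[T,S]$ (which you write as $[\sigma T\sigma^{-1}, m_f] = \sigma[T, m_{\sigma^{-1}(f)}]\sigma^{-1}$), together with the base case that conjugation by an algebra automorphism permutes the multiplication operators. Your version simply spells out in detail what the paper records in a single sentence.
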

\begin{proof} This follows from an induction based on the identity $[h.T,h.S] = h.[T,S]$ for $T,S$ in $\mathrm{End}_F(\OO_X(X))$ and $h\in H(F)$.     
\end{proof}

Assume $X$ is affine. We equip $\mathcal{D}_X$ with the unique left $H(F)$-action such that \eqref{DX:glob:sec:1} is $H(F)$-equivariant. Similarly we equip $\mathcal{D}_{X^{\mathrm{sm}}}$ with the unique left $H(F)$-action making \eqref{iso:smooth} $H(F)$-equivariant.

We conclude this subsection by briefly explaining the passage from algebraic differential operators to analytic ones. Let $X_0$ be a smooth scheme  over $F.$ Recall the sheaf $\underline{\mathrm{Der}_F}(\OO_{X_0})$ of derivations and the following result from \cite[Th\'eor\`eme 16.11.2]{EGAIV}.
\begin{lem}\label{Dx:smooth:generation:deg1} One has that $\underline{\mathrm{Der}_F}(\OO_{X_0})\subseteq \underline{\mathcal{D}_{X_0}^{\leq 1}}$. The induced map
\begin{align*}
    \OO_{X_0}\oplus \underline{\mathrm{Der}_F}(\OO_{X_0})\lto \underline{\mathcal{D}_{X_0}^{\leq 1}}
\end{align*}
is an isomorphism of $\OO_{X_0}$-modules. Moreover,  $\underline{\mathcal{D}_{X_0}}$ is the $\OO_{X_0}$-subalgebra of $\underline{\mathrm{End}_F}(\OO_{X_0})$ generated by $\underline{\mathcal{D}_{X_0}^{\leq 1}}$.
\end{lem}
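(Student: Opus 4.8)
The plan is to reduce the whole statement to a local computation in \'etale coordinates; this is precisely \cite[Th\'eor\`eme 16.11.2]{EGAIV}, and I would cite it, but I sketch the argument for completeness. The first two assertions are formal and local in nature. For a derivation $D\in\underline{\mathrm{Der}_F}(\OO_{X_0})$ and a local section $f$ of $\underline{\mathcal{D}_{X_0}^{\leq 0}}=\OO_{X_0}$ one has $[D,f]=D(f)\in\OO_{X_0}$, so $D\in\underline{\mathcal{D}_{X_0}^{\leq 1}}$, which gives the inclusion. The candidate isomorphism $\OO_{X_0}\oplus\underline{\mathrm{Der}_F}(\OO_{X_0})\to\underline{\mathcal{D}_{X_0}^{\leq 1}}$ sends $(f,D)$ to $m_f+D$, with $m_f$ denoting multiplication by $f$; injectivity follows by evaluating $m_f+D$ at $1$ (recall $D(1)=0$). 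For surjectivity I would take $T\in\underline{\mathcal{D}_{X_0}^{\leq 1}}$, set $f:=T(1)$ and $S:=T-m_f$, and observe that $S(1)=0$ while $[S,g]=[T,g]$ is a section of $\OO_{X_0}$ equal to multiplication by $[S,g](1)=S(g)$; then $S(gh)=[S,g](h)+gS(h)=S(g)h+gS(h)$, so $S$ is a derivation and $T=m_f+S$ lies in the image.

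For the generation statement, let $\mathcal{A}\subseteq\underline{\mathrm{End}_F}(\OO_{X_0})$ be the $\OO_{X_0}$-subalgebra generated by $\underline{\mathcal{D}_{X_0}^{\leq 1}}$; since $\mathcal{A}\subseteq\underline{\mathcal{D}_{X_0}}$ trivially, it suffices to show $\underline{\mathcal{D}_{X_0}^{\leq n}}\subseteq\mathcal{A}$ for every $n$, which I would prove by induction, the cases $n\le 1$ being the first two assertions. This inclusion is local, so I would pass to an affine open on which $\Omega_{X_0/F}$ is free with basis $dx_1,\dots,dx_d$; since $X_0$ is smooth over $F$, such a chart exists around every point and the $x_i$ induce an \'etale map to $\A^d_F$, so the dual derivations $\partial_1,\dots,\partial_d$ of $\underline{\mathrm{Der}_F}(\OO_{X_0})=\underline{\mathrm{Hom}}_{\OO_{X_0}}(\Omega_{X_0/F},\OO_{X_0})$ satisfy $[\partial_i,x_j]=\delta_{ij}$ and thus generate a Weyl-type algebra over $\OO_{X_0}$. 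Given $T\in\underline{\mathcal{D}_{X_0}^{\leq n}}$ with $n\ge 2$, I would set $c_\alpha:=\tfrac1{\alpha!}(\mathrm{ad}\,x_1)^{\alpha_1}\cdots(\mathrm{ad}\,x_d)^{\alpha_d}(T)$ for each multi-index $\alpha$ with $|\alpha|=n$; since each $\mathrm{ad}\,x_i$ lowers the order by one, $c_\alpha\in\underline{\mathcal{D}_{X_0}^{\leq 0}}=\OO_{X_0}$, and a direct bracket computation using $[\partial_i,x_j]=\delta_{ij}$ shows that all order-$n$ iterated brackets by the $x_i$ of $T':=T-\sum_{|\alpha|=n}c_\alpha\,\partial_1^{\alpha_1}\cdots\partial_d^{\alpha_d}$ vanish, whence $T'\in\underline{\mathcal{D}_{X_0}^{\leq n-1}}$. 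By the inductive hypothesis $T'\in\mathcal{A}$, and $\partial_i\in\underline{\mathcal{D}_{X_0}^{\leq 1}}\subseteq\mathcal{A}$, so $T\in\mathcal{A}$; gluing over such charts gives the claim.

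I expect the generation statement to be the main obstacle: the first two assertions are pure bracket bookkeeping, but the reduction of order genuinely uses the smoothness hypothesis, which enters exactly through the existence of the \'etale coordinates $x_1,\dots,x_d$ and the dual derivations $\partial_i$ (the statement can fail for non-smooth $X_0$). The delicate point within that argument is verifying that an operator all of whose order-$n$ iterated brackets by the coordinate functions vanish actually has order $\le n-1$: this is where one needs that $dx_1,\dots,dx_d$ generate $\Omega_{X_0/F}$, equivalently the local symbol description of $\underline{\mathcal{D}_{X_0}}$ on an \'etale chart --- which is why, in practice, I would simply invoke \cite[Th\'eor\`eme 16.11.2]{EGAIV}.
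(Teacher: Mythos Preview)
Your proposal is correct and aligns with the paper's treatment: the paper simply records this lemma as a result from \cite[Th\'eor\`eme 16.11.2]{EGAIV} without proof, which is exactly the reference you invoke. Your additional sketch of the argument is sound (and you correctly flag that the delicate reduction-of-order step is where smoothness enters and where one ultimately leans on EGA), but the paper itself provides no proof beyond the citation.
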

\noindent The last assertion does not imply that $\mathcal{D}_{X_0}$ is generated by $\mathcal{D}_{X_0}^{\leq 1}$. Nevertheless this is true when $X_0$ is affine \cite[Theorem 1.15]{Muhasky}.

By definition, there is a canonical isomorphism of sheaves
\begin{align*}
\underline{\mathrm{Der}_F}(\OO_{X_0})\lto \Omega_{X_0/F}^\vee
\end{align*}
and $\Omega_{X_0/F}^\vee$ is canonically isomorphic to the sheaf of sections of the tangent bundle $TX_0\to X_0$. 

Now let $F$ be an Archimdean local field. Assume $X_0(F)$ is Zariski dense in $X_0$. In particular, $U(F)=X_0(F)\cap U$ is Zariski dense in $U$ for every nonempty open subscheme $U$ of $X_0$, and $U(F)$ is an $F$-analytic manifold. By identifying $\OO_X(U)$ with $\Hom_{\textbf{Sch}_F}(U,\mathbb{G}_a)$, taking $F$-points gives rise to an $F$-algebra homomorphism
\begin{align}\label{M6:Fpoint:inj}
    \OO_{X_0}(U)\lto C^\infty(U(F),F).
\end{align}
By assumption $U(F)$ is Zariski dense is $U,$ so this is injective. Similarly, taking $F$-points gives an injection
\begin{align}
    \Omega_{X_0/F}^\vee(U) \cong \Hom_{\textbf{Sch}_{X_0}}(U,TX_0)\lto \Gamma(U(F), TX_0(F))
\end{align}
where $\Gamma(U(F), TX_0(F))$ denotes the space of sections of $TX_0(F)\to X_0(F)$ over $U(F)$. All these allow us to realize $\underline{\mathrm{Der}_F}(\OO_{X_0})$ as vector fields on $X_0(F).$
\quash{\begin{rem} We've avoided the use of sheaves of sets in the above discussion. One can view $U\mapsto \Hom_{\textbf{Sch}_F}(U,\mathbb{G}_a)$ as a sheaf over $X$ and $\OO_X$ is then isomorphic to $\Hom_{\textbf{Sch}_F}(-,\mathbb{G}_a)$. Similarly for the vector fields.
\end{rem}}

\subsection{Action of the small modulation group on $\mathcal{D}_{X\CC}$} We now return to the setting of  \S \ref{ssec:mods} in the case of an Archimedean local field $F.$ By Weil restriction there is no loss of generality in assuming $F=\mathbb{R}$.

Therefore, we are given an affine algebraic group $H$ over $\mathbb{R}$, an affine $H$-scheme $X$ of finite type over $\mathbb{R}$ and a right representation of $H$ on a finite dimensional $\mathbb{R}$-vector space $V$ together with an $H$-equivariant map $\omega:X\to V.$ We assume the data satisfy \eqref{omega:span}, \eqref{omega:scale},  \eqref{HX:H1=0}, \eqref{HX:Fpoints} and \eqref{X:cod}. Assume further that
\begin{enumerate}[label=(M\text{\arabic*}), ref=M\text{\arabic*}]\setcounter{enumi}{5}
    \item\label{dense} $X^{\mathrm{sm}}(\mathbb{R})$ is (Zariski) dense in $X^{\mathrm{sm}}$.
\end{enumerate}
In particular, $U^{\mathrm{sm}}(\mathbb{R})$ is Zariski dense in $U^{\mathrm{sm}}$ for each open subscheme $U$ of $X.$ Hence the assumption \eqref{dense} implies the map $\OO_X(U)\to C^\infty(U^{\mathrm{sm}}(\mathbb{R})):=C^\infty(U^{\mathrm{sm}}(\mathbb{R}),\mathbb{C})$ induced by \eqref{M6:Fpoint:inj} is injective. 

\begin{rem}
We point out that \eqref{dense} is often automatic. If $H$ is connected and $x \in X(\mathbb{R})$ is a point such that the orbit $O(x_0) \subset X$ is dense, then \eqref{dense} follows from Lemma \ref{lem:dense}. 
\end{rem}

Let $C^\infty_{X^{\mathrm{sm}}}$ denote the Zariski sheaf on $X^{\mathrm{sm}}$ given
 by
\begin{align*}
    C^\infty_{X^{\mathrm{sm}}}(U):= C^\infty(U(\mathbb{R}))
\end{align*}
for open subschemes $U\subseteq X^{\mathrm{sm}}$. For $v^{\vee} \in V^{\vee}(\mathbb{R})$, $\theta \in \mathcal{D}_{X^{\mathrm{sm}}}$ and open subschemes $U \subseteq X^{\mathrm{sm}}$ we define $v^\vee(\theta)\vert_U\in \mathrm{End}_\mathbb{C}C^\infty_X(U)$ 
to be the arrow making the following diagram commute:
\begin{equation}\label{vactF}
\begin{tikzcd}
C^{\infty}(U(F)) \arrow[d, "\theta"'] \arrow[rr, "{\cdot \psi(v^{\vee}\circ\omega)}"] && C^{\infty}(U(F)) \arrow[d, "v^{\vee}(\theta)|_{U}"] \\
C^{\infty}(U(F)) \arrow[rr, "{\cdot \psi(v^{\vee}\circ\omega)}"']                     && C^{\infty}(U(F))                        
\end{tikzcd}
\end{equation}
Here the horizontal maps are given by multiplication by the function $x\mapsto \psi(v^{\vee}\circ\omega(x))$. The endomorphisms $v^\vee(\theta)|_U$ together glue to 
\begin{align*}
    v^\vee(\theta) \in \mathrm{End}_\mathbb{C}C^\infty_{X^{\mathrm{sm}}}.
\end{align*}
Repeating the construction with $X^{\mathrm{sm}}$ replaced by its open subspaces, we obtain an injective morphism of sheaves of $\mathbb{R}$-algebras
\begin{align}\label{mod:Dx:Cinfty}
    v^\vee:\underline{\mathcal{D}_{X^{\mathrm{sm}}}}\lto \underline{\mathrm{End}_\mathbb{C}}C^\infty_{X^{\mathrm{sm}}}.
\end{align} 

The horizontal arrows in \eqref{vactF} are multiplication by transcendental functions. Nonetheless we have the following proposition: 
\begin{prop}\label{VactDx:preserve:filtation} For $v^{\vee} \in V^{\vee}(\mathbb{R})$, the map of \eqref{mod:Dx:Cinfty} has image in $\underline{\mathcal{D}_{X^{\mathrm{sm}}}}\otimes_\mathbb{R}\mathbb{C}$. Moreover, $v^\vee(\underline{\mathcal{D}_{X^{\mathrm{sm}}}^{\leq n}})\leq \underline{\mathcal{D}_{X^{\mathrm{sm}}}^{\leq n}}\otimes_\mathbb{R}\mathbb{C}$ for each $n\geq 0$.
\end{prop}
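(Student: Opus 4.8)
The plan rests on the observation that the map \eqref{mod:Dx:Cinfty} is conjugation by an invertible multiplication operator. Write $g := v^{\vee}\circ\omega$, the pullback along $\omega$ of the linear functional $v^{\vee}$; it is a regular function on $X$. Let $M$ denote the operator of multiplication by the function $\psi\circ g$ on local sections of $C^\infty_X$; it is invertible, with inverse multiplication by $\psi\circ(-g)$. The commutativity of \eqref{vactF} then forces $v^{\vee}(\theta)|_U = M\circ\theta\circ M^{-1}$ on each open $U\subseteq X^{\mathrm{sm}}$, so $v^{\vee}$ is a homomorphism of sheaves of $\mathbb{C}$-algebras that fixes every multiplication operator; in particular it fixes every element of $\underline{\mathcal{D}_{X^{\mathrm{sm}}}^{\leq 0}} = \OO_{X^{\mathrm{sm}}}$.

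First I would compute $v^{\vee}$ on first-order operators. By Lemma \ref{Dx:smooth:generation:deg1} every local section of $\underline{\mathcal{D}_{X^{\mathrm{sm}}}^{\leq 1}}$ is a sum of a section of $\OO_{X^{\mathrm{sm}}}$ and an algebraic derivation, so by additivity it suffices to evaluate $v^{\vee}$ on an algebraic derivation $D$ of $\OO_{X^{\mathrm{sm}}}$. Writing $\psi(t) = e^{2\pi i a t}$ with $a\in\mathbb{R}^\times$, the chain rule on the real-analytic manifold $X^{\mathrm{sm}}(\mathbb{R})$ gives $D(\psi\circ(-g)) = -2\pi i a\, D(g)\,(\psi\circ(-g))$, whence for a local section $h$ of $C^\infty_X$,
\begin{align*}
v^{\vee}(D)(h)\;=\;(\psi\circ g)\cdot D\!\left((\psi\circ(-g))\cdot h\right)\;=\;D(h)\,-\,2\pi i a\, D(g)\, h.
\end{align*}
Since $D$ is algebraic and $g$ is regular, $D(g)\in\OO_{X^{\mathrm{sm}}}$, so $v^{\vee}(D) = D - 2\pi i a\, D(g)$, where $D(g)$ now denotes the corresponding order-zero operator, lies in $\underline{\mathcal{D}_{X^{\mathrm{sm}}}^{\leq 1}}\otimes_\mathbb{R}\mathbb{C}$. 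This is the heart of the matter: conjugation by $M$ annihilates the transcendental factor $\psi\circ g$ and returns a first-order algebraic operator with complex coefficients. In particular every section of $v^{\vee}(\underline{\mathcal{D}_{X^{\mathrm{sm}}}^{\leq 1}})$ preserves the subsheaf $\OO_{X^{\mathrm{sm}}}\otimes_\mathbb{R}\mathbb{C}$ of $C^\infty_X$.

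The first assertion then follows from the last sentence of Lemma \ref{Dx:smooth:generation:deg1}: $\underline{\mathcal{D}_{X^{\mathrm{sm}}}}$ is generated as an $\OO_{X^{\mathrm{sm}}}$-algebra by $\underline{\mathcal{D}_{X^{\mathrm{sm}}}^{\leq 1}}$, so since $v^{\vee}$ is a $\mathbb{C}$-algebra homomorphism, its image lies in the $\mathbb{C}$-subalgebra generated by $\underline{\mathcal{D}_{X^{\mathrm{sm}}}^{\leq 1}}\otimes_\mathbb{R}\mathbb{C}$, which is $\underline{\mathcal{D}_{X^{\mathrm{sm}}}}\otimes_\mathbb{R}\mathbb{C}$; in particular each $v^{\vee}(\theta)$ is a $\mathbb{C}$-linear endomorphism of $\OO_{X^{\mathrm{sm}}}\otimes_\mathbb{R}\mathbb{C}$. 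For the filtration statement I would induct on $n$, the case $n=0$ being that $v^{\vee}$ fixes multiplication operators. For $n\geq 1$, let $\theta$ be a local section of $\underline{\mathcal{D}_{X^{\mathrm{sm}}}^{\leq n}}$ and $f$ a local section of $\OO_{X^{\mathrm{sm}}}$. Since $v^{\vee}$ is a ring homomorphism fixing $f$, we get $[v^{\vee}(\theta),f] = v^{\vee}([\theta,f])$; and $[\theta,f]\in\underline{\mathcal{D}_{X^{\mathrm{sm}}}^{\leq n-1}}$ by the definition of order, so the inductive hypothesis places $[v^{\vee}(\theta),f]$ in $\underline{\mathcal{D}_{X^{\mathrm{sm}}}^{\leq n-1}}\otimes_\mathbb{R}\mathbb{C}$. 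As this holds for all $f\in\OO_{X^{\mathrm{sm}}}$ (hence, by $\mathbb{C}$-linearity, for all local sections of $\OO_{X^{\mathrm{sm}}}\otimes_\mathbb{R}\mathbb{C}$), and as $v^{\vee}(\theta)$ is already known to be a $\mathbb{C}$-linear endomorphism of $\OO_{X^{\mathrm{sm}}}\otimes_\mathbb{R}\mathbb{C}$, the defining commutator characterization of operators of order $\leq n$ — applied to the $\mathbb{C}$-algebra sheaf $\OO_{X^{\mathrm{sm}}}\otimes_\mathbb{R}\mathbb{C}$, which is identified with $\OO_{X^{\mathrm{sm}}_{\mathbb{C}}}$ and for which $\underline{\mathcal{D}_{X^{\mathrm{sm}}}^{\leq n}}\otimes_\mathbb{R}\mathbb{C} = \underline{\mathcal{D}_{X^{\mathrm{sm}}_{\mathbb{C}}}^{\leq n}}$ by flat base change — gives $v^{\vee}(\theta)\in\underline{\mathcal{D}_{X^{\mathrm{sm}}}^{\leq n}}\otimes_\mathbb{R}\mathbb{C}$.

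I do not expect a genuine mathematical obstacle; the content is concentrated in the first-order computation, where the character disappears. The only care required is bookkeeping: keeping all statements at the level of Zariski sheaves on $X^{\mathrm{sm}}$, identifying $\underline{\mathcal{D}_{X^{\mathrm{sm}}}^{\leq n}}\otimes_\mathbb{R}\mathbb{C}$ with the differential operators of order $\leq n$ on $X^{\mathrm{sm}}_{\mathbb{C}}$, and checking that the ring-homomorphism properties of $v^{\vee}$ and the chain-rule identity for $D(\psi\circ(-g))$ remain valid in the presence of the transcendental multiplier $\psi\circ g$.
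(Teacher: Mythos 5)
Your proposal is correct and follows essentially the same route as the paper: reduce via Lemma \ref{Dx:smooth:generation:deg1} to the first-order case, where conjugation of a derivation $D$ by the multiplier $\psi(v^\vee\circ\omega)$ yields $D \mp 2\pi i a\,D(v^\vee\circ\omega)$ with the transcendental factor cancelling, leaving an algebraic operator of order $\leq 1$ with complex coefficients. The paper then concludes by noting $\underline{\mathcal{D}_{X^{\mathrm{sm}}}}$ is generated in degree $\leq 1$, whereas you run a formal induction on the commutator characterization of order; both are routine once the first-order computation is in hand.
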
 
\begin{proof} The problem is local in nature, so we may assume $X^{\mathrm{sm}}=\Spec A$ is affine with $A=\mathbb{R}[x_1,\ldots,x_n]/I$ for some ideal $I \leq \RR[x_1,\ldots,x_n].$ Since $A$ is smooth, by Lemma \ref{DX:glob:sec:1:lem} and Lemma \ref{Dx:smooth:generation:deg1}, it suffices to show $v^\vee(\mathrm{Der}_\mathbb{R}(A))\subseteq \mathcal{D}_{A}^{\leq 1}\otimes_\mathbb{R}\mathbb{C}$. Here $\mathrm{Der}_\mathbb{R}(A)$ is the set of $\mathbb{R}$-linear derivations on $A$.

Let $\theta\in \mathrm{Der}_\mathbb{R}(A)$. Let $f := v^\vee\circ \omega\vert_{X^{\mathrm{sm}}} \in A$ and by abuse of notation denote by $f$ a lift to $\mathbb{R}[x_1,\ldots,x_n]$. By definition 
\begin{align*}
    v^\vee(\theta) = e^{-2\pi i rf}\circ \theta\circ e^{2\pi i rf}
\end{align*}
for some $r\in\mathbb{R}^\times$. Say $\theta = \sum\limits_{j=1}^n a_j \frac{\partial}{\partial x_j}\pmod{I}$ for some $a_j\in \mathbb{R}[x_1,\ldots,x_n]$. Then
\begin{align*}
    v^\vee(\theta) = \sum_{j=1}^n a_j \left(\tfrac{\partial}{\partial x_j} +2\pi i r\tfrac{\partial}{\partial x_j}(f)\right) \pmod{I}
    = \theta + 2\pi ir\theta(f).
\end{align*}
Since $\theta(f)\in A$, it follows that $v^\vee(\theta)\in \mathcal{D}^{\leq 1}_{A}\otimes_\mathbb{R}\mathbb{C}$. 
\end{proof}

Let
\begin{align*}
    \mathcal{D}_{X\CC} := \mathcal{D}_{X}\otimes_\mathbb{R}\mathbb{C} \cong \mathcal{D}_{X^{\mathrm{sm}}}\otimes_\mathbb{R}\mathbb{C}.
\end{align*}
Then \eqref{HactDX} and \eqref{vactF} define an action 
\begin{align} \label{small:action}
\Psi_{\omega}^{\mathrm{s}}(\mathbb{R}) \times \mathcal{D}_{X\CC} \lto \mathcal{D}_{X\CC}.
\end{align}
\quash{

\begin{rem}   
There are examples of spaces that are both reductive monoids and the affine closures of Braverman-Kazhdan spaces (see \cite{BK:normalized}).  Thus if $G=\mathrm{Res}_{\CC/\RR}G_0$ there are two potential definitions of the Schwartz space in these cases.  We do not know if they coincide.  
\end{rem}}

Assume Ansatz \ref{Ans:Schwartz} for the remainder of this subsection.

\begin{ans} \label{ans:DX}
    The action of $\mathcal{D}_{X}$ on $C^\infty(X^{\mathrm{sm}}(\mathbb{R}))$ preserves $\mathcal{S}(X(\mathbb{R}),\mathcal{L}^{1/2}).$
\end{ans}
\noindent We assume Ansatz \ref{ans:DX} as well for the remainder of the subsection.  For $\theta \in \mathcal{D}_{X\mathbb{C}}$ the \textbf{Fourier transform $\mathcal{F}_{X}(\theta)$} is the unique endomorphism on $\mathcal{S}(X(\RR),\mathcal{L}^{1/2})$ making the following diagram commute:
\begin{equation}\label{Fdef}
    \begin{tikzcd}
\mathcal{S}(X(\mathbb{R}),\mathcal{L}^{1/2}) \arrow[d, "\theta"'] \arrow[r, "\mathcal{F}_{X}"] & \mathcal{S}(X(\mathbb{R}),\mathcal{L}^{1/2}) \arrow[d, "\mathcal{F}_{X}(\theta)"] \\
\mathcal{S}(X(\mathbb{R}),\mathcal{L}^{1/2}) \arrow[r, "\mathcal{F}_{X}"']                     & \mathcal{S}(X(\mathbb{R}),\mathcal{L}^{1/2}).                               
\end{tikzcd}
\end{equation}

\begin{ans}\label{ans:alg}
    If $\theta \in \mathcal{D}_{X}$  then $\mathcal{F}_{X}(\theta)$ lies in the image of  $\mathcal{D}_{X} \to \mathrm{Aut}(\mathcal{S}(X(\mathbb{R}),\mathcal{L}^{1/2}))$.
\end{ans}

\begin{rem}
In a follow up paper to \cite{Hsu:Asymp} Hsu will prove Ansatz \ref{ans:DX} and Ansatz \ref{ans:alg} when $X$ is a horospherical variety as in \S \ref{sec:HS}.
\end{rem}

\begin{lem} \label{lem:DX:act}
Assuming Ansatz \ref{Ans:Schwartz}, \ref{ans:DX} and \ref{ans:alg} there is a unique action
\begin{align} \label{conj:action}
\Psi_{\omega}\{\mathbb{R}\} \times \mathcal{D}_{X\CC} \lto \mathcal{D}_{X\CC}
\end{align}
such that $\Psi_{\omega}^{\mathrm{s}}(\mathbb{R})$ acts via \eqref{small:action} and $\mathcal{F}_{X}$ acts via 
\eqref{Fdef}.
\end{lem}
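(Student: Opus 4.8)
The plan is to produce the action \eqref{conj:action} as conjugation inside $\mathrm{End}_{\CC}(\mathcal{S}(X(\RR),\mathcal{L}^{1/2}))$, once we know that $\mathcal{D}_{X\CC}$ sits faithfully inside this endomorphism algebra as a subspace normalized by $\Psi_{\omega}\{\RR\}$. So the first step is to upgrade Ansatz \ref{ans:DX} to a faithful realization $\mathcal{D}_{X\CC}\hookrightarrow\mathrm{End}_{\CC}(\mathcal{S}(X(\RR),\mathcal{L}^{1/2}))$: Ansatz \ref{ans:DX} already provides the algebra homomorphism (the action of $\mathcal{D}_X$ on $C^\infty(X^{\mathrm{sm}}(\RR))$ restricts to $\mathcal{S}$, and we complexify), and injectivity is the only thing to check. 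An element of $\mathcal{D}_{X\CC}$ is locally a finite sum $\sum a_\alpha\partial^\alpha$ with the $a_\alpha$ complexified regular functions on $X^{\mathrm{sm}}$; its coefficients are recovered from its action on $\mathcal{S}(X(\RR),\mathcal{L}^{1/2})$ because that space is dense in $L^2(X^{\mathrm{sm}}(\RR),\mathcal{L}^{1/2})$ by \eqref{assum:dense}, and a regular function on $X^{\mathrm{sm}}$ vanishing on $X^{\mathrm{sm}}(\RR)$ vanishes identically by the Zariski density \eqref{dense}. Write $\mathcal{D}'$ for the image, so that $\mathrm{Aut}_{\CC}(\mathcal{D}')\cong\mathrm{Aut}_{\CC}(\mathcal{D}_{X\CC})$.

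Next I would show that every $g\in\Psi_{\omega}\{\RR\}$ stabilizes $\mathcal{S}(X(\RR),\mathcal{L}^{1/2})$ and normalizes $\mathcal{D}'$. Since $\Psi_{\omega}\{\RR\}$ is generated by $\mathcal{F}_{X,\psi}$ and $(\mathcal{R}_{\omega}\otimes\chi)(\Psi_{\omega}^{\mathrm{s}}(\RR))$, it is enough to treat these generators and their inverses. For $g=(\mathcal{R}_{\omega}\otimes\chi)(\lambda,h)$ this is elementary: $g$ stabilizes $\mathcal{S}(X(\RR),\mathcal{L}^{1/2})$ since that space is $H(\RR)$-invariant and local (hence closed under multiplication by $x\mapsto\psi(\lambda\circ\omega(x))$), and conjugation by $g$ --- the scalar $\chi$ cancelling --- is precisely the composite of the $H(\RR)$-action \eqref{HactDX} with the modulation \eqref{vactF}, which by Proposition \ref{VactDx:preserve:filtation} (together with the fact that \eqref{HactDX} preserves $\mathcal{D}_{X\CC}$) lands in $\mathcal{D}'$ and agrees there with the action of $(\lambda,h)$ given in \eqref{small:action}. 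For $g=\mathcal{F}_{X,\psi}$: it stabilizes $\mathcal{S}(X(\RR),\mathcal{L}^{1/2})$ by construction of the Fourier transform, and conjugation by it is, by the very definition \eqref{Fdef}, the assignment $\theta\mapsto\mathcal{F}_{X,\psi}(\theta)$, which lies in $\mathcal{D}'$ by Ansatz \ref{ans:alg}. The same applies to $\mathcal{F}_{X,\psi}^{-1}$, which is a Fourier transform of the same nature (in our examples it differs from $\mathcal{F}_{X,\psi}$ by an element of $\Psi_{\omega}^{\mathrm{s}}(\RR)$), so Ansatz \ref{ans:alg} applies to it as well and conjugation by $\mathcal{F}_{X,\psi}$ is in fact an automorphism of $\mathcal{D}'$.

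Granting this, conjugation inside $\mathrm{End}_{\CC}(\mathcal{S}(X(\RR),\mathcal{L}^{1/2}))$ restricts to a group homomorphism $\Psi_{\omega}\{\RR\}\to\mathrm{Aut}_{\CC}(\mathcal{D}')\cong\mathrm{Aut}_{\CC}(\mathcal{D}_{X\CC})$; this is the action \eqref{conj:action}, and by the previous paragraph it restricts to \eqref{small:action} on $\Psi_{\omega}^{\mathrm{s}}(\RR)$ and to \eqref{Fdef} on $\mathcal{F}_{X,\psi}$, as required. Uniqueness is then immediate, since any two actions with these two properties agree on the generating set $\{\mathcal{F}_{X,\psi}\}\cup(\mathcal{R}_{\omega}\otimes\chi)(\Psi_{\omega}^{\mathrm{s}}(\RR))$ and hence everywhere. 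I expect the main obstacle --- beyond bookkeeping --- to be the normalization of $\mathcal{D}'$ by the Fourier transform, i.e. the fact that conjugating an algebraic differential operator by $\mathcal{F}_{X,\psi}^{\pm1}$ again yields an algebraic differential operator; but this is exactly what Ansatz \ref{ans:alg} is designed to supply, the modulation half of the argument being the already-established Proposition \ref{VactDx:preserve:filtation}. The other point requiring care is the faithfulness of $\mathcal{D}_{X\CC}\hookrightarrow\mathrm{End}_{\CC}(\mathcal{S}(X(\RR),\mathcal{L}^{1/2}))$, which makes the element of $\mathcal{D}_{X\CC}$ produced by conjugation unambiguous and uses both \eqref{assum:dense} and \eqref{dense}.
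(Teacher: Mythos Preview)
Your proposal is correct and follows the same conjugation-on-$\mathcal{S}$ approach as the paper, though you spell out considerably more detail: the paper's proof consists of the single observation that by \eqref{assum:dense} an element of $\Psi_{\omega}\{\RR\}$ (being a unitary operator on $L^2$) is determined by its restriction to $\mathcal{S}(X(\RR),\mathcal{L}^{1/2})$, so conjugation is well-defined independently of any presentation as a word in the generators. Your additional discussion of faithfulness of $\mathcal{D}_{X\CC}\to\mathrm{End}_{\CC}(\mathcal{S})$ and of the normalization of $\mathcal{D}'$ by the generators is left implicit in the paper's terse ``the lemma follows.''
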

\begin{proof}
By \eqref{assum:dense}, $\mathcal{S}(X(\mathbb{R}),\mathcal{L}^{1/2})$ is dense in $L^2(X^{\mathrm{sm}}(\mathbb{R}),\mathcal{L}^{1/2}).$  Thus an automorphism in $\Psi_{\omega}\{\mathbb{R}\},$ which is originally defined as a group of automorphisms of $L^2(X^{\mathrm{sm}}(\mathbb{R}),\mathcal{L}^{1/2}),$ is uniquely determined by its action on $\mathcal{S}(X(\mathbb{R}),\mathcal{L}^{1/2}).$  The lemma follows.
\end{proof}

\subsection{Passage to the semi-classical limit} \label{ssec:semi}

Consider the associated graded
\begin{align*}
\mathrm{gr}\,\underline{\mathcal{D}_{X}} = \bigoplus_{n\geq 0} \underline{\mathcal{D}_X^{\leq {n}}} / \underline{\mathcal{D}_X^{\leq n-1}}
\end{align*}
where we set $\underline{\mathcal{D}_X^{\leq -1}} = 0$. This is a sheaf of commutative $\OO_{X}$-algebras. Similarly one defines $\mathrm{gr}\,\underline{\mathcal{D}_{X^{\mathrm{sm}}}}$. By Corollary \ref{M5:DX->DXsm:isom}, there is a canonical isomorphism
\begin{align}\label{M5:DX->DXsm:isom:gr}
    \mathrm{gr}\,\underline{\mathcal{D}_{X}} \lto j_*\left(\mathrm{gr}\,\underline{\mathcal{D}_{X^{\mathrm{sm}}}} \right)
\end{align}
of commutative $\OO_X$-algebras.

One has a canonical identification $\Omega_{X^{\mathrm{sm}}/\mathbb{R}}^\vee \cong \underline{\mathrm{Der}_\mathbb{R}}(\OO_{X^{\mathrm{sm}}}).$  
The composition 
\begin{align*}
    \Omega_{X^{\mathrm{sm}}/\mathbb{R}}^\vee\tilde{\lto} \underline{\mathrm{Der}_\mathbb{R}}(\OO_{X^{\mathrm{sm}}}) \leq \mathrm{gr}\,\underline{\mathcal{D}_{X^{\mathrm{sm}}}}
\end{align*}
extends to an $\OO_{X^{\mathrm{sm}}}$-algebra morphism 
\begin{align}\label{DX:gr:TX}
\mathrm{Sym}_{\OO_{X^{\mathrm{sm}}}}\Omega_{X^{\mathrm{sm}}/\mathbb{R}}^\vee \lto \mathrm{gr}\,\underline{\mathcal{D}_{X^{\mathrm{sm}}}}.
\end{align}
By a local computation recorded in  \cite[\S 1.1]{HTT} \eqref{DX:gr:TX} is an isomorphism.  Strictly speaking they work over the complex numbers but the argument is valid over characteristic zero fields.  Taking global sections of \eqref{DX:gr:TX} and \eqref{M5:DX->DXsm:isom:gr} we have isomorphisms
\begin{align*}
\OO_{T^*X^{\mathrm{sm}}}(T^*X^{\mathrm{sm}}) \cong (\mathrm{gr}\,\underline{\mathcal{D}_{X^{\mathrm{sm}}}})(X^{\mathrm{sm}}) \cong (\mathrm{gr}\,\underline{\mathcal{D}_{X}})(X).
\end{align*}

The action of the small modulation group $\Psi_\omega^s$ on $T^*X^{\mathrm{sm}}$ in Lemma \ref{lem:cot:action} induces a left $\Psi_\omega^s(\mathbb{R})$-action on $\OO_{T^*X^{\sm}}(T^*X^{\sm})$. Under the identifications above, this yields an action
 \begin{align} \label{C:ext:action}
     \Psi_\omega^s(\mathbb{R}) \times (\mathrm{gr}\,\underline{\mathcal{D}_{X}})(X)\lto (\mathrm{gr}\,\underline{\mathcal{D}_{X}})(X).
 \end{align}
There is a canonical injective $\OO_X(X)$-algebra homomorphism
\begin{align}\label{grDx:glob:sec:map}
    \mathrm{gr}\,\mathcal{D}_X := \bigoplus_{n\geq 0} \mathcal{D}_X^{\leq n} / \mathcal{D}_{X}^{\leq n-1} \lto (\mathrm{gr}\,\underline{\mathcal{D}_{X}})(X).
\end{align}
The left $H(\mathbb{R})$-action on $\mathcal{D}_X$ induces an $H(\mathbb{R})$-action on $\mathrm{gr}\,\mathcal{D}_X$. With these actions, the map of \eqref{grDx:glob:sec:map} is $H(\mathbb{R})$-equivariant.

\begin{rem} By Proposition \ref{VactDx:preserve:filtation}, the action \eqref{small:action} induces an $\Psi_\omega^{\mathrm{s}}(\mathbb{R})$-action on $\mathrm{gr}\,\mathcal{D}_{X\mathbb{C}}$. However, the $V^\vee(\mathbb{R})$-action is trivial, as opposed opposed to \eqref{C:ext:action}.
\end{rem}

We now prepare to explain how a conjugate of the action \eqref{C:ext:action} may 
 extend to an action of $\Psi_{\omega}\{\mathbb{R}\},$ and formulate how this action ``corresponds" to the action on $\mathcal{D}_{X\mathbb{C}}$.   
Let
\begin{align}
\sigma:\mathcal{D}_{X} \lto \textrm{gr}\,\mathcal{D}_{X}
\end{align}
be the canonical map; it is called the symbol map. It is a multiplicative, but not additive $H(\mathbb{R})$-equivariant homomorphism.
Using the symbol map the associated graded 
$\textrm{gr}\,\mathcal{D}_{X}$ inherits the structure of a commutative Poisson algebra.  The Poisson bracket is determined by $\{f, g\} := \sigma\left([\widetilde{f}, \widetilde{g}]\right)$, where $f$ and $g$ are homogeneous elements of $\textrm{gr}\, \mathcal{D}_X$ and $\widetilde{f}$ and  $\widetilde{g}$ are arbitrary lifts of $f$ and $g$  to $\mathcal{D}_X.$ 

\quash{
We have that $T^*X^{\sm}$ is a quasi-affine variety, with affine closure
\[
\overline{T^*X^{\sm}}^{\mathrm{aff}} =  \textrm{Spec} (\mathrm{gr}\,\mathcal{D}_{X}) .
\]

The action of the small modulation group $\Psi^{\mathrm{s}}_{\omega}$ on $T^*X^{\mathrm{sm}}$ given in Lemma \ref{lem:cot:action} induces an action of
 $\Psi^{\mathrm{s}}_\omega$ on $\overline{T^*X^{\mathrm{sm}}}^{\mathrm{aff}}.$  In the usual manner this induces an action of $ \Psi_{\omega}^{\mathrm{s}}(F)$ on $ \Gamma(T^*X^{\mathrm{sm}},\OO_{T^*X^{\mathrm{sm}}})=\mathrm{gr}\,\mathcal{D}_X;$ extending $\CC$-linearly we obtain 
 \begin{align} \label{C:ext:action}
     \Psi_{\omega}^{\mathrm{s}}(F) \times \mathrm{gr}\,\mathcal{D}_{X\CC} \lto \mathrm{gr}\,\mathcal{D}_{X\CC}
 \end{align}

We suggest how a conjugate of the action \eqref{C:ext:action} may 
 extend to an action of $\Psi_{\omega}\{\mathbb{R}\}$, and to more precisely formulate how this action ``corresponds" to the action on $\mathcal{D}_{X\mathbb{C}}$.   
}

Assume that $\mathfrak{x} \subset \mathcal{D}_{X\CC}$ is a complex Lie subalgebra under the commutator that is $H(\mathbb{R})$-stable. We assume moreover that $\mathfrak{x}$ admits a decomposition
\begin{align*}
    \mathfrak{x}=\bigoplus_{i}\mathfrak{x}_j
\end{align*}
into $H(\mathbb{R})$-invariant subspaces such that 
\begin{align*}
    \sigma|_{\mathfrak{x}_j}:\mathfrak{x}_j \lto \mathrm{gr}\,\mathcal{D}_{X}
\end{align*}
is $\mathbb{R}$-linear. Taking direct sum we obtain an $\mathbb{C}$-linear map
$\Sigma : \mathfrak{x}\to \mathrm{gr}\,\mathcal{D}_{X\CC}.$
Let $\mathfrak{x}' \subset \mathrm{gr}\,\mathcal{D}_{X\CC}$ be its image.  It is a Lie algebra under the Poisson bracket.  We point out that
$\Sigma\neq \sigma\vert_{\mathfrak{x}}$ in general
because $\sigma$ is not linear.

We observe that if $\mathfrak{x}$ is preserved by $\Psi^{\mathrm{s}}_{\omega}(\mathbb{R})$ then $\Psi_{\omega}^{\mathrm{s}}(\mathbb{R})$ acts
on $\mathfrak{x}'$ by transfer of structure.  It preserves the Poisson bracket on $\mathfrak{x}'.$

\begin{ans}\label{Ansatz}
Assume Ansatz \ref{Ans:Schwartz}, \ref{ans:DX} and Ansatz \ref{ans:alg}. 
There exists a complex Lie subalgebra $\mathfrak{x} \subset \mathcal{D}_{X\CC}$ as above that satisfies the following: 
    \begin{enumerate}
    \item  \label{xgen} $\mathfrak{x}$ generates $\mathcal{D}_{X\mathbb{C}}$ as an associative algebra; 
   
    \item  \label{preserve} $\mathfrak{x}$ is preserved by the action of $\Psi_{\omega}\{\RR\}$; 

\item \label{cot:gen} $\mathfrak{x}'$ generates $\mathrm{gr}\,\mathcal{D}_{X\mathbb{C}}$ as a commutative algebra;

       \item \label{ind:gp} There is an ind-group scheme $\Psi_{\omega}^{\mathrm{ind}}$ containing $\Psi_{\omega}^{\mathrm{s}}$ as a subgroup
       such that the action of $\Psi_{\omega}^{\mathrm{s}}$ on $T^*X^{\mathrm{sm}}$ extends to an action of $\Psi_{\omega}^{\mathrm{ind}}$ on $\overline{T^*X^{\mathrm{sm}}}^{\mathrm{aff}}$; 
       \item \label{compat}
       The action of $\Psi_{\omega}^{\mathrm{ind}}(\RR)$ on $\mathrm{gr}\,\mathcal{D}_{X\CC}$ preserves $\mathfrak{x}'.$  Moreover, the images of the homomorphisms
    \begin{align*}
        \Psi_{\omega}^{\mathrm{ind}}(\RR) \lto \mathrm{Aut}_{\CC}(\mathfrak{x}'),\\
        \Psi_{\omega}\{\RR\} \lto \mathrm{Aut}_{\CC}(\mathfrak{x}')
    \end{align*}
    are $\mathrm{Aut}_{\CC}(\mathfrak{x}')$-conjugate.
    \end{enumerate}    
\end{ans}

\begin{rem}
The modulation group $\Psi_{\omega}\{\mathbb{R}\}=\Psi_{\omega,\chi}\{\mathbb{R}\}$ depends on a choice of character $\chi.$  Changing $\chi$ does not affect the image of the lower automorphism in \eqref{compat}.  In particular, $\Psi_{\omega}^{\mathrm{ind}}$ can be taken to be independent of $\chi.$
\end{rem}

\begin{conj} \label{special:case:ansatz}
Ansatz \ref{Ansatz} is true if $X$ is a reductive monoid as in \S \ref{sec:monoid-mod} or if $X$ is a horospherical variety as in \S \ref{sec:HS}.
\end{conj}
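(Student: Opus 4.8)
The plan is to verify the five clauses of Ansatz \ref{Ansatz} family by family, in each case by first producing the ind-algebraic group $\Psi_{\omega}^{\mathrm{ia}}$ and the Lie subalgebra $\mathfrak{x}\subset\mathcal{D}_{X\CC}$. In both families $\mathfrak{x}$ should be the $\CC$-span of the image of $\mathrm{Lie}\,\Psi_{\omega}^{\mathrm{ia}}$ under the infinitesimal form of the representation realizing $\Psi_{\omega}\{F\}$ on $L^2(X^{\mathrm{sm}}(F),\mathcal{L}^{1/2})$, with the decomposition $\mathfrak{x}=\bigoplus_j\mathfrak{x}_j$ given by the order filtration on differential operators; the organizing slogan (compare Remark \ref{rem:BZSV}) is that the semiclassical limit of a ``minimal-type'' representation attached to $\Psi_{\omega}^{\mathrm{ia}}$ is the $($co$)$adjoint or affine-symplectic action on $\overline{T^*X^{\mathrm{sm}}}^{\mathrm{aff}}$. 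Granting Ansatz \ref{ans:DX} and Ansatz \ref{ans:alg} (themselves part of what must be proved --- in the horospherical case they are the subject of Hsu's planned follow-up to \cite{Hsu:Asymp}), clauses \eqref{xgen}, \eqref{cot:gen}, \eqref{preserve} and \eqref{ind:gp} are then structural, and essentially all of the content sits in \eqref{compat}.

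I would treat the quadric cone $X=C_n$ of \S\ref{sec:Mod:cones} as the representative horospherical example. Take $\Psi_{\omega}^{\mathrm{ia}}:=\mathrm{GO}_{V_{n+1}}$, which is already an algebraic group. By Proposition \ref{minNilpOrbitIso}, $\overline{T^*C_n^\circ}^{\mathrm{aff}}\cong\overline{\mathbb{O}}_{n+1}\subset\mathfrak{o}_{V_{n+1}}$, and $\mathrm{GO}_{V_{n+1}}$ acts on the right-hand side by the adjoint action twisted by the similitude character, extending both the $\mathrm{O}_{V_n}$-action of Lemma \ref{lem:cot:action} and the translation action of $V^\vee\cong N_n$; this gives \eqref{ind:gp}. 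For $\mathfrak{x}$ take the $\CC$-span of $d\widetilde{\tau}(\mathfrak{o}_{V_{n+1}})$, graded by the three-step decomposition $\mathfrak{o}_{V_{n+1}}=\mathfrak{n}_n^{\mathrm{op}}\oplus(\mathfrak{gl}_1\oplus\mathfrak{o}_{V_n})\oplus\mathfrak{n}_n$ attached to the Levi of $Q_n$; from \eqref{eqn:GK-klingen-action} and \eqref{tau} the three pieces act by differential operators of order $0$, $1$ and $2$ (multiplication operators, Euler-and-rotation vector fields, Bessel operators), and after rescaling the order-$0$ and order-$2$ pieces by suitable powers of $2\pi i$ the symbol map is $\RR$-linear on each. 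Then \eqref{cot:gen} holds because $\overline{\mathbb{O}}_{n+1}$ is a closed subscheme of the vector space $\mathfrak{o}_{V_{n+1}}$, so $\OO(\overline{\mathbb{O}}_{n+1})$ is generated by the linear functions, which are exactly $\mathfrak{x}'$; \eqref{xgen} follows by lifting a generating set through the symbol map; and \eqref{preserve} is immediate, since $\Psi_{\omega}\{\RR\}=\widetilde{\tau}(\mathrm{GO}_{V_{n+1}}(\RR))$ acts on $\mathcal{D}_{C_n\CC}$ by conjugation, which on $\mathfrak{x}$ is the adjoint action of $\mathrm{GO}_{V_{n+1}}(\RR)$ on $d\widetilde{\tau}(\mathfrak{o}_{V_{n+1}})$ (in particular $\mathcal{F}_C=\widetilde{\tau}(w_0)$ acts by $\mathrm{Ad}(w_0)$). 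For \eqref{compat}, both homomorphisms to $\mathrm{Aut}_{\CC}(\mathfrak{x}')$ are the adjoint action on $\mathfrak{o}_{V_{n+1}}\otimes\CC$ and hence literally coincide, provided one checks that passing to symbols intertwines conjugation by $\widetilde{\tau}(g)$ with the geometric action of $g$ under the isomorphism $\overline{T^*C_n^\circ}^{\mathrm{aff}}\cong\overline{\mathbb{O}}_{n+1}$. This last compatibility is the crux: for the Siegel--Klingen generators in $\widetilde{Q}_n(F)$ it is a direct computation from \eqref{eqn:GK-klingen-action}, but for the Fourier transform $w_0$ it requires extracting the leading symbol of the Radon-transform formula \eqref{tau}, in the style of \cite{Kobayashi:Mano}.

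For reductive monoids I would run the parallel argument on the standard-representation monoid $X=M_n$ of \S\ref{sec:standard}, and then on the Rankin--Selberg monoid $M_\otimes$ of \S\ref{ssec:RSM}. Here $X$ is smooth, $\mathcal{D}_{M_n\CC}$ is the Weyl algebra of $W=M_n\oplus M_n^\vee$ and $\mathrm{gr}\,\mathcal{D}_{M_n\CC}=\OO_W=\OO_{T^*M_n}$, while Theorem \ref{thm:Stand} identifies $\Psi_{\omega}\{F\}$ with the image of $\mathrm{H}_W(F)\rtimes\langle w,\mathrm{P}(\GL_n^2)(F)\rangle$ under the Heisenberg--Weil representation $\omega_\psi^{\mathrm{J}}$. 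Take $\Psi_{\omega}^{\mathrm{ia}}$ to be that (algebraic) subgroup of $\mathrm{J}_W$ and $\mathfrak{x}$ the $\CC$-span of $d\omega_\psi^{\mathrm{J}}$ of its Lie algebra, graded by operator order; the semiclassical limit of the Heisenberg--Weil representation is classical --- it is the affine-symplectic action of $\mathrm{H}_W\rtimes\Sp_W$ on $W=T^*M_n$, i.e.\ Weyl quantization run backwards --- so \eqref{compat} and the structural clauses follow exactly as in the cone case, again with $\Psi_{\omega}^{\mathrm{ia}}$ literally algebraic. The case $\omega=\mathrm{Sym}^2$ of Theorem \ref{thm:intro:HW} is identical with $\langle w,P_{W_0}(F)\rangle$ in place of $\mathrm{H}_W(F)\rtimes\langle w,\GL_V(F)\rangle$.

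For a general reductive monoid $M_\rho$ or a general self-associate maximal-parabolic horospherical variety $X_P$, the conjecture is a program rather than a routine extension. One must first describe $\overline{T^*X^{\mathrm{sm}}}^{\mathrm{aff}}$ and produce $\Psi_{\omega}^{\mathrm{ia}}$: for $X_P$ by combining the construction of the Fourier transform in \cite{Getz:Hsu:Leslie,BK:normalized} with the asymptotics of \cite{Hsu:Asymp}, and for $M_\rho$ by the Kac--Moody groups of \cite{Shahidi:infinite}, which is where genuine ind-algebraicity, as opposed to plain algebraicity, should finally enter. In every case the hard step is the one that already dominates the base cases: identifying the semiclassical limit of the relevant Fourier transform with an algebraic automorphism of $\overline{T^*X^{\mathrm{sm}}}^{\mathrm{aff}}$ and matching it against the abstract operator $\mathcal{F}_{X,\psi}(\theta)$ of \eqref{Fdef} --- that is, upgrading the heuristic of Remark \ref{rem:BZSV} to a theorem.
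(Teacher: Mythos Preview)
This statement is labeled a \emph{Conjecture} in the paper, and the paper does not prove it.  There is therefore no ``paper's own proof'' to compare against.  What the paper does do, in \S\ref{ssec:exam}, is verify Ansatz~\ref{Ansatz} in three base examples: affine space with $\omega=\mathrm{id}$ (Proposition~\ref{prop:id}), affine space with $\omega=\mathrm{Sym}^2$ (Proposition~\ref{prop:sym}), and the quadric cone (Proposition~\ref{prop:cone:Ansatz}).  The general reductive-monoid and horospherical cases are left open, with the remarks after Ansatz~\ref{Ansatz} pointing to \cite{Shahidi:infinite} and to the shape of \S\ref{Quadric:Cones:Section} as hints for where to look.

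Your proposal reflects this honestly: the middle two paragraphs treat exactly the base cases the paper treats, by essentially the same method (take $\mathfrak{x}$ to be the image of the Lie algebra under the infinitesimal representation, graded by operator order, and identify the semiclassical limit with the adjoint or affine-symplectic action on the relevant orbit closure), and your final paragraph explicitly concedes that the general case ``is a program rather than a routine extension.''  So you have not proved the conjecture; you have reproduced the paper's evidence for it and sketched the same strategy the paper gestures at.  Two small discrepancies with the paper's base-case proofs: for the cone the paper takes $\Psi_\omega^{\mathrm{ia}}=\mathrm{O}_{p+1,q+1}$ rather than $\mathrm{GO}_{V_{n+1}}$, and it writes $\mathfrak{x}$ down concretely via the operators $x_i,X_{ij},E,P_i$ of \cite{Kobayashi:Mano} (see \eqref{xi2}) rather than abstractly as $d\widetilde{\tau}(\mathfrak{o}_{V_{n+1}})$, invoking \cite{LSSMinimalNilpotent} for the surjectivity in \eqref{xgen}; and for the monoid side the paper does \emph{not} actually verify the ansatz for $M_n$ or $M_\otimes$ --- only for the vector space $V$ itself --- so your claim that the standard-monoid case goes through ``exactly as in the cone case'' is an assertion, not something the paper establishes.
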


\begin{rem} When $X=\overline{P^{\mathrm{der}} \backslash G}^{\mathrm{aff}}$ we suspect that something similar to the procedure in \S \ref{sssec:quadric} will yield the conjecture in certain cases. When $X=M_{\rho},$ one might try to look for the group ind-schemes in \eqref{ind:gp} using the Kac-Moody groups discussed in \cite{Shahidi:infinite}.
\end{rem}

\subsection{Examples} \label{ssec:exam} We prove Conjecture \ref{special:case:ansatz} in the cases considered in \S \ref{sec:Mod:VS} and \S \ref{sec:Mod:cones}.  Let $\alpha \in \RR^\times$ be chosen so that 
\begin{align} \label{alpha}
\psi(t)=e^{2\pi i\alpha t}.
\end{align}

\subsubsection{Affine Space} \label{ssec:affine:space}
In this subsection we take $X=V =\GG_a^n$ as in \S \ref{sec:Mod:VS} with $H=\GL_n$ acting on the right.  We use the notation of loc.~cit.
In particular, we identify $V$ with its dual $V^\vee$ using a perfect pairing as in \eqref{perfect:pair}.  For computations we pick the pairing given on points in an $\RR$-algebra $R$ by
\begin{align*}
V(R) \times V(R) &\lto R\\
((v_j),(w_j)) &\longmapsto \sum_{j=1}^nv_jw_j.
\end{align*}
We then have a symplectic space $W=V \oplus V$ with form $\langle\,,\,\rangle_{\wedge}$ defined as in \eqref{symp}.  Thus we have identifications
$$
T^*V=V \oplus V^\vee = V \oplus V=W.
$$
In this case the algebra $\mathcal{D}_{V\CC}$ is the Weyl algebra
\begin{align}
\mathcal{D}_{V\CC}=\CC\left[x_1,\dots,x_n,\frac{\partial}{\partial x_1},\dots,\frac{\partial}{\partial x_n}\right].
\end{align}

Ansatz \ref{Ans:Schwartz} holds with the usual Fourier transform attached to $\langle\,,\,\rangle$ and $\psi$ and the usual Schwartz space $\mathcal{S}(V(\RR)).$  The action of $\mathcal{D}_{V\CC}$ on $C^\infty(V(\RR))$ preserves $\mathcal{S}(V(\RR)),$ so Ansatz \ref{ans:DX} holds.  Ansatz \ref{ans:alg} holds as well.  Indeed, standard facts on the Fourier transform imply that for $p(x_i,\frac{\partial}{\partial x_1}) \in \mathcal{D}_{V\CC}$ one has that
\begin{align} \label{Fonpoly}
\mathcal{F}_{X}(p)\left(x_j,\frac{\partial}{\partial x_j}\right)=p\left(\frac{1}{2\pi i \alpha }\frac{\partial}{\partial x_j},-2\pi i \alpha x_j\right).
\end{align}
Moreover, for $A \in \GL_n(\RR)$
\begin{align} \label{A:act}
A.p\left(x_j,\frac{\partial}{\partial x_j}\right)=p\left((x_j)A,\left(\frac{\partial}{\partial x_j}\right)A^{-t}\right).
\end{align}

In the notation of Ansatz \ref{Ansatz} we choose
\begin{align} \label{x:def}
\mathfrak{x}:=\left\langle  x_1,\dots,x_n,\frac{\partial}{\partial x_1},\dots,\frac{\partial}{\partial x_n} \right\rangle \subset \mathcal{D}_{V}
\end{align}
where the brackets indicate the Lie subalgebra over $\CC$ generated by the given elements.  We have an isomorphism of Lie algebras
\begin{align} \label{LieHeis} \begin{split}
\mathrm{Lie} \,\mathrm{H}_W &\tilde{\lto} \mathfrak{x}\\
((v_j),(w_{j}),t) &\longmapsto \sum_{j=1}^nv_jx_j+\sum_{j=1}^{n}w_j\frac{\partial}{\partial x_j}+t .\end{split}
\end{align}
We set
$$
\mathfrak{x}_0:=\CC,\quad \mathfrak{x}_1:=\bigoplus_j \mathbb{C} x_j, \quad \mathfrak{x}_2:= \bigoplus_j \mathbb{C} \frac{\partial}{\partial x_j}.
$$
Then $\mathfrak{x}=\mathfrak{x}_0 \oplus \mathfrak{x}_1 \oplus \mathfrak{x}_2$ and each $\mathfrak{x}_i$ is $H(\mathbb{R})$-invariant.

\begin{prop} \label{prop:id}
If $\omega:X \to V$ is the identity map then Ansatz \ref{Ansatz} is valid with $$
\Psi_{\omega}^{\mathrm{ind}}=H_W/Z_{H_W} \rtimes \widetilde{\GL}_V \quad \textrm{and} \quad \mathfrak{x}=\mathrm{Lie} H_W.
$$
\end{prop}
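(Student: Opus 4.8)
The plan is to check each clause of Ansatz \ref{Ansatz} by an explicit computation in the Weyl algebra, using the formulae already recorded. Recall from the discussion preceding the proposition that $\mathcal{D}_{V\CC}$ is the Weyl algebra $\CC[x_1,\dots,x_n,\partial/\partial x_1,\dots,\partial/\partial x_n]$, that $\mathrm{gr}\,\mathcal{D}_{V\CC}=\CC[x_1,\dots,x_n,\xi_1,\dots,\xi_n]=\OO_{T^*V}(T^*V)$ with $T^*V$ identified with $W$ (here $\xi_j=\sigma(\partial/\partial x_j)$), and that Ansatz \ref{ans:DX} and Ansatz \ref{ans:alg} hold. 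Take $\mathfrak{x}=\mathrm{Lie}\,\mathrm{H}_W$ realized inside $\mathcal{D}_{V\CC}$ via \eqref{x:def} and \eqref{LieHeis}, with its grading $\mathfrak{x}=\mathfrak{x}_0\oplus\mathfrak{x}_1\oplus\mathfrak{x}_2$. Since $\mathfrak{x}_0,\mathfrak{x}_1\subset\mathcal{D}_{V\CC}^{\leq 0}$ and $\mathfrak{x}_2\subset\mathcal{D}_{V\CC}^{\leq 1}$ each lie in a single filtration step, $\sigma$ restricted to each $\mathfrak{x}_i$ is $\CC$-linear, so $\Sigma=\sigma_0\oplus\sigma_1\oplus\sigma_2$ is a $\CC$-linear isomorphism of $\mathfrak{x}$ onto $\mathfrak{x}'=\CC\cdot 1\oplus\bigoplus_j\CC x_j\oplus\bigoplus_j\CC\xi_j$; under the evident bases it carries $1\mapsto 1$, $x_j\mapsto x_j$, $\partial/\partial x_j\mapsto\xi_j$. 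Clauses \eqref{xgen} and \eqref{cot:gen} are then immediate: $x_j,\partial/\partial x_j\in\mathfrak{x}$ generate the Weyl algebra and $x_j,\xi_j\in\mathfrak{x}'$ generate $\CC[x_j,\xi_j]$.

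For \eqref{preserve}, recall that as a group of operators on $\mathcal{D}_{V\CC}$ the modulation group $\Psi_{\mathrm{id}}\{\RR\}$ is generated by $\mathcal{F}_{X,\psi}$ (acting by conjugation as in \eqref{Fdef}), the $\GL_n(\RR)$-action \eqref{HactDX}, and the modulations $v^\vee(\cdot)$ for $v^\vee\in V^\vee(\RR)$ as in \eqref{vactF}. By \eqref{Fonpoly}, $\mathcal{F}_{X,\psi}$ sends $x_j\mapsto(2\pi i\alpha)^{-1}\partial/\partial x_j$ and $\partial/\partial x_j\mapsto-2\pi i\alpha x_j$; by \eqref{A:act}, $\GL_n(\RR)$ acts linearly on $\mathfrak{x}_1$ and on $\mathfrak{x}_2$ (and fixes $\mathfrak{x}_0$); and the computation in the proof of Proposition \ref{VactDx:preserve:filtation}, together with \eqref{alpha} and $\omega=\mathrm{id}$, gives $v^\vee(x_j)=x_j$ and $v^\vee(\partial/\partial x_j)=\partial/\partial x_j+2\pi i\alpha v^\vee_j\cdot 1$. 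Each of these preserves $\mathfrak{x}$, so \eqref{preserve} holds. For \eqref{ind:gp} I would take $\Psi_{\omega}^{\mathrm{ia}}:=\mathrm{H}_W/Z_{\mathrm{H}_W}\rtimes\widetilde{\GL}_V=W\rtimes\widetilde{\GL}_V$, which in this example is a genuine finite-type affine algebraic group; the inclusion $\Psi_{\mathrm{id}}^{\mathrm{s}}=V^\vee\rtimes\GL_V\hookrightarrow W\rtimes\widetilde{\GL}_V$ is induced by the embedding of $V^\vee$ as the second summand of $W$ (as in the proof of Theorem \ref{thm:vector-Mod-group}) and by $\GL_V\hookrightarrow\widetilde{\GL}_V$. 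The natural affine action of $W\rtimes\mathrm{Sp}_W$ on $W$ restricts to an action of $W\rtimes\widetilde{\GL}_V$ on $\overline{T^*V}^{\mathrm{aff}}=T^*V=W$, and unwinding Lemma \ref{lem:cot:action} with $\omega=\mathrm{id}$ shows this restricts on $\Psi_{\mathrm{id}}^{\mathrm{s}}$ to the cotangent action of \S\ref{sec:mod:alg} (the check parallels the $L^2$-level intertwining in the proof of Theorem \ref{thm:vector-Mod-group}).

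The content of the proof, and the place requiring care, is \eqref{compat}. The affine action of $W\rtimes\widetilde{\GL}_V$ on $W$ preserves the space $\mathfrak{x}'$ of affine-linear functions, giving $\Psi_{\omega}^{\mathrm{ia}}(\RR)\to\mathrm{Aut}_{\CC}(\mathfrak{x}')$; by \eqref{widetilde:GLv} its image is generated by (i) the block-diagonal $\GL_n(\RR)$-action, (ii) the involution $w$ acting by $x_j\mapsto-\xi_j$, $\xi_j\mapsto x_j$, and (iii) all translations of the $x$- and $\xi$-coordinates by vectors in $\RR^n$. Transporting the $\Psi_{\mathrm{id}}\{\RR\}$-action on $\mathfrak{x}$ across $\Sigma$, its image in $\mathrm{Aut}_{\CC}(\mathfrak{x}')$ is generated by the same $\GL_n(\RR)$-action, by $\mathcal{F}_{X,\psi}$ acting by $x_j\mapsto(2\pi i\alpha)^{-1}\xi_j$, $\xi_j\mapsto-2\pi i\alpha x_j$, and by the $v^\vee$-modulations acting by $\xi_j\mapsto\xi_j+2\pi i\alpha v^\vee_j$ (all other basis vectors fixed). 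I would then conjugate by the automorphism $\Phi\in\mathrm{Aut}_{\CC}(\mathfrak{x}')$ fixing $1$ and every $x_j$ and sending $\xi_j\mapsto-(2\pi i\alpha)^{-1}\xi_j$: one checks directly that $\Phi$ commutes with the $\GL_n(\RR)$-action, that $\Phi w\Phi^{-1}=\mathcal{F}_{X,\psi}$, and that $\Phi$ carries the group of all $\RR^n$-translations of $(x,\xi)$ to the group of $x$-translations by $\RR^n$ together with $\xi$-translations by $i\RR^n$. Finally, because $\mathcal{F}_{X,\psi}$ conjugates $\xi$-translations to $x$-translations, the subgroup generated by $\GL_n(\RR)$, $\mathcal{F}_{X,\psi}$ and the $v^\vee$-modulations (which produce exactly the $\xi$-translations by $i\RR^n$) already contains all of these translations; hence $\Phi\cdot(\mathrm{im}\,\Psi_{\omega}^{\mathrm{ia}}(\RR))\cdot\Phi^{-1}$ equals $\mathrm{im}\,\Psi_{\mathrm{id}}\{\RR\}$ in $\mathrm{Aut}_{\CC}(\mathfrak{x}')$, which is \eqref{compat}.

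The hard part is precisely this last bookkeeping: isolating the correct scaling factor $-(2\pi i\alpha)^{-1}$, keeping straight the passage between real and purely imaginary translation vectors forced by the transcendental factor $\psi(v^\vee\circ\omega)$, and observing that $\mathcal{F}_{X,\psi}$-conjugation manufactures the position-space translations that are absent from the naive generating set coming from $\Psi_{\mathrm{id}}^{\mathrm{s}}$. Everything else — the verification of \eqref{xgen}, \eqref{cot:gen}, \eqref{preserve}, \eqref{ind:gp}, and the choice of $\Psi_{\omega}^{\mathrm{ia}}$ — is routine once \eqref{Fonpoly}, \eqref{A:act}, Proposition \ref{VactDx:preserve:filtation}, and Lemma \ref{lem:cot:action} are in hand.
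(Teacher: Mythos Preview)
Your proof is correct and follows essentially the same route as the paper: the paper likewise verifies \eqref{xgen}--\eqref{ind:gp} by the explicit formulae \eqref{Fonpoly}, \eqref{A:act} and the Heisenberg action, defines the affine $\Psi_{\omega}^{\mathrm{ia}}$-action on $T^*V=W$ exactly as you do, and for \eqref{compat} simply says ``one can now check directly.'' Your explicit construction of the conjugating automorphism $\Phi$ (rescaling the $\xi_j$) is precisely the direct check the paper omits; the only cosmetic difference is that for \eqref{preserve} the paper invokes Theorem~\ref{thm:vector-Mod-group} to write down the full $\mathrm{H}_W(\RR)$-action on $\mathfrak{x}$ at once, whereas you check the three generating types separately.
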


\begin{proof} 
Assertion \eqref{xgen} is clear.  
Recall that we computed $\Psi_{\omega}\{\RR\}$ in Theorem \ref{thm:vector-Mod-group}.  
Using \eqref{action:HW}, the pullback of the action of $\Psi_{\omega}\{\mathbb{R}\}$ on $\mathcal{D}_{V}$ to 
$\mathrm{H}_W(\mathbb{R}) $ is given explicitly as follows:
\begin{align}
    ((v,\lambda),t).\left( \sum_{i=1}^na_ix_i+\sum_{i=1}^nb_i\frac{\partial }{\partial x_i}+c\right)=\sum_{i=1}^na_i(x_i+v_i) +\sum_{i=1}^nb_i\left(\frac{\partial}{\partial x_i}-2\pi i \alpha \lambda_i\right)+c.
\end{align}
The action of all of $\Psi_{\omega}\{\mathbb{R}\}$ is now determined by \eqref{Fonpoly} and \eqref{A:act}.  Assertion \eqref{preserve} follows.

The symbol map has image
$$
\CC[x_1,\dots,x_n,\xi_1,\dots,\xi_n]=\Gamma(V_\CC,\OO_{V_\CC}) \otimes_{\CC} \Gamma(V_{\CC},\OO_{V_\CC})=\Gamma(T^*V_{\CC},\OO_{T^*V_\CC}).
$$
One has that $\sigma(x_i)=x_i$ and $\sigma\left(\frac{d}{dx_i}\right)=\xi_i.$
Thus $\sigma|_{\mathfrak{x}_i}$ is linear for $1 \leq i \leq 3$ and \eqref{cot:gen} is valid.

Now consider the action of $\Psi_{\mathrm{Id}}^{\mathrm{ind}}:=\mathrm{H}_W/Z_{\mathrm{H}_W} \rtimes \widetilde{\GL}_V$ on $T^*V:=V \times V$ given by 
\begin{align*}
(a,b)w&=(b,-a),\\
    (a,b)((v,\lambda), g)&=((a-v)g,(b+\lambda)g^{-t})
\end{align*}
for $((a,b),((v,\lambda), g)) \in T^*V(R) \times \Psi_{\mathrm{Id}}^{\mathrm{ind}}(R).$  
We have a closed immersion given on points by 
\begin{align*}
\Psi_{\mathrm{Id}}^{\mathrm{s}}(R) &\lto \Psi_{\mathrm{Id}}^{\mathrm{ind}}(R)\\
(\lambda,g) &\longmapsto (((0,\lambda),0), g).
\end{align*}
\quash{\textcolor{red}{Is the above RHS $(0,\lambda) \rtimes g$?}} Using this closed immersion to identify $\Psi_{\mathrm{Id}}^{\mathrm{s}}$ with a subgroup of $\Psi_{\mathrm{Id}}$ we see that the action of $\Psi_{\mathrm{Id}}^{\mathrm{ind}}$ on $T^*V$ does indeed extend the action of $\Psi_{\mathrm{Id}}^{\mathrm{s}}$ on $T^*V.$  This proves \eqref{ind:gp}.  One can now check \eqref{compat} directly.
\end{proof}

\begin{prop} \label{prop:sym}
If $\omega:X \to \mathrm{Sym}_{\langle\,,\,\rangle}^{\vee}$ is the map of  \eqref{omega:2} then Ansatz \ref{Ansatz} is valid with $\Psi_{\omega}^{\mathrm{ind}}=\mathrm{Sp}_W.$
\end{prop}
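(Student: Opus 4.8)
The plan is to mimic the proof of Proposition \ref{prop:id}, replacing $\mathrm{H}_W/Z_{\mathrm{H}_W}\rtimes\widetilde{\GL}_V$ by the full symplectic group. Recall from \eqref{sym2mod} that $\Psi^{\mathrm{s}}_\omega=\mathrm{Sym}_{\langle\,,\,\rangle}\rtimes\GL_{V}$, that (as in the proof of Theorem \ref{thm:mod:group:Weil}) the assignment $(b,g)\mapsto n(b)m(g)$ identifies $\Psi^{\mathrm{s}}_\omega$ with the Siegel parabolic $P_W\leq\mathrm{Sp}_W$, and that Theorem \ref{thm:mod:group:Weil} identifies $\Psi_\omega\{\RR\}$ with the image under the Weil representation of a subgroup of the metaplectic group covering $\langle w,P_W(\RR)\rangle=\mathrm{Sp}_W(\RR)$; in particular the conjugation action of $\Psi_\omega\{\RR\}$ on $\mathcal{D}_{V\CC}$ factors through $\mathrm{Sp}_W(\RR)$. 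For the Lie subalgebra of Ansatz \ref{Ansatz} I would take $\mathfrak{x}:=\mathcal{D}_{V\CC}^{\leq 2}$, the order-$\leq 2$ part of the Weyl algebra $\mathcal{D}_{V\CC}$ for the Bernstein filtration in which every $x_i$ and every $\partial/\partial x_i$ has order $1$ (equivalently, the image of $\mathrm{Lie}\,\mathrm{J}_W$ under the derived Heisenberg--Weil representation). As a vector space $\mathfrak{x}$ is the direct sum of the $\GL_{V}(\RR)$-subrepresentations $\CC$, $\bigoplus_i\CC x_i$, $\bigoplus_i\CC\,\partial/\partial x_i$, $\bigoplus_{i,j}\CC\, x_i\,\partial/\partial x_j$, $\bigoplus_{i\leq j}\CC\, x_ix_j$, and $\bigoplus_{i\leq j}\CC\,\partial^2/\partial x_i\partial x_j$, and I take these six summands as the spaces $\mathfrak{x}_j$; the symbol map for the order filtration is $\RR$-linear and injective on each, sending $x_i\mapsto x_i$, $\partial/\partial x_i\mapsto\xi_i$, $x_ix_j\mapsto x_ix_j$, $x_i\,\partial/\partial x_j\mapsto x_i\xi_j$, and $\partial^2/\partial x_i\partial x_j\mapsto\xi_i\xi_j$. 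Thus $\mathfrak{x}$ is of the form required in Ansatz \ref{Ansatz}, and, since $\mathfrak{x}$ contains $x_1,\dots,x_n$ and $\partial/\partial x_1,\dots,\partial/\partial x_n$, it generates $\mathcal{D}_{V\CC}$ as an associative algebra; this is \eqref{xgen}.

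For \eqref{preserve}, the formulae \eqref{Weil}, \eqref{Fonpoly} and \eqref{A:act} show that conjugation by each of the generators $\omega_{\psi,W}(m(g))$, $\omega_{\psi,W}(n(b))$ and $\mathcal{F}_{V,\psi}=\omega_{\psi,W}(w)$ of $\Psi_\omega\{\RR\}$ carries each $x_i$ and each $\partial/\partial x_i$ into $\mathcal{D}_{V\CC}^{\leq 1}$, so these conjugations preserve the Bernstein filtration and hence preserve $\mathfrak{x}=\mathcal{D}_{V\CC}^{\leq 2}$. Passing to symbols, $\mathfrak{x}'=\Sigma(\mathfrak{x})$ is the span of $1$, the $x_i$, the $\xi_i$, the $x_ix_j$, the $x_i\xi_j$, and the $\xi_i\xi_j$, that is, the space of polynomials of degree $\leq 2$ on $\Spec\CC[x_i,\xi_i]=\mathrm{gr}\,\mathcal{D}_{V\CC}$; since it contains all the $x_i$ and $\xi_i$, it generates $\mathrm{gr}\,\mathcal{D}_{V\CC}$ as a commutative algebra, giving \eqref{cot:gen}.

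For \eqref{ind:gp}, I set $\Psi_\omega^{\mathrm{ia}}:=\mathrm{Sp}_W$, which is an affine algebraic group and hence an ind-group scheme, and let it act on $T^*X^{\mathrm{sm}}=T^*V$ through the identification $T^*V=V\oplus V^\vee=W$ of \S\ref{ssec:exam} and the standard representation of $\mathrm{Sp}_W$ on $W$. Because $T^*V$ is already affine we have $\overline{T^*V}^{\mathrm{aff}}=T^*V$, so this action is automatically defined on the affine closure, and it contains $\Psi_\omega^{\mathrm{s}}$ as the closed subgroup $P_W$. It then remains to check that restricting the $\mathrm{Sp}_W$-action to $P_W$ recovers the action of Lemma \ref{lem:cot:action}: from \eqref{omega:2} one computes $\omega_x^*(b)=-bx$ under the identification $V^\vee\cong V$, so the cotangent action \eqref{action:on:cotan} of $n(b)m(g)$ is linear in $(x,\phi)\in V\oplus V^\vee$, and, once the sign conventions in \eqref{m0} and \eqref{n} are matched, it is the standard right action of $n(b)m(g)\in\mathrm{Sp}_W$ on $W$. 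This gives \eqref{ind:gp}.

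For \eqref{compat}, the $\mathrm{Sp}_W(\RR)$-action on $\mathrm{gr}\,\mathcal{D}_{V\CC}=\CC[x_i,\xi_i]$ by pullback of polynomial functions on $W=T^*V$ is degree-preserving, so it preserves $\mathfrak{x}'$, as does the $\Psi_\omega\{\RR\}$-action by \eqref{preserve}. Using \eqref{Weil}, \eqref{Fonpoly}, \eqref{A:act} and \eqref{alpha}, one checks, generator by generator over the generating set $\{m(g),n(b),w\}$ of $\mathrm{Sp}_W$, that the conjugation action of $\Psi_\omega\{\RR\}$ transported to $\mathfrak{x}'$ by $\Sigma$ is again induced by a linear action of $\mathrm{Sp}_W(\RR)$ on $W$, and that this linear action is isomorphic, as a representation of $\mathrm{Sp}_W(\RR)$, to the standard one governing the symbol action, the intertwiner being essentially the rescaling identifying $\partial/\partial x_i$ with $2\pi i\alpha\,\xi_i$. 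Hence the images of the two homomorphisms $\Psi_\omega^{\mathrm{ia}}(\RR)\to\mathrm{Aut}_\CC(\mathfrak{x}')$ and $\Psi_\omega\{\RR\}\to\mathrm{Aut}_\CC(\mathfrak{x}')$ are conjugate by a fixed $D\in\mathrm{Aut}_\CC(\mathfrak{x}')$, namely the automorphism rescaling the $\xi$-coordinates, which establishes \eqref{compat}. The step that will demand the most care is this last one: the normalizations must be tracked precisely --- the powers of $2\pi i\alpha$ coming from $\psi(t)=e^{2\pi i\alpha t}$, and the sign and pairing conventions built into \eqref{m0}, \eqref{n} and \eqref{w:def} --- so that the ``quantum'' conjugation action and the ``classical'' cotangent action genuinely differ by a single conjugation uniformly over a generating set of $\mathrm{Sp}_W$. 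The remaining verifications are routine given the explicit Weil-representation formulae \eqref{Weil}--\eqref{A:act} of \S\ref{sec:Mod:VS} and the template of Proposition \ref{prop:id}.
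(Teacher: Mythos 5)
Your strategy is the right one and matches the paper's template, but you and the paper make different choices of the Lie algebra $\mathfrak{x}$, and that difference matters. The paper keeps exactly the same $\mathfrak{x}=\mathrm{Lie}\,\mathrm{H}_W=\langle x_1,\dots,x_n,\partial/\partial x_1,\dots,\partial/\partial x_n,1\rangle$ used for Proposition \ref{prop:id}, so that \eqref{xgen} and \eqref{cot:gen} are already verified; the only new work for \eqref{preserve} is the single displayed formula for the conjugation action of $n(J)$, $J\in\mathrm{Sym}_{\langle\,,\,\rangle}(\RR)$, which sends $\partial/\partial x_i$ to $\partial/\partial x_i-\pi i\alpha\,\tfrac{\partial}{\partial x_i}(x^tJx)$ and hence visibly preserves the degree-$\leq 1$ span; \eqref{ind:gp} and \eqref{compat} then follow by identifying $\Psi_\omega^{\mathrm{s}}$ with the Siegel parabolic of $\mathrm{Sp}_W$ acting on $W=T^*V$, exactly as you do. Your enlargement of $\mathfrak{x}$ to the Bernstein-degree-$\leq 2$ part $\mathcal{D}_{V\CC}^{\leq 2}$ is legitimate (it is a Lie subalgebra, your six summands are $\GL_V(\RR)$-stable, and the order-filtration symbol is linear on each), but it buys you nothing over the Heisenberg algebra and forces you to redo \eqref{xgen}--\eqref{cot:gen} and, more seriously, to handle operator-ordering constants in \eqref{compat}.

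That last point is where your proposal has a concrete flaw. On the quadratic part, conjugation by the Fourier transform produces additive constants coming from non-commutativity: by \eqref{Fonpoly},
\begin{align*}
\mathcal{F}_{V,\psi}\circ x_i\tfrac{\partial}{\partial x_j}\circ\mathcal{F}_{V,\psi}^{-1}
=\tfrac{1}{2\pi i\alpha}\tfrac{\partial}{\partial x_i}\circ(-2\pi i\alpha\, x_j)
=-\tfrac{\partial}{\partial x_i}x_j=-x_j\tfrac{\partial}{\partial x_i}-\delta_{ij},
\end{align*}
so $\Sigma$ of the conjugate is $-x_j\xi_i-\delta_{ij}$, whereas the classical $\mathrm{Sp}_W$-action on $\mathfrak{x}'$ sends $x_i\xi_j\mapsto -x_j\xi_i$ with no constant. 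A rescaling of the $\xi$-coordinates is degree-preserving and annihilates no constants, so the intertwiner you propose cannot conjugate the transported action to the classical one on the quadratic part. The statement \eqref{compat} is still true for your $\mathfrak{x}'$, but the conjugating automorphism must also incorporate the Weyl-symmetrization shift (e.g.\ $x_i\xi_i\mapsto x_i\xi_i+\tfrac12$, corresponding to replacing $x_i\partial_i$ by $\tfrac12(x_i\partial_i+\partial_ix_i)$), and this has to be checked uniformly over a generating set of $\mathrm{Sp}_W(\RR)$. If you instead shrink $\mathfrak{x}$ back to the Heisenberg algebra as the paper does, these ordering issues disappear entirely and \eqref{compat} reduces to the clean statement that Weil-representation conjugation realizes the standard action of $\mathrm{Sp}_W$ on $W\oplus\CC$ up to the rescaling by powers of $2\pi i\alpha$.
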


\begin{proof} We have already verified \eqref{xgen} and \eqref{cot:gen} in the proof of Proposition \ref{prop:id}.  Due to our choice of pairing we may identify $J \in \mathrm{Sym}_{\langle\,,\,\rangle}(R)$ with a symmetric $n \times n$ matrix.  Then writing $a=(a_1,\dots,a_n)$ the action of $J$ on $\mathfrak{x}$ is given by
\begin{align} \label{J:act}
J.\left( \sum_{i=1}^na_ix_i+\sum_{i=1}^nb_i\frac{\partial}{\partial x_i}+c\right)=\left( \sum_{i=1}^na_ix_i+\sum_{i=1}^nb_i\left(\frac{\partial}{\partial x_i}+\pi i\alpha \frac{\partial}{ \partial x_i}\left(x^tJx \right)\right)+c\right). 
\end{align}
This implies \eqref{preserve}.

We set $\Psi_{\omega}^{\mathrm{ind}}:=\mathrm{Sp}_{W}$ and identify $\Psi_{\omega}^{\mathrm{s}}$ with the standard Siegel parabolic subgroup of $\Psi_{\omega}^{\mathrm{ind}}$ in the usual manner.  Then a conjugate of the standard representation of $\Psi_{\omega}^{\mathrm{ind}}$ on $W=V \oplus V$ extends the action of $\Psi_{\omega}^{\mathrm{s}},$ yielding \eqref{ind:gp}.  One can now use \eqref{J:act} to check \eqref{compat} directly.
\end{proof}
\quash{
As already discussed, the modulation group on $L^2(V)$ is the metaplectic group, acting via the Weil representation. Explicitly, it is generated by the ``geometric" action
    $\left(\begin{matrix}
        A & 0\\
        0 & {}^tA^{-1}
    \end{matrix}\right) \in \Sp(V \oplus V^{\vee})$
which sends
    $f \mapsto \{x \mapsto |\det A|^{1/2} f({}^tAx)\}$;
the ``modulation" action  $\left(\begin{matrix}
        1 & B\\
        0 & 1
    \end{matrix}\right) \in \Sp(V \oplus V^{\vee})$, 
which has $B = {}^tB$, and sends  $f \mapsto \left\{x \mapsto \psi\left(\frac{{}^txB x}{2}\right)f(x)\right\}$;
and the Fourier transform 
$\left(\left(\begin{matrix}
        0 & 1\\
        -1 & 0
    \end{matrix}\right)f \right)(x) = \gamma \hat{f}(x)$,
for an 8-th root of unity $\gamma$.

The differential operators $\mathcal{D}(V)$ form the Weyl algebra, which we will consider to be defined over $\mathbb{C}$ by restriction of scalars. Recall that the Weyl algebra has a faithful (infinite dimensional) representation on the polynomial algebra $\mathbb{C}[x_1, \ldots x_n]$, generated by multiplication by $x_i$, and differentiation $\frac{d}{dx_i}$ subject to the constraints $[\xi_i, x_i] =\delta_{ij}$. We may directly compute the ``conjugation" action of the metaplectic group on the operators $x_i$ and $\xi_i : = \frac{d}{dx_i}$. Let $\psi(r) = e^{2\pi i r}$. We find that the action factors through $\Sp_{2n}$, and is given by:

\begin{equation}\left(\begin{matrix}
        A & 0\\
        0 & {}^tA^{-1}
    \end{matrix}\right):\,\,\, x_i \mapsto \sum_{j=1}^n (A^{-1})_{ji} x_i;\, \,\,\,\, \xi_i \mapsto \sum_{j=1}^n (A^{-1})_{ji} \xi_i;
\end{equation}

\begin{equation}
 \left(\begin{matrix}
        1 & B\\
        0 & 1
    \end{matrix}\right) : \,\,\, x_i \mapsto x_i; \,\,\,\,\, \xi_i \mapsto \xi_i - 2\pi i \sum_{j =1}^n B_{ij} x_j;
\end{equation} 

\begin{equation}
    \left(\begin{matrix}
        0 & 1\\
        -1 & 0
    \end{matrix}\right):\,\,\, x_i \mapsto \frac{i}{2\pi}\xi_i;\,\,\,\,\, \xi_i \mapsto 2\pi i x_i.
\end{equation}

Thus the action of $\Sp_{2n}$ is given by the symplectic linear actions on the vector space spanned by $x_i$ and $\xi_i$; this action agrees with the standard action up to some nontrivial (complex) normalization (and permutation) of $x_i$ and $\xi_i$. Finally, we observe that the Lie Algebra generated by the $x_i$ and $\xi_i$ is precisely the Lie Algebra of the $n$-dimensional Heisenberg group.

We now examine the Poisson algebra of functions on $T^*V = V \oplus V^{\vee}$. The algebra of regular functions is $F[x_1, \ldots, x_n; \xi_1, \ldots, \xi_n]$. The Poisson bracket is given by $\{\xi_i, x_i\} = \delta_{ij}$. We note that the principal symbol map sends the differential operators we called $x_i$ and $\xi_i$ above to the functions we are presently calling $x_i$ and $\xi_i$.  Letting $\mathfrak{x}$ and $\mathfrak{x}_A$ be the Lie algebra spanned by $1$, $x_i$ and $\xi_i$ in both cases (which is the Lie algebra of the Heisenberg group),.}

\begin{rem} Kontsevich and Belov-Kanel conjecture that the ind-group of all automorphisms of the Weyl Algebra and the group of Poisson-automorphisms of the commutative algebra $\RR[x_1, \ldots, x_n; \xi_1, \ldots , \xi_n]$ are isomorphic  \cite{BelovKanelKontsevich2005}. Above, we have only considered the relationship between certain subgroups of these automorphism groups.  
 It is possible that a generalization of the Kontsevich Belov-Kanel conjecture  holds for the ind-group of automorphisms of $\mathcal{D}_X$ and the Poisson-automorphisms of $\OO_{T^*X^{\mathrm{sm}}}(T^*X^{\textrm{sm}})$ for more general schemes  (e.g. reductive monoids and horospherical spaces). 
\end{rem}

\subsubsection{Quadric Cones}  \label{sssec:quadric}
We now prove Conjecture \ref{special:case:ansatz} for the cones $C_n$ of \S \ref{sec:Mod:cones}.  

For comparison with \cite{Kobayashi:Mano} it is helpful to change the quadratic form defining our orthogonal group.  We therefore use different notation from \S \ref{sec:Mod:cones}.  Thus we write
\begin{align}
    J_{p,q}:=\begin{psmatrix} I_p & \\ &-I_q \end{psmatrix}
\end{align}
and let $\mathrm{O}_{p,q}$ be the associated orthogonal group, etc.  
We write 
\begin{align*}
\epsilon_i=\begin{cases} 1 & \textrm{ if } 1 \leq i \leq p,\\ -1 & \textrm{ if } p+1 \leq i \leq p+q.
\end{cases}
\end{align*} 
We assume $p+q$ is even. Let $\mathcal{Q}_{p,q}$ be the quadratic form on $V_{p,q}:=\GG_a^{p+q}$ associated to $J_{p,q}$ and let $C_{p,q} \subset V_{p,q}$ be the vanishing locus of $\mathcal{Q}_{p,q}.$ 

We define a family of differential operators using the identification
\begin{align}
    \Gamma(C_{p,q},\OO_{C_{p,q}})=\RR[x_1,\dots,x_{p+q}]/\mathcal{Q}_{p,q}.
\end{align}
Let
$$
\Box := \sum_{i=1}^n \epsilon_i \frac{\partial^2}{\partial x_i^2}
$$
be the Laplace-Beltrami operator. 

Consider the following operators:
\begin{enumerate}
    \item \label{xi1} $x_i,$ $1 \leq i \leq p+q$
    \item \label{xij} $
     X_{ij} := \epsilon_i\epsilon_j x_i \frac{\partial}{\partial x_j} - x_j\frac{\partial}{\partial x_i}$, $1 \leq i <j \leq p+q$
     \item \label{E} $E= \sum_i x_i \frac{\partial}{\partial x_i}$
     \item \label{Pi} $
P_i : = \epsilon_i x_i \Box - (2E + p+q -2) \frac{\partial}{\partial x_i},$ $1 \leq i \leq p+q$
\end{enumerate}
These all define elements of $\mathcal{D}_C$ \cite[\S 1.1]{Kobayashi:Mano}.

Let 
$$
\mathfrak{x}=\mathfrak{x}_2 \oplus \mathfrak{x}_1 \oplus \mathfrak{x}_0
$$
where 
\begin{align} \label{xi2} \begin{split}
    \mathfrak{x}_2:&=\langle P_1,\dots,P_{p+q}\rangle,\\
    \mathfrak{x}_1:&=\langle E+
\tfrac{p+q-2}{2},\{X_{ij}:1 \leq i <j \leq p+q\} \rangle,\\
    \mathfrak{x}_0:&=\langle x_1,\dots,x_{p+q} \rangle \end{split}
\end{align}
and the brackets denote the $\CC$-span.  The commutators of these elements in $\mathcal{D}_C$ are computed in \cite[\S 2.4]{Kobayashi:Mano}.  Let $\mathcal{U}(\mathfrak{g})$ be the universal enveloping algebra of the complex Lie algebra $\mathfrak{g}.$

\begin{prop} \label{prop:gen}
The space $\mathfrak{x}$ is a Lie subalgebra of $\mathcal{D}_{C\CC}$ under the bracket and is isomorphic to
$(\mathfrak{o}_{p+1,q+1})_{\CC}.$  
The induced map
$\mathcal{U}((\mathfrak{o}_{p+1,q+1})_{\CC}) \to \mathcal{D}_C$
is surjective.  
\end{prop}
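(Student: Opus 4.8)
The plan is to verify the Lie algebra structure and the isomorphism using the explicit commutator formulas of Kobayashi--Mano, and then to derive surjectivity of the enveloping algebra map from the generation statement for $\mathcal{D}_C$ together with a codimension argument. First I would observe that $\mathfrak{o}_{p+1,q+1}$ has a natural $3$-step grading $\mathfrak{g}_{-1}\oplus\mathfrak{g}_0\oplus\mathfrak{g}_1$ coming from the maximal parabolic stabilizing an isotropic line (equivalently, the grading element is the semisimple element whose eigenspaces are the root spaces appearing in the Klingen--type parabolic $Q$ used throughout \S\ref{sec:Mod:cones}). Here $\mathfrak{g}_0\cong \mathfrak{o}_{p,q}\oplus \RR$ (the Levi plus the grading element), and $\mathfrak{g}_{\pm1}$ are each isomorphic to $V_{p,q}$ as $\mathfrak{o}_{p,q}$-modules. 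The claim is that $\mathfrak{x}_0$, $\mathfrak{x}_1$, $\mathfrak{x}_2$ realize $\mathfrak{g}_{-1}$, $\mathfrak{g}_0$, $\mathfrak{g}_1$ respectively: the $x_i$ span an abelian subalgebra isomorphic to $V_{p,q}$, the $X_{ij}$ span a copy of $\mathfrak{o}_{p,q}$ with $E+\tfrac{p+q-2}{2}$ the grading element, and the $P_i$ span the opposite abelian piece. I would then cite \cite[\S 2.4]{Kobayashi:Mano} for the structure constants: $[x_i,x_j]=0$, $[X_{ij},x_k]$ is the $\mathfrak{o}_{p,q}$-action on $V_{p,q}$, $[E,x_i]=x_i$ and $[E,P_i]=-P_i$ (so $\mathfrak{x}_2$ is the $(-1)$-eigenspace relative to $\mathfrak{x}_1$'s grading element), $[P_i,P_j]=0$, and crucially $[P_i,x_j]$ lies in $\mathfrak{x}_1$ and reproduces the $\mathfrak{g}_{-1}\times\mathfrak{g}_1\to\mathfrak{g}_0$ pairing of $\mathfrak{o}_{p+1,q+1}$ (this is where the normalizing constant $\tfrac{p+q-2}{2}$ and the precise coefficient $-(2E+p+q-2)$ in $P_i$ are forced). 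Matching these structure constants with those of $\mathfrak{o}_{p+1,q+1}$ in its graded presentation — which is a finite check, the nonabelian brackets being essentially the defining relations of the orthogonal Lie algebra — establishes that $\mathfrak{x}$ is closed under the bracket and that the obvious linear map $\mathfrak{x}\to (\mathfrak{o}_{p+1,q+1})_\CC$ is a Lie algebra isomorphism.

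For surjectivity of $\mathcal{U}((\mathfrak{o}_{p+1,q+1})_\CC)\to \mathcal{D}_C$, I would argue as follows. Let $\mathcal{A}\subseteq \mathcal{D}_C$ be the image; it is the $\CC$-subalgebra generated by $\mathfrak{x}$, hence generated by the operators in \eqref{xi1}--\eqref{Pi}. Passing to symbols, $\sigma(\mathcal{A})$ is the $\CC$-subalgebra of $\mathrm{gr}\,\mathcal{D}_{C\CC}\cong \OO_{T^*C^\circ}(T^*C^\circ)$ (using Proposition~\ref{minNilpOrbitIso}, or more elementarily the description \eqref{id}--\eqref{Ga:act}) generated by the symbols of $x_i$, $X_{ij}$, $E$, $P_i$. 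These symbols are, respectively, the coordinate functions $x_i$ on the base, the functions $\epsilon_i\epsilon_j x_i\xi_j - x_j\xi_i$, $\sum x_i\xi_i$, and $\epsilon_i x_i\sum\epsilon_j\xi_j^2$ on $T^*C^\circ$ (the lower-order terms drop out on passing to the top-degree part). By Proposition~\ref{minNilpOrbitIso} the affine closure $\overline{T^*C^\circ}^{\mathrm{aff}}$ is the closure $\overline{\mathbb{O}}_{p+1,q+1}$ of the minimal nilpotent orbit in $\mathfrak{o}_{p+1,q+1}$, and under the explicit map $\overline{a}$ of \eqref{action:map}--\eqref{invariantscone} these symbols are exactly (the restrictions to $\overline{\mathbb{O}}_{p+1,q+1}$ of) the linear coordinate functions on $\mathfrak{o}_{p+1,q+1}$ — indeed each entry of the matrix \eqref{invariantscone} is visibly a polynomial in $x_i,\xi_j$ of the stated form. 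Since the linear functions generate $\OO(\mathfrak{o}_{p+1,q+1})$ as an algebra, their restrictions generate $\OO(\overline{\mathbb{O}}_{p+1,q+1})=\OO(\overline{T^*C^\circ}^{\mathrm{aff}})=\mathrm{gr}\,\mathcal{D}_{C\CC}$ (using normality and the codimension~$\geq 2$ statement from Lemma~\ref{lem:orbit:dim} to identify global functions on $T^*C^\circ$ with those on the affine closure). Therefore $\sigma(\mathcal{A})=\mathrm{gr}\,\mathcal{D}_{C\CC}$. A standard filtered-algebra argument — if a subalgebra of a filtered algebra surjects onto the associated graded, it is everything — now gives $\mathcal{A}=\mathcal{D}_C$, completing the proof.

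The main obstacle I anticipate is not conceptual but computational: verifying that the bracket $[P_i,x_j]$ actually lands in $\mathfrak{x}_1$ with exactly the right coefficients to match $\mathfrak{o}_{p+1,q+1}$, and dually that $[P_i,P_j]=0$, requires genuinely using the relation $\Box$-acts-through-$\mathcal{Q}_{p,q}=0$ on $\Gamma(C_{p,q},\OO_{C_{p,q}})$, i.e. that we are computing in $\mathcal{D}_C$ and not in the Weyl algebra of the ambient $V_{p,q}$. This is precisely the content of \cite[\S 2.4]{Kobayashi:Mano} (where these operators are introduced to build the minimal representation of $\mathrm{O}_{p+1,q+1}$), so I would lean on that reference for the bracket computation rather than reproduce it. The only other point needing care is the dimension/codimension bookkeeping in the symbol argument: one must know $\dim \mathfrak{o}_{p+1,q+1}$-many generators is consistent with $\dim \overline{\mathbb{O}}_{p+1,q+1} = \dim T^*C^\circ = 2(p+q-1)$, and that the embedding of $\overline{\mathbb{O}}_{p+1,q+1}$ in $\mathfrak{o}_{p+1,q+1}$ is via restriction of linear functions — both of which are supplied by Proposition~\ref{minNilpOrbitIso} and Lemma~\ref{lem:orbit:dim}.
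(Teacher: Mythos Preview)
Your approach is correct but differs substantially from the paper's. The paper's proof is a two-line citation: it identifies the embedding $\mathfrak{x}\subset\mathcal{D}_{C\CC}$ with a construction of Goncharov (after observing that all nondegenerate complex quadratic forms are equivalent), and then invokes the main result of Levasseur--Smith--Stafford on differential operators on minimal nilpotent orbit closures to obtain both the Lie-algebra isomorphism and the surjectivity of $\mathcal{U}((\mathfrak{o}_{p+1,q+1})_\CC)\to\mathcal{D}_C$ at once. Your argument is more self-contained: you verify the bracket relations directly (via Kobayashi--Mano) and prove surjectivity by a symbol-and-filtered-algebra argument through Proposition~\ref{minNilpOrbitIso}. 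This works, and in fact your surjectivity argument is exactly the computation the paper carries out later in the proof of Proposition~\ref{prop:cone:Ansatz} to verify condition~\eqref{cot:gen}; so you are effectively front-loading that calculation, which is a reasonable reorganization.

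One correction: your symbol of $P_i$ is wrong. You write $\sigma(P_i)=\epsilon_i x_i\sum_j\epsilon_j\xi_j^2$, claiming the remaining terms are lower order. But $E\,\partial/\partial x_i$ has filtration degree~$2$, the same as $x_i\Box$, so its symbol does not drop out. The correct symbol is
\[
\sigma(P_i)=\epsilon_i x_i\,\mathcal{Q}_{p,q}(\xi)-2\Bigl(\sum_j x_j\xi_j\Bigr)\xi_i,
\]
as the paper records in \eqref{symb1}. This matters: the expression $\epsilon_i x_i\,\mathcal{Q}_{p,q}(\xi)$ by itself is not invariant under the $\GG_a$-action \eqref{Ga:act} and so does not descend to a function on $T^*C^\circ$, whereas the corrected symbol does. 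With this fix your identification of the symbols with the matrix entries in \eqref{invariantscone} goes through, and the rest of the argument is sound. (Minor side remark: the identity $\OO(T^*C^\circ)=\OO(\overline{T^*C^\circ}^{\mathrm{aff}})$ is tautological from the definition of affine closure; what you actually need from Lemma~\ref{lem:orbit:dim} and Proposition~\ref{minNilpOrbitIso} is only that this ring equals $\OO(\overline{\mathbb{O}}_{p+1,q+1})$, hence is generated by linear coordinates on $\mathfrak{o}_{p+1,q+1}$.)
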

\begin{proof} 
We observe that all nondegenerate quadratic forms on a complex vector space are equivalent. Thus the embedding $\mathfrak{x} \subset \mathcal{D}_{C\CC}$ is equivalent to the embedding constructed in \cite[Example]{Goncharov1982Weil}.  
With this in mind the main result of \cite{LSSMinimalNilpotent} implies the proposition.
\end{proof}

As in \S \ref{ssec:mod:grp}, $\mathcal{S}(C_{p,q}(\RR))=L^2(C_{p,q}^\circ(\RR))^{\mathrm{sm}},$ where the superscript indicates vectors that are smooth under the action of $\mathrm{O}_{p+1,q+1}(\RR).$  

\begin{lem} \label{lem:compat}
    An $\mathrm{O}_{p+1,q+1}(\CC)$-conjugate of the action of $\mathfrak{o}_{p+1,q+1}$ on $\mathcal{S}(C_{p,q}(\RR))$ via $\mathcal{D}_C$  coincides with the infinitesimal action of the minimal representation. 
\end{lem}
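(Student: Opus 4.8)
The plan is to compare the $\mathfrak o_{p+1,q+1}(\CC)$-action furnished by the embedding $\mathfrak x\hookrightarrow\mathcal D_{C\CC}$ of Proposition \ref{prop:gen} with the infinitesimal minimal representation $d\tau$, using the explicit realization of the minimal representation on functions on the cone due to Kobayashi and Mano, and then to account for the resulting normalization by a single torus conjugation in $\mathrm O_{p+1,q+1}(\CC)$.

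First I would record that $\mathcal S(C_{p,q}(\RR))=L^2(C_{p,q}^\circ(\RR))^{\mathrm{sm}}$ is, by \eqref{cone:Sch}, the space of smooth vectors for the minimal representation $\tau$ of $\mathrm O_{p+1,q+1}(\RR)$, so $\mathfrak o_{p+1,q+1}$ acts on it via $d\tau$, and this action is through differential operators lying in $\mathcal D_C$. Concretely, differentiating the formula \eqref{eqn:GK-klingen-action} along $N_n$ produces multiplication by linear coordinates, along the Levi $\GG_m\times\mathrm O_{V_n}$ it produces the Euler-type operator $E+\tfrac{p+q-2}{2}$ (using $n=(p+q)/2$, so that $n-1=\tfrac{p+q-2}{2}$) and the rotation operators $X_{ij}$, and along $N_n^{\mathrm{op}}$ --- via $\mathcal F_C=\widetilde\tau(w_0)$ and the explicit Fourier/Bessel formula for $\mathcal F_C$ --- it produces the second-order operators $P_j$. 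These are exactly the generators spanning $\mathfrak x_0$, $\mathfrak x_1$, $\mathfrak x_2$, and this identification is precisely the content of \cite{Kobayashi:Mano}: the complex span $\mathfrak x$ of $\{x_j\}$, $\{X_{ij}\}$, $E+\tfrac{p+q-2}{2}$, $\{P_j\}$ is a copy of $\mathfrak o_{p+1,q+1}(\CC)$ whose integrated action on $C^\infty(C_{p,q}^\circ(\RR))$ is the minimal representation. Thus $d\tau$ and the $\mathcal D_C$-action of Proposition \ref{prop:gen} have the same image in $\mathcal D_C$ and differ only in the normalization of the graded pieces.

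Next I would pin down that normalization. Both actions are graded by the semisimple element $E+\tfrac{p+q-2}{2}$, with $\mathfrak x_0=\langle x_j\rangle$ and $\mathfrak x_2=\langle P_j\rangle$ the $(\pm 1)$-eigenspaces and $\mathfrak x_1$ the $0$-eigenspace; under the isomorphism $\mathfrak x\cong\mathfrak o_{p+1,q+1}(\CC)$ these correspond to $\mathfrak n_n$, $\mathfrak n_n^{\mathrm{op}}$, and the Siegel Levi. From \eqref{eqn:GK-klingen-action} and \eqref{2:pair} one has $\tau(n(v))f(c)=\psi(\langle v,c\rangle_n)f(c)$, so differentiating shows that $d\tau$ carries the element of $\mathfrak n_n$ attached to a coordinate direction to $2\pi i\alpha$ (with $\alpha$ as in \eqref{alpha}) times multiplication by the corresponding linear function. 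Since $d\tau$ and the $\mathcal D_C$-action are Lie algebra homomorphisms that agree on the Levi, compatibility with the brackets $[\mathfrak n_n,\mathfrak n_n^{\mathrm{op}}]\subseteq\mathfrak l_n$ forces the scalars on all coordinate directions to be a single $c\in\CC^\times$ and the corresponding factor on $\mathfrak n_n^{\mathrm{op}}$ to be $c^{-1}$; the character $\chi$ and the factor $|a|^{n-1}$ in \eqref{eqn:GK-klingen-action} contribute only to the group-level normalization and to the already-present shift $\tfrac{p+q-2}{2}$, and play no role infinitesimally. Conjugation by $g:=m(a,I_{p+q})=\mathrm{diag}(a,I_{p+q},a^{-1})\in\mathrm O_{p+1,q+1}(\CC)$ fixes the Siegel Levi pointwise and scales $\mathfrak n_n$ by $a$ and $\mathfrak n_n^{\mathrm{op}}$ by $a^{-1}$, so taking $a=c$ yields $d\tau=\rho_{\mathcal D}\circ\mathrm{Ad}(g)$, where $\rho_{\mathcal D}$ is the action via $\mathcal D_C$; this is the assertion of the lemma. (If one works with the form $J_{p,q}$ of this subsection in place of the form $J_n$ of \S\ref{sec:Mod:cones}, one first conjugates by the base-change element of $\mathrm O_{p+q}(\CC)\subset\mathrm O_{p+1,q+1}(\CC)$ identifying the two forms, which is harmless.)

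The main obstacle is the bookkeeping of conventions: reconciling the additive character, Haar-measure normalization, and sign conventions of \cite{Kobayashi:Mano} with those of \cite{GurK:Cone} used to construct $\tau$ in \S\ref{ssec:min}, and checking that the discrepancy between the two $\mathfrak o_{p+1,q+1}(\CC)$-actions is genuinely a single torus scaling and not an outer automorphism. Once it is granted that the operators $\{x_j\},\{X_{ij}\},E+\tfrac{p+q-2}{2},\{P_j\}$ integrate to the minimal representation with space of smooth vectors $\mathcal S(C_{p,q}(\RR))$, the comparison reduces to the Siegel parabolic, where both sides are elementary closed formulas and the constant $c$ is read off directly from the action of $\mathfrak n_n$.
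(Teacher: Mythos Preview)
Your argument is correct and rests on the same input as the paper's proof: the Kobayashi--Mano realization of the infinitesimal minimal representation by the operators $x_i$, $X_{ij}$, $E+\tfrac{p+q-2}{2}$, $P_i$. The paper's proof is two sentences: it cites \cite[(2.3.14), (2.3.19)]{Kobayashi:Mano} to say that $x_i$ and $P_i$ arise (up to an $\mathrm{O}_{p,q}(\CC)$-conjugate) from the infinitesimal action, and then invokes \cite[Lemma 2.4.8]{Kobayashi:Mano} to say that these generate the Lie algebra.

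Where you differ is in the second half: rather than simply citing the formulas, you compute $d\tau$ directly on $\widetilde Q_n$ from \eqref{eqn:GK-klingen-action} and then argue structurally (irreducibility of $\mathfrak n_n$ as a Levi module plus bracket compatibility $[\mathfrak n_n,\mathfrak n_n^{\mathrm{op}}]\subset\mathfrak l_n$) that the discrepancy is a single scalar, absorbed by the torus element $m(a,I_{p+q})$. This is more explicit than what the lemma demands and sharper than what the paper asserts. One small point: your Schur argument presupposes that the two homomorphisms agree on the Levi, which you state but do not fully verify; it holds because both send $\mathfrak o_{p,q}$ to the differential of the standard action (the $X_{ij}$) and the grading element to $E+\tfrac{p+q-2}{2}$, but strictly speaking this matching of bases is itself a citation to \cite{Kobayashi:Mano}, so you have not really avoided the paper's appeal to the explicit formulas. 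Your route buys an identification of the conjugating element; the paper's buys brevity.
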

\begin{proof}
By \cite[(2.3.15), (2.3.19)]{Kobayashi:Mano} the action of $x_i$ and $P_i$ can be realized using a $\mathrm{O}_{p,q}(\CC)$-conjugate of the infinitesimal action.  Since these operators generate the Lie algebra by \cite[Lemma 2.4.8]{Kobayashi:Mano} we deduce the lemma.
\end{proof}

\begin{prop}\label{QuadDiffOp}
    Ansatz \ref{ans:DX} and \ref{ans:alg} are valid for $\mathcal{D}_{C}.$
    One has that
    \begin{align*}
    \mathcal{F}_{C_{p,q}} \circ E \circ \mathcal{F}_{C_{p,q}}&=-(E+p+q-2), & \mathcal{F}_{C_{p,q}} \circ 4x_i \circ \mathcal{F}_{C_{p,q}}&=P_i,\\ \mathcal{F}_{C_{p,q}} \circ P_i \circ \mathcal{F}_{C_{p,q}}&=4x_i, &\mathcal{F}_{C_{p,q}} \circ X_{ij} \circ \mathcal{F}_{C_{p,q}}&=X_{ij}.
    \end{align*}
\end{prop}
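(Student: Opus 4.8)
The plan is to derive everything from Proposition~\ref{prop:gen}, Lemma~\ref{lem:compat}, and the fact that $\mathcal{F}_{C_{p,q},\psi}$ is, by construction, the image under the minimal representation $\widetilde{\tau}$ of $\mathrm{O}_{p+1,q+1}(\RR)$ of the Weyl involution $w_0$ interchanging the two isotropic lines of $V_{p+1,q+1}$; in particular $\mathcal{F}_{C_{p,q},\psi}^{-1}=\mathcal{F}_{C_{p,q},\psi}$, since $w_0^2=1$ and $\widetilde{\tau}$ is an honest representation.

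First I would establish Ansatz~\ref{ans:DX}. By Proposition~\ref{prop:gen} the natural map $\mathcal{U}((\mathfrak{o}_{p+1,q+1})_{\CC})\to\mathcal{D}_C$ is surjective through the embedding $\mathfrak{x}\hookrightarrow\mathcal{D}_C$, so it suffices to check that each $X\in\mathfrak{x}$ preserves $\mathcal{S}(C_{p,q}(\RR))$ when acting on $C^\infty(C_{p,q}^{\mathrm{sm}}(\RR))$ via $\mathcal{D}_C$. By Lemma~\ref{lem:compat} there is $h\in\mathrm{O}_{p+1,q+1}(\CC)$ so that this action of $X$ equals $d\widetilde{\tau}(\mathrm{Ad}(h)X)$. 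Now $\mathcal{S}(C_{p,q}(\RR))=L^2(C_{p,q}^\circ(\RR))^{\mathrm{sm}}$ is by definition the space of smooth vectors of $\widetilde{\tau}$, hence stable under the infinitesimal action $d\widetilde{\tau}$ of every element of $(\mathfrak{o}_{p+1,q+1})_{\CC}$, and $\mathrm{Ad}(h)X$ again lies in $(\mathfrak{o}_{p+1,q+1})_{\CC}$; so $X$ preserves $\mathcal{S}(C_{p,q}(\RR))$, and therefore so does the algebra it generates. The only subtlety here is that $\mathrm{O}_{p+1,q+1}(\CC)$ does not act on the real Schwartz space, but conjugation by a complex group element nonetheless carries each Lie-algebra operator to another Lie-algebra operator, which is all that is used.

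For Ansatz~\ref{ans:alg}, write $\rho_{\mathcal{D}}(X)=d\widetilde{\tau}(\mathrm{Ad}(h)X)$ for the action of $X\in\mathfrak{x}$ via $\mathcal{D}_C$ as above. Then for $X\in\mathfrak{x}$,
\begin{align*}
\mathcal{F}_{C_{p,q},\psi}\circ\rho_{\mathcal{D}}(X)\circ\mathcal{F}_{C_{p,q},\psi}
&=\widetilde{\tau}(w_0)\,d\widetilde{\tau}(\mathrm{Ad}(h)X)\,\widetilde{\tau}(w_0)^{-1}\\
&=d\widetilde{\tau}(\mathrm{Ad}(w_0h)X)=\rho_{\mathcal{D}}(\mathrm{Ad}(h^{-1}w_0h)X),
\end{align*}
and $\mathrm{Ad}(h^{-1}w_0h)X\in(\mathfrak{o}_{p+1,q+1})_{\CC}=\mathfrak{x}$. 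Hence $\mathcal{F}_{C_{p,q},\psi}(\theta)$ lies in the image of $\mathcal{D}_C$ for $\theta\in\mathfrak{x}$; since $\theta\mapsto\mathcal{F}_{C_{p,q},\psi}\circ\theta\circ\mathcal{F}_{C_{p,q},\psi}^{-1}$ is multiplicative and $\mathfrak{x}$ generates $\mathcal{D}_C$ (Proposition~\ref{prop:gen}), the same holds for all $\theta\in\mathcal{D}_C$, proving Ansatz~\ref{ans:alg}.

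It remains to verify the four displayed identities. Under the identification $\mathfrak{x}\cong(\mathfrak{o}_{p+1,q+1})_{\CC}$ of Proposition~\ref{prop:gen}, the subspaces $\mathfrak{x}_0=\langle x_i\rangle$ and $\mathfrak{x}_2=\langle P_i\rangle$ are the two opposite (abelian) nilradicals of the parabolic stabilizing an isotropic line, $E+\tfrac{p+q-2}{2}$ spans the center of the corresponding Levi, and $\{X_{ij}\}$ spans its semisimple part $\cong\mathfrak{o}_{p,q}$; the Weyl involution $w_0$ normalizes this Levi, acts trivially on the $\mathfrak{o}_{p,q}$-factor, negates the center, and interchanges the two nilradicals. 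Since a scalar is conjugation-invariant, this yields at once $\mathcal{F}_{C_{p,q},\psi}\circ X_{ij}\circ\mathcal{F}_{C_{p,q},\psi}=X_{ij}$ and $\mathcal{F}_{C_{p,q},\psi}\circ(E+\tfrac{p+q-2}{2})\circ\mathcal{F}_{C_{p,q},\psi}=-(E+\tfrac{p+q-2}{2})$, i.e.\ $\mathcal{F}_{C_{p,q},\psi}\circ E\circ\mathcal{F}_{C_{p,q},\psi}=-(E+p+q-2)$; and it shows $\mathcal{F}_{C_{p,q},\psi}(x_i)$ and $\mathcal{F}_{C_{p,q},\psi}(P_i)$ are reciprocal scalar multiples of $P_i$ and $x_i$, compatibly with $\mathcal{F}_{C_{p,q},\psi}^2=\mathrm{id}$. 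The one remaining point is that this scalar equals $4$; I expect this normalization bookkeeping to be the main obstacle, and I would settle it either by matching with the explicit formulas of \cite[\S 2.3--2.4]{Kobayashi:Mano} for $d\varpi$ of these generators and the conjugation by the unitary inversion operator, or by evaluating both sides of $\mathcal{F}_{C_{p,q},\psi}\circ P_i\circ\mathcal{F}_{C_{p,q},\psi}=4x_i$ on a single convenient vector using the Bessel-kernel description of $\mathcal{F}_{C_{p,q},\psi}$ in \cite[(1.5)]{GurK:Cone}.
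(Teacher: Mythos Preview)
Your argument for Ansatz~\ref{ans:DX} and Ansatz~\ref{ans:alg} is essentially the paper's proof, just spelled out: surjectivity from Proposition~\ref{prop:gen} plus Lemma~\ref{lem:compat} reduces both to the stability of smooth vectors under the infinitesimal action and under conjugation by a group element. Good.

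For the four displayed identities the paper takes a shorter route: it simply cites \cite[Theorem~2.5.2(3)]{Kobayashi:Mano} for the identities involving $E$, $x_i$, $P_i$, and for $X_{ij}$ it observes directly that $X_{ij}$ is the infinitesimal generator of a one-parameter subgroup of $\mathrm{O}_{p,q}(\RR)$ acting geometrically, and that $w_0$ commutes with $\mathrm{O}_{p,q}$, so $\mathcal{F}_{C_{p,q},\psi}=\widetilde\tau(w_0)$ commutes with $X_{ij}$. No appeal to the Lie algebra decomposition of $\mathfrak{x}$ is needed for this last case.

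Your structural argument via $\mathrm{Ad}(w_0)$ on the $\mathfrak{x}_0\oplus\mathfrak{x}_1\oplus\mathfrak{x}_2$ decomposition has a genuine gap. You correctly compute that conjugation by $\mathcal{F}$ on $\mathfrak{x}$ is $\mathrm{Ad}(h^{-1}w_0h)$, but then you silently replace this by $\mathrm{Ad}(w_0)$ when you invoke the parabolic decomposition. For an arbitrary $h\in\mathrm{O}_{p+1,q+1}(\CC)$ (which is all Lemma~\ref{lem:compat} gives you as stated), $h^{-1}w_0h$ need not preserve the grading $\mathfrak{x}_0\oplus\mathfrak{x}_1\oplus\mathfrak{x}_2$ at all, so the conclusion that $X_{ij}$ is fixed, the center is negated, and the nilradicals are swapped does not follow. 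The gap can be closed: the \emph{proof} of Lemma~\ref{lem:compat} (citing \cite[(2.3.14), (2.3.19)]{Kobayashi:Mano}) actually places $h$ in $\mathrm{O}_{p,q}(\CC)$, which commutes with $w_0$, so $h^{-1}w_0h=w_0$ and your argument goes through. You should make this explicit. Once that is done, the remaining determination of the scalar $4$ indeed requires the explicit normalizations in \cite[\S 2.3--2.4]{Kobayashi:Mano}, which is exactly what the paper invokes.
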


\begin{proof} The first assertion follows immediately from Lemma \ref{lem:compat} and the surjectivity statement in Proposition \ref{prop:gen}.
The first three equalities at the end of the proposition are \cite[Theorem 2.5.2(3)]{Kobayashi:Mano}. 
The last identity follows from the fact that $X_{ij}$ is the differential of a particular element of $\mathrm{O}_{p,q}(\RR)$ \cite[\S 1.1]{Kobayashi:Mano}, and hence commutes with $\mathcal{F}_{C_{p,q}}.$
\end{proof}

Let $\omega:C_{p,q}\to V_{p,q}$ be the canonical embedding and let $\Psi_{\omega}^{\mathrm{ind}}=\mathrm{O}_{p+1,q+1}$.  This group acts by conjugation on the minimal nilpotent orbit $\mathbb{O}_{p+1,q+1} \subset \mathfrak{o}_{p+1,q+1}$ and hence on its closure $\overline{\mathbb{O}}_{p+1,q+1}$ in $\mathfrak{o}_{p+1,q+1}.$  We therefore obtain an action on $\overline{T^*C_{p,q}^\circ}$ using Proposition \ref{minNilpOrbitIso}.

\begin{prop} \label{prop:cone:Ansatz}
 For the canonical embedding $\omega: C_{p,q} \to V_{p,q}$ Ansatz \ref{Ansatz} holds with $\Psi_{\omega}^{\mathrm{ind}}$ and $\mathfrak{x}$ as above.
\end{prop}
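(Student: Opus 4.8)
The plan is to verify the five conditions \eqref{xgen}--\eqref{compat} of Ansatz \ref{Ansatz} one at a time, working throughout in the concrete model provided by the Kobayashi--Mano operators \eqref{xi2} and the identification $\overline{T^*C_{p,q}^\circ}^{\mathrm{aff}}\cong\overline{\mathbb{O}}_{p+1,q+1}$ of Proposition \ref{minNilpOrbitIso}. Ansatz \ref{ans:DX} and \ref{ans:alg} were established for $\mathcal{D}_C$ in Proposition \ref{QuadDiffOp}, so Lemma \ref{lem:DX:act} furnishes the conjugation action of $\Psi_\omega\{\mathbb{R}\}$ on $\mathcal{D}_{C\mathbb{C}}$ that we must study. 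Condition \eqref{xgen} is immediate: Proposition \ref{prop:gen} asserts that $\mathcal{U}(\mathfrak{x})=\mathcal{U}\big((\mathfrak{o}_{p+1,q+1})_{\mathbb{C}}\big)\to\mathcal{D}_{C\mathbb{C}}$ is surjective, which is exactly the statement that $\mathfrak{x}$ generates $\mathcal{D}_{C\mathbb{C}}$ as an associative algebra. For condition \eqref{cot:gen} I would first note that $\mathrm{gr}\,\mathcal{D}_{C\mathbb{C}}\cong\Gamma(\overline{\mathbb{O}}_{p+1,q+1},\mathcal{O})_{\mathbb{C}}$ by Proposition \ref{minNilpOrbitIso}, and that the explicit morphism \eqref{invariantscone} identifies the symbols $\Sigma(\mathfrak{x}_j)$ with the restrictions to $\overline{\mathbb{O}}_{p+1,q+1}$ of the linear coordinate functions on $\mathfrak{o}_{p+1,q+1}$, the three pieces $j=0,1,2$ matching the three steps of the contact grading of $\mathfrak{o}_{p+1,q+1}$; a dimension count (using that the minimal nilpotent orbit spans $\mathfrak{o}_{p+1,q+1}$) shows $\mathfrak{x}'$ is exactly the space of linear functions on $\overline{\mathbb{O}}_{p+1,q+1}$. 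Since $\overline{\mathbb{O}}_{p+1,q+1}$ is the affine cone over the projectively normal variety $\mathrm{O}_{p+1,q+1}/P\hookrightarrow\mathbb{P}(\mathfrak{o}_{p+1,q+1})$, its coordinate ring is generated in degree one, so $\mathfrak{x}'$ generates $\mathrm{gr}\,\mathcal{D}_{C\mathbb{C}}$ and \eqref{cot:gen} follows.

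The crux is condition \eqref{preserve}. Theorem \ref{thm:mod:cone} identifies $\Psi_\omega\{\mathbb{R}\}$ with $\widetilde{\tau}(\mathrm{GO}_{V_{n+1}}(\mathbb{R}))$, and by the proof of that theorem together with Lemma \ref{lem:DX:act} the action of $\Psi_\omega\{\mathbb{R}\}$ on $\mathcal{D}_{C\mathbb{C}}$ is conjugation by the operators $\widetilde{\tau}(g)$, $g\in\mathrm{GO}_{V_{n+1}}(\mathbb{R})$. Lemma \ref{lem:compat} says that, after reparametrizing the isomorphism $\mathfrak{x}\cong(\mathfrak{o}_{p+1,q+1})_{\mathbb{C}}$ of Proposition \ref{prop:gen} by an inner automorphism $\mathrm{Ad}(k)$ with $k\in\mathrm{O}_{p+1,q+1}(\mathbb{C})$, the embedding $\mathfrak{x}\hookrightarrow\mathcal{D}_{C\mathbb{C}}\to\mathrm{End}_{\mathbb{C}}(\mathcal{S}(C_{p,q}(\mathbb{R})))$ becomes $d\tau$, the differential of the minimal representation; since $\mathrm{Ad}(k)$ preserves $(\mathfrak{o}_{p+1,q+1})_{\mathbb{C}}$, this identifies $\mathfrak{x}$, \emph{as a subspace} of $\mathrm{End}_{\mathbb{C}}(\mathcal{S})$, with $d\tau\big((\mathfrak{o}_{p+1,q+1})_{\mathbb{C}}\big)$. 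Differentiating $\widetilde{\tau}(g)\widetilde{\tau}(\exp tX)\widetilde{\tau}(g)^{-1}=\widetilde{\tau}(\exp(t\,\mathrm{Ad}(g)X))$ shows that conjugation by $\widetilde{\tau}(g)$ carries $d\tau(X)$ to $d\tau(\mathrm{Ad}(g)X)$, and the adjoint action of $\mathrm{GO}_{V_{n+1}}(\mathbb{R})$ preserves $\mathfrak{o}_{p+1,q+1}$; hence $\mathfrak{x}$ is stable and \eqref{preserve} holds. A more hands-on alternative, which I would use as a sanity check, is to combine the explicit formulae of Proposition \ref{QuadDiffOp} for $\mathcal{F}_{C_{p,q},\psi}=\widetilde{\tau}(w_0)$ — which exhibit $\mathcal{F}$-conjugation as the involution swapping $\mathfrak{x}_0$ with $\mathfrak{x}_2$ and fixing $\mathfrak{x}_1$ up to sign — with geometric functoriality for the Levi of $\widetilde{Q}_n$ and the modulation computation underlying Proposition \ref{VactDx:preserve:filtation}.

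It remains to treat conditions \eqref{ind:gp} and \eqref{compat}. For \eqref{ind:gp} we take $\Psi_\omega^{\mathrm{ia}}=\mathrm{O}_{p+1,q+1}$, acting by conjugation on $\overline{\mathbb{O}}_{p+1,q+1}$ and hence on $\overline{T^*C_{p,q}^\circ}^{\mathrm{aff}}$ through the isomorphism of Proposition \ref{minNilpOrbitIso}; since the action of $\Psi_\omega^{\mathrm{s}}$ on $T^*C_{p,q}^\circ$ is realized by conjugation after the embedding $m,\overline{n}$ into the similitude group and factors through the corresponding group of automorphisms of $\overline{\mathbb{O}}_{p+1,q+1}$, which sits inside the image of $\mathrm{O}_{p+1,q+1}$, this is indeed an extension. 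For \eqref{compat}: under the identification of $\mathfrak{x}'$ with the linear functions on $\overline{\mathbb{O}}_{p+1,q+1}$, i.e.\ with $(\mathfrak{o}_{p+1,q+1}^\vee)_{\mathbb{C}}$, the action of $\Psi_\omega^{\mathrm{ia}}(\mathbb{R})=\mathrm{O}_{p+1,q+1}(\mathbb{R})$ is the coadjoint action, while by the computation in \eqref{preserve} the action of $\Psi_\omega\{\mathbb{R}\}$ on $\mathfrak{x}$, hence on $\mathfrak{x}'=\Sigma(\mathfrak{x})$, is the (co)adjoint action of $\mathrm{GO}_{V_{n+1}}(\mathbb{R})$ reparametrized by $\mathrm{Ad}(k)$; the $\chi$-twist in $\mathcal{R}_{\omega,\chi}$ and the central $\mathbb{G}_m$ act trivially by conjugation, as anticipated in the remarks after Ansatz \ref{Ansatz}. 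Consequently the images of $\Psi_\omega^{\mathrm{ia}}(\mathbb{R})$ and $\Psi_\omega\{\mathbb{R}\}$ in $\mathrm{Aut}_{\mathbb{C}}(\mathfrak{x}')$ are conjugate by the automorphism induced by $k$, which is what \eqref{compat} demands.

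I expect the principal obstacle to be the careful transfer between the two incarnations of the modulation group — as unitary operators on $L^2(C_{p,q}^\circ(\mathbb{R}))$ versus as automorphisms of the algebraic package $(\mathcal{D}_{C\mathbb{C}},\mathfrak{x},\overline{T^*C_{p,q}^\circ}^{\mathrm{aff}})$ — and in particular bookkeeping the $\mathrm{O}_{p+1,q+1}(\mathbb{C})$-reparametrization of Lemma \ref{lem:compat} so that one and the same group element is seen to act compatibly on $\mathcal{D}_{C\mathbb{C}}$ and on $\overline{T^*C_{p,q}^\circ}^{\mathrm{aff}}$. A secondary, purely technical nuisance is tracking scalars through the chain $\mathrm{O}_{p+1,q+1}$, $\mathrm{GSO}_{V_{n+1}}$, $\mathrm{GO}_{V_{n+1}}$ and their projective quotients, which must be done to confirm both that $\Psi_\omega^{\mathrm{s}}$ genuinely embeds into the conjugation action of $\mathrm{O}_{p+1,q+1}$ and that the two images in $\mathrm{Aut}_{\mathbb{C}}(\mathfrak{x}')$ really do match.
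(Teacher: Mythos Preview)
Your proposal is correct and follows essentially the same route as the paper's proof: \eqref{xgen} via Proposition \ref{prop:gen}, \eqref{preserve} via Lemma \ref{lem:compat} and the adjoint action, \eqref{ind:gp} via the conjugation action of $\mathrm{O}_{p+1,q+1}$ on $\overline{\mathbb{O}}_{p+1,q+1}$ transported through Proposition \ref{minNilpOrbitIso}, and \eqref{compat} by recognizing both images as the (co)adjoint action up to the $\mathrm{O}_{p+1,q+1}(\mathbb{C})$-conjugate of Lemma \ref{lem:compat}. The only stylistic difference is in \eqref{cot:gen}: the paper computes the symbols $\sigma(P_i),\sigma(E+\tfrac{p+q-2}{2}),\sigma(X_{ij}),\sigma(x_i)$ explicitly and matches them against the matrix entries in \eqref{invariantscone}, then uses that $\overline{\mathbb{O}}_{p+1,q+1}$ is \emph{closed} in $\mathfrak{o}_{p+1,q+1}$ so its coordinate ring is a quotient of the polynomial ring on the matrix coordinates; your appeal to projective normality reaches the same conclusion but is not needed, and the a priori identification $\mathrm{gr}\,\mathcal{D}_{C\mathbb{C}}\cong\Gamma(\overline{\mathbb{O}}_{p+1,q+1},\mathcal{O})_{\mathbb{C}}$ is really a consequence of this argument rather than an input to it (the injection \eqref{grDx:glob:sec:map} is forced to be surjective once $\mathfrak{x}'\subset\mathrm{gr}\,\mathcal{D}_C$ is seen to generate the target).
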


\begin{proof}
Statement \eqref{xgen}  is part of Proposition \ref{prop:gen}, and \eqref{preserve} is a consequence of Lemma \ref{lem:compat}.

The symbol map is $\CC$-linear on the subspaces $\mathfrak{x}_i.$  Let us describe it explicitly.  As in the proof of Proposition \ref{minNilpOrbitIso}
$$
T^*C_{p,q}^\circ=(C_{p,q}^\circ \times V_{p,q})/\GG_a
$$
where the implied action of $\GG_a$ is given in \eqref{Ga:act}.  Thus we can identify functions on $T^*C^\circ_{p,q}$ with functions on $C_{p,q}^\circ \times V_{p,q}$ that are invariant under the action of $\GG_a.$
 Under this identification, the principal symbol of the operator $\partial/\partial x_i$ is $v_i$, the corresponding coordinate function on $V_{p,q}$.  Therefore
\begin{align}
\sigma(P_i)&=\epsilon_i x_i\mathcal{Q}_{p,q}(v)-2(x\cdot v) v_i \label{symb1}\\
\sigma\left(E+ {\frac{p+q-2}{2}}\right)&=x\cdot v \label{symb2}\\
\sigma(X_{ij}) &= \epsilon_i\epsilon_j x_i v_j - x_j v_i \label{symb3}\\
\sigma(x_i)&=x_i \label{symb4},
\end{align}
\noindent where $\cdot$ denotes the standard dot product. We let $\mathfrak{x}'$ denote the $\RR$-span of (\ref{symb1})-(\ref{symb4}). As a Lie algebra under the Poisson bracket, we have by construction that $\mathfrak{x}'\cong \mathfrak{x}$, so by Proposition \ref{prop:gen}, $\mathfrak{x}'\cong \mathfrak{o}_{p+1,q+1}$. 

Under the embedding $T^*C_{p,q}^\circ(\RR) \hookrightarrow \mathbb{O}_{p+1,q+1}(\RR)$, written explicitly in (\ref{invariantscone}) (with respect to a different basis) we find that (\ref{symb4}) gives the $i$th coordinate of $c$,  (\ref{symb3}) gives $\left((-J_nv)_ic_j+(J_n c^t)_iv_j\right)_{ij}$, (\ref{symb2}) gives $\langle c, v\rangle$, and (\ref{symb1}) gives the $i$th coordinate of $J_nc^t\mathcal{Q}_n(v)-\langle c,v \rangle_n J_nv^t$. Thus the functions (\ref{symb1})-(\ref{symb4}) are the pullback of the standard matrix coordinates of $\mathfrak{o}_{p+1, q+1}$ to $T^*C_{p,q}^{\circ}$ under the map $\overline{a}:T^*C_{p,q}^{\circ} \to \mathbb{O}_{p+1,q+1}$ constructed in the proof of Proposition \ref{minNilpOrbitIso}.
Since $\overline{a}$ is an isomorphism on affinizations, and the matrix coordinates of $\mathfrak{o}_{p+1, q+1}$ generate the ring of regular functions on the closed subset $\overline{\mathbb{O}}_{p+1,q+1} \subset \mathfrak{o}_{p+1,q+1}$,  we see that the principal symbols \eqref{symb1}-\eqref{symb4} generate all regular functions on $\overline{T^*C^\circ_{p,q}}$. This proves (\ref{cot:gen}). 

The ind-group scheme $\Psi_{\omega}^{\mathrm{ind}}$ in (\ref{ind:gp}) is $\textrm{O}_{p+1,q+1}$, which acts on $\overline{T^*C^{\circ}} \cong \overline{\mathbb{O}}_{p+1,q+1}$ via the manifest conjugation action on the minimal nilpotent orbit. Using \eqref{action:map} we see that the small modulation group corresponds to the action of  $Q_i^{\mathrm{op}}$, up to a conjugation due to the fact that we are using a different quadratic form.  

The action of $\mathrm{O}_{p+1,q+1}$ on $\overline{T^*C_{p,q}^{\circ}}$ induces an action of $\mathrm{O}_{p+1,q+1}$ on $\mathfrak{x}'$ that is just matrix conjugation. On the other hand up to conjugation by an element of $\mathrm{O}_{p+1,q+1}(\CC)$ the action of $\Psi_{\omega}\{\RR\}$ on $\mathcal{D}_C$ is induced by the conjugation action on $\mathfrak{x}=(\mathfrak{o}_{p+1,q+1})_{\CC}$
by Lemma \ref{lem:compat}.  Assertion \eqref{compat} follows.
\end{proof}
\quash{
\begin{rem}
    Observe that the Grothendieck filtration on the differential operators $\mathcal{D}_{C_{p,q}}$ is \textit{not} compatible with the PBW filtration under the map $\mathcal{U}(\mathfrak{o}_{V_{n+2}}) \to \mathcal{D}_C$ in Proposition \ref{QuadDiffOp}. 
    
    We note that
    $\overline{\mathbb{O}_{\min}} \subset \mathfrak{o}_{V_{n+2}}^*$ implies that we have a surjective map:

    \[
    \textrm{gr}_{\textrm{PBW}}\, \mathcal{U}(\mathfrak{o}_{V_{n+2}}) =  \textrm{Sym}^\bullet (\mathfrak{o}_{V_{n+2}})\twoheadrightarrow \Gamma(T^*C,\OO_{T^*C}).
    \]

    \noindent where the grading is taken with respect to the PBW filtration. So, somewhat surprisingly, this is \textit{incompatible} with the graded isomorphism 

    \[
    \textrm{gr}_{\textrm{Groth}}\mathcal{D}_C \simeq \Gamma(T^*C^{\circ},\OO_{T^*C^\circ}).
    \]
\end{rem}}

\section{The boundary of the Schwartz space and small modulation groups}
\label{sec:Boundary}

Assume that we are in the situation of \S \ref{ssec:Schwartz}, with a Schwartz space satisfying \eqref{assum:preserve}, \eqref{assum:local}, and \eqref{assum:sm}. We assume that $X$ admits an open dense $H$-orbit $X^\circ \subset X.$

\begin{defn} The \textbf{boundary} of the Schwartz space $\mathcal{S}(X(F),\mathcal{L}^{1/2})$ is the quotient 
$$
\mathcal{S}(X(F),\mathcal{L}^{1/2})/\mathcal{S}(X^{\circ}(F),\mathcal{L}^{1/2}).
$$
\end{defn}
Kazhdan suggested to Getz that the quotient $\mathcal{S}(X(F),\mathcal{L}^{1/2})/\mathcal{S}(X^{\circ}(F),\mathcal{L}^{1/2})$ is the local avatar of boundary terms in the Poisson summation formula; we refer to \S \ref{sec:PSC} below for more details.

In \S \ref{sec:mod:alg} we constructed an action of $\Psi_{\omega}^{\mathrm{s}}$ on $T^*X^{\mathrm{sm}}.$  It extends to the affine closure $\overline{T^*X^{\mathrm{sm}}}^{\mathrm{aff}}.$
We pose the following imprecise conjecture:
\begin{conj}
Assume $\omega$ is a closed immersion.  Then there is a correspondence between $\Psi_{\omega}^{\mathrm{s}}$-orbits on $\overline{T^*X^{\mathrm{sm}}}^{\mathrm{aff}}$ and subquotients of $\mathcal{S}(X(F),\mathcal{L}^{1/2})$ as a $\Psi_{\omega}^{\mathrm{s}}(F)$-module.
\end{conj}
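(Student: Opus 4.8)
The plan is first to make the statement precise. The $\Psi_{\omega}^{\mathrm{s}}$-orbits on the affine closure $\overline{T^*X^{\mathrm{sm}}}^{\mathrm{aff}}$ are locally closed (as orbits of an algebraic group on a scheme, cf.\ the proof of Lemma \ref{lem:loc:closed}), so their closures form a finite poset, and the radical ideals they cut out in the coordinate ring $\OO(\overline{T^*X^{\mathrm{sm}}}^{\mathrm{aff}})$ assemble into a $\Psi_{\omega}^{\mathrm{s}}$-stable filtration whose successive quotients are the coordinate rings of the individual orbits. The version of the conjecture I would aim to prove is the existence of a $\Psi_{\omega}^{\mathrm{s}}(F)$-stable filtration of $\mathcal{S}(X(F),\mathcal{L}^{1/2})$ indexed by the same poset, whose subquotient attached to an orbit $\mathcal{O}$ is a ``quantization'' of $\mathcal{O}$ — a representation of $\Psi_{\omega}^{\mathrm{s}}(F)$ that, in the Archimedean setting, has associated variety $\overline{\mathcal{O}}$, and that for the open orbit is $\mathcal{S}(X^{\circ}(F),\mathcal{L}^{1/2})$. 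With this formulation the bottom step of the filtration recovers the definition of the boundary of the Schwartz space. The guiding heuristic is the orbit method together with the semiclassical limit of \S \ref{sec:DQ}: $\mathcal{S}(X(F),\mathcal{L}^{1/2})$ should be the quantization of $\overline{T^*X^{\mathrm{sm}}}^{\mathrm{aff}}$ equipped with its $\Psi_{\omega}^{\mathrm{s}}$-action, so its submodule lattice should mirror the lattice of invariant closed subsets of the cotangent bundle.

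In the Archimedean case I would argue as follows. Assuming Ansatz \ref{ans:DX} and Ansatz \ref{ans:alg}, $\mathcal{S}(X(\RR),\mathcal{L}^{1/2})$ is a $\mathcal{D}_X$-module on which $\Psi_{\omega}^{\mathrm{s}}(\RR)$ acts compatibly via \eqref{small:action} and \eqref{C:ext:action}. Choose a good filtration; its associated graded is a $\Psi_{\omega}^{\mathrm{s}}$-equivariant coherent sheaf on $\overline{T^*X^{\mathrm{sm}}}^{\mathrm{aff}}$ whose support is a union of orbit closures. Pulling the orbit-closure filtration of $\OO(\overline{T^*X^{\mathrm{sm}}}^{\mathrm{aff}})$ back through the symbol map gives a candidate filtration of $\mathcal{S}(X(\RR),\mathcal{L}^{1/2})$ by $\mathcal{D}_X$-submodules; one then checks it is $\Psi_{\omega}^{\mathrm{s}}(\RR)$-stable (using that the actions on $\mathcal{D}_X$ and on $\mathrm{gr}\,\mathcal{D}_X$ are intertwined) and has the prescribed subquotients. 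For the top step this amounts to showing that $\mathcal{S}(X(\RR),\mathcal{L}^{1/2})/\mathcal{S}(X^{\circ}(\RR),\mathcal{L}^{1/2})$ is scheme-theoretically supported on $\overline{T^*X^{\mathrm{sm}}}^{\mathrm{aff}}\setminus T^*X^{\circ}$, which should follow from \eqref{assum:local} and \eqref{assum:sm}; identifying the deeper subquotients with quantizations of the lower orbits is the crux, and I would attempt it by restricting the whole picture to a transverse slice and inducting on the dimension of the boundary, using normality and the codimension hypothesis \eqref{X:cod}.

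I would then verify the precise conjecture directly in the two families, as \S \ref{sec:Boundary} promises. For $X=V$ with $\omega=\mathrm{id}$, one has $T^*V\cong W=V\oplus V^{\vee}$ and $\Psi_{\mathrm{id}}^{\mathrm{s}}=V^{\vee}\rtimes\GL_V$ acts with exactly two orbits, $(V\setminus\{0\})\times V^{\vee}$ and $\{0\}\times V^{\vee}$; the matching filtration of $\mathcal{S}(V(F))$ is $0\subset\mathcal{S}((V\setminus\{0\})(F))\subset\mathcal{S}(V(F))$, whose top quotient is the space of jets at the origin, and the translation part $V^{\vee}(F)$ acts trivially on that quotient exactly as it does on the closed orbit — and on no other orbit. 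For $X=C_n$ with $\omega$ the inclusion, $\overline{T^*C_n^{\circ}}^{\mathrm{aff}}\cong\overline{\mathbb{O}}_{n+1}\subset\mathfrak{o}_{V_{n+1}}$ by Proposition \ref{minNilpOrbitIso}; I would enumerate the $\Psi_{\omega}^{\mathrm{s}}$-orbits there using Lemma \ref{lem:orbit:dim} and the formula \eqref{invariantscone}, and on the analytic side read off the matching filtration from the realization of $\mathcal{S}(C_n(F))$ inside the minimal representation of $\GO_{V_{n+1}}(F)$ restricted to $\Psi_{\omega}^{\mathrm{s}}(F)$, the subquotients being $\mathcal{S}(C_n^{\circ}(F))$ and boundary distributions supported at $0$.

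The main obstacle is conceptual. In general it is unclear what the subquotient attached to a given orbit $\mathcal{O}$ should be — which ``quantization'' to choose — when $\mathcal{O}$ is not a homogeneous space for a reductive group and carries no manifest half-density line; and even granting a choice, proving that the $\mathcal{D}_X$-module filtration lifts the geometric one \emph{with those subquotients} is delicate, since characteristic-variety arguments control supports but not the module structure of the graded pieces. In the non-Archimedean case the $\mathcal{D}$-module heuristic is unavailable, and one would instead build a filtration by support on $X$-strata in the style of Bernstein--Zelevinsky and match it to the orbit stratification of $\overline{T^*X^{\mathrm{sm}}}^{\mathrm{aff}}$ by hand; showing that these two a priori unrelated stratifications correspond in general appears to need a genuinely new idea, which is presumably why the conjecture is stated imprecisely.
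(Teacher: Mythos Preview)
There is no proof in the paper to compare against: the statement is explicitly labeled a \emph{conjecture}, the paper calls it ``imprecise,'' and immediately after stating it writes ``We do not have a conjectural formulation of the correspondence at the moment. We content ourselves with discussing a pair of examples.'' What follows in \S\ref{sec:Boundary} is not a proof but two illustrative cases (Lemmas~\ref{lem:cotangent:decomp:st}--\ref{lem:boundary:decomp:st} for $X=V$, and Theorem~\ref{thm:quadconedecomp} together with the exact sequence from \cite{GurK:Cone} for $X=C_n$), after which the paper again says ``Unfortunately we do not yet know what we mean by `correspond.'\,'' Your final paragraph already diagnoses this correctly; the rest of your proposal should be read as a research program, not a proof.

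On the content of that program, two specific points deserve correction against the paper's examples. First, for $X=V$ the paper's Lemma~\ref{lem:boundary:decomp:st} records the boundary quotient as the one-dimensional trivial module $\CC$, not the full space of jets at the origin; your description is only correct in the Archimedean case (and even there one must be careful about which Schwartz space $\mathcal{S}(V^\circ(F))$ denotes). Second, and more substantively, your sketch of the quadric-cone case understates the structure: the paper (via \cite{GurK:Cone}) gives the boundary as $\mathcal{S}(C_{n-1}(F))\oplus\CC$, not merely ``boundary distributions supported at $0$,'' and Theorem~\ref{thm:quadconedecomp} shows the corresponding geometric side has a \emph{nested} stratification involving all the lower cones $C_k$ for $1\le k\le n-1$. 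This recursive appearance of a full Schwartz space of a smaller cone inside the boundary is precisely the phenomenon that makes the conjecture hard to formulate, and your $\mathcal{D}$-module/associated-variety outline does not yet explain why such a nested Schwartz space, rather than something simpler supported on a single stratum, should emerge as a subquotient.
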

  We do not have a conjectural formulation of the correspondence at the moment.  We content ourselves with discussing a pair of examples.

\subsection{Vector spaces} \label{ssec:standard:rep:loc:bd}
We consider  the setting of  \S \ref{ssec:ID}.  Thus let $\GL_V$ act on $V.$ As in \eqref{Psiid}  $\Psi_{\mathrm{id}}^{\mathrm{s}}=V^\vee \rtimes \GL_V.$

One can directly verify the following two lemmas:
\begin{lem} \label{lem:cotangent:decomp:st}
    One has a decomposition
    $$
    T^*V:=V \oplus V^\vee=V^\circ \oplus V^\vee \bigsqcup \{0\} \oplus V^\vee
    $$
    into $\mathrm{\Psi}_{\mathrm{id}}^{\mathrm{s}}$-orbits. \qed
\end{lem}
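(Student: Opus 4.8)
The plan is to exhibit the claimed decomposition by a direct computation with the action of $\Psi_{\mathrm{id}}^{\mathrm{s}}=V^\vee\rtimes\GL_V$ on $T^*V=V\oplus V^\vee$ given by Lemma \ref{lem:cot:action}. First I would recall that formula explicitly in this setting: using the identification $T^*V=V\oplus V^\vee$, for $(a,b)\in (V\oplus V^\vee)(R)$ and $(\lambda,h)\in (V^\vee\rtimes\GL_V)(R)$ one has
\begin{align*}
(a,b)\cdot(\lambda,h)=\bigl(ah,\,bh^{-t}+\lambda\bigr),
\end{align*}
where $h^{-t}$ denotes the dual action on $V^\vee$ (here $\omega=\mathrm{id}$, so $\omega_{ah}^*(\lambda\circ h^{-1})$ becomes simply $\lambda$ after the standard identifications). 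The key observation is that the first coordinate transforms only under $\GL_V$, while the $V^\vee$-factor acts by arbitrary translations on the second coordinate.

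The main step is then to identify the $\GL_V$-orbits on the $V$-factor: over a field, $\GL_V$ acts on $V$ with exactly two orbits, the open orbit $V^\circ=V\smallsetminus\{0\}$ and the fixed point $\{0\}$. I would invoke this together with the transitivity of the $V^\vee$-translation on the second factor to conclude: starting from any $(a,b)$ with $a\neq 0$, one can reach any $(a',b')$ with $a'\neq 0$ by first applying an element of $\GL_V$ carrying $a$ to $a'$ and then a translation by the appropriate $\lambda\in V^\vee$ correcting the second coordinate; hence $V^\circ\oplus V^\vee$ is a single orbit. Likewise, starting from $(0,b)$, the $\GL_V$-action fixes the first coordinate at $0$ and the translations sweep out all of $\{0\}\oplus V^\vee$, so $\{0\}\oplus V^\vee$ is the other orbit. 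Since $V=V^\circ\sqcup\{0\}$, these two orbits partition $T^*V$, giving the asserted decomposition. Strictly speaking this is an orbit decomposition at the level of field-valued points (or of the reduced scheme structure), which is what is intended here.

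I do not expect any genuine obstacle: the statement is explicitly flagged as something the reader can "directly verify," and the only mild care needed is in unwinding the conventions of Lemma \ref{lem:cot:action} so that the $\GL_V$-action on $V^\vee$ appears correctly as the inverse transpose and the $V^\vee$-part acts as the expected translation. The one point worth stating cleanly is that $\GL_V$ acting on $V$ has precisely the two orbits $V^\circ$ and $\{0\}$, which is immediate since any nonzero vector can be sent to any other by an invertible linear map. With that in hand the proof is a one-line combination of "two orbits on $V$" and "transitive translations on $V^\vee$."
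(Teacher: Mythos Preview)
Your argument is correct and is precisely the direct verification the paper leaves to the reader (the lemma is stated with a \qed and no proof). One small slip in your displayed formula: $\omega_{ah}^*(\lambda\circ h^{-1})=\lambda\circ h^{-1}$, not $\lambda$, so the combined action is $(a,b)\cdot(\lambda,h)=\bigl(ah,(b+\lambda)h^{-t}\bigr)$ as in the proof of Proposition~\ref{prop:id}; but since your orbit argument only applies $(0,h)$ and $(\lambda,I)$ separately, this does not affect anything.
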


\begin{lem} \label{lem:boundary:decomp:st}
One has an exact sequence 
$$
0 \lto \mathcal{S}(V^\circ(F))  \lto \mathcal{S}(V(F)) \lto \CC \lto 0
$$
of $\Psi_{\mathrm{id}}^{\mathrm{s}}(F)$-modules. \qed
\end{lem}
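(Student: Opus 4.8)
The plan is to realise the surjection $\mathcal{S}(V(F))\to\CC$ as evaluation at the origin, identify its kernel with $\mathcal{S}(V^\circ(F))$, and then check equivariance — which is straightforward, the origin being fixed by $\GL_V(F)$ and the modulation operators being the identity there. Concretely, trivialising the half-density bundle by a choice of Haar measure (as in Remark \ref{rem:half}) identifies $\mathcal{S}(V(F),\mathcal{L}^{1/2})$ with the classical Schwartz space $\mathcal{S}(V(F))$, and one takes $\mathrm{ev}_0\colon\mathcal{S}(V(F))\to\CC$, $f\mapsto f(0)$. The target $\CC$ is the one-dimensional $\Psi_{\mathrm{id}}^{\mathrm{s}}(F)$-module afforded by the fibre $\mathcal{L}^{1/2}_0$ of the half-density bundle at $0$: the factor $V^\vee(F)$ acts trivially because the modulation character $x\mapsto\psi(\lambda(x))$ equals $1$ at $x=0$, while $\GL_V(F)$ acts through its natural character on $\mathcal{L}^{1/2}_0$ (a power of $|\det|$), since it fixes $0\in V(F)$. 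As $\mathcal{R}_{\mathrm{id}}((\lambda,h))$ first multiplies by $x\mapsto\psi(\lambda(x))$ and then transports by $h$, this shows $\mathrm{ev}_0$ intertwines the $\Psi_{\mathrm{id}}^{\mathrm{s}}(F)$-action on $\mathcal{S}(V(F))$ with this module; it is moreover continuous and surjective, surjectivity being witnessed by any bump function in $C_c^\infty(V(F))$ not vanishing at $0$.

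The one step beyond bookkeeping is the identification $\ker(\mathrm{ev}_0)=\mathcal{S}(V^\circ(F))$ as $\Psi_{\mathrm{id}}^{\mathrm{s}}(F)$-submodules of $\mathcal{S}(V(F))$, which hinges on the definition adopted for the Schwartz space of the open orbit. Since $V$ is the affine closure of $V^\circ$ with one-point boundary $\{0\}$, the construction recalled in \S \ref{sec:HS} presents $\mathcal{S}(V^\circ(F))$ as the subspace of $\mathcal{S}(V(F))$ of functions whose value at the boundary point $0$ vanishes, i.e.\ exactly $\ker(\mathrm{ev}_0)$. I would check the two inclusions directly: $\mathcal{S}(V^\circ(F))\subseteq\ker(\mathrm{ev}_0)$ is immediate from that description, and conversely, if $f\in\mathcal{S}(V(F))$ with $f(0)=0$ then $f\in\mathcal{S}(V^\circ(F))$ — in the non-Archimedean case because such an $f$ is locally constant near $0$, hence vanishes on a neighbourhood of $0$ and so lies in $C_c^\infty(V^\circ(F))=\mathcal{S}(V^\circ(F))$; in the Archimedean case by the description of $\mathcal{S}(V^\circ(F))\subseteq\mathcal{S}(V(F))$ in \cite{Getz:Hsu:Leslie} (with \cite{Hsu:Asymp} for the behaviour near the boundary). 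Topological exactness in the Archimedean case is then automatic: $\mathcal{S}(V^\circ(F))$ carries the subspace topology and $\mathrm{ev}_0$ is an open continuous surjection onto $\CC$.

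I expect the only genuine obstacle to be making the previous paragraph airtight against the precise definition of $\mathcal{S}(V^\circ(F))$ in the cited references; everything else is the direct verification promised in the statement. As a consistency check, the resulting two-step filtration $\mathcal{S}(V^\circ(F))\subseteq\mathcal{S}(V(F))$ runs parallel to the two-orbit stratification of $T^*V$ in Lemma \ref{lem:cotangent:decomp:st}: the subrepresentation $\mathcal{S}(V^\circ(F))$ matches the open orbit $V^\circ\oplus V^\vee$, and the quotient $\CC$ matches the small orbit $\{0\}\oplus V^\vee$.
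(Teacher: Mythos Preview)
Your approach via evaluation at $0$ is the natural one, and it is exactly what the paper has in mind: the paper gives no argument beyond ``one can directly verify,'' so there is nothing to compare against. The non-Archimedean case is handled correctly, and the equivariance check is fine.

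The Archimedean case, however, does not go through as written, and your hedging is warranted. Under the paper's own convention in \S 2.1 one has $\mathcal{S}(V^\circ(F))=\mathcal{S}_{\mathrm{ES}}(V^\circ(F))$, and for an open semialgebraic subset the Elazar--Shaviv (equivalently Aizenbud--Gourevitch) Schwartz space consists of those $f\in\mathcal{S}(V(F))$ that vanish to \emph{infinite order} along the boundary $\{0\}$, not merely those with $f(0)=0$. With that definition the quotient $\mathcal{S}(V(F))/\mathcal{S}(V^\circ(F))$ is the space of $\infty$-jets at the origin, which by Borel's lemma is all of $\CC[[x_1,\dots,x_n]]$ --- certainly not one-dimensional. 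Your appeal to \S\ref{sec:HS} and to \cite{Getz:Hsu:Leslie,Hsu:Asymp} is misplaced: those references treat horospherical varieties $\overline{P^{\mathrm{der}}\backslash G}$ for simple simply connected $G$, which is not the present setting of $V$ under $\GL_V$, and in any event the asymptotics described there again yield infinite-order vanishing at the boundary rather than the single condition $f(0)=0$.

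So either the lemma is meant only over non-Archimedean $F$ (note that the parallel result for the cone immediately afterwards carries an explicit non-Archimedean hypothesis), or the symbol $\mathcal{S}(V^\circ(F))$ in this lemma is tacitly $\ker(\mathrm{ev}_0)$ rather than the Elazar--Shaviv space. You should state which reading you are adopting instead of asserting that the Archimedean identification follows from the cited references.
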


This example is suggestive, but the reader may complain that if we wanted to find a geometric analogue of the decomposition in Lemma \ref{lem:boundary:decomp:st} then one could just use the orbits of $\GL_V$ on $V.$  This na\"ive approach does not work in  the singular case, as we will demonstrate by example in the next subsection.

\subsection{Quadric cones}
We now turn to the setting of \S \ref{sec:Mod:cones}.  We assume for simplicity that the quadratic form $\mathcal{Q}_n$ is split.  Thus $C_n$ is the zero locus of $\mathcal{Q}_n$ inside the space $\GG_a^{2n},$ and $\omega:C_n \to \GG_a^{2n}$ is the inclusion.  

The following was proved in \cite{Tome}: 

\begin{thm}\label{thm:quadconedecomp}
   For $n>1$ there is a $\Psi_\omega^{\mathrm{s}}$-equivariant decomposition  into four subschemes
    \begin{align*} 
    \overline{T^*(C_{n}^\circ)}^{\mathrm{aff}} &= T^*(C_{n}^\circ)   \sqcup  (\overline{T^*(C_{n-1}^\circ)}^{\mathrm{aff}}-\{0\}) \times \mathbb{G}_a^{2} \sqcup \{0\} \sqcup C_n^\circ. 
    \end{align*}
\qed
\end{thm}
\noindent Thus $\overline{T^*(C_{n}^\circ)}^{\mathrm{aff}}$ has a nested structure.  

The Schwartz space admits an analogous decomposition \cite[Theorem 1.2]{GurK:Cone}:
\begin{thm}
Assume $F$ is non-Archimedean and that $n \geq 3.$
One has an exact sequence
$$
0 \lto \mathcal{S}(C_n^\circ(F)) \lto \mathcal{S}(C_n(F)) \lto \mathcal{S}(C_{n-1}(F)) \oplus \CC \lto 0.
$$ 
It is $\Psi_{\omega}^{\mathrm{s}}(F)$-equivariant up to twisting by appropriate quasi-characters.
\qed
\end{thm}
We suspect that $\mathcal{S}(C_n^\circ(F))$ should correspond to $T^*(C_n^\circ),$ the constant quotient $\CC$ should correspond to $C_{n}^\circ,$ and $\mathcal{S}(C_{n-1}(F))$ should correspond to the remaining terms.
  Unfortunately we do not yet know what we mean by ``correspond.''

\section{Poisson summation} \label{sec:PSC}

In this section we discuss the global theory, so $F$ is a number field.  We place ourselves in the general setting of \S \ref{sec:mod:groups} and assume \eqref{omega:span}, \eqref{omega:scale}, \eqref{HX:H1=0}, \eqref{HX:Fpoints}. 

We assume for simplicity that there is an open dense $H$-orbit $X^\circ \subset X.$  We assume moreover that $X^\circ$ admits a nowhere vanishing section of the canonical bundle that transforms under the action of $H$ by a character. 
For each place $v$ the nonvanishing section gives rise to an $H(F_v)$-eigenmeasure $dx_v$ on $X^\circ(F_v).$  Using this we identify the local Schwartz spaces $\mathcal{S}(X(F_v),\mathcal{L}^{1/2})$ with spaces of functions $\mathcal{S}(X(F_v))<L^2(X(F_v)):=L^2(X(F_v),dx_v)$ as in Remark \ref{rem:half}.

We assume  Ansatz \ref{Ans:Schwartz}.  Thus we have local Fourier transforms 
$$
\mathcal{F}_{X}:=\mathcal{F}_{X_{F_v}}:\mathcal{S}(X(F_v)) \lto \mathcal{S}(X(F_v)).
$$
Let $S$ be a finite set of places of $F$ including the infinite places.  
By an $\OO_F^S$-model of a scheme $Z$ of finite type over $F$ we mean a flat $\OO_F^S$-scheme of finite type whose generic fiber is $Z.$  By abuse of notation, we continue to denote the model by $Z.$  We choose $\OO_F^S$-models of all of the schemes appearing in \S\ref{sec:mod:groups} and assume that the relevant morphisms all extend over $\OO_F^S.$  Thus, in particular, we have an action
$
X \times H \to X
$
over $\OO_F^S.$  We extend the open $H$-orbit $X^\circ \subset X$ over $\OO_F^S$ by taking the schematic closure in $X.$   

For $v \not \in S$ we assume the existence of basic functions
\begin{align}
    b_{X_{F_v}} \in \mathcal{S}(X(F_v))^{H(\OO_{F_v})}
\end{align}
such that 
\begin{enumerate}[label=($b$\text{\arabic*}), ref=$b$\text{\arabic*}]
\item \label{b:supp} $X^\circ(\OO_{F_v}) \subsetneq \mathrm{supp}(b_{X_{F_v}}) \subseteq X(\OO_{F_v}),$
    \item \label{b:norm}$b_{X_{F_v}}|_{X^\circ(\OO_{F_v})}=1,$
    \item\label{assum:bx:preserve} $\mathcal{F}_{X}(b_{X_{F_v}})=b_{X_{F_v}}.$
\end{enumerate}
We assume that for $v|\infty$ the space $\mathcal{S}(X(F_v))$ is a Fr\'echet space.
We define 
\begin{align}
\mathcal{S}(X(\A_F)):=\widehat{\otimes}_{v|\infty}\mathcal{S}(X(F_v)) \otimes \otimes_{v\nmid \infty}'\mathcal{S}(X(F_v))
\end{align}
where the hat denotes the completed projective tensor product and the restricted tensor product is with respect to the basic functions $b_{X_{F_v}}.$  The tensor product of the local Fourier transforms yields a map $\mathcal{F}_X :\mathcal{S}(X(\A_F)) \to \mathcal{S}(X(\A_F))$ by \eqref{assum:bx:preserve}. Throughout this section we assume that 
$$
\sum_{\gamma \in X^\circ(F)}|f(\gamma)|<\infty
$$
for all $f \in \mathcal{S}(X(\A_F)).$ 

\begin{ans}[The Poisson summation formula] \label{ans:PSC}
    Let $v_1$ and $v_2$ be places of $F$ (not necessarily distinct). 
 If $f=f_{v_1}f_{v_2}f^{v_1v_2} \in \mathcal{S}(X(\A_F))$ where $f_{v_1} \in \mathcal{S}(X^\circ(F_{v_1}))$  and $\mathcal{F}_{X}(f_{v_2}) \in \mathcal{S}(X^\circ(F_{v_2}))$ then
    \begin{align*}
        \sum_{\gamma \in X^\circ(F)}f(\gamma)=\sum_{\gamma \in X^\circ(F)}\mathcal{F}_{X}(f)(\gamma).
    \end{align*}
\end{ans}

\begin{conj}[Braverman and Kazhdan] \label{conj:PSC} Ansatz \ref{ans:PSC} is true when $X$ is a reductive monoid as in \S \ref{sec:RM}.
\end{conj}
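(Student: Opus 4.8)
The plan is to deduce the Poisson summation formula for $X = M_\rho$ from the conjectural analytic behavior of the automorphic $L$-functions $L(s,\pi,\rho)$, following the strategy of Braverman--Kazhdan and Ng\^o. Throughout $G = M_\rho^\times$ acts on $M_\rho$ on both sides, $H = G \times G$, $X^\circ = G$, and $d : G/G^{\mathrm{der}} \tilde{\lto} \GG_m$ is as in \S\ref{sec:RM}, so that $\rho \circ \hat d$ is the scalar action on $V$. The zeta-integral dictionary of Godement--Jacquet is the template: Poisson summation for $\mathcal{S}(M_\rho(\A_F))$ should be equivalent, modulo the local theory, to the family of functional equations of the global zeta integrals attached to automorphic forms on $G$.

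First I would assemble the local theory. For each place $v$ the results quoted from \cite{DRS:Schwartz} furnish the Fourier transform $\mathcal{F}_{M_\rho} : \mathcal{S}(M_\rho(F_v)) \to \mathcal{S}(M_\rho(F_v))$ satisfying \eqref{Fi:equiv}, and for $v \notin S$ one needs the basic function $b_{M_{\rho,F_v}}$ together with its Fourier-invariance \eqref{assum:bx:preserve}. With these in hand, and given a matrix coefficient $\beta_{\phi,\check\phi}(g) = \langle \pi(g)\phi, \check\phi\rangle$ attached to a cuspidal automorphic representation $\pi$ of $G(\A_F)$, one sets
\begin{align*}
Z(s, f, \beta_{\phi,\check\phi}) = \int_{G(\A_F)} f(g)\, \beta_{\phi,\check\phi}(g)\, |d(g)|^{s+s_0}\, dg , \qquad f \in \mathcal{S}(M_\rho(\A_F)),
\end{align*}
for an appropriate shift $s_0$; the step here is to show that this converges for $\operatorname{Re}(s)$ large, factors as an Euler product of local zeta integrals, and that at almost every $v$, with $f_v = b_{M_{\rho,F_v}}$ and $\phi_v$ spherical, the local factor equals $L_v(s,\pi_v,\rho)$.

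Next I would show that the Poisson summation formula \eqref{ans:PSC} is equivalent to the functional equation plus analytic continuation of these zeta integrals. Pairing the putative identity $\sum_{\gamma \in G(F)} f(\gamma) = \sum_{\gamma \in G(F)} \mathcal{F}_{M_\rho}(f)(\gamma)$ against $\beta_{\phi,\check\phi}$ and unfolding converts it, using the local functional equations of \cite{DRS:Schwartz}, into $Z(s, f, \beta) = Z(1-s, \mathcal{F}_{M_\rho}(f), \check\beta)$ together with the holomorphy of $Z(s,f,\beta)$ on all of $\CC$; the hypotheses $f_{v_1} \in \mathcal{S}(X^\circ(F_{v_1}))$ and $\mathcal{F}_{M_\rho}(f_{v_2}) \in \mathcal{S}(X^\circ(F_{v_2}))$ guarantee that the boundary contributions to the two sums vanish and that the would-be poles are cancelled. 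Via the previous step this is exactly the functional equation $L(s,\pi,\rho) = \varepsilon(s,\pi,\rho)\, L(1-s, \tilde\pi, \rho)$ and the expected holomorphy of the completed $L$-function; running the argument for the residual and continuous spectrum as well — which is where a regularization in the style of Arthur becomes necessary — one conversely recovers the full summation formula with boundary terms of \S\ref{sec:Boundary}. When $\rho$ is the standard representation of $\GL_n$ this is Godement--Jacquet (reducing to classical Poisson summation on the vector space $M_n(\A_F)$), when $M_\rho = M_\otimes$ it is the Rankin--Selberg theory of Jacquet--Piatetski-Shapiro--Shalika (or the doubling integral), and more generally the conjecture follows for those $\rho$ whose $L$-function is accessible by the Langlands--Shahidi method or by a known functorial transfer; from the point of view of this paper the conclusion is the automorphy of the theta functions $\Theta_f$ on $\Psi_{\rho}(F)\backslash\Psi_{\rho}\{\A_F\}$ of \eqref{theta:intro}.

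The main obstacle is that for a general $\rho : {}^LG \to \GL_V(\CC)$ the analytic continuation and functional equation of $L(s,\pi,\rho)$ is open and of the same depth as Langlands functoriality, so a complete proof of Conjecture \ref{conj:PSC} is out of reach with present methods. Even the inputs invoked above are delicate: the existence of the basic function $b_{M_{\rho,F_v}}$ and the verification of \eqref{assum:bx:preserve} are understood only in special cases (Bouthier--Ng\^o--Sakellaridis, Shahidi, Sakellaridis), and controlling the spectral side — showing that the non-cuspidal contributions to both sides of the formula match — is precisely the ``beyond endoscopy'' difficulty. A realistic intermediate target, in the spirit of \S\ref{sec:Mod:cones}, is to prove the conjecture for the monoids $M_\rho$ that are quadric cones or built from them by iterated extension (for example $M_\otimes$), where the minimal-representation machinery and the results of \cite{GurK:Cone} give a direct handle on $\mathcal{F}_{M_\rho}$ and hence on the summation formula.
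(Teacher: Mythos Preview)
This statement is a \emph{conjecture} in the paper, not a theorem: the paper offers no proof, and indeed remarks immediately after it that the conjecture implies the functional equation of $L(s,\pi,\rho)$ and hence much of Langlands functoriality. So there is nothing in the paper to compare your proposal against.

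Your write-up is not a proof but a heuristic discussion, and you acknowledge as much in your final paragraph. That said, two points are worth flagging. First, your proposed direction of implication is essentially the reverse of what the paper emphasizes: you want to \emph{deduce} Poisson summation from the analytic properties of $L(s,\pi,\rho)$, whereas the paper (following Braverman--Kazhdan) presents Poisson summation as a route \emph{toward} those analytic properties. The equivalence you sketch is morally correct in the Godement--Jacquet template, but invoking it as a proof strategy for the conjecture is circular unless one has an independent source for the $L$-function side. Second, several of the local inputs you assume (basic functions satisfying \eqref{assum:bx:preserve} for general $\rho$, local functional equations in full generality) are themselves conjectural or only partially known, as you note; so even the reduction step is not available in general. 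Your closing assessment---that a complete proof is out of reach and that quadric-cone-type monoids such as $M_\otimes$ are the realistic test cases---is accurate and matches the spirit of the paper.
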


As stated in \cite{BK-lifting},  this conjecture implies the functional equation of the Langlands $L$-function attached to $\rho.$  Combining this with the converse theorem \cite{Cogdell:PS:ConverseII}, we see that Conjecture \ref{conj:PSC} implies much of Langlands functoriality.  

The assumption on $f$ in Ansatz \ref{ans:PSC} is unnatural.  For every $f \in \mathcal{S}(X(\A_F))$ one should have that
\begin{align} \label{full:PSC}
    \sum_{\gamma \in X^\circ(F)}f(\gamma)+\mathrm{BT}_{X}(f)=\sum_{\gamma \in X^\circ(F)}\mathcal{F}_{X}(f)(\gamma)+\mathrm{BT}_{X}(\mathcal{F}_{X}(f))
\end{align}
where $\mathrm{BT}_{X}:\mathcal{S}(X(\A_F)) \to \CC$ is a linear functional that is reasonably explicit and related to the geometry of $X.$ 
We refer to the linear functionals $\mathrm{BT}_{X}$ as \textbf{boundary terms}.
These linear functionals remain mysterious at the moment, and it is an important problem to describe them geometrically.  We refer loosely to an identity of the form \eqref{full:PSC} with an explicit, geometric description of the boundary terms $\mathrm{BT}_{X}(f)$ as a \textbf{full Poisson summation formula}.  The full Poisson summation formula is known for horospherical varieties as in \S \ref{sec:HS}  when $G$ is a classical group or $G_2$ \cite{BK:normalized,Choie:Getz,Hsu:Asymp}.
   Apart from the case of matrices, the only case where we have a full Poisson summation formula with a completely geometric description of the boundary terms is the case of the cone considered in \S \ref{sec:Mod:cones}.  We discuss this in \S \ref{ssec:BT:cones} below.

Let us explain the relationship between global boundary terms and the boundary of the Schwartz space, as explained to Getz by Kazhdan.  Let $v$ be a place of $F.$ 
 Assuming Ansatz \ref{ans:PSC}, for any fixed $f^{v} \in \mathcal{S}(X(\A_F^v))$ one has a well-defined functional
\begin{align*}
\mathcal{S}(X(F_v))/\mathcal{S}(X^\circ(F_v)) \cap \mathcal{F}_{X}^{-1}(\mathcal{S}(X^\circ(F_v))) & \lto \CC\\
f_v &\longmapsto    \sum_{\gamma \in X^\circ(F)}f_vf^v(\gamma)-\sum_{\gamma \in X^\circ(F)}\mathcal{F}_{X}(f_vf^v).
\end{align*}
Studying this functional is equivalent to studying the functional 
$$
f_v \longmapsto \mathrm{BT}_X(\mathcal{F}_{X}(f_vf^v))-\mathrm{BT}_X(f_vf^v).
$$
On the other hand we have a series of inclusions
$$
\mathcal{S}(X(F_v)) \geq \mathcal{S}(X^\circ(F_v)) \geq \mathcal{S}(X^\circ(F_v)) \cap \mathcal{F}_{X}^{-1}(\mathcal{S}(X^\circ(F_v)).
$$
Thus understanding the structure of the boundary $\mathcal{S}(X(F_v)) / \mathcal{S}(X^\circ(F_v))$  would at least shed some light on 
the boundary terms.

\subsection{Boundary terms in our two examples}

\subsubsection{Vector spaces}
If $X=V$ then one defines 
$$
\mathrm{BT}_{V}(f):=f(0).
$$
The full Poisson summation formula \eqref{full:PSC} is just the usual Poisson summation formula.

\subsubsection{Quadric cones} \label{ssec:BT:cones}

We briefly recall the main theorem of \cite{Getz:Quadric}.  
In loc.~cit.~the first author defined operators 
\begin{align} \label{I} \begin{split}
I:\mathcal{S}(V_i(\A_F) \oplus \A_F^2) &\lto C^\infty(C_i^\circ(\A_F)) \end{split}
\end{align}
for $i>0,$ together with operators
\begin{align*}
c_i:\mathcal{S}(V_i(\A_F) \oplus \A_F^2) &\lto \CC,\\
d_{i,i'}:\mathcal{S}(V_{i}(\A_F) \oplus \A_F^2)& \lto \mathcal{S}(V_{i'}(\A_F) \oplus \A_F^2)
\end{align*}
for $i > i' \geq 0.$ Briefly, $c_i(f)$ is the regularized value of $I(f)$ at $0 \in C_i(F).$  On the other hand $d_{i,i-1}$ is given by a partial Fourier transform and then restriction to the complement of a hyperbolic plane.  One then sets $d_{i,i'}=d_{i'+1,i'} \circ  \dots \circ d_{i,i-1}.$
By convention, $d_{i,i}$ is the identity.  
We let
\begin{align*}
\mathcal{F}_{\wedge}:\mathcal{S}(\A_F^2) \lto \mathcal{S}(\A_F^2)
\end{align*}
be the usual $\SL_2(\A_F)$-equivariant Fourier transform.  

The main theorem of \cite{Getz:Quadric} follows:
\begin{thm} \label{thm:main:cones} For $f \in \mathcal{S}(V_n(\A_F) \oplus \A_F^2)$ the sum
\begin{align}
\sum_{\xi \in C^{\circ}_n(F)}&I(f)(\xi)+c_n(f) \label{not:bd2} \\
&+\sum_{i=1}^{n-1}\left(c_i(d_{n,i}(f))+\sum_{\xi \in C^{\circ}_i(F)}I(d_{n,i}(f))(\xi)\right)+ \kappa d_{n,0}(f)(0_{V_0},0,0) \label{bd2}
\end{align}
is invariant under $f \mapsto  (1_{\mathcal{S}(V_i(\A_F))} \otimes \mathcal{F}_{\wedge})(f)$.
Here $\kappa$ is a suitable constant. \qed
\end{thm}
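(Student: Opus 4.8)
The plan is to prove the identity by induction on $n$, peeling off one split hyperbolic plane at a time, with the operator $d_{n,n-1}$ implementing the passage from the $n$-th stage to the $(n-1)$-st. Throughout, the starting point is that the integral defining $I$ diverges, so one first replaces it by a holomorphic family $I_s$ convergent for $\Re s\gg 0$, shows that $\sum_{\xi\in C_k^\circ(F)}I_s(f)(\xi)$ converges and continues meromorphically in $s$, and recovers $I(f)$ and the regularized constant $c_k(f)$ from the Laurent expansion at the relevant point; the analogous remarks apply to $I$ on the lower cones $C_k$ and to the partial Fourier transforms appearing in the $d_{i,i-1}$.

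First I would treat the base case $n=1$, where the middle sum in \eqref{bd2} is empty and the claim is that $\sum_{\xi\in C_1^\circ(F)}I(f)(\xi)+c_1(f)+\kappa\,d_{1,0}(f)(0_{V_0},0,0)$ is invariant under $1\otimes\mathcal{F}_{\wedge}$. Here $C_1^\circ$ sits inside $V_1=\mathbb{H}\oplus V_0$ with $\mathbb{H}$ a split plane, and unwinding the definition of $I$ one identifies the regularized sum with a Tate-type zeta integral in which the scaling of an isotropic vector is coupled to the $\A_F^2$-variable. The base case then follows from the ordinary Poisson summation formula on the split plane $\mathbb{H}(\A_F)$ sitting in $V_1(\A_F)$: the nontrivial Fourier modes in the $\mathbb{H}$-direction, together with the structure of $I$ and Tate's functional equation, recombine into the $1\otimes\mathcal{F}_{\wedge}$-dual of the original expression, the pole of the continuation contributes $c_1(f)$, the fully degenerate mode contributes $\kappa\,d_{1,0}(f)(0_{V_0},0,0)$ (here the stratum "$i=0$" degenerates to a single point because $V_0$ is anisotropic, so $C_0^\circ=\emptyset$), and the local $\gamma$-factors relating $I_s(f)$ to the $\mathcal{F}_{\wedge}$-transformed integral are absorbed into $\kappa$.

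For the inductive step I would assume the theorem for $n-1$ and decompose $\sum_{\xi\in C_n^\circ(F)}I(f)(\xi)$ according to the coordinate of $\xi$ along the last hyperbolic plane in $V_n=\GG_a e_1\oplus V_{n-1}\oplus\GG_a e_{2n+2j}$: writing $\xi=(a,v,b)$ with $ab+\mathcal{Q}_{n-1}(v)=0$, one applies Poisson summation in the $\mathbb{H}$-direction inside the regularized sum. The nontrivial Fourier modes reassemble, via Tate's functional equation once more, into the $1\otimes\mathcal{F}_{\wedge}$-image of $\sum_{\xi\in C_n^\circ(F)}I(f)(\xi)+c_n(f)$, while the trivial mode — restricted off the plane and Fourier transformed in the $V_n$-variable — is exactly $d_{n,n-1}(f)\in\mathcal{S}(V_{n-1}(\A_F)\oplus\A_F^2)$, and the expression it contributes is precisely the one governed by the inductive hypothesis. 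Since $d_{n,i}=d_{n-1,i}\circ d_{n,n-1}$ and $d_{n-1,0}\circ d_{n,n-1}=d_{n,0}$, applying the theorem for $n-1$ to $d_{n,n-1}(f)$ produces, level by level, the terms indexed $i=1,\dots,n-1$ of \eqref{bd2} together with $\kappa\,d_{n,0}(f)(0_{V_0},0,0)$; because the partial Fourier transform defining $d_{n,n-1}$ acts in the $V_n$-variable it commutes with $1\otimes\mathcal{F}_{\wedge}$, so the $\mathcal{F}_{\wedge}$-invariance provided by the inductive hypothesis is compatible with the invariance extracted from Poisson summation, and reassembling the pieces yields the $1\otimes\mathcal{F}_{\wedge}$-invariance of the full expression \eqref{not:bd2}$+$\eqref{bd2} for $f$, closing the induction.

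The main obstacle is the analytic bookkeeping underlying the regularization, not the combinatorics of which term lands where. Concretely, one must (i) prove convergence of the $s$-families of sums over $C_k^\circ(F)$ for $\Re s$ large and their meromorphic continuation with no poles beyond the expected ones; (ii) justify interchanging the summation over rational cone points with the auxiliary $s$-integration and with Poisson summation, which requires uniform decay estimates for $I_s(f)$ and its partial Fourier transforms; and (iii) match the principal parts on the two sides of each application of Poisson summation so that the residual terms are exactly $c_k(d_{n,k}(f))$ and $\kappa\,d_{n,0}(f)(0_{V_0},0,0)$ with $\kappa$ independent of $f$ — this last point simultaneously pins down $\kappa$ as a product of local volume and $\gamma$-factors at the ramified places and forces its consistency through the induction. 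Making these three steps rigorous is where the bulk of the argument of \cite{Getz:Quadric} lies.
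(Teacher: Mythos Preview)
The paper does not prove this theorem: it is introduced as ``The main theorem of \cite{Getz:Quadric} follows'' and the statement ends with a \qed, indicating it is quoted without proof from that reference. There is therefore no proof in the present paper to compare your proposal against.

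That said, your outline is a reasonable high-level sketch of the inductive architecture one expects from \cite{Getz:Quadric}: the descent $d_{n,n-1}$ peels off a split hyperbolic plane, Poisson summation on that plane separates nontrivial modes (which reassemble into the $\mathcal{F}_\wedge$-transformed top-level term) from the trivial mode (which feeds into the inductive hypothesis on $V_{n-1}$), and the regularization is handled via an auxiliary complex parameter with the $c_k$ arising from the Laurent expansion. You are also right that the substance of the argument is the analytic control in items (i)--(iii), not the combinatorics; your proposal is an outline rather than a proof, and you correctly flag that the rigorous justification of convergence, meromorphic continuation, and the interchange of Poisson summation with the regularized sums is deferred to \cite{Getz:Quadric}. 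Since the present paper only cites the result, this is the appropriate level of detail.
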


There is some description of the boundary terms in the setting of Braverman-Kazhdan spaces and generalized Schubert varieties in \cite{Choie:Getz}, but Theorem \ref{thm:main:cones} is the only case beyond vector spaces where one has a manifestly geometric expression for the boundary terms.  

There is a qualitative connection between the various terms in Theorem \ref{thm:main:cones} and the subschemes in Theorem \ref{thm:quadconedecomp}.  The sum in \eqref{not:bd2} corresponds to $T^*C_n^\circ,$ $c_n(f)$ corresponds to $C_n^\circ,$ and \eqref{bd2} corresponds to the remaining terms.  Just as in the local setting, we do not yet have a conjectural understanding of what we should mean by ``corresponds.''

\section{Automorphic representations of modulation groups} \label{sec:AR}

We continue to work under the assumptions of the previous section.  Choose a character $\chi:\Psi_{\omega}^{\mathrm{s}}(\A_F) \to \CC^\times$ trivial on $\Psi_{\omega}^{\mathrm{s}}(F).$  We assume that $\chi|_{\Psi_{\omega}^{\mathrm{s}}(\widehat{\OO}_F^S)}=1.$

For $v \not \in S$ let
\begin{align}
K_v:=\langle \mathcal{F}_{X},(\mathcal{R}_{\omega}\otimes \chi)(\Psi_{\omega}^{\mathrm{s}}(\OO_{F_v}))\rangle\leq \mathrm{Aut}(L^2(X(F_v))).
\end{align}
We then define the restricted direct product
\begin{align}
    \Psi_{\omega}\{\A_F\}:=\sideset{}{'}\prod_v \Psi_{\omega}\{F_v\}
\end{align}
with respect to the $K_v.$   We also have a subgroup
\begin{align}
    \Psi_\omega\{F\}=\langle \mathcal{F}_{X},\Psi_{\omega}^{\mathrm{s}}(F)\rangle \leq \Psi_{\omega}\{\A_F\}.
\end{align}
Here the implicit map from $\Psi_\omega^{\mathrm{s}}\{F\}$ into $\Psi_\omega^{\mathrm{s}}\{\A_F\}$ is the diagonal embedding.  

We point out that for almost all $v$ the group $K_v$ acts trivially on the basic function $b_{X_{F_v}} \in \mathcal{S}(X(F_v)).$  Thus  we have a representation
\begin{align}
\mathcal{R}_{\omega}:\Psi_{\omega}\{\A_F\} \times \mathcal{S}(X(\A_F)) \lto \mathcal{S}(X(\A_F)).
\end{align}

\begin{prop} \label{prop:auto:func}
 Assume that one has a $\Psi_{\omega}^{\mathrm{s}}(F)$-invariant linear functional
 $$
\mathrm{BT}_X:\mathcal{S}(X(\A_F)) \lto \CC.
 $$
For $(h,f) \in \Psi_\omega(\A_F) \times \mathcal{S}(X(\A_F)) $ consider the function
\begin{align}
\Theta_f(h):=\sum_{\gamma \in X^\circ(F)}\mathcal{R}_\omega(h)f(\gamma)+\mathrm{BT}_X(\mathcal{R}_{\omega}(h)(f)).
\end{align}
 Then
$\Theta_f(h)$ is left $\Psi_{\omega}\{F\}$-invariant if and only if
\begin{align} \label{full:PS}
    \sum_{\gamma \in X^\circ(F)}f(\gamma)+\mathrm{BT}_X(f)=\sum_{\gamma \in X^\circ(F)}\mathcal{F}_{X}(f)(\gamma)+\mathrm{BT}_X(\mathcal{F}_{X}(f)),
\end{align}
that is, the full Poisson summation conjecture holds.
\end{prop}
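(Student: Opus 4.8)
The statement is an "if and only if" whose content is essentially bookkeeping: unwind the definition of $\Theta_f$ and check that left-invariance under $\Psi_\omega\{F\}$ reduces to the single identity \eqref{full:PS}. The key observation is that $\Psi_\omega\{F\}$ is generated by $\Psi_\omega^{\mathrm{s}}(F)$ (embedded diagonally) and $\mathcal{F}_X$, so it suffices to check $\Psi_\omega\{F\}$-invariance of $h\mapsto \Theta_f(h)$ separately on these two sets of generators. First I would observe that for $\gamma_0\in\Psi_\omega^{\mathrm{s}}(F)$ one has
\begin{align*}
\Theta_f(\gamma_0 h)=\sum_{\gamma\in X^\circ(F)}\mathcal{R}_\omega(\gamma_0)\mathcal{R}_\omega(h)f(\gamma)+\mathrm{BT}(\mathcal{R}_\omega(\gamma_0)\mathcal{R}_\omega(h)f).
\end{align*}
Writing $\gamma_0=(\lambda,h_0)\in V^\vee(F)\rtimes H(F)$, the operator $\mathcal{R}_\omega(\gamma_0)$ acts on $g\in\mathcal{S}(X(\A_F))$ by $g(x)\mapsto \psi(\lambda\circ\omega(x))g(xh_0)$, where now $\psi$ is the standard adelic character; since $\lambda\circ\omega(\gamma)\in F$ for $\gamma\in X^\circ(F)$ we have $\psi(\lambda\circ\omega(\gamma))=1$, and since $\gamma\mapsto\gamma h_0$ permutes $X^\circ(F)$ the first sum is unchanged. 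The second term is unchanged precisely because we assumed $\mathrm{BT}$ is $\Psi_\omega^{\mathrm{s}}(F)$-invariant. Hence $\Theta_f$ is automatically left $\Psi_\omega^{\mathrm{s}}(F)$-invariant, with no hypothesis needed.

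Next I would handle the generator $\mathcal{F}_X$. Here $\mathcal{F}_X$ denotes the adelic Fourier transform $\mathcal{F}_\omega=\otimes_v\mathcal{F}_{X,F_v}$ acting on $\mathcal{S}(X(\A_F))$, and $\mathcal{R}_\omega(\mathcal{F}_X)g=\mathcal{F}_\omega(g)$. Thus
\begin{align*}
\Theta_f(\mathcal{F}_X h)=\sum_{\gamma\in X^\circ(F)}\mathcal{F}_\omega\bigl(\mathcal{R}_\omega(h)f\bigr)(\gamma)+\mathrm{BT}\bigl(\mathcal{F}_\omega(\mathcal{R}_\omega(h)f)\bigr),
\end{align*}
and requiring this to equal $\Theta_f(h)=\sum_{\gamma}\mathcal{R}_\omega(h)f(\gamma)+\mathrm{BT}(\mathcal{R}_\omega(h)f)$ for all $h$ is exactly the assertion that \eqref{full:PS} holds with $f$ replaced by the arbitrary Schwartz function $\mathcal{R}_\omega(h)f$. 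Since $h$ ranges over $\Psi_\omega\{\A_F\}$ and $\mathcal{R}_\omega$ acts on $\mathcal{S}(X(\A_F))$, the functions $\mathcal{R}_\omega(h)f$ as $(h,f)$ vary sweep out all of $\mathcal{S}(X(\A_F))$ (already taking $h=1$ gives every $f$), so invariance under $\mathcal{F}_X$ for all $h$ is equivalent to \eqref{full:PS} for all $f\in\mathcal{S}(X(\A_F))$. Combining the two generator computations: $\Theta_f$ is left $\Psi_\omega\{F\}$-invariant for all $f$ iff it is $\mathcal{F}_X$-invariant for all $f$ iff \eqref{full:PS} holds for all $f$.

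The only genuine subtlety — and the step I expect to need the most care — is checking that $\Psi_\omega\{F\}=\langle\mathcal{F}_X,\Psi_\omega^{\mathrm{s}}(F)\rangle$ really is generated by these elements inside $\Psi_\omega\{\A_F\}$ in a way compatible with the action $\mathcal{R}_\omega$, so that invariance under a generating set implies invariance under the whole group; this uses that $\mathcal{R}_\omega$ is a genuine representation of $\Psi_\omega\{\A_F\}$ (the compatibility of the local intertwining relations \eqref{Fi:equiv} globally) together with the definition of $\Psi_\omega\{F\}$ given just above the proposition. One must also be slightly careful that the absolute convergence hypothesis $\sum_{\gamma\in X^\circ(F)}|g(\gamma)|<\infty$ for all $g\in\mathcal{S}(X(\A_F))$, assumed in this section, legitimizes rearranging the sum $\sum_\gamma\mathcal{R}_\omega(h)f(\gamma)$ after the substitution $\gamma\mapsto\gamma h_0$ and applying the character $\psi$. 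Beyond these points the argument is a direct unwinding of definitions.
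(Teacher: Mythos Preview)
Your proposal is correct and follows essentially the same approach as the paper's proof: both check invariance under the generating set $\{\mathcal{F}_X\}\cup\Psi_\omega^{\mathrm{s}}(F)$, noting that $\Psi_\omega^{\mathrm{s}}(F)$-invariance is automatic (from $\psi|_F=1$, $H(F)$ permuting $X^\circ(F)$, and the hypothesis on $\mathrm{BT}$) while $\mathcal{F}_X$-invariance is exactly \eqref{full:PS}. You are simply more explicit about the absolute-convergence and generator-to-group steps than the paper, which compresses the argument into a few lines.
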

\begin{proof}
If $\Theta_f(h)$ is $\Psi_\omega\{F\}$-invariant then  \eqref{full:PS} is just the identity $\Theta_f(I)=\Theta_f(\mathcal{F}_X).$

Conversely, suppose that \eqref{full:PS} holds.  Then $\Theta_f(h)$ is invariant under the subgroup generated by $H(F)$ and $\mathcal{F}_{X}$, and the term $\mathrm{BT}_X(\mathcal{R}_{\omega}(h)f)$ is invariant under $\Psi_{\omega}^{\mathrm{s}}$ by assumption.  The sum $\sum_{\gamma \in X^\circ(F)}\mathcal{R}_{\omega}(h)(f)(\gamma)$ is obviously invariant under $H(F),$ and it is invariant under the action of $V^\vee(F)$ because $\psi(\lambda(\omega(\gamma)))=1$ for $(\lambda,\gamma) \in V^\vee(F) \times X^\circ(F).$
\end{proof}

Thus assuming the full Poisson summation conjecture it is reasonable to regard
$
\Theta_f(h)$ as an automorphic function on $\Psi_{\omega}\{F\} \backslash \Psi_{\omega}\{\A_F\},$ and it is reasonable to view  
$$
\{\Theta_f(g):f \in \mathcal{S}(M_{\rho}(\A_F))\}
$$
as an automorphic representation of $\Psi_{\omega}\{\A_F\}.$  Of course the notion of an automorphic representation of $\Psi_{\omega}\{\A_F\}$ has yet to be defined.

\bibliography{refs.bib}{}

@article {AG:Nash,
    AUTHOR = {Aizenbud, A. and Gourevitch, D.},
     TITLE = {Schwartz functions on {N}ash manifolds},
   JOURNAL = {Int. Math. Res. Not. IMRN},
  FJOURNAL = {International Mathematics Research Notices. IMRN},
      YEAR = {2008},
    NUMBER = {5},
     PAGES = {Art. ID rnm 155, 37},
      ISSN = {1073-7928},
   MRCLASS = {46T30 (14P20 46F05)},
  MRNUMBER = {2418286},
MRREVIEWER = {M. Kunzinger},
       DOI = {10.1093/imrn/rnm155},
       URL = {https://doi.org/10.1093/imrn/rnm155},
}

@article{BelovKanelKontsevich2005,
  author    = {A. Belov-Kanel and M. Kontsevich},
  title     = {Automorphisms of the {W}eyl Algebra},
  journal   = {Letters in Mathematical Physics},
  volume    = {74},
  number    = {3},
  pages     = {181--199},
  year      = {2005},
  doi       = {10.1007/s11005-005-0027-5},
}

@article {Braverman:Gaitsgory,
    AUTHOR = {Braverman, A. and Gaitsgory, D.},
     TITLE = {Geometric {E}isenstein series},
   JOURNAL = {Invent. Math.},
  FJOURNAL = {Inventiones Mathematicae},
    VOLUME = {150},
      YEAR = {2002},
    NUMBER = {2},
     PAGES = {287--384},
      ISSN = {0020-9910},
   MRCLASS = {11G45 (11F60 14F43)},
  MRNUMBER = {1933587},
MRREVIEWER = {Igor Yu. Potemine},
       DOI = {10.1007/s00222-002-0237-8},
       URL = {https://doi.org/10.1007/s00222-002-0237-8},
}

@article {BK:basic:affine,
    AUTHOR = {Braverman, A. and Kazhdan, D.},
     TITLE = {On the {S}chwartz space of the basic affine space},
   JOURNAL = {Selecta Math. (N.S.)},
  FJOURNAL = {Selecta Mathematica. New Series},
    VOLUME = {5},
      YEAR = {1999},
    NUMBER = {1},
     PAGES = {1--28},
      ISSN = {1022-1824},
   MRCLASS = {22E50 (22E55)},
  MRNUMBER = {1694894},
MRREVIEWER = {Goran Mui\'{c}},
       DOI = {10.1007/s000290050041},
       URL = {https://doi.org/10.1007/s000290050041},
}

@article {BK-lifting,
    AUTHOR = {Braverman, A. and Kazhdan, D.},
     TITLE = {{$\gamma$}-functions of representations and lifting},
      NOTE = {With an appendix by V. Vologodsky,
              GAFA 2000 (Tel Aviv, 1999)},
   JOURNAL = {Geom. Funct. Anal.},
  FJOURNAL = {Geometric and Functional Analysis},
      YEAR = {2000},
    NUMBER = {Special Volume, Part I},
     PAGES = {237--278},
      ISSN = {1016-443X},
     CODEN = {GFANFB},
   MRCLASS = {11F70 (11R39 11S37 11S40 22E55)},
  MRNUMBER = {1826255 (2002g:11064)},
MRREVIEWER = {Volker J. Heiermann},
       DOI = {10.1007/978-3-0346-0422-2_9},
       URL = {http://dx.doi.org/10.1007/978-3-0346-0422-2_9},
}

@article {BK:normalized,
    AUTHOR = {Braverman, A. and Kazhdan, D.},
     TITLE = {Normalized intertwining operators and nilpotent elements in
              the {L}anglands dual group},
      NOTE = {Dedicated to Yuri I. Manin on the occasion of his 65th
              birthday},
   JOURNAL = {Mosc. Math. J.},
  FJOURNAL = {Moscow Mathematical Journal},
    VOLUME = {2},
      YEAR = {2002},
    NUMBER = {3},
     PAGES = {533--553},
      ISSN = {1609-3321},
   MRCLASS = {22E50 (11F70 22E55)},
  MRNUMBER = {1988971},
MRREVIEWER = {Goran Mui\"A},
}

@article {Choie:Getz,
    AUTHOR = {Choie, Y. and Getz, J.~R.},
     TITLE = {Schubert {E}isenstein series and {P}oisson summation for
              {S}chubert varieties},
   JOURNAL = {Amer. J. Math.},
  FJOURNAL = {American Journal of Mathematics},
    VOLUME = {147},
      YEAR = {2025},
    NUMBER = {3},
     PAGES = {597--653},
      ISSN = {0002-9327},
   MRCLASS = {11F70 (11F55 14N15)},
  MRNUMBER = {4914752},
}

@preamble{
   "\def\polhk#1{\setbox0=\hbox{#1}{\ooalign{\hidewidth
    \lower1.5ex\hbox{`}\hidewidth\crcr\unhbox0}}} "
}

@article {Cogdell:PS:ConverseII,
    AUTHOR = {Cogdell, J. W. and Piatetski-Shapiro, I. I.},
     TITLE = {Converse theorems for {${\rm GL}_n$}. {II}},
   JOURNAL = {J. Reine Angew. Math.},
  FJOURNAL = {Journal f\"ur die Reine und Angewandte Mathematik. [Crelle's
              Journal]},
    VOLUME = {507},
      YEAR = {1999},
     PAGES = {165--188},
      ISSN = {0075-4102},
   MRCLASS = {22E55},
  MRNUMBER = {1670207},
MRREVIEWER = {Rebecca Herb},
       URL = {https://doi.org/10.1515/crll.1999.507.165},
}

@article {Conrad_adelic_points,
    AUTHOR = {Conrad, B.},
     TITLE = {Weil and {G}rothendieck approaches to adelic points},
   JOURNAL = {Enseign. Math. (2)},
  FJOURNAL = {L'Enseignement Math\'ematique. Revue Internationale. 2e
              S\'erie},
    VOLUME = {58},
      YEAR = {2012},
    NUMBER = {1-2},
     PAGES = {61--97},
      ISSN = {0013-8584},
   MRCLASS = {14-02 (01A60 01A70 14-03 14G25)},
  MRNUMBER = {2985010},
MRREVIEWER = {J{\"o}rg Jahnel},
       DOI = {10.4171/LEM/58-1-3},
       URL = {http://dx.doi.org/10.4171/LEM/58-1-3},
}

@article {Elazar:Shaviv,
    AUTHOR = {Elazar, B. and Shaviv, A.},
     TITLE = {Schwartz functions on real algebraic varieties},
   JOURNAL = {Canad. J. Math.},
  FJOURNAL = {Canadian Journal of Mathematics. Journal Canadien de
              Math\'{e}matiques},
    VOLUME = {70},
      YEAR = {2018},
    NUMBER = {5},
     PAGES = {1008--1037},
      ISSN = {0008-414X},
   MRCLASS = {14P05 (14P20 22E45 46A11 46F05)},
  MRNUMBER = {3831913},
MRREVIEWER = {Zbigniew Szafraniec},
       DOI = {10.4153/CJM-2017-042-6},
       URL = {https://doi.org/10.4153/CJM-2017-042-6},
}

@book {Getz:Hahn,
    AUTHOR = {Getz, J.~R. and Hahn, H.},
     TITLE = {An introduction to automorphic representations---with a view
              toward trace formulae},
    SERIES = {Graduate Texts in Mathematics},
    VOLUME = {300},
 PUBLISHER = {Springer, Cham},
      YEAR = {[2024] \copyright 2024},
     PAGES = {xviii+609},
      ISBN = {978-3-031-41151-9; 978-3-031-41153-3},
   MRCLASS = {11Fxx (11-02 22-02 22Exx)},
  MRNUMBER = {4738301},
       DOI = {10.1007/978-3-031-41153-3},
       URL = {https://doi.org/10.1007/978-3-031-41153-3},
}

@article {Getz:Hsu:Leslie,
    AUTHOR = {Getz, J.~R. and Hsu, C-H. and Leslie, S.},
     TITLE = {Harmonic analysis on certain spherical varieties},
   JOURNAL = {J. Eur. Math. Soc. (JEMS)},
  FJOURNAL = {Journal of the European Mathematical Society (JEMS)},
    VOLUME = {27},
      YEAR = {2025},
    NUMBER = {2},
     PAGES = {433--541},
      ISSN = {1435-9855},
   MRCLASS = {11F70 (11F55 11F85 43A32)},
  MRNUMBER = {4859573},
       DOI = {10.4171/jems/1381},
       URL = {https://doi.org/10.4171/jems/1381},
}

@article{Goncharov1982Weil,
  author  = {Goncharov, A. B.},
  title   = {Constructions of the Weil representations of certain simple Lie algebras},
  journal = {Functional Analysis and Its Applications},
  year    = {1982},
  volume  = {16},
  number  = {2},
  pages   = {133--135},
  doi     = {10.1007/BF01081630},
  note    = {Translated from \emph{Funktsional. Anal. i Prilozhen.} 16(2) (1982), 70--71}
}

@book {Gortz_Wedhorn,
    AUTHOR = {G{\"o}rtz, U. and Wedhorn, T.},
     TITLE = {Algebraic geometry {I}},
    SERIES = {Advanced Lectures in Mathematics},
      NOTE = {Schemes with examples and exercises},
 PUBLISHER = {Vieweg + Teubner, Wiesbaden},
      YEAR = {2010},
     PAGES = {viii+615},
      ISBN = {978-3-8348-0676-5},
   MRCLASS = {14-01},
  MRNUMBER = {2675155 (2011f:14001)},
MRREVIEWER = {C{\'{\i}}cero Carvalho},
       DOI = {10.1007/978-3-8348-9722-0},
       URL = {http://dx.doi.org/10.1007/978-3-8348-9722-0},
}

@incollection {Jantzen:Nilpotent,
    AUTHOR = {Jantzen, J. C.},
     TITLE = {Nilpotent orbits in representation theory},
 BOOKTITLE = {Lie theory},
    SERIES = {Progr. Math.},
    VOLUME = {228},
     PAGES = {1--211},
 PUBLISHER = {Birkh\"{a}user Boston, Boston, MA},
      YEAR = {2004},
   MRCLASS = {14L30 (17B20 17B35 20G15)},
  MRNUMBER = {2042689},
MRREVIEWER = {D. I. Panyushev},
}

@ARTICLE{jia2024affineclosuretslnu,
       author = {{Jia}, B.},
        title = "{The Affine Closure of T\^*(SL\_n/U)}",
      journal = {arXiv e-prints},
     keywords = {Mathematics - Representation Theory, 20G05},
         year = 2021,
        month = dec,
          eid = {arXiv:2112.08649},
        pages = {arXiv:2112.08649},
          doi = {10.48550/arXiv.2112.08649},
archivePrefix = {arXiv},
       eprint = {2112.08649},
 primaryClass = {math.RT},
       adsurl = {https://ui.adsabs.harvard.edu/abs/2021arXiv211208649J},
      adsnote = {Provided by the SAO/NASA Astrophysics Data System}
}

@article {Kobayashi:Mano,
    AUTHOR = {Kobayashi, T. and Mano, G.},
     TITLE = {The {S}chr\"{o}dinger model for the minimal representation of the
              indefinite orthogonal group {${\rm O}(p,q)$}},
   JOURNAL = {Mem. Amer. Math. Soc.},
  FJOURNAL = {Memoirs of the American Mathematical Society},
    VOLUME = {213},
      YEAR = {2011},
    NUMBER = {1000},
     PAGES = {vi+132},
      ISSN = {0065-9266},
      ISBN = {978-0-8218-4757-2},
   MRCLASS = {22E30 (22E46 43A80)},
  MRNUMBER = {2858535},
MRREVIEWER = {Takeshi Kawazoe},
       DOI = {10.1090/S0065-9266-2011-00592-7},
       URL = {https://doi.org/10.1090/S0065-9266-2011-00592-7},
}

@article{LSSMinimalNilpotent,
  author    = {Levasseur, T. and Smith, S. P. and Stafford, J. T.},
  title     = {{The Minimal Nilpotent Orbit, the Joseph Ideal, and Differential Operators}},
  journal   = {Journal of Algebra},
  volume    = {116},
  number    = {2},
  pages     = {480--501},
  year      = {1988},
  month     = aug,
  issn      = {0021-8693},
  doi       = {10.1016/0021-8693(88)90231-1},
  url       = {https://doi.org/10.1016/0021-8693(88)90231-1},
  publisher = {Academic Press},
  note      = {Communicated by Michael Artin},
  mrnumber  = {953165}
}

@book {WWLi:Zeta,
    AUTHOR = {Li, W-W.},
     TITLE = {Zeta integrals, {S}chwartz spaces and local functional
              equations},
    SERIES = {Lecture Notes in Mathematics},
    VOLUME = {2228},
 PUBLISHER = {Springer, Cham},
      YEAR = {2018},
     PAGES = {viii+139},
      ISBN = {978-3-030-01287-8; 978-3-030-01288-5},
   MRCLASS = {22E50 (11F67 11F70 22F50)},
  MRNUMBER = {3839636},
MRREVIEWER = {Dubravka Ban},
       DOI = {10.1007/978-3-030-01288-5},
       URL = {https://doi.org/10.1007/978-3-030-01288-5},
}

@book {Milne:AGbook,
    AUTHOR = {Milne, J. S.},
     TITLE = {Algebraic groups},
    SERIES = {Cambridge Studies in Advanced Mathematics},
    VOLUME = {170},
      NOTE = {The theory of group schemes of finite type over a field},
 PUBLISHER = {Cambridge University Press, Cambridge},
      YEAR = {2017},
     PAGES = {xvi+644},
      ISBN = {978-1-107-16748-3},
   MRCLASS = {14L15 (14-01 17B45 20-01 20G15)},
  MRNUMBER = {3729270},
MRREVIEWER = {Boris \`E. Kunyavski\u{\i}},
       DOI = {10.1017/9781316711736},
       URL = {https://doi.org/10.1017/9781316711736},
}

@incollection {NgoSums,
    AUTHOR = {Ng{\^o}, B. C.},
     TITLE = {On a certain sum of automorphic {$L$}-functions},
 BOOKTITLE = {Automorphic forms and related geometry: assessing the legacy
              of {I}. {I}. {P}iatetski-{S}hapiro},
    SERIES = {Contemp. Math.},
    VOLUME = {614},
     PAGES = {337--343},
 PUBLISHER = {Amer. Math. Soc., Providence, RI},
      YEAR = {2014},
   MRCLASS = {11F70},
  MRNUMBER = {3220933},
MRREVIEWER = {Fan Gao},
       DOI = {10.1090/conm/614/12270},
       URL = {http://dx.doi.org/10.1090/conm/614/12270},
}

@article {SakellaridisSph,
    AUTHOR = {Sakellaridis, Y.},
     TITLE = {Spherical varieties and integral representations of
              {$L$}-functions},
   JOURNAL = {Algebra Number Theory},
  FJOURNAL = {Algebra \& Number Theory},
    VOLUME = {6},
      YEAR = {2012},
    NUMBER = {4},
     PAGES = {611--667},
      ISSN = {1937-0652},
   MRCLASS = {11F67 (11F66 11F70 14M27 22E55)},
  MRNUMBER = {2966713},
MRREVIEWER = {Farrell Brumley},
       DOI = {10.2140/ant.2012.6.611},
       URL = {http://dx.doi.org/10.2140/ant.2012.6.611},
}

@article {Sakellaridis:Sph:func:sph,
    AUTHOR = {Sakellaridis, Y.},
     TITLE = {Spherical functions on spherical varieties},
   JOURNAL = {Amer. J. Math.},
  FJOURNAL = {American Journal of Mathematics},
    VOLUME = {135},
      YEAR = {2013},
    NUMBER = {5},
     PAGES = {1291--1381},
      ISSN = {0002-9327},
   MRCLASS = {22E50 (11F67 14M27 20G25 22E55)},
  MRNUMBER = {3117308},
MRREVIEWER = {A. Alldridge},
       DOI = {10.1353/ajm.2013.0046},
       URL = {https://doi.org/10.1353/ajm.2013.0046},
}

@misc{stacks-project,
  author       = {The {Stacks project authors}},
  title        = {The Stacks project},
  howpublished = {\url{https://stacks.math.columbia.edu}},
  year         = {2021},
}

@Article{Popov:Vinberg,
    AUTHOR = {E. B. {Vinberg} and V. L. {Popov}},
     TITLE = {{On a class of quasihomogeneous affine varieties}},
  FJOURNAL = {{Mathematics of the USSR. Izvestiya}},
   JOURNAL = {{Math. USSR, Izv.}},
      ISSN = {0025-5726},
    VOLUME = {6},
     PAGES = {743--758},
      YEAR = {1973},
 PUBLISHER = {American Mathematical Society (AMS), Providence, RI},
  LANGUAGE = {English},
   MSC2010 = {14M05 13D15 14L99 20G05},
       Zbl = {0255.14016}
}

@article {Ngo:Hankel,
    AUTHOR = {Ng{\^{o}}, B. C.},
     TITLE = {Hankel transform, {L}anglands functoriality and functional
              equation of automorphic {$L$}-functions},
   JOURNAL = {Jpn. J. Math.},
  FJOURNAL = {Japanese Journal of Mathematics},
    VOLUME = {15},
      YEAR = {2020},
    NUMBER = {1},
     PAGES = {121--167},
      ISSN = {0289-2316},
   MRCLASS = {22E35 (11F66)},
  MRNUMBER = {4068833},
       DOI = {10.1007/s11537-019-1650-8},
       URL = {https://doi.org/10.1007/s11537-019-1650-8},
}

@article {GurK:Cone,
    AUTHOR = {Gurevich, N. and Kazhdan, D.},
     TITLE = {Fourier transform on a cone and the minimal representation of
              even orthogonal group},
   JOURNAL = {Israel J. Math.},
  FJOURNAL = {Israel Journal of Mathematics},
    VOLUME = {266},
      YEAR = {2025},
    NUMBER = {1},
     PAGES = {99--130},
      ISSN = {0021-2172,1565-8511},
   MRCLASS = {22E50 (20G05 20G25 43A65)},
  MRNUMBER = {4905579},
       DOI = {10.1007/s11856-025-2753-y},
       URL = {https://doi.org/10.1007/s11856-025-2753-y},
}

@book {Kaniuth:Taylor,
    AUTHOR = {Kaniuth, E. and Taylor, K. F.},
     TITLE = {Induced representations of locally compact groups},
    SERIES = {Cambridge Tracts in Mathematics},
    VOLUME = {197},
 PUBLISHER = {Cambridge University Press, Cambridge},
      YEAR = {2013},
     PAGES = {xiv+343},
      ISBN = {978-0-521-76226-7},
   MRCLASS = {22D30 (43A65)},
  MRNUMBER = {3012851},
MRREVIEWER = {Jorge Galindo},
}

@unpublished{GKTtheta,
    author  = {Gan, W-T. and Kudla, S. and Takeda, S.},
    title   = {The {L}ocal {T}heta {C}orrespondence},
    year    = {2025},
    note    = {https://sites.google.com/view/stakeda/theta-book},
    url     = {https://sites.google.com/view/stakeda/theta-book},
}

@article {Wang:monoid,
    AUTHOR = {Wang, J.},
     TITLE = {On the reductive monoid associated to a parabolic subgroup},
   JOURNAL = {J. Lie Theory},
  FJOURNAL = {Journal of Lie Theory},
    VOLUME = {27},
      YEAR = {2017},
    NUMBER = {3},
     PAGES = {637--655},
      ISSN = {0949-5932},
   MRCLASS = {20M32 (14M17 14R20 20G05 20G07 20G15)},
  MRNUMBER = {3592284},
MRREVIEWER = {Boris \`E. Kunyavski\u{\i}},
}

@ARTICLE{Getz:Gu:Hsu:Leslie,
       author = {{Getz}, J.~R. and {Gu}, M.~P. and {Hsu}, C-H. and {Leslie}, S.},
        title = "{On triple product $L$-functions and the fiber bundle method}",
      journal = {arXiv e-prints},
     keywords = {Number Theory, Representation Theory, 11F66, 11F70},
         year = 2025,
        month = mar,
          eid = {arXiv:2503.21648},
        pages = {arXiv:2503.21648},
          doi = {10.48550/arXiv.2503.21648},
archivePrefix = {arXiv},
       eprint = {2503.21648},
 primaryClass = {math.NT},
       adsurl = {https://ui.adsabs.harvard.edu/abs/2025arXiv250321648G},
      adsnote = {Provided by the SAO/NASA Astrophysics Data System}
}

@article {Borel:Tits,
    AUTHOR = {Borel, A. and Tits, J.},
     TITLE = {Groupes r\'{e}ductifs},
   JOURNAL = {Inst. Hautes \'{E}tudes Sci. Publ. Math.},
  FJOURNAL = {Institut des Hautes \'{E}tudes Scientifiques. Publications
              Math\'{e}matiques},
    NUMBER = {27},
      YEAR = {1965},
     PAGES = {55--150},
      ISSN = {0073-8301},
   MRCLASS = {14.50 (20.75)},
  MRNUMBER = {207712},
MRREVIEWER = {F. D. Veldkamp},
       URL = {http://www.numdam.org/item?id=PMIHES_1965__27__55_0},
}

@article {Shahidi:infinite,
    AUTHOR = {Shahidi, F.},
     TITLE = {Infinite dimensional groups and automorphic {$L$}-functions},
   JOURNAL = {Pure Appl. Math. Q.},
  FJOURNAL = {Pure and Applied Mathematics Quarterly},
    VOLUME = {1},
      YEAR = {2005},
    NUMBER = {3, Special Issue: In memory of A. Borel. Part 2},
     PAGES = {683--699},
      ISSN = {1558-8599},
   MRCLASS = {11F70 (11F66 22E65)},
  MRNUMBER = {2201330},
MRREVIEWER = {Ramin Takloo-Bighash},
       DOI = {10.4310/PAMQ.2005.v1.n3.a8},
       URL = {https://doi.org/10.4310/PAMQ.2005.v1.n3.a8},
}

@mastersthesis{Tome,
title        = "Towards a Geometric Understanding of Boundary Terms in the Poisson Summation Formula for Quadric Cones",
  author       = "M-H. Tom\'e",
  year         = "2025",
  type         = "Bachelor's Thesis",
  school       = "Duke University",
 

}

@book {HTT,
    AUTHOR = {Hotta, R. and Takeuchi, K. and Tanisaki, T.},
     TITLE = {{$D$}-modules, perverse sheaves, and representation theory},
    SERIES = {Progress in Mathematics},
    VOLUME = {236},
      NOTE = {Translated from the 1995 Japanese edition by Takeuchi},
 PUBLISHER = {Birkh\"{a}user Boston, Inc., Boston, MA},
      YEAR = {2008},
     PAGES = {xii+407},
      ISBN = {978-0-8176-4363-8},
   MRCLASS = {32C38 (14F05 14F10 17B10)},
  MRNUMBER = {2357361},
MRREVIEWER = {Corrado Marastoni},
       DOI = {10.1007/978-0-8176-4523-6},
       URL = {https://doi.org/10.1007/978-0-8176-4523-6},
}

@ARTICLE{BZSV,
       author = {{Ben-Zvi}, D. and {Sakellaridis}, Y. and {Venkatesh}, A.},
        title = "{Relative Langlands Duality}",
      journal = {arXiv e-prints},
     keywords = {Mathematics - Representation Theory, Mathematics - Number Theory, Mathematics - Quantum Algebra},
         year = 2024,
        month = sep,
          eid = {arXiv:2409.04677},
        pages = {arXiv:2409.04677},
          doi = {10.48550/arXiv.2409.04677},
archivePrefix = {arXiv},
       eprint = {2409.04677},
 primaryClass = {math.RT},
       adsurl = {https://ui.adsabs.harvard.edu/abs/2024arXiv240904677B},
      adsnote = {Provided by the SAO/NASA Astrophysics Data System}
}

@ARTICLE{DRS:Schwartz,
       author = {{Getz}, J.~R. and {Guti{\'e}rrez Terradillos}, A. and {Hosseinijafari}, F. and {Slipper}, A. and {Xi}, G. and {Yao}, H-Y. and {Zhao}, A.},
        title = "{The $\rho$-Schwartz space}",
      journal = {arXiv e-prints},
     keywords = {Number Theory, Representation Theory, Primary 11F70 Secondary 22E30, 22E35},
         year = 2025,
        month = nov,
          eid = {arXiv:2512.00182},
        pages = {arXiv:2512.00182},
          doi = {10.48550/arXiv.2512.00182},
archivePrefix = {arXiv},
       eprint = {2512.00182},
 primaryClass = {math.NT},
       adsurl = {https://ui.adsabs.harvard.edu/abs/2025arXiv251200182G},
      adsnote = {Provided by the SAO/NASA Astrophysics Data System}
}

@article {Tits:representations,
    AUTHOR = {Tits, J.},
     TITLE = {Repr\'{e}sentations lin\'{e}aires irr\'{e}ductibles d'un groupe r\'{e}ductif
              sur un corps quelconque},
   JOURNAL = {J. Reine Angew. Math.},
  FJOURNAL = {Journal f\"{u}r die Reine und Angewandte Mathematik. [Crelle's
              Journal]},
    VOLUME = {247},
      YEAR = {1971},
     PAGES = {196--220},
      ISSN = {0075-4102},
   MRCLASS = {14.50},
  MRNUMBER = {277536},
MRREVIEWER = {J. Dieudonn\'{e}},
       DOI = {10.1515/crll.1971.247.196},
       URL = {https://doi.org/10.1515/crll.1971.247.196},
}

@article {Howe:Heis,
    AUTHOR = {Howe, R.},
     TITLE = {On the role of the {H}eisenberg group in harmonic analysis},
   JOURNAL = {Bull. Amer. Math. Soc. (N.S.)},
  FJOURNAL = {American Mathematical Society. Bulletin. New Series},
    VOLUME = {3},
      YEAR = {1980},
    NUMBER = {2},
     PAGES = {821--843},
      ISSN = {0273-0979},
   MRCLASS = {22E30 (35S05 43A30 81C40)},
  MRNUMBER = {578375},
MRREVIEWER = {L. Corwin},
       DOI = {10.1090/S0273-0979-1980-14825-9},
       URL = {https://doi.org/10.1090/S0273-0979-1980-14825-9},
}

@article {KO:I,
    AUTHOR = {Kobayashi, T. and \O rsted, B.},
     TITLE = {Analysis on the minimal representation of {$\roman O(p,q)$}.
              {I}. {R}ealization via conformal geometry},
   JOURNAL = {Adv. Math.},
  FJOURNAL = {Advances in Mathematics},
    VOLUME = {180},
      YEAR = {2003},
    NUMBER = {2},
     PAGES = {486--512},
      ISSN = {0001-8708},
   MRCLASS = {22E46 (22E45 43A80 53A30 53C50)},
  MRNUMBER = {2020550},
MRREVIEWER = {Edward G. Dunne},
       DOI = {10.1016/S0001-8708(03)00012-4},
       URL = {https://doi.org/10.1016/S0001-8708(03)00012-4},
}

@article {Howe:Tan,
    AUTHOR = {Howe, Roger E. and Tan, Eng-Chye},
     TITLE = {Homogeneous functions on light cones: the infinitesimal
              structure of some degenerate principal series representations},
   JOURNAL = {Bull. Amer. Math. Soc. (N.S.)},
  FJOURNAL = {American Mathematical Society. Bulletin. New Series},
    VOLUME = {28},
      YEAR = {1993},
    NUMBER = {1},
     PAGES = {1--74},
      ISSN = {0273-0979},
   MRCLASS = {22E46 (17B10 22-02)},
  MRNUMBER = {1172839},
MRREVIEWER = {Brian E. Blank},
       DOI = {10.1090/S0273-0979-1993-00360-4},
       URL = {https://doi.org/10.1090/S0273-0979-1993-00360-4},
}

@ARTICLE{Hsu:Asymp,
       author = {{Hsu}, C-H.},
        title = "{Asymptotics of Schwartz functions}",
      journal = {arXiv e-prints},
     keywords = {Number Theory, Representation Theory, 11F70 (Primary), 11F55, 11F85 (Secondary)},
         year = 2021,
        month = dec,
          eid = {arXiv:2112.02403},
        pages = {arXiv:2112.02403},
          doi = {10.48550/arXiv.2112.02403},
archivePrefix = {arXiv},
       eprint = {2112.02403},
 primaryClass = {math.NT},
       adsurl = {https://ui.adsabs.harvard.edu/abs/2021arXiv211202403H},
      adsnote = {Provided by the SAO/NASA Astrophysics Data System}
}

@article {EGAIV,
    AUTHOR = {Grothendieck, A.},
     TITLE = {\'{E}l\'{e}ments de g\'{e}om\'{e}trie alg\'{e}brique. {IV}. \'{E}tude locale des
              sch\'{e}mas et des morphismes de sch\'{e}mas {IV}},
   JOURNAL = {Inst. Hautes \'{E}tudes Sci. Publ. Math.},
  FJOURNAL = {Institut des Hautes \'{E}tudes Scientifiques. Publications
              Math\'{e}matiques},
    NUMBER = {32},
      YEAR = {1967},
     PAGES = {361},
      ISSN = {0073-8301},
   MRCLASS = {14.55},
  MRNUMBER = {238860},
MRREVIEWER = {J. P. Murre},
       URL = {http://www.numdam.org/item?id=PMIHES_1967__32__361_0},
}

@book {Borel:LAG,
    AUTHOR = {Borel, A.},
     TITLE = {Linear algebraic groups},
    SERIES = {Graduate Texts in Mathematics},
    VOLUME = {126},
   EDITION = {Second},
 PUBLISHER = {Springer-Verlag, New York},
      YEAR = {1991},
     PAGES = {xii+288},
      ISBN = {0-387-97370-2},
   MRCLASS = {20-01 (20Gxx)},
  MRNUMBER = {1102012},
MRREVIEWER = {F. D. Veldkamp},
       DOI = {10.1007/978-1-4612-0941-6},
       URL = {https://doi.org/10.1007/978-1-4612-0941-6},
}

@book {DG,
    AUTHOR = {Demazure, M. and Gabriel, P.},
     TITLE = {Introduction to algebraic geometry and algebraic groups},
    SERIES = {North-Holland Mathematics Studies},
    VOLUME = {39},
      NOTE = {Translated from the French by J. Bell},
 PUBLISHER = {North-Holland Publishing Co., Amsterdam-New York},
      YEAR = {1980},
     PAGES = {xiv+357},
      ISBN = {0-444-85443-6},
   MRCLASS = {14-01},
  MRNUMBER = {563524},
}

@article {Getz:Quadric,
    AUTHOR = {Getz, J.~R.},
     TITLE = {Summation formulae for quadrics},
   JOURNAL = {Selecta Math. (N.S.)},
  FJOURNAL = {Selecta Mathematica. New Series},
    VOLUME = {31},
      YEAR = {2025},
    NUMBER = {2},
     PAGES = {Paper No. 41, 42},
      ISSN = {1022-1824},
   MRCLASS = {11F70 (11E12 11F27)},
  MRNUMBER = {4887846},
MRREVIEWER = {Ivan Mati\'{c}},
       DOI = {10.1007/s00029-025-01036-7},
       URL = {https://doi.org/10.1007/s00029-025-01036-7},
}

@article {Muhasky,
    AUTHOR = {Muhasky, J. L.},
     TITLE = {The differential operator ring of an affine curve},
   JOURNAL = {Trans. Amer. Math. Soc.},
  FJOURNAL = {Transactions of the American Mathematical Society},
    VOLUME = {307},
      YEAR = {1988},
    NUMBER = {2},
     PAGES = {705--723},
      ISSN = {0002-9947},
   MRCLASS = {16A05 (13E15 14H99 16A33)},
  MRNUMBER = {940223},
MRREVIEWER = {K. R. Goodearl},
       DOI = {10.2307/2001194},
       URL = {https://doi.org/10.2307/2001194},
}

@article {BT:abstraits,
    AUTHOR = {Borel, A. and Tits, J.},
     TITLE = {Homomorphismes ``abstraits'' de groupes alg\'{e}briques simples},
   JOURNAL = {Ann. of Math. (2)},
  FJOURNAL = {Annals of Mathematics. Second Series},
    VOLUME = {97},
      YEAR = {1973},
     PAGES = {499--571},
      ISSN = {0003-486X},
   MRCLASS = {20G15 (10C30 14L15 20F55)},
  MRNUMBER = {316587},
MRREVIEWER = {J. Dieudonn\'{e}},
       DOI = {10.2307/1970833},
       URL = {https://doi.org/10.2307/1970833},
}
\bibliographystyle{alpha}

\end{document}